\patchcmd{\thebibliography}{*}{}{}{}
\pretocmd\thebibliography{\csname c@secnumdepth\endcsname=-2 }{}{}
\patchcmd{\@thm}{\thm@headfont{\scshape}}{\thm@headfont{\scshape\bfseries}}{}{}
\patchcmd{\@thm}{\thm@notefont{\fontseries\mddefault\upshape}}{}{}{}
 \theoremstyle{plain}
 \newtheorem{lemma}{Lemma}[section]
\newtheorem{theorem}[lemma]{Theorem }
\newtheorem{corollary}[lemma]
{Corollary}
\newtheorem{proposition}[lemma]{Proposition}
\theoremstyle{definition}
\newtheorem{definition}[lemma]{Definition }
\newtheorem{example}[lemma]
{Example }
\newtheorem{remark}[lemma]{Remark}
\newcommand{\lgw}{\longrightarrow}
\newcommand{\lgm}{\longmapsto}
\newcommand{\s}{\sigma}
\newcommand{\ovl}{\overline}
\newcommand{\Frac}{\operatorname{Frac}}
\newcommand{\p}{\mathfrak{p}}
\renewcommand{\P}{\mathcal{P}}
\renewcommand{\deg}{\text{deg}\,}
\newcommand{\ord}{\text{ord}}
\newcommand{\Ker}{\operatorname{Ker}}
\newcommand{\wdh}{\widehat}
\newcommand{\I}{\mathcal{I}}
\newcommand{\ini}{\text{in}}
\newcommand{\wdt}{\widetilde}
\newcommand{\la}{\lambda}
\newcommand{\LL}{\mathbb{L}}
\renewcommand{\O}{\mathcal{O}}
\newcommand{\m}{\mathfrak{m}}
\newcommand{\Z}{\mathbb{Z}}
\newcommand{\Spec}{\operatorname{Spec}}
\renewcommand{\k}{\Bbbk}
\newcommand{\D}{\Delta}
\newcommand{\Supp}{\operatorname{Supp}}
\newcommand{\lb}{\llbracket}
\newcommand{\rb}{\rrbracket}
\newcommand{\G}{\Gamma}
\renewcommand{\dim}{\operatorname{dim}}
\newcommand{\R}{\mathbb{R}}
\newcommand{\K}{\mathbb{K}}
\newcommand{\N}{\mathbb{N}}
\newcommand{\C}{\mathbb{C}}
\newcommand{\Q}{\mathbb{Q}}
\newcommand{\E}{\mathcal{E}}
\newcommand{\F}{\mathcal{F}}
\renewcommand{\t}{\tau}
\renewcommand{\a}{\alpha}
\renewcommand{\b}{\beta}
\newcommand{\g}{\gamma}
\renewcommand{\phi}{\varphi}
\renewcommand{\d}{\delta}
\newcommand{\e}{\varepsilon}
\begin{document}
\selectlanguage{english}
%
%\frontmatter
\title{Artin Approximation}
\author{Guillaume ROND}

\begin{abstract} In 1968, M. Artin proved that any formal power series solution of a system of analytic equations may be approximated by convergent power series solutions. Motivated by this result and a similar result of A. P\l oski, he conjectured that this remains true when the ring of convergent power series is replaced by a more general kind of  ring.\\
This paper  presents the state of the art on this problem and its extensions. An extended introduction is aimed at non-experts. Then we present three main aspects of the subject: the classical Artin Approximation Problem, the Strong Artin Approximation Problem and  the Artin Approximation Problem with constraints. Three appendices present the algebraic material used in this paper (The Weierstrass Preparation Theorem, excellent rings and regular morphisms, \'etale and smooth morphisms and Henselian rings). \\
The goal is to review most of the known results and to give a list of references as complete as possible. We do not give the proofs of all the results presented in this paper but, at least, we always try to outline the proofs and give the main arguments together with precise references.

\end{abstract}
%
%
%% \date{\today}
%
%
 \address{Aix Marseille UniversitŽ, CNRS, Centrale Marseille, I2M UMR 7373
13453, Marseille, France}

 \email{guillaume.rond@univ-amu.fr}

\subjclass[2010]{00-02, 03C20, 13-02, 13B40, 13J05, 13J15, 14-02, 14B12, 14B25, 32-02, 32B05, 32B10, 11J61, 26E10, 41A58}
\keywords{Artin Approximation, Formal, Convergent and Algebraic Power Series, Henselian rings,  Weierstrass Preparation Theorem}

\thanks{The author was partially supported by ANR projects STAAVF (ANR-2011 BS01 009) and SUSI (ANR-12-JS01-0002-01)}
\maketitle

\epigraph{"On the whole, divergent series are the work of the Devil and it is a shame that one dares base any demonstration on them. You can get whatever result you want when you use them, and they have given rise to so many disasters and so many paradoxes."}{N. H. Abel, letter to Holmboe, January 1826}
%
%
%\maketitle
%\author{Guillaume Rond}
\tableofcontents 

%\mainmatter

%%%%%%%%%%%%%%%%%%%%%%%%%%%%%
 \section{Introduction}
The aim of this text is to present the Artin Approximation Theorem and some related results. The  problem we are interested in is to find analytic solutions of some system of equations when this system admits formal power series solutions and the Artin Approximation Theorem yields a positive answer to this problem. This topic has been an important subject of study in the 70s and 80s having a wide variety of applications in algebraic and analytic geometry as well as in commutative algebra. But this is still an important subject of study that has been stimulated by several more recent applications (for instance in real algebraic geometry \cite{CRS}, to the theory of arc spaces \cite{Ve} or in CR geometry \cite{MMZ}). \\
\\
The main object of study are power series, i.e. formal sums of the form 
$$\sum_{\a\in\N^n} c_\a x_1^{\a_1}\ldots x_n^{\a_n}$$
and the relations that they satisfy. We will be mainly interested in the nature of series that satisfy some given relations: convergent power series, divergent power series, algebraic power series, \ldots . But we will also be interested in studying the existence of power series satisfying some given relations.\\
\\
We begin this paper by giving several examples explaining what are precisely the different problems we aim to study. Then we will present the state of the art on this problem. \\
This text contains essentially three parts: the first part is dedicated to present the Artin Approximation Theorem and its generalizations; the second part presents a stronger version of the Artin Approximation Theorem; the last part is mainly devoted to exploring the Artin Approximation Problem in the case of constraints. The end of the text contains 3 appendices: the first one concerns the Weierstrass  Preparation and Division Theorems, the second one concerns the notions of excellent ring and regular morphism, and the last one reviews the main definitions and results concerning the \'etale morphisms and Henselian rings.  \\
\\
This paper is an extended version of the habilitation thesis of the author. I wish to thank the members of the jury of my habilitation thesis who encouraged me to improve the first version of this writing: Edward Bierstone, Charles Favre, Herwig Hauser, 
Michel Hickel, Adam Parusi\'nski, Anne Pichon and Bernard Teissier. \\
I wish to thank especially Herwig Hauser for his  encouragement  on the very first stage of this writing. In particular the idea of writing an extended introduction with examples was his proposal (and he also suggested some examples).\\
I also wish to thank the participants of the Chair Jean Morlet at CIRM for the fruitful discussions that helped me to improve the text, in particular the participants of the doctoral school and more specifically Francisco Castro-Jim\'enez, Christopher Chiu, Alberto Gioia,  Dorin Popescu, Kaloyan Slavov,  Sebastian Woblistin.\\
Last but not least I thank  Mark Spivakovsky who introduced me to this subject 15 years ago and from whom I learned very much.\\
Finally I wish to thank the referee for their careful reading of the paper and for their useful and relevant remarks.\\
\\

\begin{example}\label{exe1}
Let us consider the following curve $C:=\{(t^3,t^4,t^5), t\in\C\}$ in $\C^3$. This curve is an algebraic set which means that it  is the zero locus of polynomials in three variables. Indeed, we can check that $C$ is the zero locus of the polynomials $f:=y^2-xz$, $g:=yz-x^3$ and $h:=z^2-x^2y$. If we consider the zero locus of any two of these polynomials we always get a set larger than $C$. 
The complex dimension of the zero locus of one non-constant polynomial in three variables is 2 (such a set is called a hypersurface of $\C^3$). Here $C$ is the intersection of the zero locus of three  hypersurfaces and not of two of them, but its complex dimension is 1. \\
In fact we can see this phenomenon as follows: we call a linear algebraic relation between $f$, $g$ and $h$ any element of the kernel of the linear  map $\phi : \C[x,y,z]^3\lgw \C[x,y,z]$ defined by $\phi(a,b,c):=af+bg+ch$. Obviously $r_1:=(g,-f,0)$, $r_2:=(h,0,-f)$ and $r_3:=(0,h,-g)\in\Ker(\phi)$. These are called the trivial linear relations between $f$, $g$ and $h$. But in our case there are two more linear relations which are $r_4:=(z,-y,x)$ and $r_5:=(x^2,-z,y)$ and $r_4$ and $r_5$ cannot be written as $a_1r_1+a_2r_2+a_3r_3$ with $a_1$, $a_2$ and $a_3\in\C[x,y,z]$, which means that $r_4$ and $r_5$ are not in the sub-$\C[x,y,z]$-module of $\C[x,y,z]^3$ generated by $r_1$, $r_2$ and $r_3$.\\
On the other hand we can prove, using the theory of Gr\"obner bases,  that $\Ker(\phi)$ is generated by $r_1$, $r_2$, $r_3$, $r_4$ and $r_5$.\\
Let $X$ be the common zero locus of $f$ and $g$. If $(x,y,z)\in X$ and $x\neq 0$, then $h=\frac{zf+yg}{x}=0$ thus $(x,y,z)\in C$. If $(x,y,z)\in X$ and $x=0$, then $y=0$. Geometrically this means that $X$ is the union of $C$ and the $z$-axis, i.e. the union of two curves.\\
\\
Now let us denote by $\C\lb x,y,z\rb$ the ring of formal power series with coefficients in $\C$. We can also consider linear formal relations between $f$, $g$ and $h$ that is, elements of the kernel of the map $\C\lb x,y,z\rb^3\lgw \C\lb x,y,z\rb$ induced by $\phi$.
Of course any element of the form $a_1r_1+a_2r_2+a_3r_3+a_4r_4+a_5r_5$, where  $a_1,\ldots,a_5\in\C\lb x,y,z\rb$, is a linear formal relation between $f$, $g$ and $h$.\\
In fact any linear formal relation is of this form, i.e. the linear algebraic relations generate the linear formal relations. We can show this as follows: we can assign the weights 3 to $x$, 4 to $y$ and 5 to $z$. In this case $f$, $g$, $h$ are weighted homogeneous polynomials of weights $8$, $9$ and $10$ and $r_1$, $r_2$, $r_3$, $r_4$ and $r_5$ are weighted homogeneous relations of weights $(9,8,-\infty)$, $(10, -\infty, 8)$, $(-\infty,10,9)$, $(5,4,3)$, $(6,5,4)$ ($-\infty$ is by convention  the degree of the zero polynomial). If $(a,b,c)\in\C\lb x,y,z\rb^3$ is a linear formal relation then we can write $a=\sum_{i=0}^{\infty}a_i$, $b=\sum_{i=0}^{\infty}b_i$ and $c=\sum_{i=0}^{\infty}c_i$ where $a_i$, $b_i$ and $c_i$ are weighted homogeneous polynomials  of degree $i$ with respect to the above weights. Then saying that $af+bg+ch=0$ is equivalent to $$a_if+b_{i-1}g+c_{i-2}h=0 \ \ \ \forall i\in\N$$ with the assumption $b_i=c_i=0$ for $i<0$. Thus $(a_0,0,0)$, $(a_1,b_0,0)$ and any $(a_i,b_{i-1},c_{i-2})$, for $2\leq i$, are in $\Ker(\phi)$, thus are weighted homogeneous linear combinations of $r_1,\ldots,r_5$. Hence $(a,b,c)$ is a linear combination of  $r_1,\ldots,r_5$ with coefficients in $\C\lb x,y,z\rb$.\\
\\
 Now we can investigate the same problem by replacing the ring of formal power series by $\C\{x,y,z\}$, the ring of \index{convergent power series} convergent power series with coefficients in $\C$, i.e. 
$$\C\{x,y,z\}:=\left\{\sum_{i,j,k\in\N}a_{i,j,k}x^iy^jz^k\ /\ \exists \rho>0, \sum_{i,j,k}|a_{i,j,k}|\rho^{i+j+k}<\infty\right\}.$$
We can also consider linear analytic relations between $f$, $g$ and $h$, i.e. elements of the kernel of the map $\C\{x,y,z\}^3\lgw \C\{x,y,z\}$ induced by $\phi$.
From the linear formal case we see that any linear analytic relation $r$ is of the form $a_1r_1+a_2r_2+a_3r_3+a_4r_4+a_5r_5$ with $a_i\in\C\lb x,y,z\rb$ for $1\leq i\leq 5$. And there is no reason for the $a_i$ to be convergent. For instance we can check that
$$hr_1-gr_2+fr_3=0.$$
So if $\wdh b$ is a divergent power series we see that $(\wdh bh)r_1-(\wdh bg)r_2+(\wdh b f)r_3$ is a linear formal relation but not an analytic relation. But we can prove that every linear analytic relation has the form $a_1r_1+a_2r_2+a_3r_3+a_4r_4+a_5r_5$ where the   $a_i$ can be chosen to be convergent power series, and our goal is to describe how to do this.\\
 Let us remark that the equality $r=a_1r_1+\cdots+a_5r_5$ is equivalent to saying that $a_1,\ldots,a_5$ satisfy a system of  three linear equations with analytic coefficients. This is the first example of the problem we are interested in: if  some equations with analytic coefficients have formal solutions do they have analytic solutions? The Artin Approximation Theorem yields an answer to this problem. Here is the first theorem proven by M. Artin in 1968:

\begin{theorem}[Artin Approximation Theorem]\label{Ar68_intro}\cite{Ar68}
  Let $F(x,y)$ be a  
  vector of convergent power series over $\C$ in two sets of variables $x$ and $y$. Assume given a formal power series solution $\wdh{y}(x)$,
  $$F(x,\wdh{y}(x))=0.\footnote{To be rigorous one needs to assume that $\wdh y(0)=0$ to ensure that $F(x,\wdh y(x))$ is well defined. But we can drop this assumption if $F(x,y)$ is polynomial in $y$ which is the case of most of the examples given in the introduction.}$$
  Then, for any $c\in\N$,  there exists a convergent power series solution $y(x)$,
  $$F(x,y(x))=0$$
  which coincides with $\wdh{y}(x)$ up to degree $c$,
  $$y(x)\equiv \wdh{y}(x) \text{ modulo } (x)^c.$$
  
  \end{theorem}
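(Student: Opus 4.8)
The plan is to reduce the general case to a situation where one can apply an iteration / formal implicit function argument, following Artin's original strategy. First I would use the Weierstrass Preparation Theorem (recalled in the appendix) to put the system $F(x,y)=0$ into a more tractable shape. The idea is that, after a suitable change of coordinates and using Weierstrass division, one can assume that finitely many of the components of $F$ are Weierstrass polynomials in one of the $y$-variables, and then eliminate variables one at a time. More precisely, I would argue by a double induction: on the number $n$ of variables $x=(x_1,\dots,x_n)$ and on the number of equations (or on the number of $y$-variables). The base case $n=0$ is trivial, since then a formal solution is already a constant, hence convergent.

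The heart of the matter is the inductive step, and here the key technical tool is the following nested-approximation fact, which Artin deduces from the Jacobian criterion together with Hensel's lemma for the Henselian ring $\C\{x\}$: if the formal solution $\wdh y(x)$ can be chosen so that some maximal minor of the Jacobian $\partial F/\partial y$ does not vanish identically at $\wdh y$, then one can invoke the formal implicit function theorem to express $\wdh y$ (after another Weierstrass preparation in a generic linear combination of the $x_i$) in terms of fewer data, and the convergence is then propagated by Hensel's lemma in $\C\{x\}$, which is Henselian. The general case, where the Jacobian may be degenerate along the solution, is handled by a clever trick of Artin: one introduces auxiliary equations (or passes to the "singular locus" ideal) so that on the relevant stratum the Jacobian condition is restored, reducing the number of variables and thereby feeding the induction. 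This is essentially a Noetherian/generic-smoothness argument dressed up so it works simultaneously over $\C\{x\}$ and $\C\lb x\rb$.

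I expect the main obstacle to be precisely this passage from the smooth case to the general case: making the induction bookkeeping honest, i.e. ensuring that the auxiliary system one writes down still has a formal solution agreeing with $\wdh y$ to the prescribed order $c$, and that the new system genuinely involves fewer variables (or has strictly smaller "size" in whatever well-founded measure one uses) so the induction terminates. A secondary subtlety is keeping track of the congruence condition $y(x)\equiv\wdh y(x)\ \mathrm{mod}\ (x)^c$ throughout the coordinate changes and Weierstrass preparations: each such operation must be chosen to be the identity modulo a high power of $(x)$, or one must argue that the congruence can be recovered at the end by absorbing a correction term of order $\geq c$, which is where one uses the fact that $F$ vanishes to order $1$ in $y-\wdh y$ along the solution and that the linearized system is controlled.

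An alternative, and arguably cleaner, route is to invoke the structure theory of $\C\{x\}$ as an excellent Henselian local ring whose completion is $\C\lb x\rb$, together with the general Artin–Popescu machinery on smoothness of the morphism $\C\{x\}\to\C\lb x\rb$; but that is circular at this point in the exposition, so the self-contained proof must go through the Weierstrass-division induction sketched above. In the write-up I would first prove the special case in which $F$ is a single Weierstrass polynomial in one variable $y$, where Hensel's lemma gives the result immediately, then treat systems in one $y$-variable by resultants, and finally run the induction on the number of $y$-variables, with the Jacobian/singular-locus argument as the inductive engine.
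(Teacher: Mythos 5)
Your outline captures the right general flavour (induction on the number $n$ of $x$-variables, Weierstrass division, the Jacobian criterion after replacing the ideal by the prime kernel of the evaluation map), but there are two genuine gaps where the argument as you describe it would not go through. First, you rely on Hensel's lemma (the ordinary implicit function theorem in $\C\{x\}$) to "propagate convergence" once some maximal minor $\d$ of $\partial F/\partial y$ is nonzero along $\wdh y$. But nonvanishing of $\d(x,\wdh y(x))$ as a power series does not mean it is a unit: in general $\d(x,\wdh y(x))$ vanishes at the origin, and Hensel's lemma gives nothing. The proof needs a quantitative Newton-type correction, namely Tougeron's Implicit Function Theorem (Proposition \ref{TougeronIFT}): if a convergent $\ovl y(x)$ satisfies $f(x,\ovl y(x))\in(\d(x,\ovl y(x)))^2(x)^c$, then it can be corrected to an exact convergent solution within $(\d(x,\ovl y(x)))(x)^c$. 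Nothing in your proposal produces or uses a statement of this kind, and without it the "smooth case" you treat covers only the situation where $\d$ is a unit at the origin.

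Second, your inductive engine is not the one that works, and you have not substantiated the one you propose. You suggest eliminating $y$-variables one at a time (resultants for one $y$-variable, then induction on the number of unknowns), with the degenerate-Jacobian case handled by "auxiliary equations restoring the Jacobian condition, reducing the number of variables". In the actual proof the number of unknowns is never reduced — it increases. The induction is on $n$ alone, and the reduction step (Lemma \ref{lemma1}) goes as follows: after a linear change of $x$ making $\d^2(x,\wdh y(x))$ $x_n$-regular with Weierstrass polynomial $\wdh a(x)$, one Weierstrass-divides each $\wdh y_i(x)$ by $\wdh a(x)$ and takes the division coefficients $\wdh y_{i,j}(x')$ together with the coefficients $\wdh a_p(x')$ of $\wdh a$ as \emph{new} unknowns in $n-1$ variables; dividing $f_k(x,y^*)$ and $\d^2(x,y^*)$ by the generic polynomial $A$ turns the congruences $f_k\equiv 0$, $\d^2\equiv 0 \bmod \wdh a$ into a genuine convergent system $G_l=F_{k,l}=0$ in the variables $x'$, to which the induction hypothesis applies; one then reconstructs $\ovl y(x)$ satisfying $f(x,\ovl y(x))\in(\d^2(x,\ovl y(x)))(x)^c$ and finishes with Tougeron's theorem. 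Your worry about "ensuring the new system involves fewer variables so the induction terminates" is exactly the point your sketch leaves unresolved: termination comes from dropping one $x$-variable via Weierstrass division of the \emph{solution}, not from eliminating unknowns, and the resultant/variable-elimination route you propose is not shown to preserve the formal solution, the congruence to order $c$, or convergence. (The minor reductions you omit — replacing $(f_1,\dots,f_r)$ by the prime kernel of $\phi$ and then by $h=\operatorname{ht}(I)$ of the equations using $qf_i^e\in J$ — are also needed so that only solutions of finitely many "Jacobian-controlled" equations must be found, but these are easier to supply.)
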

We can define a topology on $\C\lb x\rb$, $x=(x_1,\ldots,x_n)$ being a set of variables, by saying that two power series are close to each other if  they are equal up to a high degree. Thus we can reformulate Theorem \ref{Ar68_intro} as follows: formal power series solutions of a system of analytic equations may be approximated by convergent power series solutions (see Remark \ref{rem_topology} in the next part for a precise definition of this topology).
\end{example}

%%%%%%%%%%%%%%%%%%%%%%%%%%%%%%%%%%%

\begin{example}\label{flat_ana}
A special case of Theorem \ref{Ar68_intro} and a generalization of Example \ref{exe1}  occurs when $F$ is homogeneous linear in $y$, say $F(x,y)=\sum f_i(x)y_i$, where $f_i(x)$ is a vector of convergent power series with $r$ coordinates for each $i$ and $x$ and $y$ are two sets of variables.  A solution $y(x)$ of $F(x,y)=0$ is a linear relation between the $f_i(x)$. In this case the linear formal relations are linear combinations of linear analytic relations with coefficients in $\C\lb x\rb$. In term of commutative algebra, this is expressed as the \index{flatness} flatness of the ring of formal power series over the ring of convergent powers series, a result which can be proven via  Artin-Rees Lemma (see Remark \ref{flat} in the next part and Theorems 8.7 and 8.8 \cite{Ma}) and which is much more elementary than Theorem \ref{Ar68}.\\
It means that if $\wdh{y}(x)$ is a formal solution of $f(x,y)=0$, then there exist analytic solutions of $F(x,y)=0$ denoted by  $\wdt{y}_i(x)$, $1\leq i\leq s$, and formal power series $\wdh{b}_1(x),\ldots,\wdh{b}_s(x)$, such that $\wdh{y}(x)=\sum_i\wdh{b}_i(x)\wdt{y}_i(x)$. Thus, by replacing in the previous sum the $\wdh{b}_i(x)$ by their truncation at order $c$, we obtain an analytic solution of $f(x,y)=0$ coinciding with $\wdh{y}(c)$ up to degree $c$.\\
If the $f_i(x)$  are vectors of polynomials then  the linear formal relations are also linear combinations of linear algebraic relations since the ring of formal power series is flat over the ring of polynomials, and Theorem \ref{Ar68_intro} remains true if $F(x,y)$ is linear in $y$ and $\C\{x\}$ is replaced by $\C[x]$.

\end{example}

%%%%%%%%%%%%%%%%%%%%%%%%%%%%%%%%%%%

\begin{example}\label{faithfully_flat}
A  slight generalization of the previous example is when $F(x,y)$ is a vector of polynomials in $y$ of degree one with coefficients in $\C\{x\}$ (resp. $\C[x]$), say 
$$F(x,y)=\sum_{i=1}^mf_i(x)y_i+b(x)$$
where the $f_i(x)$  and $b(x)$ are vectors of convergent power series (resp. polynomials). Here $x$ and $y$ are multivariables. If $\wdh{y}(x)$ is a formal power series solution of $F(x,y)=0$, then $(\wdh{y}(x),1)$ is a formal power series solution of $G(x,y,z)=0$ where
$$G(x,y,z):=\sum_{i=1}^mf_i(x)y_i+b(x)z$$
and $z$ is a single variable. Thus using the flatness of $\C\lb x\rb$ over $\C\{x\}$ (resp. $\C[x]$), as in Example \ref{flat_ana}, we can approximate $(\wdh{y}(x),1)$ by a convergent power series (resp. polynomial) solution $(\wdt{y}(x),\wdt{z}(x))$ which coincides with $(\wdh{y}(x),1)$ up to degree $c$. In order to obtain a solution of $F(x,y)=0$ we would like to be able to divide $\wdt{y}(x)$ by $\wdt{z}(x)$ since $\wdt{y}(x)\wdt{z}(x)^{-1}$ would be a solution of $F(x,y)=0$ approximating $\wdh{y}(x)$. We can remark that, if $c\geq 1$, then $\wdt{z}(0)=1$ thus $\wdt{z}(x)$ is not in the ideal  $(x)$. But $\C\{x\}$ is  a \index{local ring} local ring. We call a \emph{local ring} any ring $A$ that has only one maximal ideal. This is equivalent to saying that $A$ is the disjoint union of one ideal (its only maximal ideal) and of the set of units in $A$. Here the units of $\C\{x\}$ are exactly the power series $a(x)$ such that $a(x)$ is not in the ideal $(x)$, i.e. such that $a(0)\neq 0$. In particular  $\wdt{z}(x)$ is invertible in $\C\{x\}$, hence we can approximate formal power series solutions of $F(x,y)=0$ by convergent power series solutions.\\
\\
In the case $(\wdt{y}(x),\wdt{z}(x))$ is a polynomial solution of $g(x,y,z)=0$, $\wdt{z}(x)$ is not invertible in general in $\C[x]$ since it is not a local ring.  For instance set 
$$F(x,y):=(1-x)y-1$$
where $x$ and $y$ are single variables. Then  $y(x):=\displaystyle\sum_{n=0}^{\infty}x^n=\frac{1}{1-x}$ is the only formal power series solution of $F(x,y)=0$, but $y(x)$ is not a polynomial. Thus we cannot approximate the roots of $F$ in $\C\lb x\rb$ by roots of $F$ in $\C[x]$.\\
\\
But instead of working in $\C[x]$ we can work in $\C[x]_{(x)}$ which is the ring of rational functions whose denominator does not vanish at 0. This ring is a local ring. Since $\wdt{z}(0)\neq 0$,  $\wdt{y}(x)\wdt{z}(x)^{-1}$ is a  rational function belonging to $\C[x]_{(x)}$.  In particular any system of polynomial equations of degree one with coefficients in $\C[x]$ which has solutions in $\C\lb x\rb$ has solutions in $\C[x]_{(x)}$.  \\
\\
In term of commutative algebra, the fact that degree 1 polynomial equations satisfy Theorem \ref{Ar68_intro} is expressed as the \index{faithful flatness} faithful flatness  of the ring of formal power series over the ring of convergent powers series, a result that follows from the flatness and the fact that the ring of convergent power series is a local ring (see also Remark \ref{flat_ploski} in the next part). 
 \end{example}

 %%%%%%%%%%%%%%%%%%%%%%%%%%%%%%%%%%%

\begin{example}[Implicit Function Theorem]
Let $F(x,y)$ be a polynomial in two sets of variables $x=(x_1,\ldots, x_n)$ and $y=(y_1,\ldots, y_m)$,  with $F(0,0)=0$. Let us assume that there exists a vector of formal power series $\wdh y(x)\in\C\lb x\rb^m$ solution of $F=0$ and vanishing at 0:
$$F(x,\wdh y(x))=0.$$
So by Theorem \ref{Ar68_intro} there exists a vector of convergent power series solution of $F=0$. But in general the equation $F=0$ has a infinite solution set and the given formal solution has no reason to be already convergent. For instance if $F(x,y)=y_1-y_2$, every vector of the form $(\wdh z(x), \wdh z(x))$ where $\wdh z\in\C\lb x\rb$ is a solution.\\
\\
But now let us assume that $m=1$ and  that $\frac{\partial F}{\partial y}(0,0)\neq 0$. Since $\wdh y(0)=0$ and $F(0,0)=0$ we can apply the Implicit Function Theorem for convergent power series to see that the equation $F=0$ has a unique convergent power series solution vanishing at 0. In particular, the uniqueness of this solution implies that $\wdh y$ is already a vector of convergent power series.\\
\\
In fact the proof of Theorem \ref{Ar68_intro} consists in reducing the problem to a situation where the Implicit Function Theorem applies.

\end{example}

 %%%%%%%%%%%%%%%%%%%%%%%%%%%%%%%%%%%

\begin{example}\label{ex_2}
The next example we are looking at is the following one: set $f\in A$ where $A=\C[x]$ or $\C[x]_{(x)}$ or $\C\{x\}$ with $x$  a finite set of variables. When do there exist $g$, $h\in A$ such that $f=gh$?\\
First of all, we can take $g=1$ and $h=f$ or, more generally,  $g$  a unit in $A$ and $h=g^{-1}f$. These are trivial cases and thus we are looking for non-units $g$ and $h$. \\
Of course, if there exist non-units $g$ and $h$ in $A$ such that $f=gh$, then $f=(\wdh{u}g)(\wdh{u}^{-1}h)$ for any unit $\wdh{u}\in\C\lb x\rb$. But is the following true: let us assume that there exist $\wdh{g}$, $\wdh{h}\in\C\lb x\rb$ such that $f=\wdh{g}\wdh{h}$, then do there exist non-units $g$, $h\in A$ such that $f=gh$?\\
Let us remark that this question is equivalent to the following: if $\frac{A}{(f)}$ is an integral domain, is $\frac{\C\lb x\rb}{(f)\C\lb x\rb}$ still an integral domain?\\
\\
The answer to this question is no in general: for example set $A:=\C[x,y]$ where $x$ and $y$ are single variables
 and  $f:=x^2-y^2(1+y)$. The polynomial $f$ is irreducible since $y^2(1+y)$ is not a square in $\C[x,y]$. But as a power series we can factor $f$ as  
 $$f=(x+y\sqrt{1+y})(x-y\sqrt{1+y})$$
  where $\sqrt{1+y}$ is a formal power series such that $(\sqrt{1+y})^2=1+y$. Thus $f$ is not irreducible in $\C\lb x,y\rb$ nor in $\C\{x,y\}$ but it is irreducible in $\C[x,y]$ or $\C[x,y]_{(x,y)}$.\\
 \\
  In fact it is easy to see that $x+y\sqrt{1+y}$  and $x-y\sqrt{1+y}$ are power series which are algebraic over $\C[x,y]$, i.e. they are roots of polynomials with coefficients in $\C[x,y]$ (here they are roots of the polynomial $(z-x)^2-y^2(1+y)$). The set of such algebraic power series is a subring of $\C\lb x,y\rb$ denoted by $\C\langle x,y\rangle$. In general if $x$ is a multivariable the ring of algebraic power series  \index{algebraic power series} $\C\langle x\rangle$ is the following:
  $$\C\langle x\rangle:=\left\{f\in\C\lb x\rb\ / \ \exists P(z)\in\C[x][z], \ P(z)\neq 0,\ \ P(f)=0\right\}.$$
  It is not difficult to prove that the ring of algebraic power series is a subring of the ring of convergent power series and is a local ring.
   In 1969, M. Artin proved an analogue of Theorem \ref{Ar68} for the rings of algebraic power series \cite{Ar69} (see Theorem \ref{Ar69} in the next part). Thus if $f\in\C\langle x\rangle$ (or  $\C\{x\}$) is irreducible then it remains irreducible in $\C\lb x\rb$, this is a  consequence of this Artin Approximation Theorem for algebraic power series applied to the equation $y_1y_2-f$. Indeed if $f$ were reducible in $\C[[x]]$ there would exist non-units $\wdh y_1$, $\wdh y_2\in\C[[x]]$ (i.e. $y_1(0)=y_2(0)=0$) such that $\wdh y_1\wdh y_2-f=0$. Then by Artin Approximation Theorem of algebraic power series applied with $c\geq 1$ there would exist $\wdt y_1$, $\wdt y_2\in\C\langle x\rangle$ such that $\wdt y_1\wdt y_2=f$ and $\wdt y_i-\wdh y_i\in (x)^c$ for $i=1,2$. In particular $\wdt y_1(0)=\wdt y_2(0)=0$ so $\wdt y_1$ and $\wdt y_2$ are non-unit which contradicts the fact that $f$ is assumed to be irreducible.\\
   From this theorem we  can also deduce that if $f\in\frac{\C\langle x\rangle}{I}$ (or $\frac{\C\{x\}}{I}$) is irreducible, for some ideal $I$, it remains irreducible in $\frac{\C\lb x\rb}{I\C\lb x\rb}$.  
 \end{example}
 
 %%%%%%%%%%%%%%%%%%%%%%%%%%%%%%%%%%%
 
 \begin{example}
 Let us strengthen the above question. Let us assume that there exist non-units $\wdh{g}$, $\wdh{h}\in\C\lb x\rb$ such that $f=\wdh{g}\wdh{h}$ with $f\in A$ with $A=\C\langle x\rangle$ or $\C\{x\}$. Then does there exist a unit $\wdh{u}\in\C\lb x\rb$ such that $\wdh{u}\wdh{g}\in A$ and $\wdh{u}^{-1}\wdh{h}\in A$ ?\\
  The answer to this question is  positive if $A=\C\langle x\rangle$ or $\C\{x\}$, this is a  corollary of the  Artin Approximation Theorem (see Corollary \ref{ufd_fact}). But it is negative in general for $A=\frac{\C\langle x\rangle}{I}$ or $\frac{\C\{x\}}{I}$ if $I$ is an ideal as shown by the  following example  due to S. Izumi \cite{Iz92} (nevertheless it remains true when $A$ is normal - see Corollary \ref{Iz92}):\\ 
  \\
   Set $A:=\frac{\C\{x,y,z\}}{(y^2-x^3)}$. Set $\wdh{\phi}(z):=\sum_{n=0}^{\infty}n!z^n$ (this is a divergent power series) and set 
 $$\wdh{f}:=x+y\wdh{\phi}(z),\ \ \ \wdh{g}:=(x-y\wdh{\phi}(z))(1-x\wdh{\phi}(z)^2)^{-1}\in\C\lb x,y,z\rb.$$
 Then we can  check that $x^2=\wdh{f}\wdh{g}$ modulo ${(y^2-x^3)}$. Now let us assume that there exists a unit $\wdh{u}\in \C\lb x,y,z\rb$ such that $\wdh{u}\wdh{f}\in \C\{x,y,z\}$ modulo $(y^2-x^3)$. Thus the element $P:=\wdh{u}\wdh{f}-(y^2-x^3)\wdh{h}$ is a convergent power series for some well chosen $\wdh h\in\C\lb x,y,z\rb$. We can check easily that $P(0,0,0)=0$ and $\frac{\partial P}{\partial x}(0,0,0)=\wdh{u}(0,0,0)\neq 0$. Thus by the Implicit Function Theorem for analytic functions there exists $\psi(y,z)\in\C\{y,z\}$, such that $P(\psi(y,z),y,z)=0$ and $\psi(0,0)=0$. This yields
 $$\psi(y,z)+y\wdh{\phi}(z)-(y^2-\psi(y,z)^3)\wdh{h}(\psi(y,z),y,z)\wdh{u}^{-1}(\psi(y,z),y,z)=0. $$
 By substituting 0 for $y$  we obtain $\psi(0,z)+\psi(0,z)^3\wdh{k}(z)=0$ for some power series $\wdh{k}(z)\in\C\lb z\rb$. Since $\psi(0,0)=0$, the order of the power series $\psi(0,z)$ is positive, hence the previous equality shows that $\psi(0,z)=0$. Thus $\psi(y,z)=y\theta(y,z)$ with $\theta(y,z)\in\C\{y,z\}$. Thus we obtain
 $$\theta(y,z)+\wdh{\phi}(z)-(y-y^2\theta(y,z)^3)\wdh{h}(\psi(y,z),y,z)\wdh{u}^{-1}(\psi(y,z),y,z)=0$$ 
 and by substituting 0 for $y$, we see that $\wdh{\phi}(z)=\theta(0,z)\in\C\{z\}$ which is a contradiction.\\
 Thus $x^2=\wdh{f}\wdh{g}$ modulo $(y^2-x^3)$ but there is no unit $\wdh{u}\in\C\lb x,y,z\rb$ such that $\wdh{u}\wdh{f}\in\C\{x,y,z\}$ modulo $(y^2-x^3)$. \end{example}
 
 %%%%%%%%%%%%%%%%%%%%%%%%%%%%%%%%%%%
%%%%%%%%%%%%%%%%%%%%%%%%%%%%%%%%%%%

\begin{example}
Using the same notation as in Example \ref{ex_2} we can ask a stronger question: set $A=\C\langle x\rangle$ or $\C\{x\}$ and let $f$ be in $A$. If there exist $\ovl{g}$ and $\ovl{h}\in \C[x]$, vanishing at 0,  such that $f=\ovl{g}\ovl{h}$ modulo a large power of the ideal $(x)$, do there exist $g$ and $h$ in $A$, vanishing at 0, such that $f=gh$? We just remark that by  Example \ref{ex_2} there is no hope, if $f$ is a polynomial and $g$ and $h$ exist, to expect that $g$ and $h\in \C[x]$.\\
Nevertheless we have the following theorem that gives a precise answer to this question:
\begin{theorem}[Strong Artin Approximation Theorem]\label{SAP_intro}\cite{Ar69}
  Let $F(x,y)$ be a vector of convergent power series over $\C$ in two sets of variables $x$ and $y$. Then  for any integer $c$ there exists an integer $\b$ such that for any  given approximate solution  $\ovl{y}(x)$ at order $\b$,
  $$F(x,\ovl{y}(x))\equiv 0\text{ modulo } (x)^{\b},$$
 there exists a convergent  power series solution $y(x)$,
  $$F(x,y(x))=0$$
  which coincides with $\ovl{y}(x)$ up to degree $c$,
  $$y(x)\equiv \ovl{y}(x) \text{ modulo } (x)^c.$$\end{theorem}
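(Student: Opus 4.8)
The strategy I would follow is the classical ultraproduct/Noetherianity argument due to Artin, reducing the statement to the ordinary Artin Approximation Theorem (Theorem~\ref{Ar68_intro}). Suppose the conclusion fails for some $c\in\N$. Then for every $\b\in\N$ there is an approximate solution $\ovl{y}^{(\b)}(x)$ with $F(x,\ovl{y}^{(\b)}(x))\equiv 0 \bmod (x)^{\b}$ admitting \emph{no} convergent solution $y(x)$ with $y(x)\equiv \ovl{y}^{(\b)}(x) \bmod (x)^c$. We may truncate and assume each $\ovl{y}^{(\b)}(x)$ is a polynomial, so it lives in a finite-dimensional $\C$-vector space once we also fix (or bound) the degree; to control this we first reduce, via the Weierstrass division / Artin's trick, to the case where the components of $y$ are a priori bounded in a suitable sense, or else we pass to the coefficients directly as elements of a product ring.

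The core step: form the ultraproduct $A^* := \left(\prod_{\b} \C\right)/\mathcal{U}$ over a nonprincipal ultrafilter $\mathcal{U}$ on $\N$, and let $y^* \in A^*\lb x\rb$ (or the appropriate completion) be the element whose $\b$-th coordinate truncation is $\ovl{y}^{(\b)}(x) \bmod (x)^{\b}$. Because $F(x,\ovl{y}^{(\b)}(x))\to 0$ in the $(x)$-adic sense as $\b\to\infty$ along $\mathcal{U}$, one checks that $F(x,y^*(x))=0$ in $A^*\lb x\rb$. The field $A^*$ has characteristic zero and large cardinality; one then invokes a base-change/transfer argument (Artin's approximation holds over $\C\{x\}$ and passes to $A^*\{x\}$ by an appropriate Łoś-type statement, or one works with the algebraic power series version and excellent ring machinery from the appendices) to produce a genuine solution $y^*(x)\in A^*\{x\}$ of $F=0$ with $y^*(x)\equiv \ovl{y}^{(\b)}(x) \bmod (x)^c$ for $\mathcal{U}$-almost all $\b$. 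Spreading $y^*(x)$ back out coordinatewise yields, for $\mathcal{U}$-almost every $\b$, a convergent solution over $\C$ agreeing with $\ovl{y}^{(\b)}$ modulo $(x)^c$, contradicting our choice of the $\ovl{y}^{(\b)}$. Hence no counterexample exists and $\b=\b(c)$ exists.

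The main obstacle — and the place where care is genuinely needed — is the descent step: the ultraproduct $A^*$ is a huge field, and one must know that Artin Approximation (or at least the relevant nested/algebraic version) is available over power series rings with such coefficient fields, \emph{and} that the solution produced there can be transported back to the individual factors $\C$. This is handled either by using that Artin's 1969 theorem for algebraic power series holds over any field (via Néron–Popescu desingularization and the fact that $\C\langle x\rangle$ is excellent, cf.\ the appendix on excellent rings and regular morphisms), or by a direct diagonal argument on the coefficients of $y^*(x)$ — in any case the subtlety is that $y^*(x)$ is a priori only a power series over $A^*$ with possibly unbounded "complexity," so one must first cut it down (again by Weierstrass-type preparation or by bounding the number of defining coefficients) before it makes sense to read it coordinatewise. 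Once the reduction to finitely much data per coordinate is in place, the rest is the formal ultraproduct bookkeeping together with an application of Theorem~\ref{Ar68_intro}.
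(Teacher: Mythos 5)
Your overall strategy (use ultraproducts to reduce the Strong Approximation statement to the ordinary Artin Approximation Theorem) is indeed the modern route, and it is the one this survey follows for Theorem \ref{SAP_formal}; but your argument has a genuine gap at the descent step, which is exactly where the real work lies. You take the ultrapower $A^*$ of the coefficient field $\C$, produce an exact solution $y^*(x)$ of $F=0$ with coefficients in $A^*$, and then propose to ``spread $y^*(x)$ back out coordinatewise'' to get, for $\mathcal{U}$-almost all $\b$, convergent solutions over $\C$ agreeing with $\ovl{y}^{(\b)}$ modulo $(x)^c$. This is not justified: the condition $F(x,y(x))=0$ is an infinite conjunction of coefficient equations, and the transfer principle (Theorem \ref{CKL}) only applies to first order statements about finitely many elements; each coefficient equation holds for almost all $\b$, but the set of indices where \emph{all} of them hold simultaneously need not belong to $\mathcal{U}$ and in your setting may well be empty. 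Moreover ``$A^*\{x\}$'' is undefined (the ultrapower of $\C$ carries no useful absolute value), and the abstract isomorphism $\C^*\simeq\C$ used in Proposition \ref{prop_mod} is non-canonical and incompatible with the coordinates, which is precisely why that proposition only yields \emph{some} formal solution with no jet control and no convergence. Your parenthetical fix (cut $y^*$ down to finitely coded, e.g.\ algebraic, data) is where the actual content would have to go --- this is the substance of \cite{BDLvdD} and of the Weierstrass-system machinery of \cite{D-L}, and for analytic equations $F$ (infinitely much data) it is not routine --- but it is not supplied.

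The proof this paper presents is arranged so that no solution ever has to be descended from the ultraproduct. One first reduces the analytic statement to its formal analogue (Theorem \ref{SAP_formal} with $A=\C\lb x\rb$) and then upgrades the formal solution to a convergent one with the same $c$-jet by Theorem \ref{Ar68_intro} (alternatively one uses the analytic/Weierstrass-system versions of the strong theorem, cf.\ Theorem \ref{W} and \cite{D-L}, \cite{W1}). For the formal statement one takes the ultrapower $A^*$ of the \emph{ring} $A=\C\lb x\rb$ itself, sets $A_1:=A^*/\m_\infty^*$ with $\m_\infty^*=\bigcap_n(\m_A^*)^n$; then $A_1$ is a Noetherian complete local ring, $A\lgw A_1$ is regular, and the sequence of approximate solutions becomes an exact solution $\ovl{y}\in A_1^m$. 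General N\'eron--Popescu desingularization (Theorems \ref{Popescu} and \ref{Pop_app}), together with the fact that $A$ is Henselian, then yields an exact solution $\wdt{y}$ lying in $A^m$ itself with $\wdt{y}-\ovl{y}\in\m_A^cA_1$. The only statement pushed back through the ultrafilter is this congruence, which concerns one fixed tuple of elements of $A^*$ and a finitely generated ideal, so it gives an index $\b$ with $\wdt{y}\equiv\ovl{y}^{(\b)}$ modulo $\m_A^c$, the desired contradiction. (The historical proof in \cite{Ar69} is different again: an induction on the number of $x$-variables via Weierstrass division, combining the proofs of Theorems \ref{Ar68_intro} and \ref{Greenberg}.) To repair your write-up you should either adopt this scheme, or restrict the ultraproduct argument to finitely coded algebraic data as in \cite{BDLvdD}.
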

  In particular we can apply this theorem  to the polynomial  $y_1y_2-f$ with $c=1$. It shows that there exists an integer $\b$ such that if $\ovl{g}\ovl{h}-f\equiv 0$ modulo $(x)^{\b}$ and if $\ovl{g}(0)=\ovl{h}(0)=0$,  there exist non-units $g$ and $h\in\C\{ x\}$ such that $gh-f=0$.\\
  \\
 For a given $F(x,y)$ and a given $c$ let $\b(c)$ denote the smallest integer $\b$ satisfying the above theorem. A natural question is: how to compute or bound the function $c\lgm\b(c)$ or, at least, some values of $\b$? For instance when $F(x,y)=y_1y_2-f(x)$,  $f(x)\in\C[x]$, what is the value or a bound of $\b(1)$? That is, up to what order do we have to check that the equation $y_1y_2-f=0$ has an approximate solution in order to be sure that this equation has non-trivial  solutions (i.e. which are not vanishing at 0)? For instance, if $f:=x_1x_2-x_3^d$ then $f$ is irreducible but $x_1x_2-f\equiv 0$ modulo $(x)^d$ for any $d\in\N$, so obviously $\b(1)$ really depends on $f$. \\ 
 \\
 In fact in Theorem \ref{SAP_intro} M. Artin proved that $\b$ can be chosen to depend only on the degree of the components of the  vector $F(x,y)$. But it is still an open problem to find effective bounds on $\b$ (see Section \ref{effective}).

\end{example}

%%%%%%%%%%%%%%%%%%%%%%%%%%%%%%%%%%%

\begin{example}[Arc Space and Jet Spaces]
Let $X$ be an affine algebraic subset of $\C^m$, i.e. $X$ is the zero locus of some polynomials in $m$ variables: $f_1,\ldots,f_r\in\C[y_1,\ldots,y_m]$ and let $F$ denote the vector $(f_1,\ldots,f_r)$. Let $t$ be a single variable. For any integer $n$, let us define $X_n$ to be the set of  vectors $y(t)$ whose coordinates are polynomials of degree $\leq n$ and  such that $F(y(t))\equiv 0$ modulo $(t)^{n+1}$. The elements of $X_n$ are called \index{jets} $n$-jets of $X$.  \\
If $y_i(t)=y_{i,0}+y_{i,1}t+\cdots+y_{i,n}t^n$ and if we consider each $y_{i,j}$ as an indeterminate, saying that $F(y(t))\in (t)^{n+1}$ is equivalent to the vanishing of the coefficient of $t^k$, for $0\leq k\leq n$, in the expansion of every $f_i(y(t))$. Thus this is equivalent to the vanishing of $r(n+1)$ polynomial equations involving the $y_{i,j}$. This shows that the jet spaces of $X$ are algebraic sets (here $X_n$ is an algebraic subset of $\C^{m(n+1)}$).\\
For instance if $X$ is a cusp (this example is taken from \cite{Ve}), i.e. the plane curve defined as $X:=\{y_1^2-y_2^3=0\}$ we have 
$$X_0:=\{(a_0,b_0)\in\C^2\ /\ a_0^2-b_0^3=0\}=X.$$ We have 
$$X_1=\{(a_0,a_1,b_0,b_1)\in\C^4\ / \ (a_0+a_1t)^2-(b_0+b_1t)^3\equiv 0\text{ modulo } t^2\}$$
$$=\{(a_0,a_1,b_0,b_1)\in\C^4\ / \ a_0^2-b_0^3=0\text{ and } 2a_0a_1-3b_0^2b_1=0\}.$$\\
The morphisms $\frac{\C[t]}{(t)^{k+1}}\lgw \frac{\C[t]}{(t)^{n+1}}$, for $k\geq n$, induce truncation maps $\pi_n^k:X_{k}\lgw X_n$ by reducing $k$-jets modulo $(t)^{n+1}$. In the example we are considering, the fibre of $\pi_0^1$ over the point $(a_0,b_0)\neq (0,0)$ is the line in the $(a_1,b_1)$-plane whose equation is $2a_0a_1-3b_0^2b_1=0$. This line is exactly the  tangent space at $X$ at the point $(a_0,b_0)$.
The Zariski tangent space at $X$ in $(0,0)$ is the whole plane since this point is a singular point of the plane curve $X$. This corresponds to the fact that the fibre of $\pi_0^1$ over $(0,0)$ is the whole plane.\\
On this example we show that $X_1$ is isomorphic to the tangent bundle of $X$, which is a general fact.\\
We can easily see that $X_2$ is given by the following equations:
$$\left\{\begin{aligned}
 & a_0^2-b_0^3=0\\ & 2a_0a_1-3b_0^2b_1=0\\
 & a_1^2+2a_0a_2-3b_0b_1^2-3b_0^2b_2=0 \end{aligned}\right.$$
 In particular, the fibre of $\pi_0^2$ over the point $(0,0)$ is the set of points of the form $(0,0,a_2,0,b_1,b_2)$ and the image of this fibre by $\pi_1^2$ is the line $a_1=0$. This shows that $\pi_1^2$ is not surjective.\\
 But, we can show that above the smooth part of $X$, the maps $\pi_n^{n+1}$ are surjective and the fibres are isomorphic to $\C$.\\
 \\
 The  \index{arcs space} space of arcs of $X$, denoted by $X_{\infty}$, is the set of vectors $y(t)$ whose coordinates are formal power series satisfying $F(y(t))=0$. For such a general vector of formal power series $y(t)$, saying that $F(y(t))=0$ is equivalent to saying that the coefficients of all the powers of $t$ in the Taylor expansion of $F(y(t))$ are equal to zero. This shows that $X_{\infty}$ may be defined by a countable number of equations in a countable number of variables. For instance, in the previous example, $X_{\infty}$ is the subset of $\C^{\N}$ with coordinates $(a_0,a_1,a_2,\ldots,b_0,b_1,b_2,\ldots)$ defined by the infinite following equations:
 $$\left\{\begin{aligned}
 & a_0^2-b_0^3=0\\ & 2a_0a_1-3b_0^2b_1=0\\
 & a_1^2+2a_0a_2-3b_0b_1^2-3b_0^2b_2=0\\
 & \cdots \cdots\cdots\end{aligned}\right.$$
The morphisms $\C\lb t\rb\lgw \frac{\C[t]}{(t)^{n+1}}$ induce truncations maps $\pi_n:X_{\infty}\lgw X_n$ by reducing arcs modulo $(t)^{n+1}$.\\
In general it is a difficult problem to compare $\pi_n(X_{\infty})$ and $X_n$. It is not even clear if $\pi_n(X_{\infty})$ is finitely defined in the sense that it is defined by a finite number of equations involving a finite number of $y_{i,j}$. But we have the following theorem due to M. Greenberg which is a particular case of Theorem \ref{SAP_intro} in which $\b$ is bounded by an affine function:
\begin{theorem}[Greenberg's  Theorem]\cite{Gr}\label{Gr_intro}
Let $F(y)$ be a vector of polynomials in $m$ variables and let $t$ be a single variable. Then there exist two positive integers  $a$ and $b$, such that for any integer $n$ and any polynomial solution $\ovl{y}(t)$ modulo $(t)^{an+b+1}$,
$$F(\ovl{y}(t))\equiv 0\text{ modulo } (t)^{an+b+1},$$
there exists a formal power series solution $\wdt{y}(t)$,
$$F(\wdt{y}(t))=0$$
which coincides with $\ovl{y}(t)$ up to degree $n+1$, that is
$$\ovl{y}(t)\equiv\wdt{y}(t)\text{ modulo } (t)^{n+1}.$$\\
\end{theorem}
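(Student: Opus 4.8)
\emph{Strategy.} Write $A=\C\lb t\rb$ and let $\ord(\cdot)$ denote the $t$-adic order of an element or tuple of $A$. The statement is equivalent to the following: for the ideal $I=(f_1,\dots,f_r)\subseteq\C[y]$ there is an affine function $n\longmapsto\b(n)=an+b$ (with $a,b\ge 1$) such that every $\ovl{y}\in A^m$ with $\ord(F(\ovl{y}))>\b(n)$ lies within order $n+1$ of some $\wdt{y}\in A^m$ satisfying $F(\wdt{y})=0$; since only finitely many coefficients of $\ovl{y}$ are constrained, $\ovl{y}$ may be taken to be a polynomial, as in the statement. I would prove this by induction on $d=\dim V(I)$: on the smooth locus of $V(I)$ one corrects an approximate solution by a quantitative Hensel argument, while near the singular locus — which has strictly smaller dimension — one invokes the induction hypothesis.

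\emph{The smooth branch.} The first ingredient is the quantitative Hensel Lemma (a $t$-adic implicit function theorem, valid because $A$ is complete, hence Henselian — see the appendix on Henselian rings): if, for some $c\times c$ minor $\d$ of the Jacobian matrix of $F$ at $\ovl{y}$, the corresponding $c$ equations generate $I$ near $\ovl{y}(0)$, and if $\ord(F(\ovl{y}))\ge 2\ord(\d(\ovl{y}))+1$, then there is $\wdt{y}\in A^m$ with $F(\wdt{y})=0$ and $\ord(\wdt{y}-\ovl{y})\ge\ord(F(\ovl{y}))-\ord(\d(\ovl{y}))$. In particular, if moreover $\ord(\d(\ovl{y}))\le n$ and $\ord(F(\ovl{y}))\ge 2n+1$, one already wins with $\b(n)=2n$.

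\emph{Dévissage onto the singular locus.} First note two reductions. If $\wdt{y}$ solves a system $I'$ with $I\subseteq I'$ and $V(I')\subseteq V(I)$, then a fortiori $F(\wdt{y})=0$; and if $(\sqrt I)^{e}\subseteq I$, then $\ord_{\sqrt I}(\ovl{y})\ge\tfrac1e\,\ord_I(\ovl{y})$ (from $g^{e}=\sum_jh_jf_j$ one gets $e\cdot\ord(g(\ovl{y}))\ge\ord_I(\ovl{y})$). So, at the cost of a linear factor, I may replace $I$ by $\sqrt I$ and assume $X=V(I)$ reduced; then $\operatorname{Sing}(X)\subsetneq X$, so $\dim\operatorname{Sing}(X)<d$, and by the Jacobian criterion $\operatorname{Sing}(X)=V(J)$ where $J$ is generated by the $f_i$ together with finitely many minors of their Jacobian (the components of $X$ may have different codimensions, which is handled by treating them separately). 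Now set $N:=\ord(F(\ovl{y}))$ and split. If some Jacobian minor entering $J$ has order $<N/2$, the smooth branch above produces $\wdt{y}$ with $F(\wdt{y})=0$ and $\ord(\wdt{y}-\ovl{y})>N/2$. Otherwise every such minor has order $\ge N/2$, so $\ord_J(\ovl{y})\ge N/2$, i.e.\ $\ovl{y}$ approximately solves to order $\ge N/2$ the system defining $\operatorname{Sing}(X)$; since $\dim\operatorname{Sing}(X)<d$, the induction hypothesis gives an affine $\b_{\operatorname{Sing}(X)}$, and if $N/2>\b_{\operatorname{Sing}(X)}(n)$ there is $\wdt{y}$ with $J(\wdt{y})=0$ — hence $F(\wdt{y})=0$ — and $\ord(\wdt{y}-\ovl{y})\ge n+1$.

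\emph{Conclusion, and the main obstacle.} In both cases one obtains an exact solution within order $n+1$ as soon as $N\ge an+b+1$ for $a=\max(2,2a')$ and a constant $b$ assembled from the slope and intercept of $\b_{\operatorname{Sing}(X)}$ and from the exponent $e$ of the radicalization; this is affine, and the induction on $d$ bottoms out at $d=-1$ (i.e.\ $X=\varnothing$), where the statement is vacuous since no $\ovl{y}$ satisfies even $F(\ovl{y})\equiv0\bmod t$. The points needing care are: (i) the passage to the reduced case and the bookkeeping guaranteeing that the Jacobian ideal really cuts out a \emph{lower-dimensional} set, which is cleanest when done component by component; (ii) the precise quantitative Hensel estimate with the order of the Jacobian minor appearing — the only place the completeness (Henselianity) of $\C\lb t\rb$ enters; and, above all, (iii) checking that the finitely many linear losses incurred down the dimension induction still compose into a \emph{single} affine bound, which obliges one to carry the constants $a,b$ explicitly through the induction rather than merely asserting affineness at each step. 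An alternative closer to Greenberg's original argument replaces this geometric dévissage by an induction on the number $m$ of variables, eliminating one variable at a time via the Weierstrass Preparation Theorem; the accounting of the linear losses is then organized along $m$ instead of along $\dim V(I)$.
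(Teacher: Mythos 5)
Your overall architecture is essentially that of the paper's proof of Theorem \ref{Greenberg}: reduce to the radical (a linear loss, using that $\ord$ is a valuation on $\C\lb t\rb$), then an induction --- yours descending on $\dim V(I)$, the paper's ascending on the height of an associated prime, which is the same d\'evissage --- organized around a dichotomy on the order of a Jacobian minor $\d$ at $\ovl{y}$: if $\ord \d(\ovl{y})$ is small one applies Tougeron's implicit function theorem (Proposition \ref{TougeronIFT2}), and if it is large one observes that $\ovl{y}$ is a high-order approximate solution of a strictly larger ideal (the paper's $J=I+(\d)$, your Jacobian ideal of the singular locus) and invokes the induction hypothesis. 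Your worry (iii) about composing the losses is handled exactly as in the paper's explicit $(2a+1)c+2b$ bookkeeping, so that part is fine.

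There is, however, a genuine gap in your smooth branch. Tougeron's theorem only yields $\wdt{y}$ with $f_{i_1}(\wdt{y})=\cdots=f_{i_c}(\wdt{y})=0$ for the $c$ equations entering the chosen minor; your hypothesis that these equations ``generate $I$ near $\ovl{y}(0)$'' is precisely what would let you conclude $F(\wdt{y})=0$, but nothing in your dichotomy supplies it: when you invoke that branch you only know that some minor has order $<N/2$ at $\ovl{y}$, while $\ovl{y}(0)$ need not even lie on $V(I)$, let alone be a point near which $f_{i_1},\dots,f_{i_c}$ generate $I$; the set $V(f_{i_1},\dots,f_{i_c})$ may have components through $\ovl{y}(0)$ not contained in $V(I)$, and the Newton correction can land on one of them, so ``hence $F(\wdt{y})=0$'' is unjustified as written. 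The paper closes this point by first reducing to $I$ prime (via the prime decomposition of $\sqrt{I}$, again a linear loss using the valuation property --- this is also what makes your ``component by component'' remark precise, since the size $c$ of the relevant minors depends on the component) and then to $r=h$ equations with $\d\notin I$; the remaining generators $g$ of $I$ satisfy $q\,g^{e}\in(f_{1},\dots,f_{h})$ for some $q\notin I$, and one runs the same small/large dichotomy on $\ord q(\ovl{y})$: if it is large, $\ovl{y}$ is a high-order approximate solution of $I+(q)$, which has larger height, and the induction applies; if it is small, the Tougeron solution $\wdt{y}$ is closer to $\ovl{y}$ than $\ord q(\ovl{y})$, hence $q(\wdt{y})\neq 0$, and $q(\wdt{y})g(\wdt{y})^{e}=0$ in the domain $\C\lb t\rb$ forces $g(\wdt{y})=0$ for every $g\in I$. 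This extra branch (compressed in the paper into ``as in the proof of Theorem \ref{Ar68}, we may assume $r=h$'') is the missing idea; once you add it, your proposal coincides with the paper's argument, and the Weierstrass-type induction on $m$ you mention at the end is not what the paper does for this theorem.
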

We can reinterpret this result as follows: Let  $X$ be the zero locus of $F$ in $\C^m$ and let $y(t)$ be a $(an+b)$-jet on $X$. Then the truncation of $y(t)$ modulo $(t)^{n+1}$ is the truncation of a formal power series solution of $F=0$. Thus we have
\begin{equation}\label{popop}\pi_n(X_{\infty})=\pi_n^{an+b}(X_{an+b}),\ \ \forall n\in\N.\end{equation}\\
A constructible subset of  $\C^n$ is  a set defined by the vanishing of some polynomials and the non-vanishing of other polynomials, i.e. a set which is a finite union of sets of the form $$\{x\in\C^n\ /\ f_1(x)=\cdots=f_r(x)=0, g_1(x)\neq 0,\ldots, g_s(x)\neq0\}$$ for some polynomials $f_i$, $g_j$. In particular algebraic sets are constructible sets. A \index{theorem of Chevalley}  theorem of Chevalley asserts that the projection of an algebraic subset of $\C^{n+k}$ onto $\C^k$ is a constructible subset of $\C^n$, so \eqref{popop} shows that $\pi_n(X_{\infty})$ is a \index{constructible subset} constructible subset of $\C^n$ since $X_{an+b}$ is an algebraic set. In particular $\pi_n(X_{\infty})$ is finitely defined (see  \cite{GL} for an introduction to the study of these sets).\\
\\
A difficult problem in singularity theory is to understand the behavior of $X_n$ and $\pi_n(X_{\infty})$ and to relate them to the geometry of $X$. One way to do this is to define the (motivic) measure of a constructible subset of $\C^n$, that is an additive map $\chi$ from the set of constructible sets to a commutative ring $R$ such that:\\
 $\bullet$ $\chi(X)=\chi(Y)$ as soon as $X$ and $Y$ are isomorphic algebraic sets,\\
 $\bullet$  $\chi(X\backslash U)+\chi(U)=\chi(X)$ as soon as $U$ is an open set of an algebraic set $X$,\\
 $\bullet$  $\chi(X\times Y)=\chi(X).\chi(Y)$ for any algebraic sets $X$ and $Y$.\\
 Then we are interested in understanding the following generating series: 
 $$\sum_{n\in\N}\chi(X_n)T^n \text{ and } \sum_{n\in\N}\chi(\pi_n(X_{\infty}))T^n\in R\lb T\rb.$$
The reader may consult \cite{De-Lo,Lo,Ve} for an introduction to  these problems.
\end{example}

%%%%%%%%%%%%%%%%%%%%%%%%%%%%%%%%%%%

\begin{example}\label{ex_art}
Let $f_1,\ldots, f_r\in\k[x,y]$ where $\k$ is an algebraically closed field and $x:=(x_1,\ldots,x_n)$ and $y:=(y_1,\ldots,y_m)$ are multivariables.  Moreover we will assume here that $\k$ is  uncountable. As in the previous example let us define the following sets:
$$X_l:=\{y(x)\in\k[x]^m\ / \ f_i(x,y(x))\in (x)^{l+1}\  \forall i\}.$$
As we have done in the previous example with the introduction of the variables $y_{i,j}$, for any $l$ we can embed $X_l$ in $\k^{\N(l)}$ for some integer $N(l)\in\N$. Moreover $X_l$ is an algebraic subset of $\K^{N(l)}$ and the morphisms $\frac{\k[x]}{(x)^{k+1}}\lgw \frac{\k[x]}{(x)^{l+1}}$ for $k\leq l$ induce truncation maps $\pi^k_l : X_k\lgw X_l$ for any $k\geq l$.\\

By the theorem of Chevalley mentioned in the previous example (this is where we need $\k$ to be algebraically closed), for any $l\in\N$, the sequence $(\pi^k_l(X_k))_k$ is a decreasing sequence of constructible subsets of $X_l$. Thus the sequence $(\ovl{\pi_l^k(X_k)})_k$ is a decreasing sequence of algebraic subsets of $X_l$, where $\ovl{Y}$ denotes the Zariski closure of a subset $Y$, i.e. the smallest algebraic set containing $Y$. By Noetherianity this sequence stabilizes: $\ovl{\pi_l^k(X_k)}=\ovl{\pi^{k'}_l(X_{k'})}$ for all $k$ and $k'$ large enough (say for any $k$, $k'\geq k_l$ for some integer $k_l$). Let us denote by $F_l$ this algebraic set. 

Let us assume that $X_k\neq \emptyset$ for any $k\in\N$. This implies that $F_l\neq \emptyset$. Set
$C_{k,l}:=\pi_l^k(X_k)$. It  is a constructible set whose Zariski closure is $F_l$ for any $k\geq k_l$.
Thus $C_{k,l}$ is a finite union of sets of the form $F\backslash V$ where $F$ and $V$ are algebraic sets. Let $F'_l$ be one of the irreducible components of $F_l$ and $C_{k,l}':=C_{k,l}\cap F_l'$. Then $C_{k,l}'$  contains a set of the form $F'_l\backslash V_k$ where $V_k$ is an algebraic proper subset of $F'_l$, for any $k\geq k_l$. 

The set  $U_{l}:=\cap_kC_{k,l}$ contains $\cap _k F'_l\backslash V_k=F'_l\backslash\cup_kV_k$ and the latter set is not empty since $\k$ is uncountable, hence $U_l\neq \emptyset$. By construction $U_{l}$ is exactly the set of points of $X_l$ that can be lifted to points of $X_k$ for any $k\geq l$. In particular $\pi_l^k(U_{k})=U_l$. If $y_0\in U_0$ then $y_0$ may be lifted to $U_1$, i.e.  there exists $y_1\in U_1$ such that $\pi^1_0(y_1)=y_0$. By induction we may construct a sequence of points $y_l\in U_l$ such that $\pi^{l+1}_l(y_{l+1})=y_l$ for any $l\in\N$. At the limit we obtain a point $y_{\infty}$ in $X_{\infty}$, which corresponds to a power series $y(x)\in\k\lb x\rb^m$ solution of $f(x,y)=0$.\\

We have proven here the following result really weaker than Theorem \ref{SAP_intro} but whose proof is very easy (in fact it is given as an exercise in \cite{Ar69} p. 52. See also Lemme 1.6.7 \cite{Ro-thesis} where the above proof is given):

\begin{theorem}\label{ex_Artin}
 If $\k$ is an uncountable algebraically closed field and if $F(x,y)=0$ has solutions modulo $(x)^k$ for every $k\in\N$, then there exists a power series solution $y(x)$: 
$$F(x,y(x))=0.$$
\end{theorem}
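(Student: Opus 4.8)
The plan is to work with the jet-space tower set up just above the statement. For each $l\in\N$ the set $X_l$ is an affine algebraic subset of some $\A^{N(l)}$ (its points being the coefficients of a candidate $y(x)$ truncated at degree $l$), the truncation maps $\pi_l^{l+1}\colon X_{l+1}\to X_l$ are polynomial maps, and a power series solution $y(x)\in\k\lb x\rb^m$ of $F(x,y)=0$ is the same thing as a compatible system $(y_l)_{l\in\N}$ with $y_l\in X_l$ and $\pi_l^{l+1}(y_{l+1})=y_l$. So the goal is to produce such a system.

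The only non-formal ingredient is the following lemma, which is where uncountability of $\k$ enters: if $Y$ is a nonempty affine algebraic set over an uncountable algebraically closed field and $(C_n)_{n\in\N}$ is a decreasing sequence of nonempty constructible subsets of $Y$, then $\bigcap_n C_n\neq\emptyset$. To see it, the Zariski closures $\ovl{C_n}$ form a decreasing sequence of closed sets, hence stabilize by Noetherianity to a closed set $F$; as each $C_n$ is nonempty, $F\neq\emptyset$, and discarding finitely many indices (which does not change $\bigcap_n C_n$) we may assume $\ovl{C_n}=F$ for all $n$. A constructible set that is dense in $F$ contains a dense open subset of $F$, so $C_n\supseteq F\setminus Z_n$ for some closed $Z_n$ containing no irreducible component of $F$; thus $\bigcap_n C_n\supseteq F\setminus\bigcup_n Z_n$. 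Fixing an irreducible component $F_0$ of $F$, each $Z_n\cap F_0$ is a proper closed subset of the irreducible variety $F_0$, and an irreducible variety over an uncountable field is not a union of countably many proper closed subsets; hence $F_0\setminus\bigcup_n Z_n\neq\emptyset$ and the lemma follows.

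Now I would build the system $(y_l)$ by induction, using at each step Chevalley's theorem (so that each $\pi_l^k(X_k)$ is constructible) and the lemma. Applying the lemma to the decreasing sequence $\bigl(\pi_0^k(X_k)\bigr)_{k\geq 0}$ of constructible subsets of $X_0$ — nonempty for every $k$ exactly by the hypothesis $X_k\neq\emptyset$ — yields $y_0\in X_0$ lying in $\pi_0^k(X_k)$ for all $k$, i.e. $y_0$ lifts to $X_k$ for every $k$. Assume $y_l\in X_l$ has been constructed and lifts to $X_k$ for every $k\geq l$. The fibre $W:=\{z\in X_{l+1}\ /\ \pi_l^{l+1}(z)=y_l\}$ is an affine algebraic set, and for $k\geq l+1$ the sets $W\cap\pi_{l+1}^k(X_k)$ form a decreasing sequence of constructible subsets of $W$, each nonempty because any lift of $y_l$ to $X_k$ truncates into it. By the lemma there is $y_{l+1}\in W\cap\bigcap_{k}\pi_{l+1}^k(X_k)$; then $\pi_l^{l+1}(y_{l+1})=y_l$ and $y_{l+1}$ lifts to every $X_k$, so the induction continues. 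The compatible system $(y_l)_l$ thus obtained is the desired solution $y(x)$.

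The step I expect to be the real content is passing from the non-emptiness of each $X_l$ to one coherent choice over all $l$: it is not enough to know that $\bigcap_k\pi_l^k(X_k)$ is nonempty for each $l$ separately, one needs a liftable jet at level $l$ to admit a liftable lift at level $l+1$. This is precisely why the lemma must be reapplied inside the fibres $W$ at every stage, so that the uncountability of $\k$ — in the guise of ``a variety is not a countable union of proper subvarieties'' — is invoked countably many times rather than once.
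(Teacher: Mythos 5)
Your proof is correct and takes essentially the same route as the paper's: the jet spaces $X_l$, Chevalley's theorem, Noetherian stabilization of the closures $\ovl{\pi_l^k(X_k)}$, the fact that a variety over an uncountable field is not a countable union of proper closed subsets, and an inductive lifting of jets. The only difference is one of detail: where the paper simply asserts $\pi_l^k(U_k)=U_l$ for the sets $U_l$ of infinitely liftable jets, you justify this lifting step by reapplying the uncountability lemma inside each fibre of $\pi_l^{l+1}$, which is precisely the argument that assertion rests on.
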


This kind of argument using asymptotic constructions (here the Noetherianity is the key point of the proof) may be nicely formalized using ultraproducts. Ultraproducts methods can be used to prove easily  stronger results such as Theorem \ref{SAP_intro} (See Part \ref{ultraproducts} and Proposition \ref{prop_mod}).

\end{example}

%%%%%%%%%%%%%%%%%%%%%%%%%%%%%%%%%%%

\begin{example}[Ideal Membership Problem]\label{IMP}
Let $f_1,\ldots, f_r\in \C\lb x\rb$ be formal power series where $x=(x_1,\ldots,x_n)$. Let us denote by $I$ the ideal of $\C\lb x\rb$ generated by $f_1,\ldots,f_r$. If $g$ is a power series, how can we detect that $g\in I$ or $g\notin I$? Because a power series  is determined by its coefficients, saying that $g\in I$ will depend in general on an infinite number of conditions and it will not be possible to check that all these conditions are satisfied in finite time. Another problem is to find  canonical representatives of power series modulo the ideal $I$ that will enable us to make computations in the quotient ring $\frac{\C\lb x\rb}{I}$.\\
\\
One way to solve these problems is the following. Let us consider the following order on $\N^n$:
for any $\a$, $\b\in\N^n$, we say that $\a\leq \b$ if $(|\a|,\a_1,\ldots,\a_n)\leq_{lex}(|\b|,\b_1,\ldots,\b_n)$ where $|\a|:=\a_1+\cdots+\a_n$ and $\leq_{lex}$ is the lexicographic order. For instance
$$(1,1,1)\leq  (1,2,3)\leq (2,2,2) \leq (3,2,1) \leq (2,2,3).$$
This order induces an order on the sets of monomials $x_1^{\a_1}\ldots x_n^{\a_n}$ as follows: we say that $x^{\a}\leq x^{\b}$ if $\a\leq \b$. Thus 
$$x_1x_2x_3\leq x_1x_2^2x_3^3\leq x_1^2x_2^2x_3^2  \leq x_1^3x_2^2x_3\leq x_1^2x_2^2x_3^3.$$
If $f:=\sum_{\a\in\N^n}f_{\a}x^{\a}\in\C\lb x\rb$, the initial exponent of $f$ with respect to the above order is
$$\exp(f):=\min\{\a\in\N^n\ / \ f_{\a}\neq 0\}=\inf \Supp(f)$$ 
where the support of $f$ is  the set $\Supp(f):=\{\a\in\N^n\ / \ f_{\a}\neq 0\}$. The initial term of $f$ is $f_{\exp(f)}x^{\exp(f)}$. This is the smallest non-zero monomial in the  expansion of $f$ with respect to the above order.\\
If $I$ is an ideal of $\C\lb x\rb$, we define $\G(I)$ to be the subset of $\N^n$ of all the initial exponents of elements of $I$. Since $I$ is an ideal, for any $\b\in\N^n$ and any $f\in I$, $x^{\b}f\in I$. This means that $\G(I)+\N^n=\G(I)$. Then we can prove (this statement is known as Dickson's Lemma) that there exists a finite number of elements $g_1,\ldots, g_s\in I$ such that 
$$\{\exp(g_1),\ldots,\exp(g_s)\}+\N^n=\G(I).$$
Let us mention that Dickson's Lemma is an immediate consequence of the Noetherianity of the polynomial ring in $n$ variables since it translates into saying that the monomial ideal defined by $\Gamma(f)$ is finitely generated.\\
Set
$$\Delta_1:=\exp(g_1)+\N^n\text{ and } \ \Delta_i=(\exp(g_i)+\N^n)\backslash \bigcup_{1\leq j <i}\Delta_j, \text{ for } 2\leq i\leq s.$$
Finally, set
$$\Delta_0:=\N^n\backslash\bigcup_{i=1}^s\Delta_i.$$
For instance, if $I$ is the ideal of $\C\lb x_1,x_2\rb$ generated by $g_1:=x_1x_2^3$ and $g_2:=x_1^2x_2^2$, we can check that 
$$\G(I)=\{(1,3), (2,2)\}+\N^2$$
and the sets $\Delta_0$, $\Delta_1$ and $\Delta_2$ are the following ones:

$$\begin{tikzpicture}[scale=1.7]
   
    \draw [<->,thick] (0,3) node (yaxis) {}
        |- (4,0) node (xaxis) {} ;
        \draw [white, fill=gray!20] (3.8,1) -- (1/3,1) -- (1/3,3) -- (3.8,3) -- (3.8,1) {} ;
   \draw [black]  (3.8,1) -- (1/3,1) -- (1/3,3)  {}  ;
\draw (1.4,1.8) node[below]{$\Delta_1$} ;
     
     \draw [black, fill=gray!40]  (3.8,2/3) -- (2/3,2/3) -- (2/3,1) -- (3.8,1)  {}  ;
\draw (2.8,5/6) node[left]{$\Delta_2$} ;
    \draw (1.6,0.35) node[left]{$\Delta_0$} ; 
    
  \draw (1/3,1)   node {$\bullet$}; 
   \draw (1/3,1)   node[below] {$(1,3)$} ;

      \draw (2/3,2/3)   node {$\bullet$}; 
  \draw (2/3,2/3)   node[below] {$(2,2)$} ;
      \end{tikzpicture}
$$\\

Set $g\in\C\lb x\rb$. Then by  the Galligo-Grauert-Hironaka Division Theorem \cite{Gal} there exist unique power series  $q_1,\ldots,q_s$, $r\in\C\lb x\rb$ such that
\begin{equation}\label{GD1}g=g_1q_1+\cdots+g_sq_s+r\end{equation}
\begin{equation}\label{GD2}\exp(g_i)+\Supp(q_i)\subset \D_i \text{ and } \Supp(r)\subset \Delta_0.\end{equation}
The uniqueness of the division comes from the fact the $\D_i$  are disjoint subsets of $\N^n$. The existence of such a decomposition is proven through the following division algorithm:\\
\\
Set $\a:=\exp(g)$. Then there exists an integer $i_1$ such that $\a\in \D_{i_1}$.\\
$\bullet$ If $i_1=0$, then set $r^{(1)}:=\ini(g)$ and $q_i^{(1)}:=0$ for all $i$.\\
$\bullet$  If $i_1\geq 1$, then set $r^{(1)}:=0$, $q_i^{(1)}:=0$ for $i\neq i_1$ and $q_{i_1}^{(1)}:=\frac{\ini(g)}{\ini(g_{i_1})}$.\\
 Finally set $\displaystyle g^{(1)}:=g-\sum_{i=1}^sg_iq_i^{(1)}-r^{(1)}$. Thus we have $\exp(g^{(1)})>\exp(g)$. Then we replace $g$ by $g^{(1)}$ and we  repeat the above process.\\
  In this way we construct a sequence $(g^{(k)})_k$ of power series such that, for any $k\in\N$, $\exp(g^{(k+1)})>\exp(g^{(k)})$ and $\displaystyle g^{(k)}=g-\sum_{i=1}^sg_iq_i^{(k)}-r^{(k)}$ with
  $$\exp(g_i)+\Supp(q_i^{(k)})\subset \D_i \text{ and } \Supp(r^{(k)})\subset \Delta_0.$$
  At the limit $k\lgw\infty$ we obtain the desired decomposition.\\
  \\   
  In particular since  $\{\exp(g_1),\ldots,\exp(g_s)\}+\N^n=\G(I)$ we deduce from this that $I$ is  generated by $g_1,\ldots,g_s$.\\ 
  \\
  This algorithm  implies that for any $g\in \C\lb x\rb$ there exists a unique power series $r$ whose support is included in $\D_0$ and such that $g-r\in I$ and the division algorithm yields a way to obtain this representative $r$.\\
 Moreover, saying that $g\notin I$ is equivalent to $r\neq 0$ and this is equivalent to saying that, for some integer $k$, $r^{(k)}\neq 0$. But $g\in I$ is equivalent to $r=0$ which is equivalent to $r^{(k)}=0$ for all $k\in\N$. Thus by applying the division algorithm, if for some integer $k$ we have $r^{(k)}\neq0$ we can conclude that $g\notin I$. But this algorithm will not enable us to determine if $g\in I$ since  it requires an infinite number of computations.\\
 \\
Now a natural question is what happens if we replace $\C\lb x\rb$ by $A:=\C\langle x\rangle$ or $\C\{x\}$? Of course we can  proceed with the division algorithm but we do not know if $q_1,\ldots,q_s$, $r\in A$. In fact by controlling the size of the coefficients of $q_1^{(k)},\ldots,q_s^{(k)}$, $r^{(k)}$ at each step of the division algorithm, we can prove that if $g\in\C\{x\}$ then $q_1,\ldots,q_s$ and $r$ remain in $\C\{x\}$ (see \cite{Hi64, Gr72,Gal,JP}).\\
 But if $g\in \C\langle x\rangle$ is an algebraic power series then it may happen that $q_1,\ldots,q_s$ and $r$ are not algebraic power series (see Example \ref{K-G} of Section \ref{constraint}). This is exactly an Artin Approximation problem with constraints in the sense that Equation \eqref{GD1} has formal solutions satisfying the constraints \eqref{GD2} but no algebraic power series solutions satisfying the same constraints. On the other hand, still in the case where $g$ and the $g_i$ are algebraic power series, there exists a computable bound depending on the complexity of $g$ and of the $g_i$ on the number of steps required to conclude if $g\in I$ or not, even if this bound is too large to be used in practice (see \cite{Ro15}).

\end{example}

%%%%%%%%%%%%%%%%%%%%%%%%%%%%%%%%%%%

\begin{example}[Linearization of germs of biholomorphisms]\label{biholo}
Given $f\in\C\{x\}$, $x$ being a single variable, let us assume that $f(0)=0$ and $f'(0)=\lambda \neq0$. By the inverse function theorem $f$ defines a biholomorphism  from a neighborhood of $0$ in $\C$ onto a neighborhood of 0 in $\C$ preserving the origin. The linearization problem, firstly investigated by C. L. Siegel, is the following: is $f$ conjugated to its linear part? That is: does there exist $g(x)\in\C\{x\}$, with $g(0)=0$ and  $g'(0)\neq 0$, such that $f(g(x))=g(\lambda x)$ or $g^{-1}\circ f\circ g(x)=\lambda x$ (in this case we say that $f$ is analytically linearizable)?\\
 This problem is difficult  and the following cases may occur: $f$ is not linearizable, $f$ is formally linearizable but not analytically linearizable (i.e. $g$ exists but $g(x)\in\C\lb x\rb\backslash\C\{x\}$), $f$ is analytically linearizable (see \cite{Ce}).\\
 \\
Let us assume that $f$ is formally linearizable, i.e. there exists $\wdh{g}(x)\in\C\lb x\rb$ such that $f(\wdh{g}(x))-\wdh{g}(\lambda x)=0$. 
By considering the Taylor expansion of $\wdh{g}(\lambda x)$:
$$\wdh{g}(\lambda x)=\wdh{g}(y)+\sum_{n=1}^{\infty}\frac{(\lambda x-y)^n}{n!}\wdh g^{(n)}(y)$$ we see that there  exists $\wdh{h}(x,y)\in\C\lb x,y\rb$ such that 
$$\wdh{g}(\lambda x)=\wdh{g}(y)+(y-\lambda x)\wdh{h}(x,y).$$
Thus if  $f$ is formally linearizable  there exists $\wdh{h}(x,y)\in\C\lb x,y\rb$ such that
$$f(\wdh{g}(x))-\wdh{g}(y)+(y-\lambda x)\wdh{h}(x,y)=0.$$
On the other hand if there exists such an $\wdh h(x,y)$, by replacing $y$ by $\lambda x$ in the previous equation we see that $f$ is formally linearizable.
This former equation is equivalent to the existence of $\wdh{k}(y)\in\C\lb y\rb$ such that
$$\left\{\begin{aligned}
& f(\wdh{g}(x))-\wdh{k}(y)+(y-\lambda x)\wdh{h}(x,y)=0\\
& \wdh{k}(y)-\wdh{g}(y)=0
\end{aligned}\right.$$
Using the same trick as before (Taylor expansion), this is equivalent to the existence of $\wdh{l}(x,y,z)\in\C\lb x,y,z\rb$ such that
\begin{equation}\label{eq_intro}\left\{\begin{aligned}
& f(\wdh{g}(x))-\wdh{k}(y)+(y-\lambda x)\wdh{h}(x,y)=0\\
& \wdh{k}(y)-\wdh{g}(x)+(x-y)\wdh{l}(x,y,z)=0
\end{aligned}\right.\end{equation}
Hence, we see that, if $f$ is formally linearizable, there exists a formal solution $$\left(\wdh{g}(x),\wdh{k}(z),\wdh{h}(x,y),\wdh{l}(x,y,z)\right)$$ of the  system (\ref{eq_intro}). Such a solution is called a solution with constraints. On the other hand, if the system (\ref{eq_intro}) has a convergent solution 
$$(g(x),k(z),h(x,y),l(x,y,z)),$$
 then $f$ is analytically linearizable.\\
\\
We see  that the problem of linearizing analytically $f$ when $f$ is formally linearizable is equivalent to find convergent power series solutions of the system (\ref{eq_intro}) with constraints on the support of the components of the solutions. Because in some cases $f$ may be analytically linearizable but not formally linearizable, such a system (\ref{eq_intro}) may have formal solutions with constraints but no  analytic solutions with the same constraints.\\
In Section \ref{constraint} we will give some results about the Artin Approximation Problem with constraints. 
\end{example}

%%%%%%%%%%%%%%%%%%%%%%%%%%%%%%%%%%%

\begin{example}\label{euler}
Another related problem is the following: if a differential equation with convergent power series coefficients has a formal power series solution, does it have convergent power series solutions? We can also ask the same question by replacing "convergent" by "algebraic".\\
For instance  let us consider the (divergent) formal power series $\displaystyle\wdh{y}(x):=\sum_{n\geq 0}n!x^{n+1}$. It is straightforward to check that it is a solution of the equation
 $$x^2y'-y+x=0\text{ (Euler Equation)}.$$\\
 On the other hand if $\displaystyle\sum_na_nx^n$ is a solution of the Euler Equation then the sequence $(a_n)_n$ satisfies the following recursion:
 $$a_0=0,\ \ a_1=1$$
 $$a_{n+1}=na_n \ \ \forall n\geq 1.$$
 Thus $a_{n+1}=(n+1)!$ for any $n>0$ and $\wdh{y}(x)$ is the only solution of the Euler Equation. Hence we have an example of a differential equation with polynomial coefficients having a formal power series solution but no convergent power series solution. We will discuss in Section \ref{constraint} how to relate this phenomenon to an Artin Approximation problem for polynomial equations with constraints (see Example \ref{diff}).\\
 \\
 \\
 \\
 \\
 \\
 \\
 \\
 \\
 \\
 
\end{example}

%%%%%%%%%%%%%%%%%%%%%%%%%%%%%%%%%%%
%%%%%%%%%%%%%%%%%%%%%%%%%%%%%%%%%%%
%%%%%%%%%%%%%%%%%%%%%%%%%%%%%%%%%%%
%%%%%%%%%%%%%%%%%%%%%%%%%%%%%%%%%%%
%%%%%%%%%%%%%%%%%%%%%%%%%%%%%%%%%%%

 \textbf{Notation:} If $A$ is a local ring, then $\m_A$ will denote its maximal ideal. For any $f\in A$, $f\neq 0$, 
 $$\ord(f):=\max\{n\in\N\ \backslash\ f\in\m_A^n\}.$$
 If $A$ is an integral domain, $\Frac(A)$ denotes its field of fractions.\\
 If no other indication is given the letters  $x$ and $y$ will always denote multivariables, $x:=(x_1,\ldots,x_n)$ and $y:=(y_1,\ldots,y_m)$, and $t$ will denote a single variable. In this case the notation $x'$ will be used to denote the vector $(x_1,\ldots, x_{n-1})$.\\
 If $f(y)$ is a vector of polynomials with coefficients in a ring $A$, $$f(y):=(f_1(y),\ldots,f_r(y))\in A[y]^r,$$   if $\I$ is an ideal of $A$ and $\ovl{y}\in A^m$,  $f(\ovl{y})\in \I$ (resp. $f(\ovl{y})=0$) means $f_i(\ovl{y})\in \I$ (resp. $ f_i(\ovl{y})=0$) for $1\leq i\leq r$.

 \section {Classical Artin Approximation}
 In this  part we review the main results concerning the Artin Approximation Property. We give four results that are the most characteristic ones in the story: the classical Artin Approximation Theorem in the analytic case, its generalization by A. P\l oski, a result of J. Denef and L. Lipshitz  concerning rings with the Weierstrass Division Property and, finally,  the General NŽron Desingularization Theorem.
 
 \subsection{The analytic case}
 In the analytic case the first result is due to Michael Artin in 1968 \cite{Ar68}. His result asserts that the set of convergent solutions is dense in the set of formal solutions of a system of implicit analytic equations. This result is particularly useful, because if we have some analytic problem that we can express in a system of analytic equations, in order to find solutions of this problem we only need to find formal solutions and this may be done in general  by an inductive process. Another way to use this result is the following: let us assume that we have some algebraic problem and that we are working over a ring of the form $A:=\k\lb x\rb$, where $x:=(x_1,\ldots,x_n)$ and $\k$ is a characteristic zero field. If the problem involves only a countable number of data (which is often the case in this context),  since $\C$ is algebraically closed and the transcendence degree of $\Q\lgw \C$ is uncountable, we may assume that we work over $\C\lb x\rb$. Using Theorem \ref{Ar68}, we may, in some cases, reduce the problem to $A=\C\{x\}$. Then we can use powerful methods of complex analytic geometry to solve the problem. This kind of method is used, for instance, in the recent proof of the Nash Conjecture for algebraic surfaces (see  \cite[Theorem A]{FB} and the crucial use of this theorem in \cite{FB-PP}) or in the proof of the Abhyankar-Jung Theorem given  in \cite{P-R}.\\
 Let us mention that  C. Chevalley had apparently  proven this theorem some years before M. Artin but he did not publish it because he did not find applications of it \cite{Ra?}.
  \subsubsection{Artin's  result}

  \begin{theorem}[Analytic Artin Approximation Theorem]\label{Ar68}\cite{Ar68}\index{analytic Artin approximation Theorem}
 Let $\k$ be a valued field, i.e. a field equipped with a multiplicative norm, of characteristic zero and let $f(x,y)$ be a vector of convergent power series in two sets of variables $x$ and $y$. Assume given a formal power series solution $\wdh{y}(x)$ vanishing at $0$,
  $$f(x,\wdh{y}(x))=0.$$
  Then,  for any $c\in\N$,  there exists a  convergent power series \index{convergent power series} solution $\wdt{y}(x)$,
  $$f(x,\wdt{y}(x))=0$$
  which coincides with $\wdh{y}(x)$ up to degree $c$,
  $$\wdt{y}(x)\equiv \wdh{y}(x) \text{ modulo } (x)^c.$$
  
  \end{theorem}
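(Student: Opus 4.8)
The plan is to follow Artin's original strategy: induction on the number $n$ of variables $x=(x_1,\dots,x_n)$, with the Weierstrass Preparation Theorem as the engine that removes one essential variable at a time and the analytic implicit function theorem as the tool that settles the ``smooth'' base situation. The characteristic zero hypothesis is used for the Jacobian criterion and to permit generic linear changes of the $x$-coordinates (needed to put a power series in Weierstrass position), while the valuation on $\k$ is what gives meaning to ``convergent''; when convenient one may also first reduce to $\k=\C$ by the transcendence-degree remark made at the beginning of this section.

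First I would dispose of the smooth case. Let $\rho$ be the rank of $\frac{\partial f}{\partial y}(x,\wdh y(x))$ over $\Frac(\k\lb x\rb)$, and suppose some $\rho\times\rho$ minor $\d(x)$ of this matrix is a unit, i.e. $\d(0)\neq0$. After reindexing, the minor formed by $f_1,\dots,f_\rho$ and $y_1,\dots,y_\rho$ is a unit, so the analytic implicit function theorem solves $f_1=\dots=f_\rho=0$ for $y_1,\dots,y_\rho$ as convergent power series in $x,y_{\rho+1},\dots,y_m$, recovering $\wdh y$ upon substituting $\wdh y_{\rho+1},\dots,\wdh y_m$; substituting instead their truncations modulo $(x)^c$ yields a convergent $\wdt y(x)$ with $\wdt y\equiv\wdh y\bmod(x)^c$, and one checks, using that $\rho$ is the generic rank and that $\wdh y$ solves the whole system, that the remaining equations $f_{\rho+1}=\dots=f_r=0$ hold as well for a suitable such choice. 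In the form one actually uses it this is Tougeron's implicit function theorem, which allows $\d$ to be a non-unit at the price of requiring the approximate solution to be exact modulo $(\d)^2(x)$ and is proved by a Newton iteration converging $\d$-adically; the same statement holds verbatim over $\k\lb x\rb$.

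The core of the argument is the reduction of an arbitrary formal solution to the smooth case, and I expect this to be the main obstacle. Let $\Delta(x)$ be a nonzero $\rho\times\rho$ minor of $\frac{\partial f}{\partial y}(x,\wdh y(x))$ of minimal order $d$; if $d=0$ we are done by the previous paragraph, so assume $d\ge1$. After a generic linear change of $x$ (possible since $\k$ is infinite) one may take $\Delta$ to be $x_n$-regular of order $d$, and the Weierstrass Preparation Theorem gives $\Delta=(\text{unit})\cdot P(x',x_n)$ with $x'=(x_1,\dots,x_{n-1})$ and $P$ a Weierstrass polynomial of degree $d$ in $x_n$. Using $P$ one performs a desingularizing substitution along the formal solution — schematically, writing $\wdh y=\ovl{y}^{(0)}+\Delta\cdot\wdh z$ with $\ovl{y}^{(0)}$ a polynomial truncation and $\wdh z$ new formal unknowns, then Taylor-expanding $f$ and factoring out $\Delta$ — to obtain a new vector of \emph{convergent} equations whose Jacobian along the new formal solution is less degenerate and in which, after dividing $x_n$ against $P$, the variable $x_n$ no longer occurs essentially, i.e. effectively a system over $\k\{x'\}$. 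The induction hypothesis in $n-1$ variables produces a convergent solution of the reduced system, which one transports back to a convergent solution of the original one, tracking orders at every step so that the final $\wdt y$ still satisfies $\wdt y\equiv\wdh y\bmod(x)^c$. The delicate points are precisely that the reduced system is genuinely analytic and really has one fewer essential variable, that its formal solution is the one induced by $\wdh y$, and that the order bookkeeping through the Weierstrass division and the Newton correction stays tight enough to recover the prescribed congruence.

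Finally, I would keep in reserve a cleaner alternative: to prove directly the stronger \emph{parametrized} form due to P\l oski, namely that there exist a convergent $\wdt y(x,z)\in\k\{x,z\}^m$ with $f(x,\wdt y(x,z))=0$ identically and a formal $\wdh z(x)\in\k\lb x\rb^m$, $\wdh z(0)=0$, with $\wdh y(x)=\wdt y(x,\wdh z(x))$. The additional room provided by the parameters $z$ makes the induction on $n$ run much more smoothly (the reduced systems stay in standard form), and the present theorem then follows at once by substituting the truncation of $\wdh z$ modulo $(x)^c$ for $z$ in $\wdt y(x,z)$.
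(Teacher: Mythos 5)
Your overall route -- induction on $n$ driven by Weierstrass preparation/division and finished by Tougeron's implicit function theorem -- is the same as the paper's, but there is a genuine gap at the first step, the disposal of the equations not covered by the chosen minor. You take $\rho$ to be the generic rank of $\frac{\partial f}{\partial y}(x,\wdh{y}(x))$ and claim that, once $f_1=\cdots=f_\rho=0$ is solved by the implicit function theorem and the remaining unknowns are replaced by truncations of $\wdh{y}$, "one checks" that $f_{\rho+1}=\cdots=f_r$ vanish too. As stated this is false: take $n=1$, $m=2$, $f_1=y_1$, $f_2=y_1+(y_2^2-x^2-x^3)^2$, with solution $\wdh{y}=(0,\,x\sqrt{1+x})$. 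Along $\wdh{y}$ both Jacobian rows equal $(1,0)$, so $\rho=1$ with unit minor $\partial f_1/\partial y_1$; solving $f_1=0$ gives $y_1=0$, and substituting any polynomial truncation of $\wdh{y}_2$ leaves $f_2=(y_2^2-x^2-x^3)^2\neq 0$. If instead "a suitable such choice" means some convergent $y_2\equiv\wdh{y}_2$ making $f_2$ vanish, producing it is an instance of the very theorem being proved, so the argument is circular; replacing $y_2^2-x^2-x^3$ by a general convergent series shows your "smooth case" is in fact as hard as the general statement. The point is that the generic rank of $\partial f/\partial y$ along $\wdh{y}$ is the wrong invariant when $f_1,\dots,f_r$ do not generate the kernel of the substitution morphism. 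The paper's proof repairs exactly this: it first replaces $f$ by generators of the prime ideal $I=\Ker(\phi)$, where $\phi:\k\{x,y\}\to\k\lb x\rb$ sends $y$ to $\wdh{y}$ (legitimate, since solutions of those generators solve $f$), sets $h=\operatorname{ht}(I)$, uses the regularity of $\k\{x,y\}_I$ and the Jacobian criterion to find an $h\times h$ minor $\d\notin I$ of $\partial f/\partial y$, and then uses $\sqrt{(f_1,\dots,f_h)\k\{x,y\}_I}=I\k\{x,y\}_I$ to obtain $q\notin I$ and $e$ with $qf_i^e\in(f_1,\dots,f_h)$ for $i>h$; only then does "any solution of the first $h$ equations close enough to $\wdh{y}$ solves all of them" follow, from $q(x,\wdh{y}(x))\neq 0$ and $\k\{x\}$ being a domain. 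Without this reduction your induction does not get started.

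Second, the heart of the matter -- producing from $\wdh{y}$ a convergent $\ovl{y}$ with $f(x,\ovl{y})\in(\d^{2}(x,\ovl{y}))(x)^c$ by descending to $n-1$ variables -- remains schematic: you yourself list the delicate points (that the reduced system is analytic, genuinely involves only $x'$, inherits the formal solution, and that the order bookkeeping survives) without resolving them, and they constitute most of the work. The paper's Lemma \ref{lemma1} resolves them by Weierstrass-preparing the series $\d^2(x,\wdh{y}(x))$ (a series in $x$ alone), dividing each $\wdh{y}_i$ by the resulting Weierstrass polynomial, promoting the coefficients of these divisions and of the Weierstrass polynomial to new unknowns over $\k\{x'\}$, dividing $f$ and $\d^2$ by the generic polynomial $A(a,x_n)$, applying the induction hypothesis to the coefficient equations, and only afterwards reassembling and invoking Proposition \ref{TougeronIFT} at the top level; your substitution $\wdh{y}=\ovl{y}^{(0)}+\Delta\wdh{z}$ is closer in spirit to P\l oski's refinement (Theorem \ref{Pl}), which is indeed a viable alternative as you note, but neither version of the inductive step is actually carried out in your proposal.
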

  
  \begin{remark}
  This theorem has been conjectured by S. Lang in \cite{Lan2} (last paragraph p. 372) when $\k=\C$.
  \end{remark}
 
\begin{remark}\label{rem_topology}
The ideal $(x)$ defines a topology on $\k\lb x\rb$, called the \index{Krull topology} \emph{Krull topology},  induced by the following ultrametric norm: $|a(x)|:=e^{-\ord(a(x))}$. In this case the small elements of $\k\lb x\rb$ are the elements of high order. Thus Theorem \ref{Ar68} asserts that the set of solutions in $\k\{x\}^m$ of $f(x,y)=0$ is dense in the set of solutions in $\k\lb x\rb^m$ of $f(x,y)=0$ for the Krull topology.
\end{remark} 

\begin{remark}\label{artin_ideal}
Let $f_1(x,y),\ldots,f_r(x,y)\in\k\{x,y\}$ denote the components of the vector $f(x,y)$. Let $I$ denote the ideal of $\k\{x,y\}$ generated by the $f_i(x,y)$. It is straightforward to see that
$$f_1(x,y(x))=\cdots=f_r(x,y(x))=0 \Longleftrightarrow g(x,y(x))=0\ \ \forall g\in I$$
for any vector $y(x)$ of formal power series vanishing at 0. This shows that Theorem \ref{Ar68} is a statement concerning the ideal generated by the components of the vector $f(x,y)$ and not only these components themselves.
\end{remark}

\begin{proof}[Sketch of proof of Theorem \ref{Ar68}]
Before giving the complete proof of Theorem \ref{Ar68} let us explain the  strategy. \\
The proof is made by induction on $n$. Instead of trying to find directly an approximating convergent solution we will construct a convergent power series $\ovl y(x)$ such that
\begin{equation}\label{intermediate}f(x,\ovl y)\in (\d^2(x,\ovl y))\end{equation}
with $\ovl y_i(x)- \wdh y_i(x)\in (x)^c$ for every $i$ and where $\d$ is a well chosen minor of the Jacobian matrix  $\frac{\partial f}{\partial y}$. Then we will use a generalized version of the Implicit Function Theorem due to J.-C. Tougeron (see Proposition \ref{TougeronIFT} given below) to obtain a convergent solution $\wdt y(x)$ of $f(x,y)=0$ close to $\ovl y(x)$ (and thus close to $\wdh y(x)$).\\
The choice of the minor $\d$ will require some technical reductions involving the Jacobian Criterion.\\
Then to reduce the problem to the situation of \eqref{intermediate} we do the following:\\
We apply a linear change of coordinates $x_j$ in order to assume that $\d^2(x,\wdh y)$ is $x_n$-regular of order $d$ (see Appendix \ref{weierstrass} and Remark \ref{change_x_n-regular}). Thus, by the Weierstrass Preparation Theorem \ref{WDT},  the ideal $(\d^2(x,\wdh y))$ is generated by a Weierstrass polynomial of the form
$$\wdh a(x):=x_n^d+\wdh{a}_1(x')x_{n}^{d-1}+\cdots+\wdh{a}_d(x')$$
where $x'=(x_1,\ldots,x_{n-1})$ and the $\wdh a_i(x')$ are formal power series.
Then we divide each component  $\wdh y_i(x)$ by $\wdh a(x)$. The remainders of these divisions, denoted by 

$$\wdh y_i^*(x):=\sum_{j=0}^{d-1}\wdh y_{i,j}(x')x_n^j,$$
 are polynomials in $x_n$ of degree $<d$ with coefficients in $\k\lb x'\rb$. Since $f(x,\wdh y)=0\in (\wdh a)$  and $\wdh a(x)$ divides the components of $\wdh y(x)-\wdh y^*(x)$,  we have $f(x,\wdh y^*)\in (\wdh a)$.\\
Now we introduce new variables $a_0,\ldots, a_{d-1}$ and $y_{i,j}$ for $1\leq i\leq m$ and $0\leq j <d$. Then we divide the
$$f_k\left(x,\sum_{j=0}^{d-1}y_{i,j}x_n^j\right)$$
by $A(a_i,x_n):=x_{n}^d+a_1x_{n}^{d-1}+\cdots+a_d\in\k[x_{n},a_1,\ldots,a_d]$. We denote by $\sum_{l=0}^{d-1}F_{k,l}x_n^l$, where $F_{k,l}\in\k\{x',y_{i,j},a_p\}$, the remainders of these divisions, so that the relation
$$f(x,\wdh y^*)\in (\wdh a)$$
is equivalent to 
\begin{equation}\label{reduction_intro}F_{k,l}(x',\wdh y_{i,j}(x'),\wdh a_p(x'))=0\ \ \forall k, l.\end{equation}
Hence, by the inductive assumption, we can find a convergent power series solution $(\ovl y_{i,j}(x'),\ovl a_p(x'))$ of \eqref{reduction_intro} close to the given formal solution. This one yields a vector  $\ovl{y}(x)$ of convergent power series such that \eqref{intermediate} holds. In order to prove this, one important point is  to show that the ideal generated by $\d^2(x,\ovl y)$ is equal to the ideal generated by $A(\ovl a_i(x'),x_n)$. This requires to modify a bit the previous argument by dividing $\d^2\left(x,\sum_{j=0}^{d-1}y_{i,j}x_n^j\right)$ by $\wdh a(x)$ and by adding to \eqref{reduction_intro} the condition that the remainder of this division is zero. The details are given in the proof given below.
\end{proof}

 \begin{proof}[Proof of Theorem \ref{Ar68}] 

The proof is done by induction on $n$, the case $n=0$ being obvious since the rings of formal or convergent power series in 0 variables are the field $\k$.\\
 Let us assume that the theorem is proven for  $n-1$ and let us prove it for $n$.\\
 Let us remark that if Theorem \ref{Ar68} is proven for a given integer $c$ then it is obviously true for every integer $\leq c$. This allows us to replace the integer $c$ by any larger integer. In particular at several steps we will  assume that $c$ is an integer larger that some given data independent of $c$.\\
  \\
$\bullet$ Let $I$ be the ideal of $\k\{x,y\}$ generated by $f_1(x,y),\ldots,f_r(x,y)$. Let $\phi$ be the $\k\{x\}$-morphism $\k\{x,y\}\lgw \k\llbracket x\rrbracket$ sending $y_i$ onto $\wdh{y}_i(x)$. Then $\Ker(\phi)$ is a prime ideal containing $I$ and if the theorem is true for generators of $\Ker(\phi)$ then it is true for $f_1,\ldots,f_r$. Thus we can assume that $I=\Ker(\phi)$. \\
\\
$\bullet$  The local ring $\k\{x,y\}_{I}$ is regular by a theorem of Serre (see  \cite[Theorem 19.3]{Ma}). Set $h:=$height$(I)$.  By  the \index{Jacobian Criterion}  Jacobian Criterion (see  \cite[ThŽorme 3.1]{To} or  \cite[Lemma 4.2]{Rui})  there exists a $h\times h$ minor of the Jacobian matrix $\frac{\partial(f_1,\ldots,f_r)}{\partial(x,y)}$, denoted by $\d(x,y)$, such that $\d\notin I=\Ker(\phi)$. In particular we have $\d(x,\wdh{y}(x))\neq 0$.\\
By considering the partial derivative of $f_i(x,\wdh{y}(x))=0$ with respect to $x_j$ we get
$$\frac{\partial f_i}{\partial x_j}(x,\wdh{y}(x))=-\sum_{k=1}^r\frac{\partial \wdh{y}_k(x)}{\partial x_j}\frac{\partial f_i}{\partial y_k}(x,\wdh{y}(x)).$$
Thus there exists a $h\times h$ minor of the Jacobian matrix $\frac{\partial(f_1,\ldots,f_r)}{\partial(y)}$, still denoted by $\d(x,y)$, such that $\d(x,\wdh{y}(x))\neq 0$. In particular $\d\notin I$ and $m\geq h$. From now on we will assume that $\d$ is the determinant of $\frac{\partial(f_1,\ldots,f_h)}{\partial(y_1,\ldots,y_h)}$.\\
If we denote $J:=(f_1,\ldots,f_h)$,  ht$(J\k\{x,y\}_I)\leq h$ by the Krull Haupidealsatz  \cite[Theorem 13.5]{Ma}. On the other hand the Jacobian Criterion  \cite[Proposition 4.3]{Rui} shows that ht$(J\k\{x,y\}_I)\geq $ rk$(\frac{\partial(f_1,\ldots,f_h)}{\partial(y_1,\ldots,y_h)})$ mod. $I$, and $h=$ rk$(\frac{\partial(f_1,\ldots,f_h)}{\partial(y_1,\ldots,y_h)})$ mod. $I$ because $\d(x,\wdh{y}(x))\neq0$.  Hence ht$(J\k\{x,y\}_I)= h$ and $\sqrt{J\k\{x,y\}_I}=I\k\{x,y\}_I$. This means that there exists $q\in\k\{x,y\}$, $q\notin I$, and $e\in\N$ such that $qf_i^e\in J$ for $h+1\leq i\leq r$. In particular $q(x,\wdh{y}(x))\neq 0$. \\
\\
$\bullet$ Let $\wdt y(x)$ be a given convergent solution of $f_1=\cdots=f_h=0$ such that
$$\wdt y(x)-\wdh y(x)\in (x)^c.$$
If $c>\ord(q(x,\wdh{y}(x)))$, then $q(x,\wdt{y}(x))\neq 0$ by Taylor formula. Since $qf_i^e\in J$ for $h+1\leq i\leq r$, this proves that $f_i(x,\wdt{y}(x))=0$ for all $i$ and Theorem \ref{Ar68} is proven. \\
So we can replace $I$ by the ideal generated by $f_1,\ldots,f_h$. \\
Thus from now on we assume that $r=h$ and that  there exists a $h\times h$ minor of the Jacobian matrix $\frac{\partial(f_1,\ldots,f_r)}{\partial(y)}$, denoted by $\d(x,y)$, such that $\d(x,\wdh{y}(x))\neq 0$. We also fix the integer $c$ and assume that $c>\ord(q(x,\wdh{y}(x)))$.
\\
\\
 \begin{lemma}\label{lemma1}
Let us assume that  Theorem \ref{Ar68} is true for  the integer $n-1$. Let $g(x,y)$ be a convergent power series  and let $f(x,y)$ be a vector of convergent power series.\\ Let $\wdh{y}(x)$ be in $(x)\k\lb x\rb^m$ such that $g(x,\wdh{y}(x))\neq 0$ and $f_i(x,\wdh{y}(x))\in (g(x,\wdh{y}(x)))$ for every $i$.\\
Let $c'$ be an integer. Then there exists $\ovl{y}(x)\in(x)\k\{x\}^m$ such that 
$$f_i(x,\ovl{y}(x))\in (g(x,\ovl{y}(x)))\ \ \forall i$$
 and $\ovl{y}(x)-\wdh{y}(x)\in(x)^{c'}$.
\end{lemma}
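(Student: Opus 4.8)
The plan is to reduce the divisibility condition, via the Weierstrass Division Theorem, to an ordinary system of convergent equations in the $n-1$ variables $x':=(x_1,\dots,x_{n-1})$, to which the inductive hypothesis (Theorem \ref{Ar68} in $n-1$ variables) applies. First I would dispose of the trivial case: if $g(x,\wdh{y}(x))$ is a unit, then $g(x,\ovl{y}(x))$ is a unit for every $\ovl{y}$ close enough to $\wdh{y}$, the condition $f_i(x,\ovl{y})\in(g(x,\ovl{y}))$ holds automatically, and the truncation of $\wdh{y}$ at order $c'$ works. So assume $\nu:=\ord(g(x,\wdh{y}(x)))\geq 1$ and (harmlessly) that $c'>\nu$. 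After a generic linear change of coordinates in $x$ — an automorphism of both $\k\lb x\rb$ and $\k\{x\}$, a homeomorphism for the Krull topology, hence harmless — I may assume $\wdh{g}:=g(x,\wdh{y}(x))$ is $x_n$-regular of order $\nu$. By the Weierstrass Preparation Theorem, $\wdh{g}=\wdh{u}\,\wdh{P}$ with $\wdh{u}\in\k\lb x\rb$ a unit and $\wdh{P}=x_n^\nu+\wdh{a}_{\nu-1}(x')x_n^{\nu-1}+\cdots+\wdh{a}_0(x')$ a Weierstrass polynomial (so $\ord\wdh{P}=\nu$), whence $(\wdh{g})=(\wdh{P})$ and the hypothesis reads $f_i(x,\wdh{y}(x))\in(\wdh{P})$.

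The second step is to finitize the unknown $\wdh{y}$. Put $S:=\lceil c'/\nu\rceil$, so that $\wdh{P}^S$ is $x_n$-regular of order $S\nu\geq c'$; dividing each $\wdh{y}_k$ by $\wdh{P}^S$ gives $\wdh{y}_k=\wdh{P}^S\wdh{Q}_k+\wdh{R}_k$ with $\wdh{Q}_k\in\k\lb x\rb$ and $\wdh{R}_k=\sum_{l<S\nu}\wdh{R}_{k,l}(x')x_n^l$, and since $\wdh{P}^S\wdh{Q}_k\in(x)^{c'}$ we have $\wdh{y}_k\equiv\wdh{R}_k$ modulo $(x)^{c'}$. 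I then set up an auxiliary system over $\k\{x'\}$ in the unknowns $a=(a_0,\dots,a_{\nu-1})$ and $R=(R_{k,l})_{k\leq m,\,l<S\nu}$: writing $P:=x_n^\nu+\sum_{j<\nu}a_jx_n^j$ and $Y_k:=\sum_{l<S\nu}R_{k,l}x_n^l$, the relative Weierstrass division of $f_i(x,Y)$ and of $g(x,Y)$ by the monic polynomial $P$ (valid since the $a_j$ lie in the maximal ideal) yields convergent power series $\Sigma_{i,l}(x',R,a)$ and $\Gamma_l(x',R,a)$, $l<\nu$, the coefficients of the respective remainders; the auxiliary system is $\{\Sigma_{i,l}=0\}\cup\{\Gamma_l=0\}$. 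One checks that $(\wdh{R},\wdh{a})$ is a formal solution: $f_i(x,\wdh{R})=f_i(x,\wdh{y})+\wdh{P}^S(\cdots)\in(\wdh{P})$ and $g(x,\wdh{R})=\wdh{g}+\wdh{P}^S(\cdots)\in(\wdh{P})$, so the corresponding remainders vanish. (A preliminary shift of the $R_{k,l}$ by their constant terms arranges that this formal solution vanishes at $0$ and that later substitutions land in maximal ideals, as needed to apply Theorem \ref{Ar68} in $n-1$ variables.)

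The third step is to reassemble. Applying the inductive hypothesis to the auxiliary system and the formal solution $(\wdh{R},\wdh{a})$ with approximation order $c'$ gives a convergent solution $(\ovl{R},\ovl{a})$ with $(\ovl{R},\ovl{a})\equiv(\wdh{R},\wdh{a})$ modulo $(x')^{c'}$. Set $\ovl{P}:=x_n^\nu+\sum_{j<\nu}\ovl{a}_jx_n^j$ and $\ovl{y}_k:=\sum_{l<S\nu}\ovl{R}_{k,l}(x')x_n^l\in\k\{x\}$. Then $\ovl{y}\in(x)\k\{x\}^m$ and $\ovl{y}\equiv\wdh{y}$ modulo $(x)^{c'}$ (the discarded tail $\wdh{P}^S\wdh{Q}$ lies in $(x)^{c'}$ and $\ovl{R}-\wdh{R}\in(x')^{c'}$). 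Specializing the two Weierstrass division identities at $R=\ovl{R}(x')$, $a=\ovl{a}(x')$, and using uniqueness of Weierstrass division, shows that $\ovl{P}$ divides both $g(x,\ovl{y}(x))$ and each $f_i(x,\ovl{y}(x))$ in $\k\{x\}$. Since $\ovl{y}\equiv\wdh{y}$ modulo $(x)^{c'}$ with $c'>\nu$, the series $g(x,\ovl{y}(x))$ is still $x_n$-regular of order $\nu$; as $\ovl{P}$ is monic of degree $\nu$ in $x_n$ and divides it, comparison with its Weierstrass preparation forces the quotient to be a unit, i.e. $(g(x,\ovl{y}(x)))=(\ovl{P})$. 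Hence $f_i(x,\ovl{y}(x))\in(\ovl{P})=(g(x,\ovl{y}(x)))$ for all $i$, as desired.

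The main obstacle is the finitization of the second step: one must encode ``$\wdh{P}$ divides $f_i(x,\wdh{y})$'' as a genuine finite system of convergent equations in $n-1$ variables without losing the data needed to recover $\ovl{y}$ up to order $c'$ — the tension being that $(x)^{c'}$-closeness forces control of the $x_n$-expansion of $\ovl{y}$ up to degree $c'>\nu$, whereas a single Weierstrass division by $\wdh{P}$ only captures degree $<\nu$. Dividing instead by the power $\wdh{P}^S$ and carrying the coefficients of $\wdh{P}$ itself as additional unknowns resolves this; everything else is bookkeeping with the Weierstrass Division Theorem and a diagram chase through the division identities.
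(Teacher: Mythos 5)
Your proof is correct and follows essentially the same strategy as the paper: prepare $g(x,\wdh{y}(x))$ into a Weierstrass polynomial, perform the generic Weierstrass division of $f_i(x,Y)$ and $g(x,Y)$ by the universal monic polynomial whose coefficients $a_j$ are new unknowns, apply Theorem \ref{Ar68} in $n-1$ variables to the resulting system of remainder coefficients (with the same constant-term shift as in the paper's footnote), and conclude with the identical unit-quotient argument showing $(g(x,\ovl{y}(x)))=(\ovl{P})$. The only variation is the treatment of the $x_n$-tail of $\wdh{y}$: you divide by $\wdh{P}^S$ with $S\nu\geq c'$ and simply discard the quotient, whereas the paper divides by $\wdh{P}$ itself and keeps the quotients $\wdh{w}_i$ as extra data to be approximated by truncation; your choice is harmless for a fixed $c'$ (though it makes the auxiliary system's size grow with $c'$, which would matter only for uniform statements), and it slightly simplifies the reassembly.
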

 $\bullet$ We  apply this lemma to $g(x,y):=\d^2(x,y)$ with the integers $c':=c+d+1$ and $d:=\ord(\d^{2}(x,\wdh{y}(x)))$. Indeed since $f(x,\wdh y(x))=0$ we have $f_i(x,\wdh y(x))\in (\d^2(x,\wdh y(x)))$ for every integer $i$. \\
 Thus we may assume that there are $\ovl{y}_{i}(x)\in\k\{x\}$, $1\leq i\leq m$, such that $f(x,\ovl{y})\in(\d^2(x,\ovl{y}))$ and $\ovl{y}_{i}(x)-\wdh{y}_{i}(x)\in (x)^{c+d+1}$, $1\leq i\leq m$. Because $\ord(\d^2(x,\ovl{y}))=d$ we have that $f(x,\ovl{y})\in \d^2(x,\ovl{y})(x)^c$ by Taylor formula.
 Then we use the following generalization of the Implicit Function Theorem (with $m=h$) to show that there exists $\wdt{y}(x)\in\k\{x\}^m$ with $\wdt{y}(0)=0$ such that $\wdt{y}_j(x)-\wdh{y}_j(x)\in(x)^c$, $1\leq j\leq m$,  and $f_i(x,\wdt{y}(x))=0$ for $1\leq i\leq h$. This proves Theorem \ref{Ar68}.  \end{proof}

\begin{proposition}[Tougeron  Implicit Function Theorem]\label{TougeronIFT}\cite{To}\index{Tougeron  Implicit Function Theorem}
Let $f(x,y)$ be a vector of  $\k\{x,y\}^h$ with $m\geq h$ and let $\d(x,y)$ be  a $h\times h$ minor of the Jacobian matrix $\frac{\partial(f_1,\ldots,f_h)}{\partial(y_1,\ldots,y_m)}$. Let us assume that there exists $y(x)\in\k\{x\}^m$ such that
$$f(x,y(x))\in (\d(x,y(x)))^2(x)^c \text{ for all }1\leq i\leq h$$
and for some $c\in\N$. Then there exists $\wdt{y}(x)\in\k\{x\}^m$ such that 
$$f_i(x,\wdt{y}(x))=0 \text{ for all } 1\leq i\leq h$$
$$\wdt{y}(x)-y(x)\in (\d(x,y(x)))(x)^c.$$
Moreover $\wdt{y}(x)$ is unique if we impose $\wdt{y}_j(x)=y_j(x)$ for $h<j\leq m$.
\\
\end{proposition}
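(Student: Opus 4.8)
The plan is to absorb the near-solution into a change of unknowns, $\widetilde y = y(x)+\delta(x,y(x))\,z$, which turns $y(x)$ into an honest solution while making the relevant Jacobian invertible in the new variables $z$; after that the ordinary analytic implicit function theorem finishes the job.

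\emph{Reductions.} Since $\delta$ is an $h\times h$ minor of $\frac{\partial(f_1,\dots,f_h)}{\partial(y_1,\dots,y_m)}$, after permuting the $y_j$ we may assume $\delta(x,y)=\det\bigl(\tfrac{\partial f_i}{\partial y_j}\bigr)_{1\le i,j\le h}$; replacing $f_i(x,y)$ by $f_i(x,y_1,\dots,y_h,y_{h+1}(x),\dots,y_m(x))$ we may assume $m=h$, which is exactly the normalization $\widetilde y_j=y_j$ for $j>h$ demanded by the uniqueness clause. We may also assume $c\ge 1$ (all that is needed in the applications). Put $\delta_0:=\delta(x,y(x))\in\k\{x\}\setminus\{0\}$, let $A(x)$ be the $h\times h$ matrix with entries $\frac{\partial f_i}{\partial y_j}(x,y(x))$, so $\det A=\delta_0$, and rewrite the hypothesis as $f(x,y(x))=\delta_0^2\,g(x)$ with $g\in(x)^c\,\k\{x\}^h$.

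\emph{The key expansion.} Taylor-expanding the convergent series $f_i$ around $y(x)$ and substituting $\delta_0 z$ for the increment gives, in $\k\{x,z\}$,
$$f_i(x,y(x)+\delta_0 z)=\delta_0^2\,g_i(x)+\delta_0\sum_{j=1}^h A_{ij}(x)\,z_j+\delta_0^2\,R_i(x,z),$$
where $\delta_0^2 R_i(x,z)$ is the Taylor remainder of order $\ge 2$ in the increment; it is manifestly divisible by $\delta_0^2$ in $\k\lb x,z\rb$, hence $R_i\in\k\{x,z\}$ because $\delta_0^2\,\k\lb x,z\rb\cap\k\{x,z\}=\delta_0^2\,\k\{x,z\}$ by faithful flatness of $\k\lb x,z\rb$ over $\k\{x,z\}$ (cf.\ Example \ref{flat_ana}), and moreover $R_i$ vanishes to order $\ge 2$ in $z$. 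Factoring out $\delta_0$ and using that $\k\{x\}$ is a domain, for $z=z(x)\in(x)\,\k\{x\}^h$ the system $f(x,y(x)+\delta_0 z)=0$ is equivalent to $\delta_0\,g+A\,z+\delta_0\,R(x,z)=0$, and left-multiplying by $\operatorname{adj}(A)$ and cancelling $\delta_0$ this becomes the fixed-point equation
$$z=\Phi(x,z):=-\operatorname{adj}(A(x))\,g(x)-\operatorname{adj}(A(x))\,R(x,z),\qquad \Phi\in\k\{x,z\}^h.$$

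\emph{Conclusion.} Apply the analytic implicit function theorem to $\Psi(x,z):=z-\Phi(x,z)$: since $g(0)=0$ and $R(0,0)=0$ we have $\Psi(0,0)=0$, and since $R$ has order $\ge 2$ in $z$ we get $\tfrac{\partial\Psi}{\partial z}(0,0)=I$, invertible. Hence there is a unique $z(x)\in\k\{x\}^h$ with $z(0)=0$ and $\Psi(x,z(x))=0$, and $\widetilde y:=y(x)+\delta_0(x)\,z(x)$ (together with $\widetilde y_j:=y_j$ for $j>h$) is a convergent solution of $f(x,\widetilde y)=0$ with $\widetilde y-y=\delta_0\,z$. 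A short bootstrap on the $(x)$-adic order then improves this: from $z=\Phi(x,z)$, $g\in(x)^c$ and $R$ of order $\ge 2$ in $z$ one sees $z\in(x)^k\Rightarrow z\in(x)^{\min(c,2k)}$, whence $z\in(x)^c$, i.e.\ $\widetilde y-y\in(\delta_0)(x)^c$. Finally any competitor $\widetilde y'=y+\delta_0 z'$ with $z'(0)=0$ and $f(x,\widetilde y')=0$ solves $\Psi(x,z')=0$, so $z'=z$ by the uniqueness in the implicit function theorem, giving the asserted uniqueness.

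\emph{Where the difficulty sits.} All the genuinely analytic content is pushed into two spots: the claim that the remainder $R(x,z)$, a priori only a formal quotient by $\delta_0^2$, is actually convergent (the division/flatness step, also provable directly by a Weierstrass-type majorant estimate on the quotient), and the analytic implicit function theorem itself, whose proof by the method of majorants is the real engine — and, being far more elementary, its use here creates no circularity with Theorem \ref{Ar68}. If one insists on being self-contained, the same conclusion drops out of the Newton iteration $\bar y^{(k+1)}=\bar y^{(k)}-\delta(x,\bar y^{(k)})^{-1}\operatorname{adj}\bigl(\tfrac{\partial f}{\partial y}(x,\bar y^{(k)})\bigr)f(x,\bar y^{(k)})$ started at $\bar y^{(0)}=y(x)$: the quadratic nature of the correction forces $\bar y^{(k+1)}-\bar y^{(k)}$ into ever higher powers of $(x)$, and the one delicate point is the majorant control needed to keep the limit inside $\k\{x\}$ rather than merely $\k\lb x\rb$.
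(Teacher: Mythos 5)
Your proof is correct and follows essentially the same route as the paper: reduce to $m=h$, substitute $\widetilde y=y(x)+\delta(x,y(x))z$, Taylor-expand so the system factors as $\delta\cdot A\cdot[\,\operatorname{adj}(A)g+z+\operatorname{adj}(A)R\,]$, and apply the ordinary analytic Implicit Function Theorem to the bracketed map, whose $z$-Jacobian at the origin is the identity. Your extra bootstrap showing $z\in(x)^c$ (and hence $\widetilde y-y\in(\delta)(x)^c$) is a welcome detail that the paper's proof leaves implicit; also note the convergence of the remainder $R$ needs no flatness argument, since each Taylor term of order $\ge 2$ in the increment $\delta_0 z$ visibly carries a factor $\delta_0^2$ times a convergent series.
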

\noindent

Its remains to prove Lemma \ref{lemma1} and Proposition \ref{TougeronIFT}.
\begin{proof}[Proof of Lemma \ref{lemma1}]
If $g(x,\wdh{y}(x))$ is invertible, the result is obvious (just take for $\wdt{y}_i(x)$ any truncation of $\wdh{y}_i(x)$). Thus let us assume that $g(x,\wdh{y}(x))$ is not invertible. By making a linear change of variables  we may assume that $g(x,\wdh{y}(x))$ is $x_n$-regular  (see Remark \ref{change_x_n-regular}), and by the  Weierstrass Preparation Theorem 
$g(x,\wdh{y}(x))=\wdh{a}(x)\times \text{unit}$ where 
$$\wdh{a}(x):=x_{n}^d+\wdh{a}_1(x')x_{n}^{d-1}+\cdots+\wdh{a}_d(x')$$
where $x':=(x_1,\ldots,x_{n-1})$, $d$ is a positive integer and $a_i(x')\in(x')\k\lb x'\rb$, $1\leq i\leq d$.\\
Let us perform the Weierstrass division of $\wdh{y}_i(x)$ by $\wdh{a}(x)$:
\begin{equation}\label{w_i}\wdh{y}_i(x)=\wdh{a}(x)\wdh{w}_i(x)+\sum_{j=0}^{d-1}\wdh{y}_{i,j}(x')x_{n}^j\end{equation}
for $1\leq i\leq m$. We set
$$\wdh{y}^*_i(x):=\sum_{j=0}^{d-1}\wdh{y}_{i,j}(x')x_{n}^j,\ \ 1\leq i\leq m.$$
Then by the Taylor formula 
$$g(x,\wdh{y}(x))=g(x,\wdh{y}^*(x)) \text{ mod. }\wdh{a}(x)$$ and 
$$f_k(x,\wdh{y}(x))=f_k(x,\wdh{y}^*(x)) \text{ mod. }\wdh{a}(x)$$ for $1\leq k\leq r$. Thus
\begin{equation}\label{Artin3}g(x,\wdh{y}^*(x))=f_k(x,\wdh{y}^*(x))=0 \text{ mod. } \wdh a(x).\end{equation}
\\
Let $y_{i,j}$, $1\leq i\leq m$, $0\leq j\leq d-1$, be new variables. We define $y^*_i:=\sum_{j=0}^{d-1}y_{i,j}x_{n}^j$, $1\leq i\leq m$. Let us define the polynomial
$$A(a_i,x_{n}):=x_{n}^d+a_1x_{n}^{d-1}+\cdots+a_d\in\k[x_{n},a_1,\ldots,a_d]$$
where $a_1,\ldots,a_d$ are new variables. Let us perform the Weierstrass division of $g(x,y^*)$ and $f_i(x,y^*)$ by $A$:
\begin{equation}\label{Artin1}g(x,y^*)=A.Q+\sum_{l=0}^{d-1}G_lx_{n}^l\end{equation}
\begin{equation}\label{Artin2}f_k(x,y^*)=A.Q_k+\sum_{l=0}^{d-1}F_{k,l}x_{n}^l,\ \ 1\leq k\leq r\end{equation}
 where $Q$, $Q_k\in\k\{x, y_{i,j}, a_p\}$ and $G_l$, $F_{k,l}\in\k\{x',y_{i,j},a_p\}$. \\
 Then we have 
 $$g(x,\wdh{y}^*(x))=\sum_{l=0}^{d-1}G_l(x',\wdh{y}_{i,j}(x'),\wdh{a}_p(x'))x_{n}^l \text{ mod. } (\wdh{a}(x))$$
 $$f_k(x,\wdh{y}^*(x))=\sum_{l=0}^{d-1}F_{k,l}(x',\wdh{y}_{i,j}(x'),\wdh{a}_p(x'))x_{n}^l \text{ mod. } (\wdh{a}(x)),\ \ 1\leq k\leq r.$$
Hence \eqref{Artin3} shows that 
$$G_l(x',\wdh{y}_{i,j}(x'),\wdh{a}_p(x'))=0$$ and $$F_{k,l}(x',\wdh{y}_{i,j}(x'),\wdh{a}_p(x'))=0$$ for all $k$ and $l$. By the inductive hypothesis, there exist convergent power series $\ovl{y}_{i,j}(x')\in\k\{x'\}$ and $\ovl{a}_p(x')\in\k\{x'\}$ for all $i$, $j$ and $p$, such that 
$$G_l(x',\ovl{y}_{i,j}(x),\ovl{a}_p(x'))=0 \text{ and }F_{k,l}(x',\ovl{y}_{i,j}(x'),\ovl{a}_p(x'))=0$$
 for all $k$ and $l$, and $\ovl{y}_{i,j}(x')-\wdh{y}_{i,j}(x')$, $\ovl{a}_p(x')-\wdh{a}_p(x')\in(x')^c$ for all $i$, $j$ and $p$ \footnote{Formally in order to apply the induction hypothesis we should have $\wdh{y}_{i,j}(0)=0$ and $\wdh{a}_p(0)=0$ which is not necessarily the case here. We can remove the problem by replacing $\wdh{y}_{i,j}(x')$ and $\wdh{a}_p(x')$ by $\wdh{y}_{i,j}(x')-\wdh{y}_{i,j}(0)$ and $\wdh{a}_p(x')-\wdh{a}_p(0)$, and $G_l(x',y_{i,j},a_p)$ by $G(x',y_{i,j}+\wdh{y}_{i,j}(0),a_p+\wdh{a}_p(0))$ - idem for $F_{k,l}$. We skip the details here.}.\\
\\
Now let us set
$$\ovl{a}(x):=x_{n}^d+\ovl{a}_1(x')x_{n}^{d-1}+\cdots+\ovl{a}_d(x')$$
$$\ovl{y}_i(x):=\ovl{a}(x)\ovl{w}_i(x)+\sum_{j=0}^{d-1}\ovl{y}_{i,j}(x')x_{n}^j$$
for some $\ovl{w}_i(x)\in\k\{x\}$ such that $\ovl{w}_i(x)-\wdh{w}_i(x)\in (x)^c$ for all $i$ (see \eqref{w_i}). 
It is straightforward to check that $\ovl{y}_j(x)-\wdh{y}_j(x)\in(x)^c$ for $1\leq j\leq m$.
If $c>d$, the Taylor formula shows that
$$g(x,\ovl y(x))-g(x,\wdh y(x))\in (x)^c\subset (x)^{d+1}.$$
Thus 
$$g(0,\ldots,0,x_n,\ovl y(0,\ldots,0,x_n))-g(0,\ldots,0,x_n,\wdh y(0,\ldots,0,x_n)) \in (x_n)^{d+1}.$$
Since the order of the power series $g(0,\ldots,0,x_n,\wdh y(0,\ldots,0,x_n))$ is $d$ this implies that the order of $g(0,\ldots,0,x_n,\ovl y(0,\ldots,0,x_n))$ is also  $d$. But $\ovl a(x)$ divides $g(x,\ovl y(x))$ and it is a Weierstrass polynomial of degree $d$. So the Weierstrass Division Theorem implies that $g(x,\ovl y(x))$ equals $\ovl a(x)$ times a unit. Since $f(x,\ovl y(x))\in (\ovl a(x))$ by \eqref{Artin2} we have
$$f(x,\ovl{y}(x))=0 \text{ mod. }g(x,\ovl{y}(x)).$$

\end{proof}

\begin{proof}[Proof of Proposition \ref{TougeronIFT}] 
We may assume that $\d$ is the first $h\times h$ minor of the Jacobian matrix. If we add the equations $f_{h+1}:=y_{h+1}-\wdt{y}_{h+1}(x)=0$, \ldots,  $f_m:=y_m-\wdt{y}_{m}(x)=0$, we may assume that $m=h$ and $\d$ is the determinant of the Jacobian matrix $J(x,y):=\frac{\partial(f_1,\ldots,f_h)}{\partial(y)}$. We have
$$f\left(x,y(x)+\d(x,y(x))z\right)=f(x,y(x))+\d(x,y)J(x,y(x))z+\d(x,y(x))^2H(x,y(x),z)$$
where $z:=(z_1,\ldots,z_m)$ and $H(x,y(x),z)\in\k\{x,y(x),z\}^{m}$ is of order at least 2 in $z$. Let us denote by $J'(x,y(x))$ the adjoint matrix of $J(x,y(x))$. Let $\e(x)$ be in $(x)^c\k\{x\}^r$ such that $f(x,y(x))=\d^2(x,y(x))\e(x)$. Then we have
\begin{equation*}\begin{split}f(x,y(x)+\d(x,y(x))z)=& \\
=\d(x,y(x))J(x,y(x))&\left[J'(x,y(x))\e(x)+z+J'(x,y(x))H(x,y(x),z)\right].\end{split}\end{equation*}
We define
$$g(x,z):=J'(x,y(x))\e(x)+z+J'(x,y(x))H(x,y(x),z).$$
Then $g(0,0)=0$ and  the  matrix $\frac{\partial g(x,z)}{\partial z}(0,0)$ is the identity matrix. Thus, by the Implicit Function Theorem, there exists a unique $z(x)\in\k\{x\}^m$ such that 
$$f(x,y(x)+\d(x,y(x))z(x))=0.$$ This proves the proposition.
 \end{proof}
%%%%%%%%%%%%%%%%%%%%%%%%%%

\begin{remark}\label{rmkAr68}
We make the following remarks about the proof of Theorem \ref{Ar68}:
\begin{enumerate}
\item[i)] In the case  $n=1$ i.e. $x$ is a single variable, set $e:=\ord(\d(x,\wdh{y}(x)))$. If $\ovl{y}(x)\in\k\{x\}^m$ satisfies $\wdh{y}(x)-\ovl{y}(x)\in (x)^{2e+c}$,  then we have $$\ord(f(x,\ovl{y}(x)))\geq 2e+c$$ and $$\d(x,\ovl{y}(x)) =\d(x,\wdh{y}(x)) \text{ mod. }(x)^{2e+c},$$ thus $\ord(\d(x,\ovl{y}(x)))=\ord(\d(x,\wdh{y}(x)))=e$. Hence we have automatically 
$$f(x,\ovl{y}(x))\in(\d(x,\ovl{y}(x)))^2(x)^c$$
  since $\k\{ x\}$ is a discrete valuation ring (i.e. if $\ord(a(x))\leq \ord(b(x))$ then $a(x)$ divides $b(x)$ in $\k\{ x\}$).\\ Thus Lemma \ref{lemma1} is not necessary in this case and the proof is more simple. This fact will be general: approximation results will be easier to obtain, and sometimes stronger,  in discrete valuation rings than in more general rings.
  
\item[ii)] We did not really use the fact that $\k$ is a field of characteristic zero, we just need $\k$ to be a perfect field in order to use the \index{Jacobian Criterion} Jacobian Criterion. But the use of the Jacobian Criterion is more delicate for imperfect fields. This also will be general: approximation results will be more difficult to prove in positive characteristic. For instance M. Andr\'e proved Theorem \ref{Ar68} in the case where $\k$ is a complete valued field of  positive characteristic and replaced the use of the Jacobian Criterion by  the \index{AndrŽ homology} homology of commutative algebras \cite{An}. In fact it is proven that Theorem \ref{Ar68} is satisfied for a field $\k$ if and only if the completion of $\k$ is separable over $\k$ \cite{Schemmel}.

\item[iii)] For $n\geq 2$, the proof of Theorem \cite{Ar68} uses  induction on $n$. In order to do it we use the Weierstrass Preparation Theorem. But to apply the Weierstrass Preparation Theorem we need to make a linear change of coordinates in $\k\{x\}$, in order to transform $g(x,\wdh{y}(x))$ into a power series $h(x)$ such that $h(0,\ldots,0,x_n)\neq 0$. Because of this change of coordinates the proof does not adapt to prove similar results in the case of constraints: for instance if  $\wdh{y}_1(x)$ depends only on $x_1$ and $\wdh{y}_2(x)$ depends only on $x_2$, can we find a convergent solution such that $\wdt{y}_1(x)$ depends only on $x_1$, and $\wdt{y}_2(x)$ depends only on  $x_2$?\\
Moreover, even if we were able to make a linear change of coordinates without modifying the constraints, the use of the Tougeron  Implicit Function Theorem may remove the constraints. We will discuss these problems in Section \ref{constraint}.

\end{enumerate}
\end{remark}

\begin{remark}
The Tougeron Implicit Function Theorem has strong applications for finding normal forms of power series, i.e. to finding local coordinates $x_1',\ldots, x'_n$ such that a given power series is a polynomial in some of these new coordinates (see \cite{To0} or \cite{Ku}). Some generalizations of this theorem have been proven in \cite{BK}.
\end{remark}

 \begin{corollary}\label{Ar68cor}
 Let $\k$ be a valued field of characteristic zero and let $I$ be an ideal of $\k\{x\}$. Let $A$ denote the local ring $\frac{\k\{x\}}{I}$, $\m_A$ its maximal ideal and $\wdh A$ its completion.\\
  Let $f(y)\in \k\{x,y\}^r$ and
 $\wdh{y}\in\wdh A ^m$ be a solution of $f=0$ in $A$ such that  $\wdh{y}\in \m_A\wdh A$.  Then there exists a solution $\wdt y$ of $f=0$ in $A$  such that $\wdt{y}\in \m_A$  and  $\wdt{y}-\wdh{y}\in\m_A^c\wdh A$.
  \end{corollary}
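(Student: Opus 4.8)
The plan is to reduce the statement to the analytic Artin Approximation Theorem \ref{Ar68} by clearing the ideal $I$ and introducing auxiliary unknowns for the coefficients of a membership relation. First I would record the standard facts that, $\k\{x\}$ being Noetherian, completion commutes with the quotient by $I$, so $\wdh A\cong\k\lb x\rb/I\k\lb x\rb$, that $\m_A\wdh A$ is the image of $(x)\k\lb x\rb$ and $\m_A^c\wdh A$ the image of $(x)^c\k\lb x\rb$; since enlarging $c$ only strengthens the conclusion I may assume $c\ge 1$. I would then fix generators $g_1(x),\dots,g_s(x)\in\k\{x\}$ of $I$ and lift the given $\wdh y\in\wdh A^m$ to a vector $\wdh z(x)\in\k\lb x\rb^m$, which may be chosen in $(x)\k\lb x\rb^m$ because $\wdh y\in\m_A\wdh A$. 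The hypothesis $f(\wdh y)=0$ in $\wdh A$ then says exactly that $f_i(x,\wdh z(x))=\sum_{j}\wdh u_{ij}(x)g_j(x)$ for suitable $\wdh u_{ij}(x)\in\k\lb x\rb$, $1\le i\le r$, $1\le j\le s$.

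Next I would set $\lambda_{ij}:=\wdh u_{ij}(0)\in\k$ and consider the vector of convergent power series
$$F_i(x,y,u):=f_i(x,y)-\sum_{j=1}^s(\lambda_{ij}+u_{ij})\,g_j(x)\in\k\{x,y,u\},\qquad 1\le i\le r,$$
in the original unknowns $y$ together with new unknowns $u=(u_{ij})$ (note $F$ is affine in $u$, hence convergent). By construction $\bigl(\wdh z(x),\wdh u(x)-\lambda\bigr)$ is a formal solution of $F=0$ vanishing at the origin, so Theorem \ref{Ar68} applies and yields a convergent solution $\bigl(\wdt z(x),\wdt u(x)\bigr)$ agreeing with $\bigl(\wdh z(x),\wdh u(x)-\lambda\bigr)$ modulo $(x)^c$. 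I would then take $\wdt y$ to be the image of $\wdt z(x)$ in $A^m$: from $F(x,\wdt z(x),\wdt u(x))=0$ one reads off $f_i(x,\wdt z(x))=\sum_j(\lambda_{ij}+\wdt u_{ij}(x))g_j(x)\in I\k\{x\}$, hence $f(\wdt y)=0$ in $A$; from $c\ge 1$ one gets $\wdt z(0)=\wdh z(0)=0$, hence $\wdt z(x)\in(x)\k\{x\}^m$ and $\wdt y\in\m_A$; and $\wdt z(x)-\wdh z(x)\in(x)^c\k\lb x\rb^m$ gives $\wdt y-\wdh y\in\m_A^c\wdh A$ after reduction modulo $I\k\lb x\rb$.

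I expect no serious obstacle here: this is essentially a bookkeeping reduction. The one point deserving care is that the coefficients $\wdh u_{ij}(x)$ of the membership relation need not vanish at $0$, whereas Theorem \ref{Ar68} requires a formal solution vanishing at the origin; this is precisely why I split off the constant terms $\lambda_{ij}$ and absorb them into the (still convergent) coefficients of the enlarged system $F$. Everything else — the identification $\wdh A\cong\k\lb x\rb/I\k\lb x\rb$, the description of $\m_A^c\wdh A$ as the image of $(x)^c\k\lb x\rb$, and the compatibility of the substitution $y\mapsto\wdt z(x)$ with reduction modulo $I$ — is routine.
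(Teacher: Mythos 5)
Your proof is correct and follows essentially the same route as the paper: lift $\wdh y$ to $\k\lb x\rb^m$, express $f(\wdh y)=0$ as an ideal-membership relation with generators of $I$, treat the membership coefficients as extra unknowns in an enlarged convergent system, apply Theorem \ref{Ar68}, and push the convergent solution back down to $A$. Your explicit splitting off of the constants $\lambda_{ij}$ to meet the vanishing-at-the-origin hypothesis of Theorem \ref{Ar68} is a minor refinement the paper leaves implicit, but it changes nothing essential.
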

\begin{proof}
 Let $a_1,\ldots,a_s\in\k\{x\}$ be generators of $I$. Let us choose $\wdh{w}(x)\in\k\lb x\rb^m$ such that $\wdh{w}_j(x)=\wdh{y}_j$ mod. $I$ for $1\leq j\leq m$. Since $f_i(\wdh{y})=0$ in $A$  there exist $\wdh{z}_{i,k}(x)\in\k\lb x\rb$, $1\leq i\leq r$ and $1\leq k\leq s$, such that
$$f_i(x,\wdh{w}(x))+a_1 \wdh{z}_{i,1}(x)+\cdots+a_s\wdh{z}_{i,s}(x)=0\ \ \forall i.$$
By Theorem \ref{Ar68} there exist $\wdt{w}_j(x)$, $\wdt{z}_{i,k}(x)\in\k\{x\}$ such that
$$f_i(x,\wdt{w}(x))+a_1 \wdt{z}_{i,1}(x)+\cdots+a_s\wdt{z}_{i,s}(x)=0\ \ \forall i$$
and $\wdh{w}_j(x)-\wdt{w}_j(x)\in (x)^c$ for $1\leq j\leq m$.
Then the images of the $\wdt{w}_j(x)$  in $\frac{\k\{x\}}{I}$ satisfy the conclusion of the corollary.
 \end{proof}

%%%%%%%%%%%%%%%%%%%%%%%
%%%%%%%%%%%%%%%%%%%%%%%%%%%%%%%%%%%%%%%%%%%%%%%%%%%%%%%%%%

 \subsubsection{P\l oski's  Theorem}
 For his PhD thesis, a few years after M. Artin  result, A. P\l oski strengthened Theorem \ref{Ar68} by a careful analysis of the proof and a smart modification of it. His result yields an analytic parametrization of a piece of the set of solutions of $f=0$ such that the formal solution $\wdh{y}(x)$ is a formal point of this parametrization.
 
  \begin{theorem}[P\l oski's Theorem]\label{Pl}\cite{Pl,Pl15}\index{P\l oski's Theorem}
  Let $\k$ be a valued field of characteristic zero and let $f(x,y)$ be a vector of convergent power series  in two sets of variables $x$ and $y$.
 Let $\wdh{y}(x)$ be a formal power series solution
with $\wdh{y}(0)=0$,
 $$f(x,\wdh{y}(x))=0.$$ Then there is  a convergent power series solution $y(x,z)\in\k\{x,z\}^m$ with $y(0,0)=0$, where $z=(z_1,\ldots,z_s)$ are new variables, 
 $$f(x,y(x,z))=0,$$
 and a vector of formal power series $\wdh{z}(x)\in\k\llbracket x\rrbracket  ^s$ with $\wdh{z}(0)=0$ such that 
 $$\wdh{y}(x)=y(x,\wdh{z}(x)).$$
 \end{theorem}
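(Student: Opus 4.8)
The plan is to run the proof of Theorem \ref{Ar68} again, carrying auxiliary parameters at every step and --- this is the key point --- keeping track of the formal solution \emph{exactly}, not just up to high order, along the formal values of those parameters. Concretely I would prove, simultaneously and by induction on $n$, both Theorem \ref{Pl} for $n$ variables and a parametrized form of Lemma \ref{lemma1}: under the hypotheses of that lemma there exist $\ovl{y}(x,u)\in\k\{x,u\}^m$ with $\ovl{y}(0,0)=0$ ($u$ a new set of variables) such that $f_i(x,\ovl{y}(x,u))\in(g(x,\ovl{y}(x,u)))$ for all $i$, together with $\wdh{u}(x)\in(x)\k\lb x\rb^s$ satisfying $\ovl{y}(x,\wdh{u}(x))=\wdh{y}(x)$. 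The case $n=0$ is trivial. For the induction step one shows, exactly as in the proof of Theorem \ref{Ar68}: Theorem \ref{Pl} in $n-1$ variables implies the parametrized Lemma \ref{lemma1} in $n$ variables, and the latter, together with a parametrized use of Proposition \ref{TougeronIFT}, implies Theorem \ref{Pl} in $n$ variables.

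\textbf{Reductions.} I would begin with the first reductions in the proof of Theorem \ref{Ar68}: replace the components of $f$ by generators of the prime ideal $\Ker(\phi)$, use the Jacobian Criterion to find an $h\times h$ minor $\d$ of $\frac{\partial(f_1,\dots,f_h)}{\partial(y)}$ with $\d(x,\wdh{y}(x))\neq0$ and $q\notin\Ker(\phi)$, $e\in\N$ with $qf_i^e\in(f_1,\dots,f_h)$ for $i>h$, and reduce to the system $f_1=\dots=f_h=0$. The only new twist is how the discarded equations are recovered at the end: once a convergent $y(x,z)$ solving $f_1=\dots=f_h=0$ identically in $z$ is found together with $\wdh{z}(x)$ such that $y(x,\wdh{z}(x))=\wdh{y}(x)$, the identity $q(x,y(x,z))\,f_i(x,y(x,z))^e=0$ holds in the domain $\k\{x,z\}$, and $q(x,y(x,z))$ is nonzero there because its value at $z=\wdh{z}(x)$ equals $q(x,\wdh{y}(x))\neq0$; hence $f_i(x,y(x,z))=0$ for all $i$. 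The passage to $\Ker(\phi)$ is harmless for the same reason, and $m\geq h$ as before.

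\textbf{The parametrized Lemma \ref{lemma1}.} If $g(x,\wdh{y}(x))$ is a unit, take $\ovl{y}(x,u):=u$ and $\wdh{u}(x):=\wdh{y}(x)$ (then $g(x,u)$ is a unit of $\k\{x,u\}$). Otherwise, after the same change of coordinates and Weierstrass preparation $g(x,\wdh{y}(x))=\wdh{a}(x)\times\text{unit}$ (Weierstrass polynomial of degree $d$ in $x_n$) and the same division $\wdh{y}_i=\wdh{a}\wdh{w}_i+\sum_j\wdh{y}_{i,j}(x')x_n^j$, one forms the auxiliary convergent equations $G_l,F_{k,l}$ in the variables $(x',y_{i,j},a_p)$, for which $(\wdh{y}_{i,j}(x'),\wdh{a}_p(x'))$ is a formal solution. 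Theorem \ref{Pl} in $n-1$ variables (after the standard normalization of constant terms used in the proof of Lemma \ref{lemma1}) yields convergent $Y_{i,j}(x',u')$, $A_p(x',u')$ solving $G_l=F_{k,l}=0$ identically in $u'$ and a formal $\wdh{u}'(x')$ realizing $\wdh{y}_{i,j}(x')$ and $\wdh{a}_p(x')$. One then sets $\ovl{a}(x,u'):=x_n^d+\sum_pA_p(x',u')x_n^{d-p}$, introduces a further block $w=(w_1,\dots,w_m)$ of new variables, puts $\ovl{y}_i(x,u',w):=\ovl{a}(x,u')(w_i+\wdh{w}_i(0))+\sum_jY_{i,j}(x',u')x_n^j$, and lets $\wdh{u}(x):=(\wdh{u}'(x'),\wdh{w}_1(x)-\wdh{w}_1(0),\dots,\wdh{w}_m(x)-\wdh{w}_m(0))$; then $\ovl{y}(x,\wdh{u}(x))=\wdh{y}(x)$ by construction. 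Taylor expansion and the identical vanishing of the $G_l,F_{k,l}$ along $(Y_{i,j},A_p)$ give $f_k(x,\ovl{y})\in(\ovl{a}(x,u'))$ and $g(x,\ovl{y})\in(\ovl{a}(x,u'))$. The one genuinely new and delicate point --- the analogue of the final step of the proof of Lemma \ref{lemma1}, and the place I expect the real work --- is that $\ovl{a}(x,u')$ is, up to a unit of $\k\{x,u',w\}$, the Weierstrass polynomial of $g(x,\ovl{y}(x,u',w))$ in $x_n$. This follows from an order comparison: $\ovl{y}_i(0,\dots,0,x_n,0,0)\equiv\wdh{y}_i(0,\dots,0,x_n)\pmod{x_n^{d+1}}$ (using $\ovl{a}(0,\dots,0,x_n,0)=x_n^d$, $A_p(0,0)=\wdh{a}_p(0)=0$ and $Y_{i,j}(0,0)=\wdh{y}_{i,j}(0)$), hence $g(0,\dots,0,x_n,\ovl{y}(0,\dots,0,x_n,0,0))$ has $x_n$-order exactly $d$, so $g(x,\ovl{y})$ is $x_n$-regular of order $d$; since the degree-$d$ Weierstrass polynomial $\ovl{a}(x,u')$ divides it, uniqueness in Weierstrass division forces $g(x,\ovl{y})=\ovl{a}(x,u')\times\text{unit}$, whence $f_k(x,\ovl{y})\in(g(x,\ovl{y}))$.

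\textbf{Conclusion of the induction step.} To obtain Theorem \ref{Pl} in $n$ variables, apply the parametrized Lemma \ref{lemma1} with $g:=\d^2$, getting $\ovl{y}(x,z)\in\k\{x,z\}^m$ with $f_k(x,\ovl{y})\in(\d^2(x,\ovl{y}))$ and $\ovl{y}(x,\wdh{z}(x))=\wdh{y}(x)$. Writing $f_k(x,\ovl{y})=\d^2(x,\ovl{y})v_k$ and specializing $z=\wdh{z}(x)$, the domain $\k\lb x\rb$ and $\d^2(x,\wdh{y}(x))\neq0$ force $v_k(x,\wdh{z}(x))=0$, hence $v_k(0,0)=0$, i.e. $v_k\in\m_{\k\{x,z\}}$ and $f_k(x,\ovl{y})\in(\d(x,\ovl{y}))^2\m_{\k\{x,z\}}$. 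Now apply Proposition \ref{TougeronIFT} with the set of variables $x$ replaced by $(x,z)$ and exponent $c=1$: one gets $\wdt{y}(x,z)\in\k\{x,z\}^m$ with $f_k(x,\wdt{y}(x,z))=0$ identically, $\wdt{y}-\ovl{y}\in(\d(x,\ovl{y}))\m_{\k\{x,z\}}$ and $\wdt{y}_j=\ovl{y}_j$ for $h<j\leq m$. Specializing at $z=\wdh{z}(x)$: the last $m-h$ components of $\wdt{y}(x,\wdh{z}(x))$ coincide with those of $\wdh{y}(x)$, while the first $h$ components $\wdt{y}'(x,\wdh{z}(x))$ and $\wdh{y}'(x)$ are two formal solutions of $f(x,y_1,\dots,y_h,\wdh{y}_{h+1}(x),\dots,\wdh{y}_m(x))=0$ that agree modulo $(\d(x,\wdh{y}(x)))(x)\k\lb x\rb$; by the uniqueness clause in the analogue of Proposition \ref{TougeronIFT} over $\k\lb x\rb$ (proved in the same way, using the formal implicit function theorem) they are equal, so $\wdt{y}(x,\wdh{z}(x))=\wdh{y}(x)$. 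Together with the recovery of $f_{h+1},\dots,f_r$ from the reduction step, the pair $y(x,z):=\wdt{y}(x,z)$, $\wdh{z}(x)$ is the required solution. The essentially new ingredients over the proof of Theorem \ref{Ar68} are thus: carrying parameters throughout and adding fresh ones for the Weierstrass quotients; the domain argument replacing truncation estimates, which permits $c=1$ in Proposition \ref{TougeronIFT}; and the uniqueness clause of Proposition \ref{TougeronIFT}, used to upgrade an approximate matching of $\wdh{y}$ to an exact one. I expect the parametrized Weierstrass-polynomial bookkeeping in the third step above to be the main obstacle.
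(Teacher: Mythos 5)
Your proposal is correct, and its skeleton---induction on $n$, a parametrized form of Lemma \ref{lemma1} for $g=\delta^{2}$ obtained by feeding the Weierstrass data of the formal solution into the inductive hypothesis, then an implicit-function-theorem step whose uniqueness clause recovers $\wdh{y}$ exactly---is the same as the paper's. Where you genuinely diverge is the implementation of the last step. P\l oski's argument, as reproduced in the paper, changes the division scheme: $\wdh{a}$ is the Weierstrass polynomial of $\delta(x,\wdh{y}(x))$ (not of $\delta^{2}$), the first $h$ unknowns are divided by $\wdh{a}$ and the last $m-h$ by $\wdh{a}^{2}$, the system is multiplied by the adjoint matrix $M$, and a bespoke parametrized statement (Proposition \ref{PloskiIFT}) solves for the first $h$ quotient-parameters in terms of the free ones $(t,z')$, the exact matching of $\wdh{y}$ coming from the uniqueness in the Implicit Function Theorem applied to $G(x,\wdh{t}(x),z',z)=0$. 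You instead keep the symmetric division by the Weierstrass polynomial of $\delta^{2}$ exactly as in Lemma \ref{lemma1}, introduce free parameters for all $m$ Weierstrass quotients, and invoke Proposition \ref{TougeronIFT} as a black box over the enlarged ring $\k\{x,z\}$ with $c=1$: the hypothesis $f(x,\ovl{y})\in(\delta(x,\ovl{y}))^{2}\m_{\k\{x,z\}}$ is secured not by truncation estimates but by your specialization/domain argument (the quotient $v_k$ vanishes along $z=\wdh{z}(x)$, hence lies in the maximal ideal), and the exact equality $\wdt{y}(x,\wdh{z}(x))=\wdh{y}(x)$ follows from the uniqueness clause of the formal analogue of Proposition \ref{TougeronIFT} with base point $\wdh{y}$, where the only admissible correction is zero; the same exact-specialization idea also cleans up the recovery of $f_{h+1},\dots,f_r$ via $qf_i^{e}\in(f_1,\dots,f_h)$. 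What P\l oski's route buys is a leaner parameter set and an explicit identification of which parameters are solved for; what yours buys is the verbatim reuse of Proposition \ref{TougeronIFT} and of the computations of Lemma \ref{lemma1} (including the order-$d$ regularity comparison showing $g(x,\ovl{y})=\ovl{a}\times\text{unit}$, which you rightly single out and carry through correctly), at the cost of $m$ extra parameters and of having to state and prove the formal ($\k\lb x\rb$) version of Tougeron's theorem with its uniqueness clause---a routine but necessary addition, since the paper only states the convergent case.
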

 
\begin{remark}\label{Pl->app} This result obviously implies Theorem \ref{Ar68} because we can choose convergent power series $\wdt{z}_1(x),\ldots,\wdt{z}_s(x)\in\k\{x\}$ such that $\wdt{z}_j(x)-\wdh{z}_j(x)\in (x)^c$ for $1\leq j\leq s$. Then, by denoting $\wdt{y}(x):=y(x,\wdt{z}(x))$, we get the conclusion of Theorem \ref{Ar68}.\\
\end{remark}

\begin{example}\label{flat_ploski}
Let $T$ be a $p\times m$ matrix whose entries are in $\k\{x\}$ and let $b\in \k\{x\}^p$ be a vector of convergent power series. Let $\wdh y(x)$ be a formal power series vector solution of the following system of linear equations:
\begin{equation}\label{lineq}Ty=b.\end{equation}
By \index{faithful flatness} the faithful flatness of $\k\lb x\rb$ over $\k\{x\}$ (see Example \ref{faithfully_flat} of the introduction) there exists a convergent power series vector solution of \eqref{lineq} denoted by $y^0(x)$. Let $M$ be the (finite) $\k\{x\}$-submodule of $\k\{x\}^m$ of convergent power series solutions of the associated homogeneous linear system:
$$Ty=0.$$
Then by the flatness  of $\k\lb x\rb$ over $\k\{x\}$ (see Example \ref{flat_ana} of the introduction) the set of formal power series solutions is the set of linear combinations of elements of $M$ with coefficients in $\k\lb x\rb$. Thus if $m_1(x),\ldots,m_s(x)$ are generators of $M$ there exist formal power series $\wdh z_1(x),\ldots,\wdh z_s(x)$ such that
$$\wdh y(x)-y^0(x)=\wdh z_1(x)m_1(x)+\cdots+\wdh z_s(x) m_s(x).$$
We define
$$y(x,z):=y^0(x)+\sum_{i=1}^sm_i(x)z_i$$
and Theorem \ref{Pl} is proven in the case of systems of linear equations.\\
\end{example}
 
  \begin{proof}[Sketch   of the proof of Theorem \ref{Pl}] 
  The proof is very similar to the proof of Theorem \ref{Ar68}. It is also an induction on $n$. The beginning of the proof is the same, so we can assume that $r=h$ and  we need to prove an analogue of Lemma \ref{lemma1} with parameters for $g=\d^2$ where $\d$ is the first $h\times h$ minor of the jacobian matrix $\frac{\partial f}{\partial y}$. But in order to prove it  we need to make a slight but crucial modification in the proof. First we  make a linear change of variables and assume that $\d(x,\wdh{y}(x))$ is regular with respect to $x_n$, i.e.
  $$\d(x,\wdh{y}(x))=(x_{n}^d+\wdh{a}_1(x')x_{n}^{d-1}+\cdots+\wdh{a}_d(x'))\times \text{unit}.$$  
  We set
  $$\wdh{a}(x):=x_{n}^d+\wdh{a}_1(x')x_{n}^{d-1}+\cdots+\wdh{a}_d(x').$$
  (in the proof of Theorem \ref{Ar68}, $\wdh{a}(x)$ denotes the square of  this Weierstrass polynomial!)\\
  Then the idea of P\l oski is to perform the Weierstrass division of  $\wdh{y}_i(x)$ by $\wdh{a}(x)$ for $1\leq i\leq h$ and by $\wdh{a}(x)^2$ for $h<i\leq m$:
  \begin{equation}\label{z_i1}\wdh{y}_i(x)=\wdh{a}(x)\wdh{z}_i(x)+\sum_{j=0}^{d-1}\wdh{y}_{i,j}(x')x_{n}^j,\ \ 1\leq i\leq h,\end{equation}
    \begin{equation}\label{z_i2}\wdh{y}_i(x)=\wdh{a}(x)^2\wdh{z}_i(x)+\sum_{j=0}^{2d-1}\wdh{y}_{i,j}(x')x_{n}^j, \ \ Êh<i\leq m.\end{equation}
Let us define
$$\wdh{y}^*_i(x):=\sum_{j=0}^{d-1}\wdh{y}_{i,j}(x')x_{n}^j,\ \ 1\leq i\leq h,$$

$$\wdh{y}^*_i(x):=\sum_{j=0}^{2d-1}\wdh{y}_{i,j}(x')x_{n}^j,\ \ h< i\leq m.$$
Let $M(x,y)$ denote the adjoint matrix of $\frac{\partial(f_1,\ldots,f_h)}{\partial(y_1,\ldots,y_h)}$:
 $$M(x,y) \frac{\partial(f_1,\ldots,f_h)}{\partial(y_1,\ldots,y_h)}=    \frac{\partial(f_1,\ldots,f_h)}{\partial(y_1,\ldots,y_h)}M(x,y)=\d(x,y)I_h$$
  where $I_h$ is the identity matrix of size $h\times h$.  Then we define
  $$g(x,y):=M(x,y)f(x,y)=(g_1(x,y),\ldots,g_h(x,y))$$ where $g$ and $f$ are considered as column vectors. We have
   \begin{equation*}
  \begin{split}0=f(x,\wdh{y}(x))=f\big(x,\wdh{y}_1^*(x)+\wdh{a}(x)\wdh{z}_{1}(x),\ldots,&\wdh{y}_h^*(x)+\wdh{a}(x)\wdh{z}_{h}(x),\\
    \wdh{y}_{h+1}^*(x)+\wdh{a}(x)^2\wdh{z}_{h+1}(x)&,\ldots,\wdh{y}_m^*(x)+\wdh{a}(x)^2\wdh{z}_{m}(x)\big)=\end{split}
  \end{equation*}
   \begin{equation*}
  \begin{split}=f(x,\wdh{y}^*(x))+\wdh{a}(x) \frac{\partial(f_1,\ldots,f_h)}{\partial(y_{1},\ldots,y_h)}(x,\wdh{y}^*(x))&\left(\begin{array}{c}\wdh{z}_{1}(x)\\
  \vdots\\
  \wdh{z}_h(x)\end{array}\right)+\\
  +\wdh{a}(x)^2 \frac{\partial(f_1,\ldots,f_h)}{\partial(y_{h+1},\ldots,y_m)}(x,\wdh{y}^*(x))&\left(\begin{array}{c}\wdh{z}_{h+1}(x)\\
  \vdots\\
  \wdh{z}_{m}(x)\end{array}\right)+\wdh{a}(x)^2Q(x)   \end{split}
  \end{equation*}
  for some $Q(x)\in \k\lb x\rb^h$.
  Hence $g_k(x,\wdh{y}^*(x))\in (\wdh{a}(x)^2)$ since $\d$ is the determinant of $\frac{\partial(f_1,\ldots,f_h)}{\partial(y_{1},\ldots,y_h)}$. As in the proof of Theorem \ref{Ar68} we have $\d(x,\wdh{y}^*(x))\in(\wdh{a}(x))$.\\
  We assume that Theorem  \ref{Pl} is proven for $n-1$ variables. Thus we can imitate  the proof of Lemma \ref{lemma1} to show that there exist  convergent power series  $\ovl{y}_{i,j}(x',t)$, $\ovl{a}_p(x',t)\in\k\{x,t\}$, $t=(t_1,\ldots,t_s)$, such that $\wdh{y}_{i,j}(x')=\ovl{y}_{i,j}(x',\wdh{t}(x'))$ and $\wdh{a}_p(x')=\ovl{a}_p(x',\wdh{t}(x'))$ for some $\wdh{t}(x')\in\k\lb x'\rb^s$ and
  $$g\left(x,\ovl{y}^*(x,t)\right)\in (\ovl{a}(x,t)^2)$$  
  $$f(x,\ovl{y}^*(x,t))\in (g\left(x,\ovl{y}^*(x,t)\right))$$
  with 
  $$\ovl{a}(x,t):=x_{n}^d+\ovl{a}_1(x',t)x_{n}^{d-1}+\cdots+\ovl{a}_d(x',t),$$
   $$\ovl{y}_i^*(x,t):=\sum_{j=0}^{d-1}\ovl{y}_{i,j}(x',t)x_{n}^j\text{ for }1\leq i\leq h,$$
   $$\ovl{y}_i^*(x,t):=\sum_{j=0}^{2d-1}\ovl{y}_{i,j}(x',t)x_{n}^j\text{ for }h< i\leq m.$$
   Moreover $\ovl a(x,t)$ is the Weierstrass polynomial of $\d(x,\ovl{y}^*(x,t))$.\\
  
   Let $z:=(z_1,\ldots,z_h)$ and $z':=(z'_{h+1},\ldots,z'_{m})$ be two vectors of new variables. 
  Let us define
$$\ovl{y}_i(x,t,z_i):=\ovl{a}(x,t)z_i+\sum_{j=0}^{d-1}\ovl{y}_{i,j}(x',t)x_{n}^j\ \text{ for } 1\leq i\leq h,$$
$$y_i(x,t,z_i'):=\ovl a(x,t)^2 z'_{i}+\sum_{j=0}^{2d-1}\ovl{y}_{i,j}(x',t)x_{n}^j,\ \ h<i\leq m.$$
  Then we use the following proposition similar to Proposition \ref{TougeronIFT} whose proof is given below:
  
    \begin{proposition}\label{PloskiIFT}\cite{Pl2}
   
 With the above notation and assumptions there exist convergent power series 
 $$\ovl{z}_i(x,t,z')\in\k\{x,t,z'\},\ \ 1\leq i\leq h,$$ such that 
 
  $$f(x,\ovl y_1(x,t,\ovl{z}_1(x,t,z')),\ldots,\ovl y_h(x,t,\ovl{z}_h(x,t,z')), y_{h+1}(x,t,z'),\ldots,y_m(x,t,z'))=0.$$
  Moreover there exists a vector formal power series $\wdh z'(x)$ such that 
  $$\ovl y_i(x,\wdh t(x'),\ovl z_i (x,\wdh t(x'),\wdh z'(x)))=\wdh y_i(x) \ \text{ for } 1\leq i\leq h,$$
  $$y_i(x,\wdh t(x'),\wdh z'_i(x))=\wdh y_i(x)\text{ for }h< i\leq m.$$\\
  \end{proposition}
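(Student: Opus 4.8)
The plan is to run the argument for Proposition \ref{TougeronIFT} with the parameters $t$ and $z'$ carried along, and to bookkeep the formal solution at the end. Write $\ovl y(x,t,z,z')$ for the vector occurring in the proposition (its $i$-th entry being $\ovl y_i(x,t,z_i)$ for $i\le h$ and $y_i(x,t,z'_i)$ for $i>h$), and $y^{(0)}(x,t,z'):=\ovl y(x,t,0,z')$ for the point obtained by setting $z=(z_1,\dots,z_h)=0$; thus $y^{(0)}_i=\ovl y_i^*(x,t)$ for $i\le h$ and $y^{(0)}_i=\ovl a(x,t)^2z'_i+\ovl y_i^*(x,t)$ for $i>h$. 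Since, by construction, $f(x,\ovl y^*(x,t))\in\big(g(x,\ovl y^*(x,t))\big)\subset(\ovl a(x,t)^2)$ and $y^{(0)}-\ovl y^*\in(\ovl a^2)^m$, a Taylor expansion gives $f(x,y^{(0)})\in(\ovl a^2)$; as $\ovl a^2$ is, up to a unit, a Weierstrass polynomial in $x_n$, the Weierstrass Division Theorem provides $\e\in\k\{x,t,z'\}^h$ with $f(x,y^{(0)})=\ovl a^2\e$. Similarly $\d(x,y^{(0)})\equiv\d(x,\ovl y^*)\bmod(\ovl a^2)$, and since $\ovl a$ is the Weierstrass polynomial of $\d(x,\ovl y^*(x,t))$ one may write $\d^0:=\d(x,y^{(0)})=\ovl a\,v$ with $v$ a unit.

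Next I would Taylor-expand $f(x,\ovl y(x,t,z,z'))$ in $z$ about $z=0$, exactly as in the proof of Proposition \ref{TougeronIFT}, obtaining
$$f\big(x,\ovl y(x,t,z,z')\big)=f(x,y^{(0)})+\ovl a\,J^0z+\ovl a^2H(x,t,z',z),$$
where $J^0:=\frac{\partial(f_1,\cdots,f_h)}{\partial(y_1,\cdots,y_h)}(x,y^{(0)})$ and $H$ has order $\ge2$ in $z$. Let $M^0$ be the adjoint matrix of $J^0$, so $M^0J^0=\d^0I_h=\ovl a\,vI_h$. Substituting $f(x,y^{(0)})=\ovl a^2\e$ and factoring, a direct computation gives
$$f\big(x,\ovl y(x,t,z,z')\big)=\ovl a\,J^0\,g(x,t,z',z),\qquad g(x,t,z',z):=z+\tfrac1vM^0\big(\e+H(x,t,z',z)\big),$$
so any zero of $g$ solves the system in the statement. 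Since $\partial g/\partial z$ is the identity matrix at $z=0$, the Implicit Function Theorem, with $(x,t,z')$ as parameters, produces a unique $\ovl z(x,t,z')\in\k\{x,t,z'\}^h$ with $g(x,t,z',\ovl z(x,t,z'))=0$; these are the $\ovl z_i$ of the proposition and they satisfy its first displayed equation.

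For the formal equalities I would set $\wdh z'(x):=(\wdh z_{h+1}(x),\dots,\wdh z_m(x))$ and $\wdh z(x):=(\wdh z_1(x),\dots,\wdh z_h(x))$, the quotients from the Weierstrass divisions \eqref{z_i1} and \eqref{z_i2}. Those divisions give $\ovl y_i(x,\wdh t(x'),\wdh z_i(x))=\wdh a(x)\wdh z_i(x)+\wdh y_i^*(x)=\wdh y_i(x)$ for $i\le h$ and $y_i(x,\wdh t(x'),\wdh z_i(x))=\wdh a(x)^2\wdh z_i(x)+\wdh y_i^*(x)=\wdh y_i(x)$ for $i>h$; hence $f(x,\ovl y(x,\wdh t(x'),\wdh z(x),\wdh z'(x)))=f(x,\wdh y(x))=0$. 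Specializing $t=\wdh t(x')$, $z'=\wdh z'(x)$ in $f=\ovl a\,J^0\,g$ and using that $\wdh a(x)\ne0$ and that the specialization of $\det J^0=\ovl a\,v$ is nonzero in $\k\lb x\rb$, one gets $g(x,\wdh t(x'),\wdh z'(x),\wdh z(x))=0$; by uniqueness in the formal Implicit Function Theorem (applied to $g$ with its parameters specialized to $(\wdh t(x'),\wdh z'(x))$) this forces $\ovl z(x,\wdh t(x'),\wdh z'(x))=\wdh z(x)$, which is exactly the remaining equalities of the proposition once the definitions of $\ovl y_i$ and $y_i$ are unwound.

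I expect the main obstacle to be the point flagged in Remark \ref{rmkAr68}(iii): to make the Implicit Function Theorem, and the uniqueness used for the formal matching, applicable one must keep the base point at the origin, i.e. ensure that $\e$ and the formal data $\wdh t(x')$, $\wdh z'(x)$, $\wdh z(x)$ all vanish at $0$. This is why Theorem \ref{Pl} is stated with $\wdh y(0)=0$ and $\wdh z(0)=0$, and one must carefully propagate this normalization—together with the divisibility $f(x,\ovl y^*(x,t))\in(\ovl a^2)$ obtained by imitating the proof of Lemma \ref{lemma1} for $g=M\!f$—through the induction on $n$. Granting this bookkeeping, the whole argument is a routine parametrized copy of the proof of Proposition \ref{TougeronIFT}.
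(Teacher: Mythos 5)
Your argument is correct and is essentially the paper's own proof: the same Taylor expansion in $z$, the same adjugate-matrix trick exploiting divisibility by $\ovl a$ and $\ovl a^2$, the same application of the convergent and then formal Implicit Function Theorem, and the same identification of $\wdh z(x)$, $\wdh z'(x)$ with the Weierstrass division quotients from \eqref{z_i1}--\eqref{z_i2}; the only (cosmetic) difference is that you evaluate the Jacobian at $y^{(0)}$ and factor $f=\ovl a\,J^0 g$ so that $\partial g/\partial z(0)=I_h$ is immediate, whereas the paper multiplies $F$ on the left by $M(x,\ovl y^*(x,t))$ to get $\d^2G$ and checks that $\det\frac{\partial G}{\partial z}$ is a unit by a determinant computation. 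The normalization point you flag at the end (vanishing at the origin of $\e$ and of $\wdh t$, $\wdh z$, $\wdh z'$, needed both to apply the IFT at the origin and to invoke its uniqueness for the formal matching) is indeed the one delicate step, and the paper itself glosses it by simply asserting $G(0,0,0,0)=0$, so your explicit acknowledgment is appropriate rather than a defect.
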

  
  Thus we apply this proposition and we define
  $$y_i(x,t,z'):=\ovl y_i(x,t,\ovl z_i(x,t,z'))=\ovl{a}(x,t)\ovl{z}_i(x,t,z')+\sum_{j=0}^{d-1}\ovl{y}_{i,j}(x',t)x_{n}^j\ \text{ for } 1\leq i\leq h$$
so that we have
  $$f(x,y(x,t,z'))=0,$$
  $$\ovl y_i(x,\wdh t(x'),\ovl z_i (x,\wdh t(x'),\wdh z'(x)))=\wdh y_i(x) \ \text{ for } 1\leq i\leq h,$$
  $$y_i(x,\wdh t(x'),\wdh z_i'(x))=\wdh y_i(x)\text{ for }h< i\leq m.$$
  This achieves the proof of Theorem \ref{Pl} with $z=(t,z')$ and 
  $$y(t,z')=(\ovl y_1(x,t,\ovl z_1(x,t,z')),\ldots, \ovl y_h(x,t,\ovl z_h(x,t,z')), y_{h+1}(x,t,z'),\ldots, y_m(x,t,z')).$$   
  
  \end{proof}
    
  \begin{proof}[Proof of Proposition \ref{PloskiIFT}]
  We prove first the existence of the convergent power series $\ovl z_i(x,t,z')$. We have
  \begin{equation*}
  \begin{split}F(x,t,z',z):=f\big(x,\ovl{y}_1^*(x,t)+\ovl a(x,t)z_{1},&\ldots,\ovl{y}_h^*(x,t)+\ovl a(x,t)z_{h},\\
    \ovl{y}_{h+1}^*(x,t)+\ovl a(x,t)^2&z'_{h+1},\ldots,\ovl{y}_m^*(x,t)+\ovl a(x,t)^2z'_{m}\big)=\end{split}
  \end{equation*}
   \begin{equation*}
  \begin{split}=f(x,\ovl{y}^*(x,t))+\ovl a(x&,t)^2 \frac{\partial(f_1,\ldots,f_h)}{\partial(y_{h+1},\ldots,y_m)}(x,\ovl{y}^*(x,t))\left(\begin{array}{c}z'_{h+1}\\
  \vdots\\
  z'_{m}\end{array}\right)+\\
 +\ovl a(x,t) &\frac{\partial(f_1,\ldots,f_h)}{\partial(y_{1},\ldots,y_h)}(x,\ovl{y}^*(x,t))\left(\begin{array}{c}z_{1}\\
  \vdots\\
  z_h\end{array}\right)+\ovl a(x,t)^2Q(x,t,z',z)   \end{split}
  \end{equation*}
where the entries of the vector $Q(x,t,z',z)$ are in $(x,t,z',z)^2$.\\
 Since $\ovl a(x,t)$ is equal to $\d(x,\ovl{y}^*(x,t))$ times a unit, by multiplying on the left  this equality by $M(x,\ovl{y}^*(x,t))$ we obtain that 
 $$M(x,\ovl{y}^*(x,t))F(x,t,z',z)=\d^2(x,\ovl{y}^*(x,t))G(x,t,z',z),$$ 
 where the entries of the vector $G(x,t,z',z)$ are convergent  series and $G(0,0,0,0)=0$.
By differentiation this equality yields
$$M(x,\ovl{y}^*(x,t))\frac{\partial(F_1,\ldots,F_h)}{\partial(z_{1},\ldots,z_h)}(x,t,z',z)=\d^2(x,\ovl{y}^*(x,t))\frac{\partial(G_1,\ldots,G_h)}{\partial(z_{1},\ldots,z_h)}(x,t,z',z).$$
It is easy to check that
\begin{equation*}\begin{split}\text{det}\left(\frac{\partial(F_1,\ldots,F_h)}{\partial(z_{1},\ldots,z_h)}\right)(x,0,0&,0)=\\
=\text{det}\left(\frac{\partial(f_1,\ldots,f_h)}{\partial(y_{1},\ldots,y_h)}\right)&(x,0,0,0)\ovl a(x,0)^h=\d(x,\ovl{y}^*(x,0))^{h+1}\times\text{ unit}.\end{split}\end{equation*}
But det$(M(x,\ovl{y}^*(x,t)))=\d(x,\ovl{y}^*(x,t))^{h-1}$ thus $\text{det}\left(\frac{\partial(G_1,\ldots,G_h)}{\partial(z_{1},\ldots,z_h)}\right)(x,0,0,0)$ is a unit. 
Hence $\text{det}\left(\frac{\partial(G_1,\ldots,G_h)}{\partial(z_{1},\ldots,z_h)}\right)(0,0,0,0)\neq 0$.
So  the Implicit Function Theorem yields unique convergent power series $\ovl z_i(x,t,z')\in\k\{x,t,z'\}$, $ 1\leq i\leq h$, vanishing at 0 such that 
$$G(x,t,z',\ovl z(x,t,z'))=0.$$ This shows $F(x,t,z',\ovl z(x,t,z'))=0$. \\
\\
In order to prove the existence of the formal power series $\wdh z'(x)$ we make the same computation where $t$ is replaced by $\wdh t(x')$. Thus by the Implicit Function Theorem there exist unique power series $\wdt z_i(x,z')\in\k\lb x,z'\rb$, for $1\leq i\leq h$, vanishing at 0 such that
$$G(x,\wdh t(x'),z',\wdt z(x,z'))=0$$
i.e.
\begin{equation*}\begin{split}f(x,\ovl y_1(x,\wdh t(x'),\wdt z_1(x,z')),\ldots,\ovl y_h(x,\wdh t(x'),\wdt z_h(x,z'))&,\\
y_{h+1}(x,\wdh t(x'),&z'),\ldots,y_m(x,\wdh t(x'),z'))=0.\end{split}\end{equation*}
Thus by uniqueness we have 
$$\ovl z(x,\wdh t(x'),z')=\wdt z (x,z').$$
Moreover, again by the Implicit  Function Theorem, any vector of formal power series $\wdh z(x)$ vanishing at the origin is a solution of the equation
\begin{equation}\label{G=0}G(x,\wdh t(x'),z',z)=0\end{equation}
if and only if  there exists a vector of formal power series $\wdh z'(x)$ such that
$$\wdh z(x)=\wdt z(x,\wdh z'(x)).$$
In particular, because the vector $\wdh z(x)$ defined by \eqref{z_i1} and \eqref{z_i2} is a solution of \eqref{G=0}, there exists a vector of formal power series $\wdh z'(x)$ such that 
 $$\wdh{y}_i(x)=     \ovl{a}(x,\wdh t(x'))z_i(x,\wdh z'(x))+\sum_{j=0}^{d-1}\ovl{y}_{i,j}(x',\wdh t(x'))x_{n}^j\ \text{ for } 1\leq i\leq h,$$
    $$\wdh{y}_i(x)=\ovl a(x,\wdh t(x'))^2\wdh z'_{i}(x)+\sum_{j=0}^{2d-1}\ovl{y}_{i,j}(x',\wdh t(x'))x_{n}^j, \ \ Êh<i\leq m.$$ 
  
  \end{proof}

 \begin{remark}\label{Pl_quotient}
 Let us remark that this result remains true if we replace $\k\{x\}$ by a quotient $\frac{\k\{x\}}{I}$ as in Corollary \ref{Ar68cor}.
 \end{remark}

\begin{remark}\label{morphism}
 Let $I$ be the ideal generated by $f_1,\ldots,f_r$. The formal solution $\wdh{y}(x)$ of $f=0$ induces a $\k\{x\}$-morphism $\k\{x,y\}\lgw \k\llbracket x\rrbracket$ defined by the substitution of $\wdh{y}(x)$ for $y$. Then $I$ is included in the kernel of this morphism thus, by the universal property of the quotient ring, this morphism induces a $\k\{x\}$-morphism 
 $\psi : \frac{\k\{x,y\}}{I}\lgw \k\llbracket x\rrbracket$. On the other hand, any $\k\{x\}$-morphism $\psi :  \frac{\k\{x,y\}}{I}\lgw \k\llbracket x\rrbracket$ is clearly defined by substituting for $y$ a vector of formal power series $\wdh{y}(x)$ such that $f(x,\wdh{y}(x))=0$.\\
 \\
 Thus we can reformulate Theorem \ref{Pl} as follows: Let $\psi : \frac{\k\{x,y\}}{I}\lgw \k\llbracket x\rrbracket$ be the $\k\{x\}$-morphism defined by the formal power series solution $\wdh{y}(x)$. Then there exist an analytic smooth $\k\{x\}$-algebra $D:=\k\{x,z\}$ and $\k\{x\}$-morphisms $C\lgw D$ (defined via the convergent power series solution $y(x,z)$ of $f=0$) and $D\lgw \k\llbracket x\rrbracket$ (defined by substituting  $\wdh{z}(x)$ for $z$) such that the following diagram commutes:
 
 $$\xymatrix{\k\{x\} \ar[r]^{\phi} \ar[d] & \k\llbracket x\rrbracket\\
 \frac{\k\{x,y\}}{I} \ar[ru]^{\psi}  \ar@{.>}[r]& D:=\k\{x,z\} \ar@{.>}[u]}$$  
 We will use and generalize this formulation later (see Theorem \ref{Popescu}).
 \end{remark}

%%%%%%%%%%%%%%%%%%%%%%%%%%%%%%%%%%%%%%%%%%%%%%%%%%%%%%%%%%%%%%%%%%%%%%%%%%%%%%%%

 \subsection{Artin Approximation and the Weierstrass Division Theorem}\label{AAATWDT}
 The proof of Theorem \ref{Ar68} uses essentially only two results: the \index{Weierstrass Division Property} Weierstrass Division Theorem and the Implicit Function Theorem. In particular it is straightforward to check that the proof of Theorem \ref{Ar68} remains true if we replace $\k\{x,y\}$ by $\k\langle x,y\rangle$, the ring of algebraic power series in $x$ and $y$, since this ring satisfies the Weierstrass Division Theorem (cf. \cite{Laf}, see Section \ref{weierstrass}) and the Implicit Function Theorem (cf. Lemma \ref{henselization_A[x]}; in fact in general the Weierstrass Division Theorem implies the Implicit Function Theorem, cf. Lemma \ref{WPPIFT}). We state here this very important variant of Theorem \ref{Ar68} (which is in fact valid in any characteristic - see also Remark \ref{rmkAr68} ii):
 
  \begin{theorem}[Algebraic Artin Approximation Theorem]\label{Ar69}\cite{Ar69}\index{algebraic Artin Approximation Theorem}
 Let $\k$ be a field and let $f(x,y)\in \k[x,y]^p$ (resp. $\k\langle x,y\rangle^p$) be a vector of polynomials (resp. algebraic power series)  with coefficients in $\k$.  Assume given a formal power series solution $\wdh{y}(x)\in  \k\lb x\rb^m$ (resp. vanishing at 0),
  $$f(x,\wdh{y}(x))=0.$$
  Then there exists, for any $c\in\N$, an algebraic power series  \index{algebraic power series} solution $\wdt{y}(x)\in \k\langle x\rangle^m$ (resp. vanishing at 0),
  $$f(x,\wdt{y}(x))=0$$
  which coincides with $\wdh{y}(x)$ up to degree $c$,
  $$\wdt{y}(x)\equiv \wdh{y}(x) \text{ modulo } (x)^c.$$
  
  \end{theorem}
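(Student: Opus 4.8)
The plan is to run the proof of Theorem \ref{Ar68} verbatim with $\k\{x,y\}$ replaced by $\k\langle x,y\rangle$. This is legitimate because $\k\langle x,y\rangle$ is Noetherian and Henselian local, and — these being the only two ``analytic'' ingredients actually used in the proof of Theorem \ref{Ar68}, namely in Lemma \ref{lemma1} and in Proposition \ref{TougeronIFT} — it satisfies both the Weierstrass Division Theorem (Section \ref{weierstrass}) and the Implicit Function Theorem (Lemma \ref{henselization_A[x]}). First I dispose of the polynomial case: given $f\in\k[x,y]^p$ and a formal solution $\wdh y(x)$, translate $y$ by the constant vector $\wdh y(0)$ (after, if necessary, replacing $\k$ by the finite extension $\k(\wdh y(0))$, over which the coefficients of the translated $f$ still lie) so that the new formal solution vanishes at $0$; since $\k[x,y]\subset\k\langle x,y\rangle$ it then suffices to prove the statement for $f\in\k\langle x,y\rangle^p$ with $\wdh y(0)=0$, which is what the rest of the argument treats.

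\textbf{Induction on $n$; the Jacobian step.} Proceed by induction on the number $n$ of variables $x$, the case $n=0$ being trivial, and assume the algebraic-power-series statement known for $\le n-1$ variables over an arbitrary field. Exactly as in the proof of Theorem \ref{Ar68}: the substitution $y\mapsto\wdh y(x)$ defines a $\k\langle x\rangle$-morphism $\k\langle x,y\rangle\to\wdh\k\lb x\rb$ whose kernel $I$ is a prime ideal containing $(f_1,\dots,f_p)$, and replacing $f_1,\dots,f_p$ by a finite generating family of $I$ (possible since $\k\langle x,y\rangle$ is Noetherian) we may assume $I=(f_1,\dots,f_p)$. As $\k\langle x,y\rangle$ is regular local, $\k\langle x,y\rangle_I$ is regular (Serre) of dimension $h:=\haut I$, so the Jacobian Criterion yields an $h\times h$ minor $\d(x,y)$ of $\frac{\partial(f_1,\dots,f_p)}{\partial(x,y)}$ with $\d\notin I$; differentiating the relations $f_i(x,\wdh y(x))=0$ with respect to the $x_j$, as in Theorem \ref{Ar68}, we may take $\d$ to be a minor of $\frac{\partial(f_1,\dots,f_p)}{\partial y}$ with $\d(x,\wdh y(x))\neq0$. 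A second use of the Jacobian Criterion (via the Krull Hauptidealsatz and the resulting inequality on heights) gives $q\notin I$ and $e\in\N$ with $qf_i^e\in(f_1,\dots,f_h)$ for $h<i\le p$; hence, provided our final solution agrees with $\wdh y$ modulo $(x)^{c'}$ for some $c'>\ord(q(x,\wdh y(x)))$, the equations $f_{h+1},\dots,f_p$ hold automatically, and we may assume $p=h$.

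\textbf{Conclusion via Lemma \ref{lemma1} and Proposition \ref{TougeronIFT}.} Now $p=h$ and $\d$ is an $h\times h$ minor of $\frac{\partial f}{\partial y}$ with $\d(x,\wdh y(x))\neq0$. Apply the algebraic-power-series analogue of Lemma \ref{lemma1} — whose proof is the one given word for word, since it uses only Weierstrass division (valid in $\k\langle x'\rangle$ and in $\k\langle x',y_{i,j},a_p\rangle$), a linear change of the variables $x$, and the inductive hypothesis of the theorem in $n-1$ variables — to $g:=\d^2$, with $c':=c+d+1$ and $d:=\ord(\d^2(x,\wdh y(x)))$; this produces $\ovl y(x)\in\k\langle x\rangle^m$ with $f(x,\ovl y(x))\in(\d^2(x,\ovl y(x)))$ and $\ovl y-\wdh y\in(x)^{c'}$, so that $f(x,\ovl y(x))\in\d^2(x,\ovl y(x))\,(x)^c$ by Taylor's formula. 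Finally the Tougeron Implicit Function Theorem (Proposition \ref{TougeronIFT}), valid in $\k\langle x\rangle$ because it rests only on the Implicit Function Theorem, upgrades $\ovl y$ to an algebraic-power-series solution $\wdt y(x)\in\k\langle x\rangle^m$ of $f=0$ with $\wdt y-\ovl y\in(\d(x,\ovl y(x)))(x)^c\subset(x)^c$, whence $\wdt y\equiv\wdh y$ modulo $(x)^c$, which closes the induction.

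\textbf{Main obstacle.} The one step requiring genuine care — and the reason the theorem is asserted to hold ``in any characteristic'' — is the use of the Jacobian Criterion when $\k$ is not perfect (in particular imperfect of positive characteristic): there the naive minor argument must be replaced by the form of the criterion involving a $p$-basis of $\k$, equivalently the module $\Omega_{\k\langle x,y\rangle/\k}$ together with a separating transcendence basis of $\Frac(\k\langle x,y\rangle/I)$ over $\k$; for $\k$ perfect, and a fortiori in characteristic zero, this is standard (cf. Remark \ref{rmkAr68}(ii)). A minor secondary point is the polynomial reduction above, where one should either be content with solving over the finite extension $\k(\wdh y(0))$ or check that no loss occurs. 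It is also worth noting, as in Remark \ref{rmkAr68}(i), that for $n=1$ the ring $\k\langle x\rangle$ is a discrete valuation ring, Lemma \ref{lemma1} becomes unnecessary, and the whole argument collapses to a single application of the Tougeron Implicit Function Theorem.
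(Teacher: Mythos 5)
Your proposal is exactly the paper's argument: the paper proves Theorem \ref{Ar69} by observing that the proof of Theorem \ref{Ar68} uses only the Weierstrass Division Theorem and the Implicit Function Theorem, both valid in $\k\langle x\rangle$ (cf.\ Section \ref{weierstrass} and Lemma \ref{henselization_A[x]}), so it carries over verbatim. Your caveat about the Jacobian Criterion over imperfect fields is the same one the paper flags in Remark \ref{rmkAr68}~ii), so the proposal is correct and follows essentially the same route.
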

  
 In fact in \cite{Ar69} M. Artin gives a more general version of this statement valid for polynomial equations over a field or an excellent discrete valuation ring $R$, and proves that the formal solutions of such equations can be approximated by solutions in the Henselization of the ring of polynomials over $R$, in particular in a localization of a  finite extension of the ring of polynomials over $R$. In the case $R=\k$ is a field the Henselization of $\k[x]_{(x)}$ is the ring of algebraic power series $\k\langle x\rangle$ (see Lemma \ref{henselization_A[x]}).
 The proof of the result of M. Artin, when $R$ is an excellent discrete valuation ring, uses N\'eron $\p$-desingularization \cite{Ne} (see Section \ref{N\'eron} for a statement of N\'eron $\p$-desingularization). This result is very important since it enables to reduce some algebraic problems over complete local rings to local rings which are localizations of finitely generated rings over a field or a discrete valuation ring. \\
 For instance this idea, first used by C. Peskine and L. Szpiro, was exploited by M. Hochster to reduce problems over complete local rings in characteristic zero to the same problems in positive characteristic. The idea is the following: let us assume that some statement $(T)$ is true  in positive characteristic (where we can use the Frobenius map to prove it for instance) and let us assume that there exists an example showing that $(T)$ is not true in characteristic zero. In some cases we can use the Artin Approximation Theorem to show the existence of a counterexample to  $(T)$ in the Henselization at a prime ideal  of a finitely generated algebra over a field of characteristic zero. Since the Henselization is the direct limit of \'etale extensions, we can show the existence of a counterexample to $(T)$ in a  local ring $A$ which is the localization of a finitely generated algebra over a characteristic zero  field  $\k$. Thus $A$ is defined by a finite number of data and we may lift this counterexample to a ring which is the localization of a finitely generated ring over $\Q$, and even over $\Z[\frac{1}{p_1},\ldots,\frac{1}{p_s}]$ where the $p_i$  are prime integers. Finally we may show that this counterexample remains a counterexample to  $(T)$ over $\Z/p\Z$ for all but finitely many primes $p$ by reducing the problem modulo $p$ (in fact for $p\neq p_i$ for $1\leq i\leq s$). This is a contradiction which completes the proof.\\
 This idea was used to prove important results about Intersection Conjectures (in \cite{P-S} for the first time)   and  Homological Conjectures  \cite{H-R,H} (see \cite{Sch} 8.6 for more details).\\
  
 J. Denef and L. Lipshitz   axiomatized the properties a ring needs to satisfy in order to adapt the proof of the main theorem of \cite{Ar69} due M. Artin. They called such families of rings \emph{Weierstrass Systems}. There are two reasons for introducing such rings: the first one is the proof of Theorem \ref{nested_ana} (i.e. the 1-variable Nested Approximation) and the second one is their use in proofs of Strong Artin Approximation results via ultraproducts (see Remark \ref{tobedone}). Previously H. Kurke, G. Pfister, D. Popescu, M. Roczen and T. Mostowski (cf. \cite{KPPRM}) introduced the notion of \emph{Weierstrass category} which is very similar (see \cite{KP} for a connection between these two notions).\\
 Before giving the definition of a Weierstrass System we need two definitions:
 
 \begin{definition}
 Let $(A,\m_A)$ be a local ring. The \index{completion}  \emph{completion} of $A$, denoted by $\wdh A$, is  the limit  $\underset{{\longleftarrow}}{\lim}\frac{A}{\m_A^n}$. In the case where $A$ is one of the following rings: $\k[x]_{(x)}$, $\k\langle x\rangle$, $\k\{x\}$ where $\k$ is a field, then $\wdh A=\k\lb x\rb$. When $A$ is a field then $\wdh A=A$ since $\m_A=(0)$.
 
 \end{definition}
 
 \begin{definition}
 A \index{discrete valuation ring} \emph{discrete valuation ring} is a Noetherian local domain whose maximal ideal is  principal and different from $(0)$.\\
  The main examples of complete discrete valuation rings are the ring of power series $\k\lb x\rb$ with $x$ a single variable and $\k$ a field (in this case its maximal ideal is $\p=(x)$), and the ring of $p$-adic integers $\Z_p$ (in this case its maximal ideal is $\p=(p)$).\\
   The rings of algebraic power series $\k\langle x\rangle$ or convergent power series $\k\{x\}$ in one variable $x$ are Henselian discrete valuation rings.\\
   The rings $\Z_{(p)}$, with $p$ prime, and $\k[x]_{(x)}$, with $x$ one variable, are discrete valuation rings but they are not Henselian.
 \end{definition}
  
\begin{definition}\label{W-sys}\cite{D-L} \index{Weierstrass System} Let $\k$ be a field or a discrete valuation ring of maximal ideal $\p$. By a \emph{Weierstrass System of local $\k$-algebras}, or a \emph{W-system} over $\k$, we mean a family of  $\k$-algebras $\k\llceil x_1,\ldots,\,x_n\rrceil$, $n\in\N$ such that:
\begin{enumerate}
\item[i)] For $n=0$, the $\k$-algebra is $\k$,\\
For any $n\geq 1$, $\k[x_1,\ldots,x_n]_{(\p,x_1,\ldots,x_n)}\subset \k\llceil x_1,\ldots,x_n\rrceil \subset \wdh \k\llbracket x_1,\ldots,x_n\rrbracket  $\\
and $\k\llceil x_1,\ldots,\,x_{n+m}\rrceil \cap \wdh  \k\llbracket x_1,\ldots,\,x_n\rrbracket  =\k\llceil x_1,\ldots,\,x_n\rrceil$ for $m\in\N$.
For any permutation $\s$ of $\{1,\ldots,\,n\}$ 
$$f\in\k\llceil x_1,\ldots,x_n\rrceil \Longrightarrow f(x_{\s(1)},\ldots,\,x_{\s(n)})\in\k\llceil x_1,\ldots, x_n\rrceil.$$
\item[ii)]  Any element of $\k\llceil x\rrceil$, $x=(x_1,\ldots,x_n)$, which is a unit in $\wdh \k\llbracket x\rrbracket  $, is a unit in $\k\llceil x\rrceil$.
\item[iii)] If $f\in\k\llceil x\rrceil $ and $\p$ divides $f$ in $\wdh \k\lb x\rb$ then $\p$ divides $f$ in $\k\llceil x\rrceil$. Here $\wdh\k$ denotes the completion of $\k$ when $\k$ is a discrete valuation ring, i.e. $\wdh \k=\underset{{\longleftarrow}}{\lim}\frac{\k}{\p^n}$. When $\k$ is a field $\wdh\k=\k$.
\item[iv)] Let $f\in(\p, x)\k\llceil x\rrceil$ such that $f\neq 0$. Suppose that $f\in (\p,x_1,\ldots,x_{n-1},x_n^s)$ but $f\notin (\p,x_1,\ldots,x_{n-1}, x_n^{s-1})$. Then for any $g\in\k\llceil x\rrceil$ there exist a unique $q\in\k\llceil x\rrceil$ and a unique $r\in\k\llceil x_1,\ldots,\,x_{n-1}\rrceil[x_n]$ with $\deg_{x_n}r<d$ such that $g=qf+r$.
\item[v)] (if char$(\k)>0$) If $\ovl{y}\in(\p,x_1,\ldots,\,x_n)\wdh \k\llbracket x_1,\ldots,\,x_n\rrbracket ^m $ and $f\in\k\llceil y_1,\ldots,\,y_m\rrceil$ such that $f\neq 0$ and $f(\ovl{y})=0$,  there exists $g\in\k\llceil y\rrceil$ irreducible in $\k\llceil y\rrceil$ such that $g(\ovl{y})=0$ and such that there does not exist any unit $u(y)\in\k\llceil y\rrceil$ with $u(y)g(y)=\sum_{\a\in\N^n}a_{\a}y^{p\a}$ ($a_{\a}\in\k$).
\item[vi)] (if char$(\k/\p)\neq 0$) Let $(\k/\p)\llceil x\rrceil$ be the image  of $\k\llceil x\rrceil$ under the projection $\wdh \k\lb x\rb\lgw (\k/\p)\lb x\rb$. Then $(\k/\p)\llceil x\rrceil$ satisfies v).
\end{enumerate}
\end{definition}

\begin{proposition}\cite{D-L}\label{prop_W} Let us consider a $W$-system $\k\llceil x\rrceil$. 
\begin{enumerate} \item[i)] For every $n\geq$, $\k\llceil x_1,\ldots,x_n\rrceil$ is a Noetherian Henselian regular  local ring. In particular $\k$ is a Henselian local ring.
\item[ii)] If $f\in\k\llceil x_1,\ldots,x_n,y_1,\ldots,y_m\rrceil$ and $g\in(\p,x)\k\llceil x_1,\ldots,x_n\rrceil^m$,  $f(x,g(x))\in\k\llceil x \rrceil$.
\item[iii)] If $f\in\k\llceil x\rrceil$, then $\frac{\partial f}{\partial x_i}\in\k\llceil x\rrceil$.
\item[iv)] If $\k\llceil x_1,\ldots,x_n\rrceil$ is a family of rings satisfying i)-iv) of Definition \ref{W-sys} and if all these rings are excellent,  they satisfy v) and vi) of Definition \ref{W-sys}.

\end{enumerate}
\end{proposition}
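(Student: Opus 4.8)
\emph{Strategy.} All four properties are bootstrapped from the single substantial hypothesis, the Weierstrass division axiom iv) of Definition~\ref{W-sys}, together with the sandwiching $\k[x]_{(\p,x)}\subseteq\k\llceil x\rrceil\subseteq\wdh\k\lb x\rb$ and the unit axiom ii) of Definition~\ref{W-sys}. I would prove them in the order ii), i), iii), iv): the composition statement ii) is what supplies ``changes of coordinates'' inside a $W$-system and is also needed to make sense of the specializations appearing in the proof of iii), while part i) feeds into iv). The bulk of the work sits in iv).

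\emph{Part ii).} By induction on $m$. Regarding $f\in\k\llceil x,y_1,\dots,y_m\rrceil$ as a series with $y_m$ as its last variable, the element $y_m-g_m(x)$ lies in the maximal ideal but not in $(\p,x,y_1,\dots,y_{m-1})$, hence is $y_m$-regular of order $1$, so axiom iv) of Definition~\ref{W-sys} gives $f=q\cdot(y_m-g_m(x))+r$ with $r\in\k\llceil x,y_1,\dots,y_{m-1}\rrceil$. Specializing $y_m\mapsto g_m(x)$ inside $\wdh\k\lb x,y_1,\dots,y_{m-1}\rb$ gives $f(x,y_1,\dots,y_{m-1},g_m(x))=r$, and the induction hypothesis applies. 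Two consequences I will use: taking $m=1$, $g_1=0$ shows the specialization $x_n\mapsto 0$ sends $\k\llceil x\rrceil$ into $\k\llceil x_1,\dots,x_{n-1}\rrceil$; and taking the $g_i$ of the form $x_i+h_i(x)$ with $h_i\in(\p,x)$ shows that $W$-systems are stable under triangular coordinate changes.

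\emph{Parts i) and iii).} Locality is clear: an $f\notin(\p,x)$ is a unit of $\wdh\k\lb x\rb$, hence a unit of $\k\llceil x\rrceil$ by axiom ii) of Definition~\ref{W-sys}. For Noetherianity I induct on $n$: given a nonzero ideal $I$, pick $0\neq f\in I$, make it $x_n$-regular of some order $s$ by a triangular coordinate change, and use Weierstrass preparation (divide $x_n^s$ by $f$) to write $f=u\cdot W$ with $u$ a unit and $W\in\k\llceil x_1,\dots,x_{n-1}\rrceil[x_n]$ a Weierstrass polynomial; division then exhibits $\k\llceil x\rrceil/(f)$ as the free $\k\llceil x_1,\dots,x_{n-1}\rrceil$-module on $1,x_n,\dots,x_n^{s-1}$, a Noetherian ring, so $I/(f)$ and $I$ are finitely generated. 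Regularity: $\k\llceil x\rrceil$ is Noetherian local with maximal ideal generated by $n$ elements (resp. $n+1$ when $\k$ is a discrete valuation ring with uniformizer $\pi$), and one checks its completion is $\wdh\k\lb x\rb$, so its dimension equals $\dim\wdh\k\lb x\rb=n$ (resp. $n+1$), matching the embedding dimension. Henselianity: for $P(T)=T^d+c_{d-1}T^{d-1}+\dots+c_1T+c_0\in\k\llceil x\rrceil[T]$ with $c_0\in\m$ and $c_1$ a unit, $P$ viewed as an element of $\k\llceil x,T\rrceil$ is $T$-regular of order exactly $1$, so Weierstrass preparation yields $P=u\cdot(T-a)$ with $a\in(\p,x)$, i.e.\ $P$ has a root in $\m$, which is the simple-root form of Hensel's lemma. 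For iii): introduce a new variable $t$; by the consequences of ii) above, $f(x_1,\dots,x_i+t,\dots,x_n)\in\k\llceil x,t\rrceil$, its difference with $f(x)$ vanishes at $t=0$, and division by $t$ shows $\big(f(x_1,\dots,x_i+t,\dots,x_n)-f(x)\big)/t\in\k\llceil x,t\rrceil$; its specialization at $t=0$ is $\partial f/\partial x_i$, which therefore lies in $\k\llceil x\rrceil$.

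\emph{Part iv), the main obstacle.} First vi) reduces to v): $(\k/\p)\llceil x\rrceil\cong\k\llceil x\rrceil/\p\k\llceil x\rrceil$ (using axiom iii) of Definition~\ref{W-sys}) is a $W$-system over the field $\k/\p$ satisfying i)--iv) of Definition~\ref{W-sys} — each property descends from the corresponding property of $\k\llceil x\rrceil$ — and it is excellent, being a quotient of an excellent ring, so it is enough to prove v) for $W$-systems over a field. So assume $\cha\k=p>0$, let $f\in\k\llceil y\rrceil\setminus\{0\}$, and suppose $f(\ovl y)=0$ with $\ovl y\in\wdh\k\lb x\rb^m$ having all coordinates in $(\p,x)$; let $\p$ be the kernel of the map $\k\llceil y\rrceil\to\wdh\k\lb x\rb$, $y\mapsto\ovl y$, a nonzero prime containing $f$. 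If $\ovl y=0$ take $g=y_1$. Otherwise, by part i), $\k\llceil y\rrceil$ is a regular local ring, hence a unique factorization domain (Auslander--Buchsbaum), and some irreducible factor of $f$ lies in $\p$; the task is to arrange that this factor, or a suitable substitute in $\p$, is not, up to a unit, a power series in $y^p$. This is where excellence is used: the completion map $\k\llceil y\rrceil\to\wdh\k\lb y\rb$ is geometrically regular, so the formal fibres are geometrically reduced, which forces the residue field extension cut out by $\ovl y$ to be separable over $\k$; a purely inseparable ``$y^p$-defect'' in a candidate equation would produce an equation of the same shape vanishing at $\ovl y^{\,p}$, whose order is $p$ times larger, and since $\ovl y\neq 0$ this cannot recur indefinitely, so one eventually reaches a genuinely separable irreducible equation lying in $\p$. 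Turning this last descent into a rigorous argument — pinning down the interplay between excellence, geometric regularity of the formal fibres in characteristic $p$, and the ``series in $y^p$'' condition — is the delicate part; everything preceding it is routine Weierstrass bootstrapping.
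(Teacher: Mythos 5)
Your parts i)--iii) are the standard Weierstrass bootstrapping and are fine as far as they go; note, though, that the paper does not reprove them but simply cites Remark 1.3 of \cite{D-L}, so the only statement for which the paper supplies an argument is iv) --- and that is exactly where your proposal has a genuine gap. You correctly reduce v) to producing an irreducible element of $\mathfrak{q}:=\Ker\bigl(\k\llceil y\rrceil\lgw\wdh{\k}\lb x\rb\bigr)$ which is not a unit times a series in the $y_i^p$, but at that point you only sketch an iterated descent (``a purely inseparable $y^p$-defect would produce an equation vanishing at $\ovl{y}^{\,p}$, of larger order, and this cannot recur indefinitely'') and you concede yourself that it is not rigorous. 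As stated it is indeed not a proof: replacing $\ovl{y}$ by $\ovl{y}^{\,p}$ merely poses the same question at a new point, no quantity is exhibited that strictly decreases, and unbounded growth of the order is not by itself a contradiction, so the ``descent'' is an unterminated regress rather than an induction. Moreover the excellence you invoke is attached to the wrong morphism: geometric regularity of the formal fibres of $\k\llceil y\rrceil\lgw\wdh{\k}\lb y\rb$ says nothing direct about $\mathfrak{q}$, which is a prime of $\k\llceil y\rrceil$ cut out by a point whose coordinates lie in $\wdh{\k}\lb x\rb$, not in $\wdh{\k}\lb y\rb$.

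The paper closes this step in one stroke, with no iteration. Let $I$ be the kernel of the substitution morphism $\k\llceil x,y\rrceil\lgw\wdh{\k}\lb x\rb$, so $\mathfrak{q}=I\cap\k\llceil y\rrceil$. Excellence of $\k\llceil x\rrceil$ makes $\k\llceil x\rrceil\lgw\wdh{\k}\lb x\rb$ regular, hence $\Frac(\wdh{\k}\lb x\rb)$ is separable over $\Frac(\k\llceil x\rrceil)$; since $\Frac\bigl(\k\llceil x,y\rrceil/I\bigr)$ is a subfield of $\Frac(\wdh{\k}\lb x\rb)$, one deduces that $\Frac(\k)\lgw\Frac\bigl(\k\llceil y\rrceil/\mathfrak{q}\bigr)$ is separable. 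If, on the contrary, every irreducible $g\in\mathfrak{q}$ admitted a unit $u$ with $u(y)g(y)=\sum_{\a}a_{\a}y^{p\a}$, this extension would be purely inseparable --- a contradiction; this separable-versus-purely-inseparable dichotomy applied to the residue field extension is precisely what your sketch is missing, and vi) is obtained by the same argument (your preliminary reduction of vi) to v) over $\k/\p$ is consistent with that). So to repair your proof you should drop the descent on $\ovl{y}^{\,p}$, apply excellence to the $x$-variable ring, i.e.\ to the morphism $\k\llceil x\rrceil\lgw\wdh{\k}\lb x\rb$ where the point $\ovl{y}$ actually lives, and convert the hypothesis that all irreducibles in $\mathfrak{q}$ are unit multiples of series in the $y_i^p$ into pure inseparability of $\Frac\bigl(\k\llceil y\rrceil/\mathfrak{q}\bigr)$ over $\Frac(\k)$.
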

\begin{proof}
All these assertions are proven in Remark 1.3 \cite{D-L}, except iv).\\  
Proof of iv): let us assume that char$(\k)=p>0$ and let $\ovl{y}\in (\p,x)\wdh{\k}\lb x\rb^m$. We denote by $I$ the kernel of the $\k\llceil x\rrceil$-morphism $\k\llceil x, y\rrceil\lgw \wdh{\k}\lb x\rb$ defined by the substitution of $\ovl{y}$ for $y$ and let us assume that $I\cap \k\llceil y\rrceil\neq (0)$. Since  $\k\llceil x \rrceil$ is excellent, the morphism $\k\llceil x\rrceil \lgw \wdh{\k}\lb x\rb$ is regular (see Example \ref{ex_reg}). Thus $\Frac(\wdh{\k}\lb x\rb)$ is a separable extension of  $\Frac(\k\llceil x\rrceil)$ (see Example \ref{ex_reg}), but $\Frac\left(\frac{\k\llceil x, y\rrceil}{I}\right)$ is a subfield of $\Frac(\wdh{\k}\lb x\rb)$, hence $\Frac(\k\llceil x\rrceil)\lgw\Frac\left(\frac{\k\llceil x, y\rrceil}{I}\right)$ is separable. This implies that the field extension $\Frac(\k)\lgw \Frac\left(\frac{\k\llceil y\rrceil}{I\cap \k\llceil y\rrceil}\right)$ is a separable field extension. But if for every irreducible $g\in I\cap \k\llceil y\rrceil$  there  existed a unit $u(y)\in\k\llceil y\rrceil$ with $u(y)g(y)=\sum_{\a\in\N^n}a_{\a}y^{p\a}$,  the extension $\Frac(\k)\lgw \Frac\left(\frac{\k\llceil y\rrceil}{I\cap \k\llceil y\rrceil}\right)$ would be purely inseparable. This proves that Property v) of Definition \ref{W-sys} is satisfied.\\ 
The proof that Property vi) of Definition \ref{W-sys} is satisfied is identical.
 \end{proof}

\begin{example}
  We give here a few examples of Weierstrass systems:
  \begin{enumerate}
\item[i)] If $\k$ is a field or a complete discrete valuation ring, the family $\k\llbracket x_1,\ldots,\,x_n\rrbracket$ is a W-system over $\k$ (using Proposition \ref{prop_W} iv) since complete local rings are excellent rings).
\item[ii)] Let $\k\langle x_1,\ldots,\,x_n\rangle$ be the Henselization of the localization of $\k[x_1,\ldots,\,x_n]$ at the maximal ideal $(x_1,\ldots,\,x_n)$ where $\k$ is a field or an excellent  discrete valuation ring. Then, for $n\geq 0$, the family $\k\langle x_1,\ldots,\,x_n\rangle$ is a W-system over $\k$ (using Proposition \ref{prop_W} iv) since the Henselization of an excellent local ring is again  excellent - see Proposition \ref{henz_exc}).
\item[iii)] The family $\k\{x_1,\ldots,\,x_n\}$ (the ring of convergent power series in $n$ variables over a valued field $\k$) is a W-system over $\k$.
\item[iv)] The family of Gevrey power series in $n$ variables over a valued field $\k$ is a W-system  \cite{Br}.

\end{enumerate}
\end{example}
Then we have the following Approximation result (the case of $\k\langle x\rangle$ where $\k$ is a field or a discrete valuation ring is proven in \cite{Ar69}, the general case is proven in \cite{D-L} - see also \cite{Robba} for a particular case):

\begin{theorem}\label{W}\cite{Ar69,D-L}
Let $\k\llceil x\rrceil$ be a W-system over $\k$, where $\k$ is a field or a discrete valuation ring with maximal ideal $\p$. Let  $f\in\k\llceil x,y\rrceil^r$ and  $\wdh{y}\in (\p,x)\wdh{\k}\lb x\rb^m$ satisfy
$$f(x,\wdh{y})=0.$$ Then, for any $c\in\N$, there exists a  power series solution $\wdt{y}\in(\p,x)\k\llceil x\rrceil^m$,
 $$f(x,\wdt{y})=0 \text{ such that } \wdt{y}-\wdh{y}\in(\p,x)^c.$$
\end{theorem}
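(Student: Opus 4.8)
The plan is to run the proof of Theorem~\ref{Ar68} essentially verbatim, with $\k\{x,y\}$ replaced by $\k\llceil x,y\rrceil$, checking at each step that the only tools that argument uses are available for an arbitrary $W$-system. Those tools are: regularity of the localization of the ambient ring at a prime, the Jacobian criterion, Weierstrass division in one variable, and the Implicit Function Theorem (i.e. Hensel's Lemma) for the final step. For a $W$-system the first is supplied by Proposition~\ref{prop_W}~i) (each $\k\llceil x_1,\cdots,x_n\rrceil$ is Noetherian, regular and Henselian, so Serre's theorem gives regularity of $\k\llceil x,y\rrceil_I$ and the Jacobian criterion applies), the third by property~iv) of Definition~\ref{W-sys}, the last again by the Henselian property; and Proposition~\ref{prop_W}~ii)--iii) guarantees that substitution and differentiation do not leave the system. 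Concretely, I would first reduce to the case $I=\Ker\phi$, where $\phi:\k\llceil x,y\rrceil\to\wdh\k\lb x\rb$ is the $\k\llceil x\rrceil$-morphism sending $y$ to $\wdh y$ (as in Theorem~\ref{Ar68}, proving the statement for a set of generators of this prime ideal gives it for $f$, since $f_i\in\Ker\phi$). Then, with $h:=\haut(I)$, I would use the Jacobian criterion over $\k\llceil x,y\rrceil_I$ to produce an $h\times h$ minor $\d$ of $\partial(f_1,\cdots,f_r)/\partial(y)$ with $\d(x,\wdh y)\neq 0$, passing from a minor of the full Jacobian to one of $\partial f/\partial y$ by differentiating $f_i(x,\wdh y(x))=0$ in the $x_j$; and, exactly as in the analytic case, reduce to $r=h$ using $\sqrt{J\k\llceil x,y\rrceil_I}=I\k\llceil x,y\rrceil_I$ for $J=(f_1,\cdots,f_h)$ together with Taylor's formula.

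Once $r=h$, I would establish the $W$-system analogue of Lemma~\ref{lemma1} for $g=\d^2$ by the same device: after a linear change of coordinates $g(x,\wdh y(x))$ is $x_n$-regular, hence by property~iv) equals a Weierstrass polynomial $\wdh a(x)\in\k\llceil x'\rrceil[x_n]$ times a unit; one Weierstrass-divides the $\wdh y_i$ by $\wdh a$, divides $g(x,y^*)$ and the $f_k(x,y^*)$ by the generic polynomial $A(a_i,x_n)$, and applies the induction hypothesis in $n-1$ variables (over the same $W$-system) to the coefficient equations $G_l=F_{k,l}=0$, correcting constant terms as in the footnote to the proof of Lemma~\ref{lemma1}. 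Lifting back produces $\ovl y\in(\p,x)\k\llceil x\rrceil^m$ with $f(x,\ovl y)\in(\d(x,\ovl y))^2(\p,x)^c$, and the Henselian version of Proposition~\ref{TougeronIFT} (whose proof is pure Hensel's Lemma, hence valid in any Henselian ring) then yields $\wdt y\in(\p,x)\k\llceil x\rrceil^m$ with $f(x,\wdt y)=0$ and $\wdt y-\wdh y\in(\p,x)^c$. The base case $n=0$ is trivial when $\k$ is a field; when $\k$ is a discrete valuation ring the whole argument is carried out with $\p$ treated as an extra parameter alongside $x$, using properties~i)--iv) of Definition~\ref{W-sys} and the Henselian property of $\k$ itself (this is the place where, in Artin's original treatment of $\k\langle x\rangle$ over a discrete valuation ring in \cite{Ar69}, N\'eron $\p$-desingularization enters).

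The one genuinely new obstacle is the Jacobian criterion in positive characteristic: if $\cha\k=p>0$ one cannot guarantee a priori that a nonvanishing minor $\d$ exists, because the residue extension cut out by $I$ may be inseparable. This is exactly what conditions~v) and~vi) of Definition~\ref{W-sys} are designed to repair: they force $I\cap\k\llceil y\rrceil$ (and, over a discrete valuation ring, its reduction modulo $\p$) to contain an irreducible element that is not, up to a unit, a power series in the $p$-th powers $y_i^p$; hence $\Frac(\k)\to\Frac\bigl(\k\llceil y\rrceil/(I\cap\k\llceil y\rrceil)\bigr)$ is not purely inseparable, and propagating this separability along the regular morphism $\k\llceil x\rrceil\to\wdh\k\lb x\rb$ — precisely as in the proof of Proposition~\ref{prop_W}~iv) — produces enough separability for the Jacobian criterion to furnish $\d$, after which the argument above goes through unchanged. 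In particular, when the rings of the $W$-system are excellent, conditions~v) and~vi) are automatic (Proposition~\ref{prop_W}~iv)), so the theorem for excellent $W$-systems rests on nothing beyond Weierstrass division, regularity and Henselianity.
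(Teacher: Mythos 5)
Your plan is the one the paper itself intends: the text gives no separate proof of Theorem \ref{W}, it relies on the observation made before Theorem \ref{Ar69} that the proof of Theorem \ref{Ar68} uses only the Weierstrass Division Property and the Implicit Function Theorem, together with the fact that Definition \ref{W-sys} was axiomatized by Denef and Lipshitz precisely so that Artin's induction on $n$ carries over. Your reduction to $I=\Ker\phi$, the passage to a minor $\d$ of $\partial f/\partial y$, the reduction to $r=h$, the $W$-system analogue of Lemma \ref{lemma1} via division by the generic polynomial $A$, and the final appeal to the Henselian Tougeron lemma (Proposition \ref{TougeronIFT2}) reproduce that scheme, with Proposition \ref{prop_W} i)--iii) and axiom iv) supplying exactly the tools you list.

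The one step that does not survive scrutiny is your positive-characteristic argument. You propose to obtain the minor $\d$ by ``propagating separability along the regular morphism $\k\llceil x\rrceil\to\wdh{\k}\lb x\rb$, as in the proof of Proposition \ref{prop_W} iv)''. This is circular and runs the implication backwards: regularity of $\k\llceil x\rrceil\to\wdh{\k}\lb x\rb$ amounts to excellence of $\k\llceil x\rrceil$ (Example \ref{ex_excellent} iv)), excellence is not among axioms i)--vi) of Definition \ref{W-sys}, and in this paper it is obtained only as a consequence of the approximation theorem via Rotthaus's converse of Theorem \ref{Pop_app}; moreover Proposition \ref{prop_W} iv) proves the opposite implication (excellent $\Rightarrow$ v) and vi)), so it cannot be quoted at this point. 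The way v) and vi) actually enter --- and this is the whole point of the Denef--Lipshitz axiomatization --- is direct: whenever the inductive elimination produces a nonzero element of $I\cap\k\llceil y\rrceil$ (or of its image modulo $\p$), axiom v) (resp. vi)) furnishes an irreducible $g$ vanishing at $\wdh{y}$ which is not, up to a unit, a series in the $y_i^p$; such a $g$ has some partial derivative $\partial g/\partial y_i\notin I$, and this nonvanishing derivative is what replaces the characteristic-zero Jacobian criterion, without any appeal to regularity of the completion morphism. With that correction (and noting that the same axioms, not excellence, must also carry the base case when $\k$ is a non-complete discrete valuation ring), your argument is the intended adaptation.
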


\noindent Moreover let us mention that Theorem \ref{Pl} extends also  to Weierstrass systems (see \cite{Ro12}).

\begin{remark}\label{mouze'}
Let $(m_k)_k$ be a logarithmically convex sequence of positive real numbers, i.e.
\begin{equation}\label{logcon} m_0=1\ \text{ and } \ m_km_{k+2}\geq m_{k+1}^2\ \forall k\in\N,\end{equation}
 and $\k=\R$ or $\C$. The set $\k\lb x\rb(m_k)$ is the subset of $\k\lb x\rb$ defined as follows:
\begin{equation}\label{mouze}\k\lb x\rb(m_k)= \left\{\sum_{\a\in\N^n}f_{\a}x^{\a}\in\k\lb x\rb\ / \ \exists C>0,\ \forall \a,\ \sup_{\a\in\N^n}\frac{|f_{\a}|}{C^{|\a|}m_{|\a|}}<\infty\right\}.\end{equation}
By Leibniz's rule and \eqref{logcon}, $\k\lb x\rb(m_k)$ is a subring of $\k\lb x\rb$. 
This ring does not satisfy the Weierstrass division Theorem but it satisfies Theorem \ref{W} and Theorem \ref{Pl}  (see \cite{Mouze}). To be more precise if $f$ and $g\in\k\lb x\rb(m_k)$ and $f$ is $x_n$-regular of order $d$, then  for the Weierstrass division of $g$ by $f$:
$$g=fq+r$$
the series $q$ and $r$ are not in $\k\lb x\rb(m_k)$ in general. Nevertheless if $d=\ord(f)$ then $q$ and $r\in\k\lb x\rb(m_k)$ \cite{CC}. But in the proof of the Artin Approximation Theorem one needs to divide by a well chosen minor $\d(x)$ that is made $x_n$-regular $d$ by a linear change of coordinates and $d$ can be chosen such that $d=\ord(\d(x))$ (see Remark \ref{change_x_n-regular}). So the original proof of Artin adapts also to this case.
\end{remark}

%%%%%%%%%%%%%%%%%%%%%%%%%%%%%%%%%%%%%%%%%%%%%%%%%%%%%%%%%%%%%%%

\subsection{The General N\'eron Desingularization Theorem}\label{N\'eron}
During the 70s  and the 80s  one of the main objectives concerning  the Artin Approximation Problem was to find  necessary and sufficient conditions for a local ring $(A,\m_A)$ to have the Artin Approximation Property, i.e. such that the set of solutions in $A^m$ of any system of algebraic equations $(\mathcal{S})$ in $m$ variables with coefficients in $A$ is dense for the Krull topology  in the set of solutions  of $(\mathcal{S})$ in $\wdh{A}^m$.  \\
Let us recall that the Krull topology on $A$ is the topology induced by the following norm: $|a|:=e^{-\ord(a)}$ for all $a\in A\backslash\{0\}$.
The problem was to find a way of proving approximation  results without using the Weierstrass Division Theorem which does not hold for every Henselian local ring (see Example \ref{ex_nested}).\\

\begin{remark}
The most important case is when the ring $A$ is Noetherian. So in the following we only consider this case. But there are also examples of non-Noetherian rings $A$ that satisfy analogues of  the Artin Approximation Property, see  \cite{Sch1,M-B'}.\index{Approximation in non-Noetherian rings} See also \cite{To76} for the case  of $\mathcal C^\infty$ real function germs.\index{$\mathcal C^\infty$ function germs}

\end{remark}

 \begin{remark}
 Let $P(y)\in A[y]$ satisfy $P(0)\in\m_A $ and $\frac{\partial P}{\partial y}(0)\notin \m_A$. Then, by the Implicit Function Theorem for complete local rings (see Example \ref{ex_henselian} and Theorem \ref{IFT}), $P(y)$ has a unique root in $\wdh{A}$ equal to 0 modulo $\m_A$. Thus if we want to be able to approximate roots of $P(y)$ in $\wdh{A}$  by roots of $P(y)$  in $A$, a necessary condition is that the root of $P(y)$ constructed by the Implicit Function Theorem is in $A$. Thus it is clear that if a local ring $A$ has the Artin Approximation Property then $A$ has to satisfy the Implicit Function Theorem, in other words $A$ is necessarily Henselian (see Appendix \ref{etale_app} for a definition of  a Henselian ring).
  \end{remark}
In fact M. Artin conjectured that a sufficient condition would be that $A$ is an excellent Henselian local ring (see \cite[Conjecture (1.3)]{Ar70} or \cite{Ar82} where the result is proven when $A$ is the ring of convergent power series). 
The idea emerges soon that in order to prove this conjecture one should generalize P\l oski's   Theorem \ref{Pl} and a theorem of desingularization of A. N\'eron \cite{Ne} (see \cite[Question 3]{Ra72}). This generalization is the following (for the definitions and properties of a regular morphism and of an excellent local ring cf. Appendix \ref{regular_app} - for those concerning smooth and Žtale morphisms cf. Appendix \ref{etale_app}):

  \index{general N\'eron Desingularization}
 
  \begin{theorem}[General N\'eron Desingularization]\label{Popescu}\cite{Po85, Po86}
   Let be given $\phi : A\lgw B$ a \index{regular morphism} regular morphism of  Noetherian rings, $C$ a finitely generated $A$-algebra and $\psi : C\lgw B$ a morphism of $A$-algebras. Then $\psi$ factors through a finitely generated $A$-algebra $D$ which is smooth over $A$:
 $$\xymatrix{A \ar[r]^{\phi} \ar[d] & B\\
 C \ar[ru]^{\psi}  \ar@{.>}[r]& D \ar@{.>}[u]}$$
 \end{theorem}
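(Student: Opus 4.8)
The plan is to follow the strategy of Popescu, in the streamlined form later given by Ogoma, Andr\'e, Spivakovsky and Swan, and to prove instead the equivalent ``limit'' statement that a regular morphism of Noetherian rings is a filtered colimit of smooth algebras; by a routine Noetherian approximation argument (every finitely generated $A$-subalgebra of such a colimit factors through one of the smooth stages, and conversely) this is precisely the asserted factorization property. First I would carry out the reductions to a manageable local situation: writing $C=A[y_1,\dots,y_N]/(f_1,\dots,f_M)$ and using that $\psi$ involves only finitely many elements, one localizes and completes to reduce to the case where $A$ and $B$ are Noetherian local rings and $\phi$ is a local regular morphism, and one may further take $B$ to be the local ring of $C$ at a prime lying over the closed point of $A$. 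The point of these reductions is to make available the structural content of regularity: $\phi$ is flat with geometrically regular fibres, so that the special fibre $B/\m_A B$ is geometrically regular and $\phi$ is formally smooth for the relevant adic topologies; equivalently, the Andr\'e--Quillen homology $H_1$ of $\phi$ vanishes, which is the lifting property one will exploit.

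The core of the argument is a desingularization induction. To a finitely presented $A$-algebra $C=A[y]/I$ one attaches its standard smoothness (Jacobian) ideal $H_{C/A}\subseteq C$ --- built from the $h\times h$ minors of $\partial f/\partial y$ together with the ideal of $h$-element sub-systems of the $f_i$ --- whose zero locus is exactly the non-smooth locus of $\Spec C\to\Spec A$. Thus if $\psi(H_{C/A})B=B$, then $\psi$ already factors through a principal localization of $C$ which is smooth over $A$, and one is done. The real content is the \emph{Desingularization Lemma}: whenever $\psi(H_{C/A})B\neq B$, one can construct a larger finite-type $A$-algebra $C'$, together with $A$-algebra maps $C\to C'\to B$ compatible with $\psi$, such that the image of $H_{C'/A}$ in $B$ \emph{strictly} contains the image of $H_{C/A}$. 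Since $B$ is Noetherian, iterating this must terminate with the smoothing ideal equal to the unit ideal, and the factorization through a smooth $D$ follows.

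The main obstacle is the construction of $C'$, where essentially all the work lies. The idea is to use regularity of $\phi$ to put the defining relations of $C$, after a smooth base change on $A$ and modulo the relevant ideals, into a normal form in which some minor of the Jacobian matrix becomes a non-zerodivisor upon transport to $B$; this is exactly the mechanism of N\'eron's original $\p$-desingularization, which supplies the base case (a single relation over a discrete valuation ring, or a field, where the discrete valuation makes divisibility automatic, as in Remark \ref{rmkAr68} i)). The passage to the general case is an induction on an invariant of the singularity --- typically the dimension of the non-smooth locus of the fibre together with the order of a suitable element of the smoothing ideal --- in which one adjoins auxiliary variables constrained by relations that encode identities valid in $B$ because $\phi$ is regular (this is where formal smoothness, i.e. the vanishing of $H_1$, is used to lift approximate solutions). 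Making this induction actually close --- guaranteeing that the new Jacobian ideal genuinely grows at each stage, and controlling the bookkeeping of generators and relations --- is the delicate and lengthy part of Popescu's proof; the later expositions of Ogoma, Andr\'e, Spivakovsky and Swan are in essence successive clarifications of precisely this step, and one should expect to borrow from them rather than to reconstruct it from scratch.
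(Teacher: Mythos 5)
Your proposal follows essentially the same route as the paper's own sketch: introduce the smoothing (Jacobian) ideal $H_{C/A}$, observe that $\psi(H_{C/A})B=B$ already yields a smooth factorization, and otherwise enlarge $C$ so that the image of the smoothing ideal in $B$ strictly grows, concluding by Noetherianity of $B$, with the genuinely hard desingularization step (the paper's Proposition on enlarging $H_{C/A}B$ at a minimal prime) deferred to the literature exactly as the paper does. The only small slip is the claim that the unit-ideal case gives a factorization through a \emph{principal} localization of $C$: for non-local $B$ one needs the standard trick $D:=C[z_1,\dots,z_s]/(1-\sum_i c_iz_i)$ used in the paper, since no single generator of $H_{C/A}$ need map to a unit of $B$.
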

 
 Historically the first version of this theorem has been proven by A. N\'eron \cite{Ne} under the assumption that $A$ and $B$ are discrete valuation rings.  Then several authors gave  proofs of particular cases (see for instance \cite{Po80,Ar82,Brown',Ar-De,Ar-Ro,Rot1} - in this last paper the result is proven in the equicharacteristic zero case) until D. Popescu \cite{Po85,Po86} proved the general case. Then several authors provided simplified proofs or strengthened   this result \cite{Og, Sp, Sw, ST}. This result is certainly the most difficult to prove among all the results presented in this paper. We will just give a slight hint of the proof of this result here because there exist very nice and complete presentations of the proof elsewhere (see \cite{Sw} or \cite{ST}  for the general case, \cite{Qu} or \cite{Po00} for  the equicharacteristic zero case). \\
 Before explaining the relation with the Artin Approximation Theorem let us give one more definition. 
 Let $(A,I)$ be the data of a ring $A$ and an ideal $I$ of $A$. There exists a notion of \emph{Henselian pair} for such a couple $(A,I)$ which coincides with the notion of Henselian local ring when $A$ is a local ring and $I$ is its maximal ideal. One definition is the following: a couple $(A,I)$ is a Henselian pair \index{Henselian pair} if Hensel's Lemma (with the notation of Proposition \ref{Henselian}) is satisfied for $\m_A$ replaced by the ideal $I$. The reader may consult \cite[Part XI]{Ra}  for details. In what follows the reader may think about a Henselian pair $(A,I)$ only as a Henselian local ring $A$ whose maximal ideal is $I$. \\
Because $A\lgw \wdh{A}$ is regular when $A$ is an \index{excellent ring} excellent ring (see \ref{ex_reg}), $I$ is an ideal of $A$ and $\wdh{A}:=\underset{{\longleftarrow}}{\lim}\frac{A}{I^n}$ is  the $I$-adic completion of $A$, we get the following result:
 \index{general Artin Approximation}
 
 \begin{theorem}[General Artin Approximation]\label{Pop_app}
 Let $(A,I)$ be an excellent  Henselian pair and $\wdh A$ be the $I$-adic completion of $A$. Let $f(y)\in A[y]^r$ and $\wdh{y}\in\wdh{A}^m$  satisfy $f(\wdh{y})=0$. Then, for any $c\in\N$, there exists $\wdt{y}\in A^m$  such that $\wdt{y}-\wdh{y}\in I^c\wdh{A}$, and $f(\wdt{y})=0$.
 \end{theorem}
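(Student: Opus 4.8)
The plan is to obtain Theorem \ref{Pop_app} as a direct consequence of General N\'eron Desingularization (Theorem \ref{Popescu}), the two hypotheses serving two distinct purposes: excellence of $A$ provides the \emph{regular} morphism to which Theorem \ref{Popescu} is applied, while the Henselian pair hypothesis provides the lifting that brings a solution from the auxiliary smooth algebra back down to $A$. First I would record the elementary facts. Since $A$ is excellent it is Noetherian, hence its $I$-adic completion $\wdh A=\varprojlim A/I^n$ is again Noetherian and carries the canonical isomorphism $\wdh A/I^c\wdh A\cong A/I^c$ for every $c\in\N$; moreover, as recalled just before the statement, excellence of $A$ makes $\phi\colon A\lgw\wdh A$ a regular morphism of Noetherian rings.

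Next, form the finitely generated $A$-algebra $C:=A[y_1,\dots,y_m]/(f_1,\dots,f_r)$. Because $f_j(\wdh y)=0$ for all $j$, the assignment $y_i\mapsto\wdh y_i$ is well defined and gives an $A$-algebra morphism $\psi\colon C\lgw\wdh A$. Applying Theorem \ref{Popescu} to $\phi$, $C$ and $\psi$ produces a finitely generated \emph{smooth} $A$-algebra $D$ together with $A$-algebra morphisms $\iota\colon C\lgw D$ and $p\colon D\lgw\wdh A$ satisfying $p\circ\iota=\psi$.

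The heart of the argument is then to descend $p$. Reducing modulo $I^c$ and using $\wdh A/I^c\wdh A\cong A/I^c$, the morphism $p$ induces an $A$-algebra morphism $\bar p\colon D\lgw A/I^c$. Here the Henselian hypothesis enters: $(A,I)$ being a Henselian pair, so is $(A,I^c)$ — indeed $I^c\subseteq I$ still lies in the Jacobson radical, and any coprime monic factorization over $A/I^c$ reduces to one over $A/I$, lifts to $A$ by hypothesis, and its reduction modulo $I^c$ agrees with the given factorization by uniqueness of Hensel factorizations modulo the nilpotent ideal $I/I^c$. Since $D$ is smooth over $A$, the lifting property of smooth morphisms over the Henselian pair $(A,I^c)$ — surjectivity of $\mathrm{Hom}_A(D,A)\lgw\mathrm{Hom}_A(D,A/I^c)$ — yields an $A$-algebra morphism $q\colon D\lgw A$ lifting $\bar p$, so that $q$ and $p$ induce the same morphism to $A/I^c$. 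Now $C\xrightarrow{\iota}D\xrightarrow{q}A$ is an $A$-algebra morphism, hence is given by $y_i\mapsto\wdt y_i$ for some $\wdt y\in A^m$; since each $f_j$ maps to $0$ in $C$, this forces $f(\wdt y)=0$. Finally, $p\circ\iota=\psi$ shows that the image of $y_i$ in $\wdh A$ is $\wdh y_i$, whereas under $q\circ\iota$ it is $\wdt y_i$; as $q$ and $p$ agree modulo $I^c$ these coincide in $A/I^c=\wdh A/I^c\wdh A$, i.e. $\wdt y-\wdh y\in I^c\wdh A$, which is exactly the assertion.

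Granting Theorem \ref{Popescu}, the only step with real content is this descent, and its crux — the point I expect to require care — is the lifting property of a smooth morphism along a Henselian pair. The reduction is standard: a smooth $A$-algebra is, Zariski-locally on its spectrum, \'etale over an affine space $A[t_1,\dots,t_N]$, and since $A^N\lgw(A/I^c)^N$ is trivially surjective the statement reduces to lifting a section of an \'etale algebra, which is one of the equivalent characterizations of a Henselian pair; the only subtlety is to perform this reduction at the point of $\Spec D$ cut out by $\bar p$. Theorem \ref{Popescu} itself — the deepest result surveyed in this paper — enters only as a black box.
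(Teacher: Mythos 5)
Your proof is correct and follows the same skeleton as the paper's: apply General N\'eron Desingularization (Theorem \ref{Popescu}) to $C=A[y]/(f)$ and the morphism $\psi$ determined by $\wdh y$, then descend the resulting map $D\lgw\wdh A$ to an $A$-point of $D$ agreeing with it modulo $I^c$. Where you differ is in how the descent is carried out. The paper's argument is really written for the local case $I=\m_A$: it localizes $D$ at the preimage of $\m_{\wdh A}$, factors $A\lgw D_{\m}$ as $A\lgw A[z]_{\m_A+(z)}\lgw D_{\m}$ with the second arrow local \'etale, replaces the images $\wdh z\in\wdh A^s$ of the variables $z$ by elements $\wdt z\in A^s$ congruent to them modulo $\m_A^c$, and then lifts a section of the \'etale $A$-algebra $D_{\m}/(z_1-\wdt z_1,\cdots,z_s-\wdt z_s)$ from $A/\m_A^c$ to $A$ by Proposition \ref{lift}. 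You instead reduce $p\colon D\lgw\wdh A$ modulo $I^c$, observe that $(A,I^c)$ is again a Henselian pair (your justification is fine: $I/I^c$ is nilpotent in $A/I^c$ since $A$ is Noetherian, so coprime monic factorizations lift uniquely along $A/I^c\lgw A/I$), and quote the lifting property of sections of \emph{smooth} algebras along Henselian pairs. This buys you the statement in the generality in which it is phrased (arbitrary Henselian pairs, where localization at a maximal ideal is not available and the paper's written proof quietly specializes), but the price is that smooth lifting along a Henselian pair is itself a nontrivial theorem (due to Elkik \cite{Elk} in the Noetherian setting); your sketch of it, Zariski-locally \'etale over affine space plus the \'etale characterization of Henselian pairs, is exactly where the work lies, since the section is supported on all of $\Spec(A/I^c)$ rather than at a single point and need not land in one such chart --- you flag this but do not resolve it. Note that the paper's manipulation (splitting off the polynomial variables and approximating $\wdh z$ by $\wdt z$ before lifting the remaining \'etale piece) is precisely the standard proof of that smooth lifting statement in the local case, so the two arguments coincide once that result is either proved or cited. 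Your final bookkeeping ($f(\wdt y)=0$ because the $f_j$ vanish in $C$, and $\wdt y-\wdh y\in I^c\wdh A$ because $q\equiv\bar p$ modulo $I^c$) is correct.
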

 
 \begin{proof}
The proof goes as follows: let us set  $C:=\frac{A[y]}{J}$ where $J$ is the ideal generated by $f_1,\ldots,f_r$. The formal solution $\wdh{y}\in\wdh{A}$ defines a $A$-morphism $\wdh{\phi} : C\lgw \wdh{A}$ (see Remark \ref{morphism}). By Theorem \ref{Popescu}, since $A\lgw \wdh{A}$ is regular (Example \ref{ex_excellent}), there exists a smooth $A$-algebra $D$ factorizing this morphism. After a change of variables we may assume that $\wdh y\in \m_{\wdh A}$ so the morphism $C\lgw \wdh A$ extends to a morphism $C_{\m_A+(y)}\lgw \wdh A$ and this latter morphism factors through $D_{\m}$ where $\m$ is the inverse image of $\m_{\wdh A}$.
The morphism $A\lgw D_{\m}$  decomposes  as $A\lgw A[z]_{\m_A+(z)}\lgw D_{\m}$ where $z=(z_1,\ldots,z_s)$ and $A[z]_{(z)}\lgw D_{\m}$ is a local \'etale morphism  \cite[Theorem 3.1 III.3]{Iv}. Let us choose $\wdt{z}\in A^s$ such that $\wdt{z}-\wdh{z}\in \m_A^c\wdh{A}^s$ ($\wdh{z}$ is the image of $z$ in $\wdh{A}^s$). This defines a morphism $A[z]_{(z)}\lgw A$. Then $A\lgw \frac{D_{\m}}{(z_1-\wdt{z}_1,\ldots,z_s-\wdt{z}_s)}$ is local \'etale and admits a section in $\frac{A}{\m_A^c}$. Since $A$ is Henselian, this section lifts to a section in $A$ by Proposition \ref{lift}. This section composed with $A[z]_{(z)}\lgw A$ defines a $A$-morphism $D_{\m}\lgw A$, and this latter morphism composed with $C\lgw D_{\m}$ yields a morphism $\wdt{\phi}: C\lgw A$ such that $\wdt{\phi}(z_i)-\wdh{\phi}(z_i)\in\m_A^c\wdh{A}$ for $1\leq i\leq m$.
  \end{proof}
 
 \begin{remark}
 Let $(A,I)$ be a Henselian pair and let $J$ be an ideal of $A$. By applying this result to  the Henselian pair $\left(\frac{B}{J},\frac{IB}{J}\right)$ we can prove the following result (using the notation of Theorem \ref{Pop_app}): if $f(\wdh{y})\in J\wdh{A}$ then there exists $\wdt{y}\in A^m$ such that $f(\wdt{y})\in J$ and $\wdt{y}-\wdh{y}\in I^c\wdh{A}$.
 \end{remark}
 
 In fact the General N\'eron Desingularization Theorem is a result of desingularization which generalizes  Theorem \ref{Pl} to any excellent Henselian local ring as shown in Corollary \ref{Ploski2} given below. In particular it provides a parametrization of a piece of the set $f=0$ locally at a given formal solution. Corollary \ref{Ploski2} does not appear  in the literature but it is useful to understand Theorem \ref{Popescu} when $B$ is the completion of a local domain. Before giving this statement let us state the following lemma which was first proven by M. Nagata with the extra assumption of normality \cite[ 44.1]{Na}:
 %%%%
 \begin{lemma}\label{henselization_A[x]}
 If $A$ is an excellent local domain  we denote by $A^h$ its Henselization. Then  $A^h$ is exactly the algebraic closure of $A$ in its completion $\wdh{A}$. In particular, for an excellent Henselian local domain $A$  (a field for instance)   the ring $A\langle x\rangle$ of elements of $\wdh A[[x]]$ algebraic over $A[x]$, i.e. the ring of algebraic power series with coefficients in $A$, is the Henselization of the local ring $A[x]_{\m_A+(x)}$. Thus $A\langle x\rangle$ satisfies the Implicit Function Theorem (see Theorem \ref{IFT}).
 \end{lemma}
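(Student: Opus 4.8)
The plan is to prove the two claims in turn, the first being the real content and the second being a formal consequence of it applied to the polynomial ring in the variables $x$. First I would recall the standard characterization of the Henselization: for an excellent integral local domain $(A,\m_A)$ with completion $\wdh A$, the Henselization $A^h$ sits between $A$ and $\wdh A$, and it is the unique (up to unique isomorphism) local ind-\'etale $A$-algebra which is Henselian. The key point is that $\wdh A$ is Henselian (being complete local), so $A^h$ embeds into $\wdh A$, and every element of $A^h$ is algebraic over $A$ because \'etale morphisms are of finite type and preserve the fraction field dimension; hence $A^h$ is contained in the algebraic closure $\bar A$ of $A$ inside $\wdh A$. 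The substantive inclusion is the reverse one: every $\alpha\in\wdh A$ which is algebraic over $\Frac(A)$ actually lies in $A^h$.

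To establish that reverse inclusion I would argue as follows. Let $\alpha\in\wdh A$ be algebraic over $K:=\Frac(A)$, with minimal polynomial $P(T)\in A[T]$ after clearing denominators; we may assume $P$ is separable over $K$ if $A$ has characteristic zero, and in general we use that $A\to\wdh A$ is a \emph{regular} morphism (here excellence is essential), so $\Frac(\wdh A)$ is a separable extension of $K$, and therefore $\alpha$ is separable over $K$. Then $P'(\alpha)\neq 0$ in $\wdh A$. Consider $C:=A[T]/(P)$, a finite $A$-algebra, together with the $A$-morphism $C\to\wdh A$ sending $T\mapsto\alpha$; localizing at the prime which is the inverse image of $\m_{\wdh A}$ gives a local $A$-algebra $C_\mfk q$ with $C_\mfk q\to\wdh A$, and since $P'(\alpha)$ is a unit in $\wdh A$ the morphism $A\to C_\mfk q$ is \'etale at $\mfk q$ (standard \'etale form $A[T]_{P'(T)}/(P)$). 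An \'etale local $A$-algebra equipped with a section modulo the maximal ideal into the Henselian ring $\wdh A$ factors through $A^h$ by the universal property of the Henselization (every local-\'etale $A$-algebra with a compatible map to a Henselian local $A$-algebra lifts uniquely to $A^h$); concretely, the image of $T$ under the induced map $C_\mfk q\to A^h\hookrightarrow\wdh A$ is a root of $P$ congruent to $\alpha$ modulo $\m_{\wdh A}$, and by Hensel's uniqueness of simple roots this root equals $\alpha$. Hence $\alpha\in A^h$, proving $\bar A\subseteq A^h$ and so $A^h=\bar A$.

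For the second claim, note that if $A$ is an excellent Henselian local domain (so $A^h=A$), then $A[x]_{\m_A+(x)}$ is again an excellent local domain (excellence is preserved under localization and finite type extensions), and its completion is $\wdh A\lb x\rb$. Applying the first claim to $A[x]_{\m_A+(x)}$ identifies its Henselization with the algebraic closure of $A[x]_{\m_A+(x)}$ inside $\wdh A\lb x\rb$, i.e.\ with the subring of $\wdh A\lb x\rb$ consisting of power series algebraic over $A[x]$; this is by definition $A\langle x\rangle$. Finally, being a Henselization of a local ring, $A\langle x\rangle$ is a Henselian local ring, hence satisfies the Implicit Function Theorem (Theorem \ref{IFT}) — alternatively one checks directly that a simple root of a polynomial over $A\langle x\rangle$ produced by the IFT over $\wdh A\lb x\rb$ is itself algebraic over $A[x]$ by eliminating variables.

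The main obstacle is the separability input needed to guarantee $P'(\alpha)\neq 0$: without it (e.g.\ in positive characteristic for a non-excellent ring) an algebraic element of $\wdh A$ need not come from an \'etale extension, and the identification can fail — this is precisely where the hypothesis that $A$ is \emph{excellent} (so that $A\to\wdh A$ is regular, hence geometrically reduced fibers, hence $\Frac(\wdh A)/\Frac(A)$ separable) does the essential work. Everything else is a routine unwinding of the universal property of the Henselization and of standard \'etale local structure.
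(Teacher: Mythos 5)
Your reduction from ``algebraic'' to ``separable'' via excellence (geometric reducedness of the generic fibre of $A\lgw\wdh{A}$) is correct and is a genuine ingredient, but the next step has a real gap: separability of $\alpha$ over $K=\Frac(A)$ only gives $P'(\alpha)\neq 0$ in $\wdh{A}$, and you then silently upgrade this to ``$P'(\alpha)$ is a unit in $\wdh{A}$''. That upgrade is false in general, and it is exactly what your argument needs, both for $A\lgw C_{\mfk{q}}$ to be (standard) \'etale and for the Hensel uniqueness of the simple root. Concretely, take $A=\C[t]_{(t)}$ and $\alpha=t\sqrt{1+t}\in\C\lb t\rb=\wdh{A}$, with minimal polynomial $P(T)=T^2-t^2(1+t)$: here $P'(\alpha)=2\alpha\in\m_{\wdh{A}}$, the localization of $A[T]/(P)$ at the contraction of $\m_{\wdh{A}}$ is ramified (its closed fibre is $\C[T]/(T^2)$), and the two roots $\pm t\sqrt{1+t}$ are congruent modulo $\m_{\wdh{A}}$, so neither the \'etale factorization through $A^h$ nor the uniqueness step is available. (A smaller slip: after clearing denominators $P$ need not be monic, so $C=A[T]/(P)$ need not be finite over $A$; this is not where the argument breaks, though.)

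Repairing this along your classical lines essentially forces the normalization detour: replace $A[\alpha]$ by the integral closure of $A$ in $K(\alpha)$ (finite by excellence) and reduce to $A$ normal so that this closure still maps into $\wdh{A}$ --- this is Nagata's original proof, which the paper explicitly says required normality. The paper's own proof avoids the issue entirely by using the machinery already established: $A^h$ is excellent Henselian (Proposition \ref{henz_exc}) with completion $\wdh{A}$, so Theorem \ref{Pop_app} applied to the single polynomial equation $a_0y^d+\cdots+a_d=0$ satisfied by $f\in\wdh{A}$ yields, for every $c$, a root $\wdt{f}\in A^h$ with $\wdt{f}-f\in\m_A^c\wdh{A}$; since the polynomial has only finitely many roots and $\bigcap_c\m_A^c\wdh{A}=(0)$, one gets $\wdt{f}=f$ for $c$ large, hence $f\in A^h$. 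So either add the normalization step (with the normality hypothesis it requires) or fall back on Artin approximation as the paper does; as written, your key \'etale step fails.
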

 
 Apparently it is not known if this lemma remains true for  excellent local rings which are not integral domains.
 
 \begin{proof}
 Indeed $A\lgw A^h$ is a filtered limit of algebraic extensions, thus $A^h$ is a subring of the ring of algebraic elements of $\wdh{A}$  over $A$.\\
 On the other hand if $f\in \wdh{A}$ is algebraic over $A$, then $f$ satisfies an equation 
 $$a_0f^d+a_1f^{d-1}+\cdots+a_d=0$$
 where $a_i\in A$ for all $i$. Thus for $c$ large enough there exists $\wdt{f}\in A^h$ such that $\wdt{f}$ satisfies the same polynomial equation and $\wdt{f}-f\in\m_A^c$ (by Theorem \ref{Pop_app} and Theorem \ref{henz_exc}). Because $\cap_c\m_A^c=(0)$ and a polynomial equation has a finite number of roots (because $\wdh A$ is a domain -  see Proposition \ref{int_domain} given in the next chapter),  we have  $\wdt{f}=f$ for $c$ large enough and $f\in A^h$.
  \end{proof}

Then we have the following result that also implies Theorem \ref{Pop_app} in the same way as Theorem \ref{Pl} implies Theorem \ref{Ar68} (see Remark \ref{Pl->app}):
 %%%%%%
 \begin{corollary}\label{Ploski2}
 Let  $A$ be an excellent Henselian local  domain and $f(y)\in A[y]^p$ where $y=(y_1,\ldots,y_m)$. Let $\wdh y\in\wdh A^m$ be a solution of $f(y)=0$. Then there exist an integer $s$, a vector $y(z)\in A\langle z\rangle$ with $z=(z_1,\ldots,z_s)$ and a vector $\wdh z\in\wdh A^s$ such that
 $$f(y(z))=0, $$
 $$\wdh y= y(\wdh z).$$ 
 \end{corollary}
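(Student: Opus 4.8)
The plan is to re-run the proof of Theorem \ref{Pop_app}, but instead of truncating the formal solution at some order $c$ we keep the whole smooth parametrization produced by Theorem \ref{Popescu}; the ring $A\langle z\rangle$ will play here the role played by $\k\{x,z\}$ in Theorem \ref{Pl}. First, after translating $y$ by a lift $y_0\in A^m$ of the residue class of $\wdh y$ in $(\wdh A/\m_{\wdh A})^m=(A/\m_A)^m$ (that is, replacing $f(y)$ by $f(y+y_0)$ and $\wdh y$ by $\wdh y-y_0$, and adding $y_0$ back at the very end), we may assume $\wdh y\in\m_{\wdh A}^m$. Set $C:=A[y]/J$ with $J:=(f_1,\cdots,f_p)$; the solution $\wdh y$ defines an $A$-morphism $\wdh\phi\colon C\lgw\wdh A$ (Remark \ref{morphism}) which factors through $C_{\m_A+(y)}$. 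Since $A$ is excellent, $A\lgw\wdh A$ is regular (Example \ref{ex_excellent}), so Theorem \ref{Popescu} provides a finitely generated smooth $A$-algebra $D$ and a factorization $C\lgw D\overset{\theta}{\lgw}\wdh A$. Put $\m:=\theta^{-1}(\m_{\wdh A})$; then the maps $A\lgw D_{\m}\lgw\wdh A$ are local, hence $\kappa(\m)=\kappa(\m_A)=\kappa(\m_{\wdh A})$, and $C_{\m_A+(y)}\lgw\wdh A$ factors through $D_{\m}$. By the local structure of smooth morphisms (see \cite{Iv} Theorem 3.1 III.3, as in the proof of Theorem \ref{Pop_app}) we may write $A\lgw D_{\m}$ as $A\lgw A[z]_{\m_A+(z)}\lgw D_{\m}$ with $z=(z_1,\cdots,z_s)$ and $A[z]_{(z)}\lgw D_{\m}$ local \'etale; this is the integer $s$ of the statement. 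In particular $D_{\m}$ is a localization of an \'etale $A[z]_{(z)}$-algebra at a prime over $\m_A+(z)$ with trivial residue field extension.

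Next, by Lemma \ref{henselization_A[x]}, $A\langle z\rangle$ is the Henselization of $A[z]_{(z)}$. By the universal property of the Henselization there is therefore a canonical $A[z]_{(z)}$-morphism $D_{\m}\lgw A\langle z\rangle$; composing it with $C\lgw D_{\m}$ yields $\Phi\colon C\lgw A\langle z\rangle$, and we set $y(z):=(\Phi(y_1),\cdots,\Phi(y_m))\in A\langle z\rangle^m$. Since $J\subseteq\Ker\Phi$, we get $f(y(z))=0$. On the other hand, let $\wdh z_j\in\m_{\wdh A}$ be the image of $z_j$ under $A[z]_{(z)}\lgw D_{\m}\overset{\theta}{\lgw}\wdh A$, and set $\wdh z:=(\wdh z_1,\cdots,\wdh z_s)\in\wdh A^s$. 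As $\wdh A$ is Henselian, there is a unique local $A[z]_{(z)}$-morphism $\psi\colon A\langle z\rangle\lgw\wdh A$ with $\psi(z_j)=\wdh z_j$; identifying $A\langle z\rangle$ with a subring of $\wdh A[[z]]$ as in Lemma \ref{henselization_A[x]}, the map $\psi$ is simply the substitution $z\mapsto\wdh z$ (which makes sense since $\wdh z_j\in\m_{\wdh A}$ and $\wdh A$ is complete), so $\psi(g)=g(\wdh z)$ for every $g\in A\langle z\rangle$.

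It remains to prove $\wdh y=y(\wdh z)$. The two composites $D_{\m}\lgw A\langle z\rangle\overset{\psi}{\lgw}\wdh A$ and $D_{\m}\overset{\theta}{\lgw}\wdh A$ are both $A[z]_{(z)}$-morphisms from the \'etale $A[z]_{(z)}$-algebra $D_{\m}$ to the local ring $\wdh A$, and they induce the identity on the common residue field $\kappa(\m_A)$; by the rigidity (formal unramifiedness) of \'etale morphisms they coincide. Composing on the left with $C\lgw D_{\m}$ gives $\psi\circ\Phi=\wdh\phi$, hence for each $i$
$$ y(\wdh z)_i=\psi\bigl(y(z)_i\bigr)=\psi\bigl(\Phi(y_i)\bigr)=\wdh\phi(y_i)=\wdh y_i, $$
i.e. $\wdh y=y(\wdh z)$. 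This proves the corollary; as noted, one then undoes the translation by adding $y_0$, and the resulting $y(z)$ still lies in $A\langle z\rangle^m$ since $y_0\in A^m$.

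The main obstacle is entirely contained in Theorem \ref{Popescu}, which we are assuming: once the smooth factorization $C\lgw D\lgw\wdh A$ is available, everything else is formal bookkeeping with the local structure of smooth and \'etale morphisms together with the universal property of the Henselization (and Lemma \ref{henselization_A[x]}). The only point that needs a little care is checking that the relevant primes of $D$, of $D_{\m}$ and of the intermediate \'etale algebra all have residue field $\kappa(\m_A)$ — this is what places $D_{\m}$ in the direct system defining $A\langle z\rangle$ and makes the rigidity argument legitimate.
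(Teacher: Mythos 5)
Your proof is correct and follows essentially the same route as the paper's: factor the morphism $C\lgw\wdh A$ given by $\wdh y$ through the smooth $A$-algebra $D$ provided by Theorem \ref{Popescu}, identify the Henselization of the localized smooth algebra with an $A\langle z\rangle$ via Lemma \ref{henselization_A[x]}, and read off $y(z)$ and $\wdh z$ from the two resulting morphisms. The only cosmetic difference is that the paper gets $D^h\cong A\langle z_1,\cdots,z_s\rangle$ directly from a presentation by Hensel's Lemma and factors the map to $\wdh A$ through it by the universal property of the Henselization (so no compatibility check is needed), whereas you invoke the structure theorem for smooth local morphisms and then verify $\psi\circ\Phi=\wdh\phi$ by rigidity of \'etale maps plus Krull separatedness of $\wdh A$ --- both steps are fine.
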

 
 \begin{proof}
Let us define $C=A[y]/(f)$. The  formal solution $\wdh{y}\in \wdh A^m$ of the equations $f=0$ defines a $A$-morphism $\psi : C\lgw \wdh A$  such that the following diagram commutes:
  $$\xymatrix{A \ar[r]^{\phi} \ar[d] & \wdh A\\
 C \ar[ru]^{\psi}  &  }$$
 Let $D$ be a smooth finitely generated $A$-algebra given by Theorem \ref{Popescu}. The $A$-algebra $D$ has the form
 $$D=A[z_1,\ldots,z_t]/(g_1,\ldots,g_r)$$
 for some polynomials $g_i\in A[z_1,\ldots,z_t]$ and new variables $z=(z_1,\ldots,z_t)$. For every $j$ let $a_j\in A$ such that the image of $z_j$ in $\wdh A$ is equal to $a_j$ modulo $\m_A$. By replacing $z_j$ by $z_j-a_j$ for every $j$ we can assume that $\psi$ factors through $D_{\m_A+(z_1,\ldots,z_t)}$.\\
 Since $A$ is an excellent local domain, $A[z]_{\m_A+(z)}$ is also an excellent local domain and its Henselization is equal to its algebraic closure in its completion $\wdh A[[z]]$ (see Example \ref{henselization_A[x]} below). Thus the Henselization $D^h$ of $D_{\m_A+(z)}$ is equal to 
 $$D^h=A\langle z_1,\ldots, z_t\rangle/(g_1,\ldots,g_r).$$
 But $D^h$ being smooth over $A$ means that the jacobian matrix $\left(\frac{\partial g_i}{\partial z_j}\right)$ has maximal rank modulo $\m_A+(z)$. Thus by Hensel's Lemma $D^h$ is isomorphic to $A\langle z_1,\ldots,z_s\rangle$ for some integer $s\leq t$. Since $\wdh A$ is Henselian, by the universal property of the Henselization $\psi$ factors through $D^h$, i.e. $\psi$ factors through $A\langle z_1,\ldots,z_s\rangle$:
 
 $$\xymatrix{A \ar[r]^{\phi} \ar[d] & \wdh A\\
 C \ar[ru]^{\psi}  \ar@{.>}[r]^{\s}& A\langle z\rangle \ar@{.>}[u]^{\t}}$$
where $z=(z_1,\ldots,z_s)$. The morphism $\t$ is completely determined by the images $\wdh z_i\in \wdh A$ of the $z_i$ and the morphism $\s$ is uniquely determined by the images $y_i(z)\in A\langle z\rangle$ of the $y_i$ that are solution of $f=0$.
  \end{proof}
 
 \begin{example}
 Let $A=\C\{ x_1,\ldots,x_n\}$ be the ring of convergent power series in $n$ variables over $\C$. Let $C=\frac{A[y_1,y_2]}{(f)}$ where $f=y_1^2-y_2^3$ and let $(\wdh y_1,\wdh y_2)\in \wdh A ^2$ be a solution of $f=0$. Since $\wdh A=\C\lb x_1,\ldots,x_n\rb$ is a unique factorization domain and $\wdh{y_1}^2=\wdh y_2^3$, $\wdh{y}_2$ divides $\wdh y_1$. Let us define $\wdh z=\frac{\wdh y_1}{\wdh y_2}$. Then we obtain $(\wdh y_1,\wdh y_2)=(\wdh z^3,\wdh z^2)$. \\
 Conversely any vector of the form $(\wdh z^3,\wdh z^2)$, for a power series $\wdh z\in \wdh A$, is a solution of $f=0$. In this example the previous corollary is satisfied with $s=1$ and $y(z)=(z^3,z^2)$. Here we remark that $y(z)$ does not depend on the given formal solution $(\wdh y_1,\wdh y_2)$ which is not true in general.
 
  \end{example}

 \begin{remark}
 In \cite{Rot}, C. Rotthaus proved the converse of Theorem \ref{Pop_app} in the local case: if $A$ is a Noetherian local ring that satisfies Theorem \ref{Pop_app}, then $A$ is excellent. In particular this shows that Weierstrass systems are excellent local rings. Previously this problem had been studied in \cite{C-P} and \cite{Brown}.
 \end{remark}

 \begin{remark}\label{flat}
 Let $A$ be a Noetherian  ring and $I$ be an ideal of $A$. If we assume that $f_1(y),\ldots,f_r(y)\in A[y]$ are linear homogeneous with respect to $y$, then Theorem \ref{Pop_app} may be proven easily  in this case since  $A\lgw \wdh{A}$ is  \index{flatness} flat (see Examples \ref{flat_ana} and \ref{flat_ploski}). The proof of this flatness result uses the Artin-Rees Lemma (see  \cite[Theorems 8.7 and 8.8]{Ma}).
 \end{remark}

 \begin{example}\label{ex_nested}
 The strength of Theorem \ref{Pop_app}  is that it applies to rings that do not satisfy the Weierstrass Preparation Theorem and for which the proof of Theorem \ref{Ar68} or Theorem \ref{W} does not apply. For example Theorem \ref{Pop_app} applies to the local  ring  $B=A\langle x_{1},\ldots,x_n\rangle$ where $A$ is an excellent Henselian local ring (the main example is $B=\k\lb t\rb\langle x\rangle$ where $t$ and $x$ are multivariables). Indeed, this ring is the Henselization of $A[x_{1},\ldots,x_n]_{\m_A+(x_1,\ldots,x_n)}$. Thus $B$ is an excellent local ring by Example \ref{ex_excellent} and Proposition \ref{henz_exc}. \\
  This case was the main motivation of D. Popescu for proving Theorem \ref{Popescu} (see also  \cite{Ar70}) because it implies a nested Artin Approximation result (see Theorem \ref{nested_alg}). \\
  Particular cases of this application had been studied before: see \cite{P-P81} for a direct proof that $V\lb x_1\rb\langle x_2\rangle$ satisfies Theorem \ref{Pop_app}, when $V$ is a complete discrete valuation ring, and \cite{BDL} for the ring $\k\lb x_1,x_2\rb\langle x_3,x_4,x_5\rangle$.
 \end{example}
 
 \begin{remark}\index{Bass-Quillen Conjecture}
 Let us mention that Theorem \ref{Popescu} has other applications than Theorem \ref{Pop_app} even if this latter result is our main motivation for presenting the former theorem. For example one very important application of Theorem \ref{Pop_app} is the proof of the so-called Bass-Quillen Conjecture that asserts that any finitely generated projective $R[y_1,\ldots,y_m]$-module is free when $R$ is a regular local ring (cf. \cite{Sp} for instance).
 \end{remark}
 
 \begin{proof}[Idea of the proof of Theorem \ref{Popescu}] The proof of this theorem is quite involved and would require more machinery than we can present in this paper. The reader interested by the whole proof should consult  \cite{Sw} or \cite{ST} for the general case, or \cite{Qu} or \cite{Po00} for the equicharacteristic zero case.\\
    Let $A$ be a Noetherian ring and  $C$ be a $A$-algebra of finite type, $C=\frac{A[y_1,\ldots,y_m]}{I}$ with $I=(f_1,\ldots,f_r)$. We denote by $\Delta_g$ the ideal of $A[y]$ generated by the $h\times h $ minors of the Jacobian matrix $\left(\frac{\partial g_i}{\partial y_j}\right)_{1\leq i\leq h, 1\leq j\leq m}$ for $g:=(g_1,\ldots,g_h)\subset I$. We define the \index{Jacobian ideal} Jacobian ideal
  $$H_{C/A}:=\sqrt{\sum_{g}\Delta_g((g):I)C}$$
  where the sum runs over all $g:=(g_1,\ldots,g_h)\subset I$ and $h\in\N$. The definition of this ideal may be a bit scary at first sight. What the reader has to know about this ideal is that it    is independent of the presentation of $C$ and its support is the singular locus of $C$ over $A$:
  
  \begin{lemma}\label{smooth_loc}
  For any prime $\p\in\Spec(C)$, $C_{\p}$ is smooth over $A$ if and only if  $H_{C/A}\not\subset \p$.
  \end{lemma}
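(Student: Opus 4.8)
The plan is to prove the two implications of a Jacobian criterion. First I would strip off the radical: since $H_{C/A}$ is radical and $\p$ is prime, $H_{C/A}\not\subset\p$ holds if and only if there exist a subtuple $g=(g_1,\dots,g_h)\subset I$, an $h\times h$ minor $\delta\in A[y]$ of $\left(\frac{\partial g_i}{\partial y_j}\right)_{1\le i\le h,\,1\le j\le m}$, and an element $a\in((g):I)\subset A[y]$ whose product $a\delta$ has image outside $\p$ in $C$. So it is enough to show that such a triple $(g,\delta,a)$ exists precisely when $C_\p$ is smooth over $A$, where the latter is understood, as is standard for the smooth locus, to mean that $C_f$ is smooth over $A$ for some $f\in C\setminus\p$. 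I would use freely the basic facts from Appendix \ref{etale_app}: standard smooth algebras are smooth over the base; localizing a smooth algebra at a single element keeps it smooth; and a finitely presented algebra $C=A[y]/I$ is smooth over $A$ exactly when $\Omega_{C/A}$ is finite projective and the conormal (second fundamental) exact sequence
$$0\longrightarrow I/I^2\xrightarrow{\;\mathrm d\;}\Omega_{A[y]/A}\otimes_{A[y]}C\longrightarrow\Omega_{C/A}\longrightarrow 0$$
is split exact.

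For the ``if'' direction, given $(g,\delta,a)$ with $a\delta\notin\p$ (I write the same letters for representatives in $A[y]$), I would invert $a\delta$: there $a$ becomes a unit and $aI\subset(g)$, so $IA[y]_{a\delta}=(g)A[y]_{a\delta}$ and hence $C_{a\delta}\cong\bigl(A[y]/(g)\bigr)_{a\delta}$. After renumbering the $y$-variables so that $\delta=\det\left(\frac{\partial g_i}{\partial y_j}\right)_{1\le i,j\le h}$, the ring $\bigl(A[y]/(g)\bigr)_{\delta}$ is by definition a standard smooth $A$-algebra, hence smooth over $A$; inverting the remaining element $a$ preserves smoothness, so $C_{a\delta}$ is smooth over $A$, and since $a\delta\notin\p$ this exhibits $C$ as smooth over $A$ at $\p$.

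For the converse --- the part carrying the real weight --- I would start from $C_f$ smooth over $A$ with $f\notin\p$. Since $H_{\bullet/A}$ is independent of the presentation, $H_{C_f/A}=H_{C/A}C_f$, so it suffices to treat the case where $C$ itself is smooth over $A$; then the conormal sequence is split exact with $\Omega_{C/A}$ finite projective, and localizing at $\p$ the module $(I/I^2)_\p$ is free over the local ring $C_\p$, of some rank $h$. I would pick $g_1,\dots,g_h\in I$ whose classes form a basis. Tensoring the still-split localized sequence with the residue field $\kappa(\p)$ keeps $\mathrm d$ injective, which forces the matrix $\left(\frac{\partial g_i}{\partial y_j}\right)$ reduced modulo the preimage $\mathfrak q\subset A[y]$ of $\p$ to have rank $h$; hence some $h\times h$ minor $\delta$ of it does not lie in $\mathfrak q$. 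Separately, since the $g_i$ generate $(I/I^2)_{\mathfrak q}$ one has $I_{\mathfrak q}=(g)_{\mathfrak q}+I_{\mathfrak q}^2$, so the finitely generated $A[y]_{\mathfrak q}$-module $M:=I_{\mathfrak q}/(g)_{\mathfrak q}$ satisfies $M=I_{\mathfrak q}M$ with $I_{\mathfrak q}$ contained in the maximal ideal, and Nakayama gives $M=0$, i.e. $I_{\mathfrak q}=(g)_{\mathfrak q}$. Clearing denominators produces $a\in A[y]\setminus\mathfrak q$ with $aI\subset(g)$, i.e. $a\in((g):I)$, and then $a\delta\in\Delta_g\,((g):I)$ has image outside $\p$, so $H_{C/A}\not\subset\p$.

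The hard part is this converse implication, and inside it the two Nakayama-style arguments: one to extract a nonvanishing $h\times h$ Jacobian minor from the injectivity of $\mathrm d$ after reduction modulo $\mathfrak q$, and one to obtain $I_{\mathfrak q}=(g)_{\mathfrak q}$ and thence an element of the colon ideal $((g):I)$ outside $\p$. The remaining care points are making the reduction from ``$C$ smooth at $\p$'' to ``$C$ smooth over $A$'' legitimate via the presentation-independence of $H_{C/A}$, and checking that the witness $a\delta$ genuinely lands in $\sum_g\Delta_g((g):I)C$; everything else is the definition of a standard smooth algebra and the structural facts about smooth morphisms recalled from Appendix \ref{etale_app}.
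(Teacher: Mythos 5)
The paper does not prove this lemma at all: it is stated as a black-box fact inside the sketch of the proof of General N\'eron Desingularization (together with the presentation-independence of $H_{C/A}$), with the reader referred to the literature (Swan, Stacks Project, etc.). So there is no internal proof to compare with; your argument is the standard Jacobian-criterion proof that those references give, and its skeleton is sound: translating $H_{C/A}\not\subset\p$ into the existence of a witness $a\delta\notin\p$ with $a\in((g):I)$ and $\delta$ an $h\times h$ minor, the standard-smooth computation for the ``if'' direction, and for the converse the choice of $g_1,\dots,g_h$ inducing a basis of the localized conormal module, the nonvanishing minor from injectivity of $\mathrm d\otimes\kappa(\p)$, Nakayama to get $I_{\mathfrak q}=(g)_{\mathfrak q}$, and clearing denominators to produce $a\in((g):I)\setminus\mathfrak q$.

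Two points deserve more care than you give them. First, with the paper's definition of smoothness (lifting criterion for morphisms essentially of finite presentation), ``$C_\p$ smooth over $A$'' is a statement about the local ring $C_\p$ itself; your reformulation ``$C_f$ smooth for some $f\notin\p$'' is a priori stronger, and its equivalence with the literal statement is essentially the openness of the smooth locus, i.e.\ the content of the lemma you are proving. The clean fix is to run your converse argument directly on the presentation $C_\p=A[y]_{\mathfrak q}/I_{\mathfrak q}$: smoothness of $C_\p$ gives the split exact conormal sequence with $(I/I^2)_{\mathfrak q}$ free, and the rest of your argument goes through verbatim, with no reduction needed. Second, the identity $H_{C_f/A}=H_{C/A}C_f$ is true but does not follow formally from presentation-independence of $H$; if you keep the reduction to ``$C$ itself smooth'', you either need this localization compatibility as a separate (standard) lemma, or you should compare the conormal sequences of $C$ and of $C_f=A[y,t]/(I,tf'-1)$ directly, noting that the extra relation splits off because its differential has unit $dt$-coefficient. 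Neither point is fatal, but as written the converse leans on exactly the kind of statement the lemma is meant to encapsulate.
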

 
   The following property  will be used in the proof of Proposition \ref{smoothing1}:
  \begin{lemma}\label{prop_smooth_loc}
Let $C$ and $C'$ be two $A$-algebras of finite type and let $A\lgw C\lgw C'$ be two morphisms of $A$-algebras. Then
 $$H_{C'/C}\cap \sqrt{H_{C/A}C'}=H_{C'/C}\cap  H_{C'/A}.$$

  \end{lemma}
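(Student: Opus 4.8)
The plan is to deduce the equality from the Jacobian criterion of Lemma~\ref{smooth_loc}, which describes the smooth loci cut out by the ideals $H$. First, note that $H_{C'/C}$, $\sqrt{H_{C/A}C'}$ and $H_{C'/A}$ are all radical ideals of $C'$, hence so are the two sides of the asserted identity. Two radical ideals of a Noetherian ring coincide iff they have the same vanishing locus, so it suffices to prove $V\!\big(H_{C'/C}\cap\sqrt{H_{C/A}C'}\big)=V\!\big(H_{C'/C}\cap H_{C'/A}\big)$ in $\Spec(C')$. Using $V(\mathfrak{a}\cap\mathfrak{b})=V(\mathfrak{a})\cup V(\mathfrak{b})$ and $V(\sqrt{\mathfrak{a}})=V(\mathfrak{a})$, this reduces to
$$V(H_{C'/C})\cup V(H_{C/A}C')=V(H_{C'/C})\cup V(H_{C'/A}),$$
where $V(H_{C/A}C')$ is the preimage of $V(H_{C/A})$ under $\Spec(C')\to\Spec(C)$. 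Equivalently, for every prime $\mathfrak{q}\in\Spec(C')$ with $H_{C'/C}\not\subseteq\mathfrak{q}$ one must show $H_{C/A}C'\subseteq\mathfrak{q}\iff H_{C'/A}\subseteq\mathfrak{q}$ (when $H_{C'/C}\subseteq\mathfrak{q}$ both unions trivially contain $\mathfrak{q}$).

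Now fix such a $\mathfrak{q}$ and set $\mathfrak{p}:=\mathfrak{q}\cap C$. By Lemma~\ref{smooth_loc}, the hypothesis $H_{C'/C}\not\subseteq\mathfrak{q}$ says $C'_{\mathfrak{q}}$ is smooth over $C$; moreover $H_{C/A}C'\subseteq\mathfrak{q}$ iff $H_{C/A}\subseteq\mathfrak{p}$ iff $C_{\mathfrak{p}}$ is \emph{not} smooth over $A$, and $H_{C'/A}\subseteq\mathfrak{q}$ iff $C'_{\mathfrak{q}}$ is \emph{not} smooth over $A$. So the claim becomes the local statement: \emph{if $C\to C'$ is smooth at $\mathfrak{q}$, then $C_{\mathfrak{p}}$ is smooth over $A$ iff $C'_{\mathfrak{q}}$ is smooth over $A$.} Localizing (and replacing $C$, $C'$ by suitable principal localizations that are of finite type and for which the relevant maps are actually smooth, so that finite presentation is available), the map $C_{\mathfrak{p}}\to C'_{\mathfrak{q}}$ is a smooth local homomorphism, hence flat, hence faithfully flat. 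The implication "$C_{\mathfrak{p}}$ smooth over $A$ $\Rightarrow$ $C'_{\mathfrak{q}}$ smooth over $A$" is just stability of smoothness under composition. For the converse, assume $C'_{\mathfrak{q}}$ is smooth over $A$. Since $C_{\mathfrak{p}}\to C'_{\mathfrak{q}}$ is smooth, the first fundamental exact sequence
$$0\longrightarrow \Omega_{C_{\mathfrak{p}}/A}\otimes_{C_{\mathfrak{p}}}C'_{\mathfrak{q}}\longrightarrow \Omega_{C'_{\mathfrak{q}}/A}\longrightarrow \Omega_{C'_{\mathfrak{q}}/C_{\mathfrak{p}}}\longrightarrow 0$$
is split exact; as $\Omega_{C'_{\mathfrak{q}}/A}$ is projective, its direct summand $\Omega_{C_{\mathfrak{p}}/A}\otimes_{C_{\mathfrak{p}}}C'_{\mathfrak{q}}$ is a projective $C'_{\mathfrak{q}}$-module, whence $\Omega_{C_{\mathfrak{p}}/A}$ is a projective $C_{\mathfrak{p}}$-module of the correct fibrewise rank by faithfully flat descent. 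Likewise, flatness of $C'_{\mathfrak{q}}$ over $A$ descends along the faithfully flat map $C_{\mathfrak{p}}\to C'_{\mathfrak{q}}$ to flatness of $C_{\mathfrak{p}}$ over $A$. Hence $C_{\mathfrak{p}}$ is smooth over $A$, which finishes the equivalence and the proof.

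The routine part is the ideal-theoretic bookkeeping (the fact that $\mathfrak{a}\cap\mathfrak{b}\subseteq\mathfrak{q}$ iff $\mathfrak{a}\subseteq\mathfrak{q}$ or $\mathfrak{b}\subseteq\mathfrak{q}$ for $\mathfrak{q}$ prime, together with Lemma~\ref{smooth_loc}). The main obstacle is the last descent step: one has to translate "smooth" into the flat‑plus‑projective‑differentials criterion and invoke faithfully flat descent of flatness and of module projectivity, while being careful to read "$C_{\mathfrak{p}}$ smooth over $A$" as the existence of a principal localization of $C$ that is smooth of finite type over $A$, so that the finite‑presentation hypotheses underlying those descent statements are met throughout.
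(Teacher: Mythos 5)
The paper does not actually prove this lemma: it is quoted, together with Lemma \ref{smooth_loc}, as a black box inside the sketch of the proof of Theorem \ref{Popescu}, with the details left to the references (\cite{Sw}, \cite{ST}). So there is no argument in the text to compare yours against; it can only be judged on its own terms, against the standard treatments of the Jacobian (Elkik) ideal.

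On those terms your argument is essentially correct and follows the usual route. Since all the ideals involved are radical, the identity is a statement about vanishing loci, and via Lemma \ref{smooth_loc} it reduces exactly as you say to: if $C\lgw C'$ is smooth at $\mathfrak{q}$, then $A\lgw C'$ is smooth at $\mathfrak{q}$ if and only if $A\lgw C$ is smooth at $\mathfrak{p}=\mathfrak{q}\cap C$. The forward implication is composition of smooth maps, and your descent argument for the converse (faithful flatness of $C_{\mathfrak{p}}\lgw C'_{\mathfrak{q}}$, split exactness of the cotangent sequence for a formally smooth map, descent of flatness and of finite projectivity) is sound. The one phrase you should not leave as a phrase is ``of the correct fibrewise rank by faithfully flat descent'': descent gives freeness of $\Omega_{C_{\mathfrak{p}}/A}$, not the rank statement, and in positive characteristic ``flat with projective $\Omega$'' alone does not imply smoothness, so the rank genuinely carries weight. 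It does come out right, because $\rank\Omega_{C_{\mathfrak{p}}/A}=\rank\Omega_{C'_{\mathfrak{q}}/A}-\rank\Omega_{C'_{\mathfrak{q}}/C_{\mathfrak{p}}}$ and fibre dimensions add along the flat local morphism of fibres at $\mathfrak{q}$; alternatively you can avoid ranks altogether by descending (geometric) regularity of the fibre at $\mathfrak{p}$ along that flat local map, or by descending the vanishing of $H_1$ of the cotangent complex via the Jacobi--Zariski sequence, which is how \cite{Sw} and \cite{ST} phrase it.
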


 The idea of the proof of Theorem \ref{Popescu} is the following: if $H_{C/A}B\neq B$, then we replace $C$ by a $A$-algebra of finite type $C'$ such that  $H_{C/A}B$ is a proper sub-ideal of $H_{C'/A}B$. Using the Noetherian assumption, after a finite number of steps we have $H_{C/A}B=B$. Then we use the following proposition:
 \begin{proposition}\label{smoothing1}
 Using the notation of Theorem \ref{Popescu}, let us assume that we have $H_{C/A}B=B$. Then  $\psi$ factors as in Theorem \ref{Popescu}.
 \end{proposition}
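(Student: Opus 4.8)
The plan is to reduce the hypothesis $H_{C/A}B=B$ to a partition of unity on $\Spec C$ that lies in the smooth locus, and then to glue the corresponding smooth localizations of $C$ into a single finitely generated smooth $A$-algebra through which $\psi$ factors; regularity of $\phi$ is not needed here (it enters only in the reduction step, done before this proposition, that arranges $H_{C/A}B=B$).

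First I would use Lemma \ref{smooth_loc} to control the smooth locus. Write $C=A[y]/I$ with $I=(f_1,\dots,f_r)$; since $A$ is Noetherian, $C$ is of finite presentation over $A$. By Lemma \ref{smooth_loc}, the non-smooth locus of $C$ over $A$ is exactly $V(H_{C/A})$. Hence, for any $h\in H_{C/A}$, the localization $C_h$ is smooth over $A$: it is again a finitely presented $A$-algebra, and for every prime $\mathfrak q$ of $C_h$, corresponding to a prime $\mathfrak p$ of $C$ with $h\notin\mathfrak p$, one has $h\in H_{C/A}\setminus\mathfrak p$, so $H_{C/A}\not\subset\mathfrak p$ and $C_{\mathfrak p}=(C_h)_{\mathfrak q}$ is smooth over $A$ by Lemma \ref{smooth_loc}; as smoothness of a finitely presented algebra is an open condition on the source, $C_h$ is smooth over $A$. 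Now $H_{C/A}B=B$ says precisely that the image of $H_{C/A}$ under $\psi$ generates the unit ideal of $B$, so there exist finitely many $h_1,\dots,h_N\in H_{C/A}$ and $\beta_1,\dots,\beta_N\in B$ with $\sum_{i=1}^N\beta_i\,\psi(h_i)=1$, and each $C_{h_i}$ is a finitely generated $A$-algebra smooth over $A$.

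Next I would glue. Introduce new variables $T_1,\dots,T_N$ and set
$$D:=C[T_1,\dots,T_N]\big/\big(h_1T_1+\cdots+h_NT_N-1\big),$$
a finitely presented $A$-algebra. In $D$ the images of $h_1,\dots,h_N$ generate the unit ideal, so the basic opens $D(h_i)$ cover $\Spec D$; and in $D_{h_i}$, where $h_i$ is invertible, one may solve $T_i=h_i^{-1}\big(1-\sum_{j\neq i}h_jT_j\big)$, so $D_{h_i}\cong C_{h_i}[T_j:j\neq i]$, a polynomial ring over the smooth $A$-algebra $C_{h_i}$, hence smooth over $A$. Since smoothness is local on $\Spec D$, $D$ is smooth over $A$. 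Finally, the canonical map $C\to D$ is an $A$-algebra morphism, and the assignment $c\mapsto\psi(c)$ for $c\in C$ together with $T_i\mapsto\beta_i$ defines an $A$-algebra morphism $D\to B$ (the defining relation maps to $\sum\beta_i\psi(h_i)-1=0$) whose composition with $C\to D$ equals $\psi$; this is the factorization asserted in Theorem \ref{Popescu}. The only genuinely delicate point is the passage from ``$C_{\mathfrak p}$ smooth over $A$ for all $\mathfrak p\in D(h)$'' to ``$C_h$ smooth over $A$'', and likewise for $D$: both rest on the standard facts that, for finitely presented algebras, smoothness can be checked on local rings, is stable under localization and under adjunction of polynomial variables, and is Zariski-local on the source — everything else (the partition of unity and the one-line gluing) is formal.
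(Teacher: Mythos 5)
Your proof is correct and takes essentially the same route as the paper: you construct the very same algebra $D=C[T_1,\dots,T_N]/(h_1T_1+\cdots+h_NT_N-1)$ built from a partition of unity in $H_{C/A}$ and the same $A$-morphism $D\to B$, $T_i\mapsto\beta_i$, extending $\psi$. The only cosmetic difference is the smoothness verification: the paper deduces $1\in H_{D/A}$ from Lemma \ref{prop_smooth_loc} applied to $A\to C\to D$, whereas you identify $D_{h_i}\cong C_{h_i}[T_j:j\neq i]$ and use that smoothness is stable under localization, polynomial extension and is Zariski-local on the source — both checks are standard and equivalent.
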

  \begin{proof}[Proof of Proposition \ref{smoothing1}]
  Let $(c_1,\ldots,c_s)$ be a system of generators of $H_{C/A}$. Then 
  $\displaystyle 1=\sum_{i=1}^sb_i\psi(c_i)$
  for some $b_i$  in $B$. Let us define
  $$D:=\frac{C[z_1,\ldots,z_s]}{(1-\sum_{i=1}^sc_iz_i)}.$$
  We construct a morphism of $C$-algebras $D\lgw B$ by sending $z_i$  onto $b_i$, $1\leq i\leq s$. It is easy to check that $D_{c_i}$ is a smooth $C$-algebra for any $i$, thus $c_i\in H_{D/C}$ by Lemma \ref{smooth_loc}, and $H_{C/A}D\subset H_{D/C}$. By Lemma \ref{prop_smooth_loc} used for $C'=D$, since $1\in H_{C/A}D$, we see that $1\in H_{D/A}$. By Lemma \ref{smooth_loc}, this proves that $D$ is a smooth $A$-algebra.
   \end{proof}
 
  Now to increase the size of $H_{C/A}B$ we use the following proposition:
  \begin{proposition}\label{key_prop}
  Using the notation of Theorem \ref{Popescu}, let $\p$ be a minimal prime ideal of $H_{C/A}B$. Then there exists a factorization of $\psi : C\lgw D\lgw B$ such that  $D$ is finitely generated over $A$ and $\sqrt{H_{C/A}B}\subsetneq\sqrt{H_{D/A}B}\not\subset \p$.
  
  \end{proposition}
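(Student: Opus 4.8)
The Noetherian induction that drives the proof of Theorem~\ref{Popescu} has already been set up above, so the whole matter reduces to performing a single desingularization step at the minimal prime $\p$. My plan is to: (i) localize the situation at $\p$, working formally and clearing denominators only at the end; (ii) use the regularity of $\phi$ to put the fibre over $\mathfrak{q}:=\p\cap A$ in good position; (iii) build $D$ by a N\'eron move --- gluing $C$ with a standard smooth $A$-algebra extracted from that fibre; and (iv) verify the Jacobian-ideal estimate via Lemmas~\ref{smooth_loc} and~\ref{prop_smooth_loc}.

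For (i)--(ii): put $k:=\kappa(\mathfrak{q})$. Since $\phi$ is regular, the fibre $\ovl{B}:=B\otimes_A k$ is geometrically regular over $k$; in particular the local ring of $\ovl B$ at $\p$ is regular and stays regular after every finite extension of $k$, so $k\lgw\kappa(\p)$ admits a separating transcendence basis. Using this I would choose, after a linear change of the $y$-coordinates over $k$ and possibly after adjoining a controlled number of auxiliary variables that absorb the inseparability detected above, a subfamily $g=(g_1,\dots,g_h)\subset I$ and an $h\times h$ minor $\d$ of $\partial g/\partial y$ such that the image of $\d$ in the localization of $B$ at $\p$ is a unit --- which geometric regularity of $\ovl B$ makes possible --- while the full system $f$ offers no such minor. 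In particular $g$ cuts out, near $\p$, a subscheme of $\Spec A[y]$ that is smooth over $A$ of the expected codimension.

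For (iii)--(iv): from such a $g$ and $\d$ I would build, by the linearization $y\mapsto y+\d z$ appearing in Propositions~\ref{TougeronIFT} and~\ref{PloskiIFT} (a device going back to \cite{Ne}), a standard smooth $A$-algebra $B'$, and then combine it with $C$ --- gluing the two along a common localization --- to obtain a finitely generated $A$-algebra $D$ fitting into a factorization $C\lgw D\lgw B$ of $\psi$ and agreeing with $B'$ in a neighbourhood of $\p$. Since $D$ is smooth over $A$ near $\p$, Lemma~\ref{smooth_loc} gives $H_{D/A}B\not\subset\p$; since $H_{C/A}D\subseteq H_{D/A}$, Lemma~\ref{prop_smooth_loc} gives $\sqrt{H_{C/A}B}\subseteq\sqrt{H_{D/A}B}$, and the inclusion is strict because $\p$ contains $H_{C/A}B$ but not $H_{D/A}B$. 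A final denominator-clearing, undoing the localization of (i), keeps $D$ finitely generated over $A$.

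The hard part is step (ii) in positive or mixed characteristic with imperfect residue field: one cannot simply diagonalize $\partial g/\partial y$, and the selection of $g$, of $\d$ and of the auxiliary variables absorbing inseparability is exactly where geometric regularity of the fibre --- equivalently, the availability of $p$-bases --- is indispensable, rather than merely the regularity of the fibre rings. This is the technically demanding heart of the whole theorem; it is carried out in full in \cite{Po85} and \cite{Po86}, and in the expositions \cite{Sw} and \cite{ST}, while in equicharacteristic zero the inseparability difficulties vanish and the shorter treatments of \cite{Qu} or \cite{Po00} suffice.
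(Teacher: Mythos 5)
Your pivotal step (ii) --- that geometric regularity of the fibre $B\otimes_A\kappa(\mathfrak{q})$ lets you select a subfamily $g\subset I$ whose $h\times h$ Jacobian minor becomes a unit in $B_\p$ --- is false, and not only in positive characteristic: it already fails in the simplest N\'eron situation, in equicharacteristic zero with no inseparability to ``absorb''. Take $A=\C[t]_{(t)}$, $B=\C\lb t\rb$ (a regular morphism, $A$ being excellent), $C=A[y]/I$ with $I=(y^2-t^2(1+t))$, and $\psi(y)=t\sqrt{1+t}$. Here $H_{C/A}=\sqrt{(y)C}$, so $\sqrt{H_{C/A}B}=(t)B=\p$; and since there is a single variable $y$ and $I$ is principal, the only nonzero minors of subfamilies of $I$ are the derivatives $\partial(uf)/\partial y$ with $f=y^2-t^2(1+t)$, whose images under $\psi$ equal $2u(\psi(y))\,t\sqrt{1+t}\in\p$. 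So no choice of $g\subset I$, no linear change of the $y$-coordinates, and no auxiliary variables of the kind you describe can produce the required unit: geometric regularity of the fibres is a property of $B$ and gives nothing inside the fixed presentation of $C$, which is genuinely singular over $A$ along $\p$. The factorization the proposition demands is $C\lgw D:=A[z]/(z^2-(1+t))\lgw B$ with $y\mapsto tz$ and $z\mapsto\sqrt{1+t}$: it is obtained by adjoining a \emph{new} element of $B$ (namely $\psi(y)/t$) satisfying new equations that do not lie in $I$ --- N\'eron's move \cite{Ne} --- and producing such a $D$ in general is exactly the content your steps (ii)--(iii) assert and then delegate to \cite{Po85}, \cite{Sw}, \cite{ST}. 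Note also that the argument indicated in the text is organized quite differently: it is an induction on $\operatorname{ht}(\p)$, whose base case $\operatorname{ht}(\p)=0$ is Theorem \ref{Popescu} for Artinian rings; a one-shot ``localize, pick a good minor, glue'' step is not how the construction goes, and your outline does not explain how it could replace that inductive structure.

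There is a second gap in your step (iv). The containment $H_{C/A}D\subseteq H_{D/A}$ that you invoke is not automatic (were it so, Lemma \ref{prop_smooth_loc} would be pointless). As in the proof of Proposition \ref{smoothing1}, what has to be checked is that the constructed $D$ is smooth over $C$ along the locus where $C$ is already smooth over $A$, i.e. $H_{C/A}D\subset\sqrt{H_{D/C}}$; only then does Lemma \ref{prop_smooth_loc} yield $\sqrt{H_{C/A}D}\subset\sqrt{H_{D/A}}$ and hence $\sqrt{H_{C/A}B}\subset\sqrt{H_{D/A}B}$, with strictness coming from $H_{D/A}B\not\subset\p$. Nothing in your gluing of step (iii) guarantees that passing from $C$ to $D$ does not damage the locus where $C$ was good, so this is a real constraint on the construction, not a formality. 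Taken together with the failure of step (ii), the proposal is an outline of intent whose one concrete mechanism does not work and whose remaining content is deferred to the references.
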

  
  The proof of Proposition \ref{key_prop} is done by a decreasing induction on the  height of $\p$. Thus there are two things to prove: first the case where ht$(\p)=0$ , then the reduction ht$(\p)=k+1$ to the case ht$(\p)=k$. This last case is quite technical, even in the equicharacteristic zero case (i.e. when $A$ contains $\Q$, see \cite{Qu} for a good presentation of this case). In the case where  $A$ does not contain $\Q$ there appear more problems due to the existence of inseparable extensions of residue fields. In this case the  Andr\'e homology is the right tool to handle these problems (see \cite{Sw}). \index{AndrŽ homology}

   \end{proof}

  %%%%%%%%%%%%%%%%%%%%%%%%%%%%%%%%%%%%%%%%%%%%%%%%%%%%%%%%%%

   \section {Strong Artin Approximation}
We review here results about the Strong Approximation Property. There are  clearly two different cases: the case where the base ring is a discrete valuation ring (where life is easy!) and the second case is the general case (where life is less easy).

%%%%%%%%%%%%%%%%%%%%%%

\subsection{Greenberg's  Theorem: the case of a discrete valuation ring}\label{sec_Green}
Let $V$ be a Henselian discrete valuation ring, $\m_V$ its maximal ideal and $\K$ be its field of fractions. Let us denote by $\wdh{V}$ the $\m_V$-adic completion of $V$ and by $\wdh{\K}$ its field of fractions. If char$(\K)>0$, let us assume that $\K\lgw \wdh{\K}$ is a separable field extension (in this case this is equivalent to $V$ being excellent, see Example \ref{ex_reg} iii) and Example \ref{ex_excellent} iv)).\index{Greenberg's  Theorem}

\begin{theorem}[Greenberg's  Theorem]\label{Greenberg}\cite{Gr} With the above notation and hypotheses, if $f(y)\in V[y]^r$,  there exist $a$, $b\geq 0$   such that
$$\forall c\in\N \ \forall \ovl{y}\in V^m \text{ such that } f(\ovl{y})\in\m_V^{ac+b}$$
$$\exists \wdt{y}\in V^m \text{ such that } f(\wdt{y})=0 \text{ and } \wdt{y}-\ovl{y}\in \m_V^c.$$
\end{theorem}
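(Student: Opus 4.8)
I would argue by Noetherian induction on the ideal $I=(f_1,\dots,f_r)\subset V[y]$: prove the statement for $I$ assuming it for every ideal strictly containing $I$. It is convenient to reformulate the conclusion as a bound $\b_I(c)\le ac+b$ on the least integer $\b_I(c)$ for which \emph{every} $\ovl y\in V^m$ with $f(\ovl y)\in\m_V^{\b_I(c)}$ admits an exact solution $\wdt y$ of $f=0$ with $\wdt y\equiv\ovl y\bmod\m_V^c$. The mechanism behind everything is that $V$ is a discrete valuation ring, so $\ord$ is a genuine valuation; in particular the ``$\d(x,y(x))^2(x)^c$'' hypothesis of Proposition \ref{TougeronIFT} is automatic as soon as $\ord(f(\ovl y))\gtrsim 2\ord(\d(\ovl y))+c$ (compare Remark \ref{rmkAr68}(i)), so over $V$ the Tougeron/Hensel step is quantitatively cheap and produces a solution congruent to $\ovl y$ modulo $\m_V^c$ with a purely affine loss.

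\textbf{Reductions.} If $1\in I$, or more generally $t\in\sqrt I$ --- say $t^k\in I$, $t^k=\sum_i g_if_i$ --- then $k=\ord(t^k)\ge\min_i\ord(f_i(\ovl y))$ for every $\ovl y\in V^m$, so no approximate solution of order $>k$ exists and the statement holds vacuously with $a=0$, $b=k+1$. Assume then $t\notin\sqrt I$, so $\dim V[y]/I\ge 1$. If $I$ is not $t$-saturated, replace it by $(I:t^\infty)$; if the generic fibre is still non-reduced, enlarge further to $\sqrt{IV[y][t^{-1}]}\cap V[y]$. In both cases one gets $J\supsetneq I$ with the same set of exact solutions in $V^m$ (since $V$ is a $t$-torsion-free domain, a solution of $f=0$ kills every generator of $J$), while a generator $g$ of $J$ with $t^{N}g^{M}\in I$ satisfies $\ord(g(\ovl y))\ge(\ord(f(\ovl y))-N)/M$; hence an inductive bound $\b_J(c)\le a'c+b'$ yields $\b_I(c)\le M(a'c+b')+N$, still linear. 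So we may assume $V[y]/I$ is $V$-flat with reduced generic fibre, and --- after one more such reduction, which in residue characteristic $p>0$ is where the excellence of $V$, i.e. the separability of $\K\lgw\wdh\K$, is used --- generically smooth over $V$.

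\textbf{The dichotomy.} Now $H_{(V[y]/I)/V}$ lies in no minimal prime of $I$, so, exactly as in the proof of Theorem \ref{Ar68}, I fix a \emph{finite} list of data $(\d_\ell,q_\ell,g^{(\ell)})$, $1\le\ell\le p$, where $g^{(\ell)}$ is a sub-tuple of $(f_1,\dots,f_r)$ of some length $h_\ell$, $\d_\ell$ is an $h_\ell\times h_\ell$ minor of $\frac{\partial g^{(\ell)}}{\partial y}$, $q_\ell f_i^{e_\ell}\in(g^{(\ell)})$ for all $i$ and a suitable $e_\ell$, and every minimal prime of $I$ misses some $\d_\ell q_\ell$; in particular each $\d_\ell q_\ell\notin I$, so $I+(\d_\ell q_\ell)\supsetneq I$ and the inductive hypothesis applies to it with a bound $\tau(c):=a'c+b'$ taken uniform in $\ell$. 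Given $\ovl y\in V^m$ with $f(\ovl y)\in\m_V^{3\tau(c)}$: if $\ord((\d_\ell q_\ell)(\ovl y))<\tau(c)$ for some $\ell$, then $\ovl y$ solves $g^{(\ell)}=0$ to order $\ge 3\tau(c)\ge 2\ord(\d_\ell(\ovl y))+c'$ with $c':=\max(c,\ord(q_\ell(\ovl y))+1)$, so the discrete-valuation form of Proposition \ref{TougeronIFT} produces $\wdt y$ with $g^{(\ell)}(\wdt y)=0$ and $\wdt y\equiv\ovl y\bmod\m_V^{c'}$; since $c'>\ord(q_\ell(\ovl y))$ one has $q_\ell(\wdt y)\neq 0$, whence $q_\ell(\wdt y)f_i(\wdt y)^{e_\ell}=0$ forces $f_i(\wdt y)=0$ ($V$ being a domain), and $\wdt y$ is the required solution. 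Otherwise $\ord((\d_\ell q_\ell)(\ovl y))\ge\tau(c)$ for all $\ell$, so $\ovl y$ is an approximate solution of order $\ge\tau(c)$ of $I+(\d_1q_1)$, and the inductive hypothesis for that strictly larger ideal furnishes $\wdt y$ with $(I+(\d_1q_1))(\wdt y)=0$ --- in particular $f(\wdt y)=0$ --- and $\wdt y\equiv\ovl y\bmod\m_V^c$. Choosing $a,b$ large enough, linearly in the finitely many $(a',b')$ that occur, makes the bound uniform and closes the induction.

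\textbf{Main obstacle.} The hard part is not the Hensel step, which over a DVR is easy, but the bookkeeping of the reductions: one must guarantee that an approximate solution lying near the singular locus is honestly an approximate solution \emph{to an order still linear in $c$} of a strictly larger ideal. This is why one passes to $(I:t^\infty)$ and to the reduced generic fibre, and why it matters that $t$-saturation distorts $\ord(f(\ovl y))$ only affinely while taking radicals distorts it by a bounded multiplicative factor --- otherwise the final estimate would be superlinear. In residue characteristic $p$ there is the further nuisance of making the Jacobian criterion and the generic-smoothness reduction go through, which is precisely what the excellence of $V$ provides.
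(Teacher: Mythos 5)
Your characteristic-zero argument is correct and is, in substance, the paper's own proof of Theorem \ref{Greenberg}: the same dichotomy on $\operatorname{ord}\bigl((\delta q)(\ovl y)\bigr)$ against the affine bound supplied by induction for the strictly larger ideal $I+(\delta q)$, with Proposition \ref{TougeronIFT2} (Tougeron over the Henselian DVR) finishing the case where the minor is not too small, and affine bookkeeping throughout. The differences are organizational rather than substantive: the paper first replaces $I$ by $\sqrt I$ (using $\sqrt I^{\,e}\subset I$) and then by one of the primes in the decomposition of $\sqrt I$, inducting on height, while you $t$-saturate, take the radical only after inverting $t$, keep a finite list $(\delta_\ell,q_\ell,g^{(\ell)})$ covering all minimal primes, and run Noetherian induction on the ideal; both reductions distort the order of approximate solutions only affinely, so they are interchangeable.

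The one step that would fail as written is the positive residue characteristic aside. No enlargement of $I$ inside $V[y]$ can make the quotient generically smooth when the generic fibre is reduced but not geometrically reduced: for $I=(y^p-u)$ with $u\in V$ not a $p$-th power in $\K$, the ideal is already $t$-saturated with $\K[y]/(y^p-u)$ a field, yet the Jacobian vanishes identically, so your finite list $(\delta_\ell,q_\ell)$ does not exist and the dichotomy never starts. Excellence of $V$ does not repair this reduction; in Greenberg's argument it enters through a separate step, showing that such inseparable components carry no approximate solutions beyond a bounded order (in the example, approximate roots of every order converge to a root in $\wdh V$, forcing $u^{1/p}\in\wdh\K$ and hence, by separability of $\K\lgw\wdh\K$, $u^{1/p}\in \K$). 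Since the paper also confines its sketch to $\operatorname{char}(\K)=0$, this does not put you behind its proof, but the claim that the generic-smoothness reduction "goes through" by excellence should be replaced by that argument.
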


\begin{proof}[Sketch of proof]
We will give the proof in the case char$(\K)=0$. As for the classical Artin Approximation Theorem this statement depends only on the ideal generated by the components of $f(y)$. The result is proven by induction on the height of the ideal generated by $f_1(y),\ldots,f_r(y)$. Let us denote by $I$ this ideal. We will denote by $\nu$ the $\m_V$-adic order on $V$:
$$\nu(v)=\max\{n\in\N \backslash v\in\m_V^n\}\ \ \forall v\in V, v\neq 0$$
and $\nu(0)=+\infty$. This  is a valuation by assumption.\\
 Let $e$ be an integer such that $\sqrt{I}^e\subset I$. Then $f(\ovl{y})\in  \m_V^{ec}$ for all $f\in I$ implies that $f(\ovl{y})\in\m_V^c$ for all $f\in \sqrt{I}$ since $V$ is a valuation ring. So if the theorem is proven for $\sqrt{I}$ with the constants $a$ and $b$, it is proven for $I$ with the constants $ea$ and $eb$.\\
 Moreover if $\sqrt{I}=\P_1\cap \cdots\cap  \P_s$ is the prime decomposition of $\sqrt{I}$, then $f(\ovl{y})\in  \m_V^{sc}$ for all $f\in \sqrt{I}$ implies that $f(\ovl{y})\in\m_V^c$ for all $f\in \P_{i_0}$ for some $i_0$. So if the theorem is proven for $\P_{i_0}$ with the constants $a$ and $b$, it is proven for $\sqrt I$ with the constants $sa$ and $sb$. This allows us to replace  $I$ by one of its associated  primes, namely $\P_{i_0}$, so we may assume that $I$ is a prime ideal of $V[y]$.\\
 Let $h$ be the height of $I$. If $h=m+1$,  $I$ is a maximal ideal of $V[y]$ and so it contains some non-zero element of $V$ denoted by $v$. Then there is no $\ovl{y}\in V^m$ such that $f(\ovl{y})\in \m_V^{\nu(v)+1}$ for all $f\in I$. Thus the theorem is true for $a=0$ and $b=\nu(v)+1$ (see Remark \ref{none} below).\\
 Let us assume that the theorem is proven for ideals of height $h+1$ and let $I$ be a prime ideal of height $h\leq m$. As in the proof of Theorem \ref{Ar68}, we may assume that $r=h$ and that a $h\times h$ minor of the Jacobian matrix of $f$, denoted by $\d$, is not in $I$. Let us define $J:=I+(\d)$. Since $I$ is prime  we have  ht$(J)=h+1$, so by the inductive hypothesis there exist $a$, $b\geq 0$ such that
$$\forall c\in\N \ \forall \ovl{y}\in V^m \text{ such that } f(\ovl{y})\in\m_V^{ac+b}\ \ \ \forall f\in J$$
$$\exists \wdt{y}\in V^m \text{ such that } f(\wdt{y})=0 \ \ \forall f\in J \text{ and } \wdt{y}_j-\ovl{y}_j\in \m_V^c,\ 1\leq j\leq m.$$
Then let $c\in\N$ and  $\ovl{y}\in V^m$ satisfy $f(\ovl{y})\in\m_V^{(2a+1)c+2b}$ for all $f\in I$. If $\d(\ovl{y})\in\m_V^{ac+b}$, then $f(\ovl{y})\in\m_V^{ac+b}$ for all $f\in J$ and the result is proven by the inductive hypothesis.\\
If $\d(\ovl{y})\notin\m_V^{ac+b}$, then $f_i(\ovl{y})\in(\d(\ovl{y}))^2\m_V^c$ for $1\leq i\leq r$. Then the result comes from the following result.
  \end{proof}
 
 \begin{proposition}[Tougeron  Implicit Function Theorem]\label{TougeronIFT2}\index{Tougeron  Implicit Function Theorem}
Let $A$ denote a Henselian local ring and  $f(y)\in A[y]^h$, $y=(y_1,\ldots,y_m)$, $m\geq h$.  Let $\d(x,y)$ be  a $h\times h$ minor of the Jacobian matrix $\frac{\partial(f_1,\ldots,f_h)}{\partial(y_1,\ldots,y_m)}$. Let us assume that there exists $\ovl{y}\in A^m$ such that
$$f_i(\ovl{y})\in (\d(\ovl{y}))^2\m_A^c  \ \text{ for all }\ 1\leq i\leq h$$
and for some $c\in\N$. Then there exists $\wdt{y}\in A^m$ such that 
$$f_i(\wdt{y})=0 \text{ for all } 1\leq i\leq h\ \text{ and }\  \wdt{y}-\ovl{y}\in (\d(\ovl{y}))m_A^c.$$
\end{proposition}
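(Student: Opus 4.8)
The plan is to transcribe the proof of Proposition \ref{TougeronIFT} into the Henselian setting: since $f\in A[y]^r$ already lives in the polynomial ring $A[y]$, no Weierstrass preparation is needed, and the analytic Implicit Function Theorem used there is simply replaced by the Implicit Function Theorem for Henselian local rings (Theorem \ref{IFT}).

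First I would reduce to the case $m=r$. Relabelling the $y_i$, we may assume that $\d(y)=\det\left(\frac{\partial(f_1,\cdots,f_r)}{\partial(y_1,\cdots,y_r)}\right)$, and we may assume $\d(\ovl y)\neq 0$, since otherwise the hypothesis forces $f(\ovl y)=0$ and $\wdt y:=\ovl y$ works. Now freeze the last coordinates at $\ovl y_{r+1},\cdots,\ovl y_m$, that is, set $g_i(y_1,\cdots,y_r):=f_i(y_1,\cdots,y_r,\ovl y_{r+1},\cdots,\ovl y_m)$. Then the $r\times r$ Jacobian matrix $J(y):=\frac{\partial(g_1,\cdots,g_r)}{\partial(y_1,\cdots,y_r)}$ satisfies $\det J(\ovl y_1,\cdots,\ovl y_r)=\d(\ovl y)$, and $g_i(\ovl y_1,\cdots,\ovl y_r)=f_i(\ovl y)\in(\d(\ovl y))^2\m_A^c$. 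If I can produce $\wdt y_1,\cdots,\wdt y_r$ with $g_i(\wdt y_1,\cdots,\wdt y_r)=0$ and $\wdt y_j-\ovl y_j\in(\d(\ovl y))\m_A^c$, then setting $\wdt y_j:=\ovl y_j$ for $j>r$ gives the claim.

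Next, writing $y,z$ for length-$r$ vectors, a Taylor expansion produces $H\in A[z]^r$ with no constant or linear term in $z$ such that
$$g(\ovl y+\d(\ovl y)z)=g(\ovl y)+\d(\ovl y)J(\ovl y)z+\d(\ovl y)^2H(z).$$
Let $J'(\ovl y)$ be the adjugate of $J(\ovl y)$, so that $J'(\ovl y)J(\ovl y)=J(\ovl y)J'(\ovl y)=\d(\ovl y)I_r$, and pick $\e\in(\m_A^c)^r$ with $g(\ovl y)=\d(\ovl y)^2\e$. Factoring out $\d(\ovl y)J(\ovl y)$ yields
$$g(\ovl y+\d(\ovl y)z)=\d(\ovl y)J(\ovl y)\bigl[z+J'(\ovl y)\e+J'(\ovl y)H(z)\bigr].$$
Put $G(z):=z+J'(\ovl y)\e+J'(\ovl y)H(z)\in A[z]^r$. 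Since $H$ is of order at least $2$ in $z$, $\frac{\partial G}{\partial z}(0)=I_r$, which is a unit, while $G(0)=J'(\ovl y)\e\in\m_A^r$; hence by the Implicit Function Theorem for Henselian local rings (Theorem \ref{IFT}) there is a (unique) $\wdt z\in(\m_A)^r$ with $G(\wdt z)=0$, and therefore $g(\ovl y+\d(\ovl y)\wdt z)=\d(\ovl y)J(\ovl y)G(\wdt z)=0$. To obtain the sharp modulus, use $\wdt z=-J'(\ovl y)\e-J'(\ovl y)H(\wdt z)$ together with $H(\wdt z)\in(\wdt z_1,\cdots,\wdt z_r)^2$ and $\e\in(\m_A^c)^r$: an immediate induction on $k$ gives $\wdt z\in(\m_A^{\min(2^k,c)})^r$ for every $k$, hence $\wdt z\in(\m_A^c)^r$. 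Setting $\wdt y_j:=\ovl y_j+\d(\ovl y)\wdt z_j$ for $1\leq j\leq r$ then gives $f_i(\wdt y)=0$ for all $i$ and $\wdt y_j-\ovl y_j\in(\d(\ovl y))\m_A^c$, as required.

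I do not expect a real obstacle: the argument is essentially an exact copy of the proof of Proposition \ref{TougeronIFT}, with the single genuine difference being the replacement of the convergent-power-series Implicit Function Theorem by its Henselian analogue. The only slightly delicate point is the very last step, namely getting $\wdt z\in\m_A^c$ rather than merely $\wdt z\in\m_A$ so that the conclusion lands with the sharp modulus $(\d(\ovl y))\m_A^c$; if Theorem \ref{IFT} is quoted only in its coarse form, the short Newton-type iteration above supplies the needed refinement.
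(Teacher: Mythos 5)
Your proposal is correct and is essentially the paper's own argument: the paper proves this proposition by declaring it "completely similar" to the proof of Proposition \ref{TougeronIFT}, i.e.\ exactly your route of factoring $g(\ovl y+\d(\ovl y)z)=\d(\ovl y)J(\ovl y)\bigl[z+J'(\ovl y)\e+J'(\ovl y)H(z)\bigr]$ and solving $G(z)=0$ by the multivariate Hensel lemma in place of the analytic Implicit Function Theorem. Your final Newton-type iteration giving $\wdt z\in\m_A^c$ (and hence the sharp modulus $(\d(\ovl y))\m_A^c$) is a detail the paper leaves implicit, and it is a welcome addition.
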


\begin{proof}
The proof is completely similar to the proof of Theorem \ref{TougeronIFT}.
 \end{proof}

In fact we can prove the following result whose proof is identical to the proof of Theorem \ref{Greenberg} and extends Theorem \ref{Greenberg}  to more general equations than polynomial ones:

\begin{theorem}\label{Greenberg_formal}\cite{Sc1,Sc}
Let $V$ be a complete discrete valuation ring and  $f(y,z)\in V\lb y\rb[z]^r$, where $z:=(z_1,\ldots,z_s)$. Then there exist $a$, $b\geq 0$   such that
$$\forall c\in\N \ \forall \ovl{y}\in (\m_VV)^m,\ \forall  \ovl{z}\in V^s \text{ such that } f(\ovl{y},\ovl{z})\in\m_V^{ac+b}$$
$$\exists \wdt{y}\in (\m_VV)^m,\ \exists \wdt{z}\in  V^s \text{ such that } f(\wdt{y},\wdt{z})=0 \text{ and } \wdt{y}-\ovl{y},\ \wdt{z}-\ovl{z}\in \m_V^c.$$
\end{theorem}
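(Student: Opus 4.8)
The plan is to mimic the proof of Theorem \ref{Greenberg}, carrying out the induction on the height of the ideal generated by $f_1(y,z),\cdots,f_r(y,z)$ inside $V\lb y\rb[z]$; this ring is Noetherian (it is a localization-free polynomial extension of the Noetherian complete local ring $V\lb y\rb$), so the Noetherian bookkeeping goes through. As in Theorem \ref{Greenberg}, a first reduction passes to an associated prime of the ideal: if $\sqrt{I}^e\subset I$ and $\sqrt I=\P_1\cap\cdots\cap\P_t$, then (using that $\nu$, the $\m_V$-adic order, is a valuation, and that $V\lb y\rb$ is $\m_V$-adically separated with $V$ itself a valuation ring in the relevant sense) an approximate solution of $I$ at a high enough order is an approximate solution of some $\P_i$ at order $c$, up to a linear change of the constant $b$. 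So we may assume $I$ is prime. The base case of the induction is the one where $I$ has maximal height, i.e. $I\cap V\neq(0)$: then $I$ contains a nonzero $v\in V$, and no $(\ovl y,\ovl z)$ can satisfy $f\in\m_V^{\nu(v)+1}$ for all $f\in I$, so the statement holds vacuously with $a=0$, $b=\nu(v)+1$.

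For the inductive step, suppose the result holds for ideals of height $h+1$ and let $I$ be prime of height $h$. Exactly as in the proof of Theorem \ref{Ar68} and Theorem \ref{Greenberg}, using the Jacobian criterion (valid since we are in characteristic zero, or more generally over a perfect residue situation) we may assume $r=h$ and that some $h\times h$ minor $\d$ of the Jacobian matrix of $(f_1,\cdots,f_h)$ with respect to the variables $(y,z)$ is not in $I$; since differentiating $f_i(x,\wdh y(x))=0$ type relations is not available here, one instead argues directly that the minor with respect to $z$ alone (or after a harmless coordinate rearrangement among $y$ and $z$) can be taken outside $I$. Set $J:=I+(\d)$, of height $h+1$, and apply the inductive hypothesis to $J$, getting constants $a,b$. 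Given $c$ and an approximate solution $(\ovl y,\ovl z)$ of $I$ at order $(2a+1)c+2b$: if $\d(\ovl y,\ovl z)\in\m_V^{ac+b}$ then $(\ovl y,\ovl z)$ is an approximate solution of $J$ at order $ac+b$ and we are done by the inductive hypothesis; if $\d(\ovl y,\ovl z)\notin\m_V^{ac+b}$, then because $V$ is a valuation ring we get $f_i(\ovl y,\ovl z)\in(\d(\ovl y,\ovl z))^2\m_V^c$ for all $i$, and we invoke a Tougeron-type implicit function theorem to produce an exact solution $\m_V^c$-close to $(\ovl y,\ovl z)$.

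The one genuinely new point, compared to Theorem \ref{Greenberg}, is the implicit function step: here the ambient ring is $V\lb y\rb[z]$ rather than $V[y]$, so I would appeal to (a mild variant of) Proposition \ref{TougeronIFT2} applied to the Henselian local ring $V\lb y\rb[z]_{\m_V+(z)}$, or more simply observe that $V\lb y\rb$ is complete, hence Henselian, and run the same fixed-point computation as in the proof of Proposition \ref{TougeronIFT2}: write $f(\ovl y,\ovl z+\d(\ovl y,\ovl z)w)=\d(\ovl y,\ovl z)\,J(\ovl y,\ovl z)\big[J'(\ovl y,\ovl z)\e+w+J'(\ovl y,\ovl z)H(\ovl y,\ovl z,w)\big]$ with $f(\ovl y,\ovl z)=\d^2(\ovl y,\ovl z)\e$, $\e\in\m_V^c$, and solve the resulting system $w=-J'\e-J'H(\ovl y,\ovl z,w)$ by Hensel's lemma, keeping the correction inside $\m_V^c$ and keeping the $y$-coordinates inside $\m_VV\lb y\rb$. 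The main obstacle is precisely to make sure that throughout this process the $y$-variables stay in $\m_V V\lb y\rb^m$ (as the statement requires) while only the $z$-variables roam over all of $V^s$: one must choose which minor to extract and how to distribute the $m+s$ variables so that the Jacobian argument and the implicit function theorem respect this splitting. Once that bookkeeping is set up correctly, the argument is word-for-word the one for Theorem \ref{Greenberg}, which is why the author can say the proof is identical.
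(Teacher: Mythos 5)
Your overall strategy is exactly the paper's: the paper offers no separate argument for Theorem \ref{Greenberg_formal} beyond asserting that the proof is identical to that of Theorem \ref{Greenberg}, and your proposal is a faithful transcription of that proof to the Noetherian ring $V\lb y\rb[z]$ --- height induction after reducing to an associated prime, the vacuous case when the prime meets $V\setminus\{0\}$, the dichotomy on $\nu(\d(\ovl y,\ovl z))$ with $J=I+(\d)$, and a Tougeron/Hensel correction over the complete ring $V$, where evaluation of the power series at $\ovl y\in\m_V^m$ makes sense by completeness.

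One detail in your write-up is wrong, though easily repaired and without effect on the architecture: you claim one can arrange the nonvanishing $h\times h$ minor to be taken with respect to the variables $z$ alone, and your displayed Hensel computation accordingly perturbs only $\ovl z$. This cannot be arranged in general --- for instance $f$ may not involve $z$ at all, in which case every minor using a $z$-column vanishes identically --- and no ``coordinate rearrangement'' converts a $y$-derivative into a $z$-derivative. The correct bookkeeping, which is what Greenberg's proof does since there all unknowns play the same role, is to allow the minor to involve any $h$ of the $m+s$ unknowns $(y,z)$ and to apply the Tougeron Implicit Function Theorem over the Henselian ring $V$ itself (not over $V\lb y\rb[z]_{\m_V+(z)}$), correcting exactly those $h$ coordinates while fixing the remaining ones, as in the uniqueness clause of Proposition \ref{TougeronIFT}. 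Your worry about keeping $\wdt y$ in $(\m_V V)^m$ then evaporates: for $c\geq 1$ every correction lies in $\m_V^c\subset\m_V$, so $\wdt y\in\m_V^m$ automatically, and the case $c=0$ is trivial.
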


\begin{example}
Let $k$ be some positive integer. Then for any $y\in V$ and $c\in\N$ we have
$$y^k\in \m_V^{kc}\Longrightarrow y\in \m_V^c.$$
Thus for  the polynomial $f(y)=y^k$  Theorem \ref{Greenberg} is satisfied by  the constants $a=k$ and $b=0$. \\
But let us remark that if $y^k\in\m_V^{k(c-1)+1}$ we have  $k\nu(y)\geq k(c-1)+1$ thus $\nu(y)\geq c-1+\frac{1}{k}$. But since $\nu(y)$ is an integer we have  $\nu(y)\geq c$ and $y\in\m_V^c$. Therefore we see here that we can also choose $a=k$ and $b=1-k$ which give a smaller bound than the previous one.

\end{example}

\begin{example}
Let us assume that $V=\C\lb t\rb$ where $t$ is a single variable. Here the valuation $\nu$ is just the $t$-adic order. Let $y_1$, $y_2\in V$ and $c\in\N$ such that 
\begin{equation}\label{cusp_Greenberg}y_1^2-y_2^3\in (t)^{3c}.\end{equation}
If $\ord(y_1)\geq \ord(y_2)$, let us denote by $z$ the power series $\frac{y_1}{y_2}$. Then
$$y_1^2-y_2^3=(z^2-y_2)y_2^2\in (t)^{3c}.$$
Thus 
$$z^2-y_2\in (t)^c \text{ or }y_2\in (t)^c.$$
In the first case we set
$$(\wdt y_1,\wdt y_2):=\left(z^3,z^2\right)=\left(\frac{y_1^3}{y_2^3},\frac{y_1^2}{y_2^2}\right),$$
in the second case we set
$$(\wdt y_1,\wdt y_2):=(0,0).$$
In both cases we have $\wdt y_1^2-\wdt y_2^3=0$. In the first case 
$$\wdt y_1-y_1=\left(\frac{y_1^2}{y_2^2}-y_2\right)\frac{y_1}{y_2}\in (t)^c$$
 and in the second case 
 $$\wdt y_1-y_1=-y_1\in (t)^c$$
 since $\ord(y_1)\geq \ord(y_2)\geq c$. We also have $\wdt y_2-y_2\in (t)^c$.\\
 If $\ord(y_1)<\ord(y_2)$ we have $3c\leq \ord(y_1^2)<\ord(y_2^3)$ and we set 
 $$(\wdt y_1,\wdt y_2):=(0,0).$$
Hence
$$\wdt y_1-y_1\text{ and }\wdt y_2-y_2\in (t)^c.$$
Thus  for the polynomial $f(y_1,y_2)=y_1^2-y_2^3$, Theorem \ref{Greenberg} is satisfied by  the constants $a=3$ and $b=0$. 
 \end{example}
 
 \begin{remark}\label{none}
In the case $f(y)$ has no solution in $V$ we can choose $a=0$ and Theorem \ref{Greenberg} asserts that there exists a constant $b$ such that $f(y)$ has no solution in $\frac{V}{\m_V^b}$.

\end{remark}

\begin{remark}\index{$C_i$ field}
M. Greenberg proved this result in order to study $C_i$ fields. Let us recall that a $C_i$ field is a field $\k$ such that for every integer $d$ every homogeneous form  of degree  $d$ in more than $d^i$ variables with coefficients in $\k$ has a non-trivial zero. More precisely M. Greenberg proved that for a $C_i$ field $\k$, the field of formal power series $\k(\!(t)\!)$ is $C_{i+1}$. Previous results about $C_i$ fields had been previously  studied, in particular by S. Lang in \cite{Lan} where appeared for the first time a special case of the Artin Approximation Theorem appeared for the first time (see Theorem 11 and its corollary in \cite{Lan}).
\end{remark}

\begin{remark}\label{Loj}
The valuation $\nu$ of $V$ defines an ultrametric norm on $\K$ (as noticed in Remark \ref{rem_topology}): we define it as
$$\left|\frac{y}{z}\right|:=e^{\nu(z)-\nu(y)}, \ \ \forall y,z\in V\backslash\{0\}.$$
The norm is ultrametric means that a much stronger version of the triangle inequality holds:
$$\forall y,z \in \K \ \ |y+z|\leq \max\{|y|,|z|\}.$$
This norm defines  a distance on $V^m$, for any $m\in\N^*$, denoted by $d(.,.)$ and defined by 
$$d(y,z):=\max_{1\leq k\leq m}\left|y_k-z_k\right|.$$
Then it is well known that Theorem \ref{Greenberg} can be reformulated as a \L ojasiewicz Inequality (see \cite{T2} or \cite{Ro} for example):
$$\exists a\geq 1,\ C>0 \text{ s.t. } |f(\ovl{y})|\geq Cd(f^{-1}(0),\ovl{y})^a \ \ \forall \ovl{y}\in V^m.$$\\
This kind of  \index{\L ojasiewicz Inequality} \L ojasiewicz Inequality is well known for complex or real analytic functions and Theorem \ref{Greenberg} can be seen as a generalization of this \L ojasiewicz Inequality for algebraic or analytic functions defined over $V$.
If $V=\k\lb t\rb$ where $\k$ is  a field, there are very few known results  about the geometry of algebraic varieties defined over $V$. It is a general problem to extend classical results of differential or analytic geometry over $\R$ or $\C$ to this setting. See for instance  \cite{B-H} (extension of  the Rank Theorem), \cite{Reg} %or \cite{FB-PP2} 
(extension of the Curve Selection Lemma), \cite{HiT} (concerning the extension of local metric properties of analytic sets or functions) for some results in this direction. 
\end{remark}
\noindent \index{Greenberg's Function}
\begin{definition}[Greenberg's Function] For any $c\in\N$ let us denote by $\b(c)$ the smallest integer such that:\\
 for all $\ovl{y}\in V^m$ with $f(\ovl{y})\in (x)^{\b(c)}$,  there exists $\wdt{y}\in V^m$ with $f(\wdt{y})=0$ and  $\wdt{y}-y\in (x)^c$.\\
  Greenberg's  Theorem asserts that such a function $\b\ : \N\lgw \N$ exists and  is bounded by a linear function. We call this function $\b$ the \emph{Greenberg's function} of $f$. 
 \end{definition}
 We can remark that the Greenberg's function is an invariant of the integral closure of the ideal generated by $f_1,\ldots,f_r$:\index{integral closure}
 
  \begin{lemma}\label{int_closure}
  Let us consider $f(y)\in V[y]^r$ and  $g(y)\in V[y]^q$. Let us denote by $\b_f$ and $\b_g$ their Greenberg's functions. Let $I$ (resp. $J$) be the ideal of $V[y]$ generated by $f_1(y),\ldots,f_r(y)$ (resp. $g_1(y),\ldots,g_q(y)$). If $\ovl{I}=\ovl{J}$ then $\b_f=\b_g$. 
  \end{lemma}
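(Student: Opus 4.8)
The plan is to reduce the equality $\b_f=\b_g$ to the single observation that, because $\ovl I=\ovl J$, the systems $f$ and $g$ impose exactly the same conditions at every finite $\m_V$-adic level. Precisely, I would establish that for all $\ovl y\in V^m$ and all $N\in\N$ one has $f(\ovl y)\in\m_V^N$ if and only if $g(\ovl y)\in\m_V^N$, and moreover $f(\ovl y)=0$ if and only if $g(\ovl y)=0$. Granting this, the predicate defining $\b_f(c)$ --- for every $\ovl y$ with $f(\ovl y)\in\m_V^{\b}$ there exists $\wdt y$ with $f(\wdt y)=0$ and $\wdt y-\ovl y\in\m_V^c$ --- becomes word for word the predicate defining $\b_g(c)$, so the least $\b$ satisfying it is the same for $f$ and for $g$, and we get $\b_f(c)=\b_g(c)$ for every $c$.

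The technical heart, and the step I expect to need the most care, is a valuation estimate: \emph{if $h\in V[y]$ is integral over the ideal $I$, then $\nu(h(\ovl y))\geq\min_i\nu(f_i(\ovl y))$ for every $\ovl y\in V^m$}, where $\nu$ is the $\m_V$-adic valuation. I would start from an integral equation $h^k+a_1h^{k-1}+\cdots+a_k=0$ with $a_l\in I^l$. Setting $m:=\min_i\nu(f_i(\ovl y))$ and $n:=\nu(h(\ovl y))$, the containment $a_l\in I^l$ gives $\nu(a_l(\ovl y))\geq l\,m$ (an element of the $l$-th power of the ideal of $V$ generated by the $f_i(\ovl y)$ has $\nu$ at least $l\,m$), and then evaluating the equation at $\ovl y$ and using that $\nu$ is a valuation yields $k\,n\geq\min_{1\leq l\leq k}\big(l\,m+(k-l)n\big)$; for the index $l_0$ realizing this minimum one gets $l_0\,n\geq l_0\,m$, hence $n\geq m$. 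This little inequality is essentially the only computation in the argument, and it is the place where being a valuation ring is used.

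Finally I would assemble the pieces. Since an ideal always lies inside its integral closure and $\ovl I=\ovl J$, each component $g_j$ of $g$ belongs to $\ovl I$; applying the estimate gives $\min_j\nu(g_j(\ovl y))\geq\min_i\nu(f_i(\ovl y))$, hence $f(\ovl y)\in\m_V^N\Rightarrow g(\ovl y)\in\m_V^N$ for all $N$ and all $\ovl y$. Swapping the roles of $f$ and $g$ (using $f_i\in\ovl J$) gives the reverse implication, so the two conditions are equivalent at every level; letting $N\to\infty$ and using $\bigcap_N\m_V^N=(0)$ then yields $f(\ovl y)=0\iff g(\ovl y)=0$. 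Substituting these two equivalences into the definition of the Greenberg function completes the proof. (Note that only the fact that $V$ is a discrete valuation ring is used in this argument; the hypotheses making $V$ excellent Henselian enter merely through Theorem \ref{Greenberg}, which guarantees that $\b_f$ and $\b_g$ exist in the first place.)
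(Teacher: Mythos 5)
Your proof is correct and follows essentially the same route as the paper: both arguments rest on the valuative property of integral closure, namely that $\nu(h(\ovl{y}))\geq\min_i\nu(f_i(\ovl{y}))$ for every $h\in\ovl{I}$, so that the conditions $f(\ovl{y})\in\m_V^N$ and $f(\ovl{y})=0$ depend only on $\ovl{I}$ and the defining predicate of the Greenberg function is literally the same for $f$ and $g$. The only difference is cosmetic: you spell out the estimate from the integral equation and get exactness of solutions via $\bigcap_N\m_V^N=(0)$, whereas the paper invokes the standard valuative characterization and handles the ideal $(0)$ directly.
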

  \begin{proof}
  Let $\I$ be an ideal of $V$ and  $\ovl{y}\in V^m$. We remark that 
  $$f_1(\ovl{y}),\ldots, f_r(\ovl{y})\in \I\Longleftrightarrow g(\ovl{y})\in\I \ \ \forall g\in I.$$
  Then by replacing $\I$ by $(0)$ and $\m_V^c$, for all $c\in\N$, we see that $\b_f$ depends only on $I$ (see also Remark \ref{artin_ideal}).\\
  Now, for any $c\in \N$, we have:
  $$g(\ovl{y})\in \m_V^c\ \ \forall g\in I\Longleftrightarrow \nu(g(\ovl{y}))\geq c\ \  \forall g\in I$$
  $$\ \qquad\qquad\qquad\qquad\Longleftrightarrow \nu(g(\ovl{y}))\geq c\ \  \forall g\in \ovl{I}$$
 $$\  \qquad\qquad\qquad\qquad\Longleftrightarrow g(\ovl{y})\in \m_V^c \ \ \forall g\in \ovl{I}.$$
    Indeed  if $g\in \ovl I$ then we have 
    $$g^d+a_1 g^{d-1}+\cdots+a_{d-1}g+a_d=0$$
    for some $d\geq 1$ and $a_i\in I^i$. If $\nu(a_i(\ovl y))\geq ic$  for every $i$ we have that $\nu(g(\ovl y))\geq c$. This proves the  implication  $$\nu(g(\ovl{y}))\geq c\ \  \forall g\in I\Longrightarrow \nu(g(\ovl{y}))\geq c\ \  \forall g\in \ovl{I}.$$
    Thus $\b_f$ depends only on $\ovl{I}$.
   \end{proof}

 In general, it is a difficult problem to compute the Greenberg's function of an ideal $I$. It is even a difficult problem to bound this function in general. If we analyze carefully the proof of Greenberg's  Theorem, using classical effective results in commutative algebra, we can prove the following result:
 
 \begin{theorem}\label{Ro10}\cite{Ro10}
 Let $\k$ be a characteristic zero field and  $V:=\k\lb t\rb$ where $t$ is a single variable. Then there exists a function
 $$\N^2\lgw \N$$
 $$(m,d)\lgm a(m,d)$$
 which is a polynomial function in  $d$ whose degree is exponential in $m$, such that for any vector  $f(y)\in \k[t, y]^r$ of polynomials of total degree $\leq d$ with $y=(y_1,\ldots,y_m)$, the Greenberg's function of $f$ is bounded by $c\lgm a(m,d)(c+1)$. 
 \end{theorem}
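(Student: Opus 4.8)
The plan is to revisit the inductive proof of Greenberg's Theorem \ref{Greenberg} and make each of its steps quantitative, keeping careful track of how the constants $a$ and $b$ depend on $m$ and $d$. Throughout one works inside the polynomial ring $\k[t,y]$ in $m+1$ variables, so that all the effective commutative algebra invoked below is applied to ideals of a polynomial ring over a field; in particular the resulting bounds depend only on the number of variables and on the degrees of the generators, and are uniform in $\k$ and in the coefficients of $f$.

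Recall that the proof of Theorem \ref{Greenberg} proceeds by descending induction on $h=\haut I$, where $I\subseteq\k[t,y]$ is generated by $f_1,\dots,f_r$, and uses a handful of non-effective ingredients, each of which I would replace by an effective counterpart. First, one needs an integer $e$ with $\sqrt I^{\,e}\subseteq I$ together with a prime decomposition $\sqrt I=\P_1\cap\dots\cap\P_s$: here I would invoke an effective Nullstellensatz (Kollár, Brownawell) to bound $e$ by $d^{O(m)}$, and an effective primary decomposition (Hermann, or the degree bounds of Dubé, or of Aschenbrenner) to bound $s$ and the degrees of generators of the $\P_i$ by a polynomial in $d$ whose degree is a function $\phi(m)$ of $m$. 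Second, in the base case $h=m+1$, where $I$ contains a nonzero $v\in\k\lb t\rb$, the $t$-adic order $\nu(v)$ is controlled by an effective elimination bound, again polynomial in $d$ with exponent a function of $m$. Third, in the inductive step --- after reducing, exactly as in the proof of Theorem \ref{Ar68}, to $r=h$ with an $h\times h$ Jacobian minor $\d\notin I$ of degree $\le m(d-1)$ --- one uses colon-ideal relations $qf_i^{\,e'}\in(f_1,\dots,f_h)$ with $q\notin I$, which the effective Nullstellensatz again supplies with $e'$ and $\deg q$ bounded in the same way. Finally, Tougeron's Implicit Function Theorem (Proposition \ref{TougeronIFT2}) introduces no new constant. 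The characteristic zero hypothesis on $\k$ enters, as in Remark \ref{rmkAr68} ii), through the Jacobian criterion, and $\k[t,y]$ is excellent, being of finite type over a field.

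With these substitutions the recursion can be bookkept. Write $D$ for a bound on the degrees of the generators of the ideal under consideration at a given height. Passing from $\sqrt I^{\,e}\subseteq I$ and the prime decomposition multiplies the relevant $\m_V$-adic exponents by $e\cdot s\le d^{O(m)}$ and replaces $D$ by a degree bound $\operatorname{poly}_m(D)$ for the generators of the $\P_i$; the reduction to $r=h$ costs a further factor $e'$ and again replaces $D$ by $\operatorname{poly}_m(D)$; and the inductive step itself turns a pair $(a,b)$ valid at height $h+1$ (for generator-degrees $\le\operatorname{poly}_m(D)$) into the pair $(2a+1,2b)$ valid at height $h$ --- this is the passage to the condition $f(\ovl y)\in\m_V^{(2a+1)c+2b}$ in the proof of Theorem \ref{Greenberg}. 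The base case $h=m+1$ gives $a=0$ and $b$ equal to the elimination bound above. Since there are at most $m+2$ levels and at each level the degree bound $D$ gets replaced by a power of itself with exponent polynomial in $m$, the final degree bound is a polynomial in $d$ whose degree is exponential in $m$; the constants $a,b$, being products of at most $m+2$ such polynomials together with extra factors $2$, satisfy the same type of bound. Setting $a(m,d):=1+\max(a,b)$, Greenberg's Theorem then yields $\b(c)\le ac+b\le a(m,d)(c+1)$, as required.

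The hard part is controlling the degree blow-up through the recursion: the effective Nullstellensatz and, above all, the effective primary decomposition are the heaviest inputs, and it is their iteration over the $\le m+2$ heights that forces the final bound to be polynomial in $d$ of degree exponential in $m$, rather than jointly polynomial in $d$ and $m$. A secondary technical point is that $\k\lb t\rb$ is not of finite type over $\k$; I would handle this by carrying out all ideal-theoretic operations in $\k[t,y]$ and bounding the $t$-degrees of the polynomials in $t$ alone produced by the elimination steps, so that the effective statements for polynomial rings over a field apply verbatim. One must also verify that all cited effective bounds are uniform in the coefficients --- which they are, being proved by degree arguments --- so that $a(m,d)$ indeed depends on $m$ and $d$ only.
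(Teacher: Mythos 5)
Your proposal follows exactly the route the paper indicates for Theorem \ref{Ro10}: one re-runs the height induction in the proof of Greenberg's Theorem \ref{Greenberg} (radical and prime decomposition, Jacobian minor in characteristic zero, the colon element $q$, Tougeron's Implicit Function Theorem) and replaces each non-effective ingredient by classical effective commutative algebra (effective Nullstellensatz, Hermann-type degree bounds for decompositions and colon ideals), then composes the degree blow-ups over the at most $m+2$ levels to get a bound polynomial in $d$ of degree exponential in $m$. Apart from harmless bookkeeping imprecisions (e.g.\ where exactly $e'$ enters, and the care needed to keep all operations in $\k[t,y]$ rather than $\k\lb t\rb[y]$, which you acknowledge), this is essentially the same argument as in \cite{Ro10}.
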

 
Moreover let us remark that, in the proof of Theorem \ref{Greenberg}, we proved a particular case of the following inequality:
 $$\b_I(c)\leq 2\b_J(c)+c,\ \ \forall c\in\N$$
 where $J$ is the \index{Jacobian ideal} Jacobian ideal of $I$ (for a precise definition of the Jacobian ideal in general and a general proof of this inequality let see \cite{Elk}).	 The coefficient 2 comes from the use of Tougeron  Implicit Function Theorem. We can sharpen this bound in the following particular case:\\
 
 \begin{theorem}\cite{Hi93}\label{Hi1}
 Let $\k$ be an algebraically closed  field of characteristic zero and  $V:=\k\lb t\rb$ where $t$ is a single variable.  Let $f(y)\in V\lb y\rb$ be one power series. Let us denote by $J$ the ideal of $V\lb y\rb$ generated by $f(y)$, $\frac{\partial f}{\partial t}(y)$, $\frac{\partial f}{\partial y_1}(y),\ldots,\frac{\partial f}{\partial y_m}(y)$, and let us denote by $\b_f$ the Greenberg's function of $(f)$ and by $\b_J$ the Greenberg's function of $J$. Then
 $$\b_f(c)\leq \b_J(c)+c\ \ \forall c\in\N.$$

  \end{theorem}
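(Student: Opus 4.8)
The plan is to replay the proof of Greenberg's Theorem~\ref{Greenberg} with a sharper constant: that proof loses a factor $2$ because Tougeron's Implicit Function Theorem requires $f(\ovl y)\in(\d(\ovl y))^2(t)^c$, whereas here the extra generator $\partial f/\partial t$ of $J$ will let us close the estimate with a loss of only $+c$. Fix $c\in\N$ and $\ovl y\in V^m$ with $\ord(f(\ovl y))\geq\b_J(c)+c$; we look for $\wdt y\in V^m$ with $f(\wdt y)=0$ and $\wdt y-\ovl y\in(t)^c$. We may assume $g(t):=f(t,\ovl y(t))\neq 0$ and $p:=\ord(g)\geq 1$ (otherwise there is nothing to prove). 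Let $\mu$ be the order of the ideal of $V$ generated by $\frac{\partial f}{\partial t}(t,\ovl y(t))$ and by the $\frac{\partial f}{\partial y_j}(t,\ovl y(t))$, $1\leq j\leq m$.

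If $\mu\geq\b_J(c)$ we finish immediately: all generators of $J$, evaluated at $(t,\ovl y(t))$, then lie in $(t)^{\b_J(c)}$ (for $g$ itself because $p\geq\b_J(c)$, for the partial derivatives by the choice of $\mu$), so the definition of the Greenberg function $\b_J$ provides $\wdt y\in V^m$, with $\wdt y-\ovl y\in(t)^c$, at which $f$ and all of its first partial derivatives vanish; in particular $f(\wdt y)=0$.

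The content lies in the case $\mu<\b_J(c)$, where $p>\mu+c$. The engine is the chain rule identity
$$g'(t)=\frac{\partial f}{\partial t}(t,\ovl y(t))+\sum_{j=1}^m\ovl y_j'(t)\,\frac{\partial f}{\partial y_j}(t,\ovl y(t)),$$
together with $\ord(g')=p-1$, which holds because $\cha\k=0$. One first deduces that some $y$-derivative must be small, namely $\nu:=\min_j\ord\bigl(\frac{\partial f}{\partial y_j}(t,\ovl y(t))\bigr)<\b_J(c)$: were every term $\ovl y_j'(t)\frac{\partial f}{\partial y_j}(t,\ovl y(t))$ of order $\geq\b_J(c)>\mu$, the identity would force $\ord\bigl(\frac{\partial f}{\partial t}(t,\ovl y(t))\bigr)=\ord(g')=p-1$, hence $p=\mu+1$, contradicting $p>\mu+c$. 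Fixing $j_0$ with $\ord\bigl(\frac{\partial f}{\partial y_{j_0}}(t,\ovl y(t))\bigr)=\nu$, one then analyses the roots in $\eta$ of the single equation $f(t,\ovl y(t)+\eta)=0$ over the complete discrete valuation ring $V$ through its Newton polygon: correcting the coordinate $y_{j_0}$ by the successive terms of the relevant root produces approximate solutions $\ovl y^{(k)}$ with $\ovl y^{(k)}-\ovl y\in(t)^{p-\nu}\subseteq(t)^{c+1}$, and, since $p$ is so large, after finitely many steps one reaches $\ovl y^{(N)}$ --- still within $(t)^{c+1}$ of $\ovl y$ --- which either satisfies the hypothesis of the previous case or satisfies $f(\ovl y^{(N)})\in\bigl(\frac{\partial f}{\partial y_{j_0}}(t,\ovl y^{(N)}(t))\bigr)^2(t)^c$; in the latter situation Proposition~\ref{TougeronIFT2} yields the desired $\wdt y$ with $\wdt y-\ovl y^{(N)}\in(t)^c$, hence $\wdt y-\ovl y\in(t)^c$.

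The difficult part --- and the reason all of the hypotheses ``$\k$ algebraically closed'', ``$\cha\k=0$'' and ``$f$ a single power series'' enter --- is to make this Newton polygon analysis precise and to show that it terminates with the right bookkeeping: the total order one is forced to sacrifice over the whole process must be kept $\leq\mu+c<\b_J(c)+c$, i.e.\ one must obtain the sharp loss $+c$ and not the $+2\b_J(c)$ produced by blindly iterating Tougeron's theorem. This rests on the exactness $\ord(g')=\ord(g)-1$ at each stage, on the geometry of the Newton polygon of the hypersurface $\{f=0\}\subset\Spec V\lb y\rb$ --- whose singular locus is cut out precisely by $J$ --- and on the fact that, $\k$ being algebraically closed of characteristic $0$, the root of $f(t,\ovl y(t)+\eta)=0$ one tracks is unramified enough to be reached inside $V$. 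Finally, Lemma~\ref{int_closure} --- that $\b_f$ and $\b_J$ depend only on the integral closures of $(f)$ and of $J$ --- is available to streamline the intermediate reductions.
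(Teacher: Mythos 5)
The survey you are working from does not in fact prove this theorem (it is quoted from Hickel \cite{Hi93} without proof), so your attempt must stand on its own, and as written it is an outline rather than a proof. The steps you actually carry out are correct but they are precisely the ingredients already present in the proof of Theorem \ref{Greenberg}: when all generators of $J$ have order $\geq \b_J(c)$ at $\ovl y$ you invoke the Greenberg function of $J$, and when $\ord f(\ovl y)\geq 2\nu+c$ you invoke Proposition \ref{TougeronIFT2}; combining only these two cases yields a bound of the shape $2\b_J(c)+c$, not $\b_J(c)+c$ (note that $\ord f(\ovl y)\geq \b_J(c)+c$ together with $\nu<\b_J(c)$ does \emph{not} give $\ord f(\ovl y)\geq 2\nu+c$). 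The entire content of the theorem, the removal of the factor $2$, is concentrated in the step you yourself label ``the difficult part'': the claim that by correcting the single coordinate $y_{j_0}$ along the Newton polygon of $f(t,\ovl y(t)+\eta)=0$ one reaches, after finitely many steps and without moving more than $(t)^{c+1}$, a point $\ovl y^{(N)}$ which either has all generators of $J$ of order $\geq\b_J(c)$ or satisfies $\ord f(\ovl y^{(N)})\geq 2\nu_N+c$. This is asserted, not proved, and it is where the theorem lives.

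Concretely, three things are missing. First, the one--variable equation $f(t,\ovl y(t)+\eta e_{j_0})=0$ need not have any root $\eta\in (t)V$ (think of behaviour like $\eta^2-t^{2k+1}$): its roots live a priori only in Puiseux series, and the assertion that the root you track is ``unramified enough to be reached inside $V$'' is exactly what has to be established, not a consequence of $\k$ being algebraically closed. Second, after each correction the quantities $p_k=\ord f(\ovl y^{(k)})$, $\nu_k$ and $\ord\frac{\partial f}{\partial t}(t,\ovl y^{(k)}(t))$ all change; you give no argument that the corrections do not accumulate below order $c+1$, nor that the iteration terminates in one of your two good configurations rather than, say, cycling with $\nu_k$ dropping while $p_k$ never passes $2\nu_k+c$. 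Third, the precise bookkeeping that converts ``termination'' into the sharp inequality $\b_f(c)\leq\b_J(c)+c$ (as opposed to a weaker linear bound) is nowhere indicated. Your preliminary reduction (the chain-rule identity forcing $\nu<\b_J(c)$ when $\mu<\b_J(c)$, for $c\geq1$) is fine, but without the quantitative Newton-polygon analysis the proposal does not establish the stated bound.
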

  
 This bound may be used to find sharp bounds of some Greenberg's functions (see Remark \ref{Greenberg_ana_set}). \\
  On the other hand we can describe  the behaviour of $\b$ in the following case:
 
\begin{theorem}\label{De84}\cite{De84}\cite{De-Lo}
Let $V$ be $\Z_p$ or a Henselian discrete valuation ring whose residue field is an algebraically closed field of characteristic zero. Let us denote by $\m_V$ the maximal ideal of $V$. Let $\b$ denote  the  Artin function of $f(y)\in V[y]^r$.  Then there exists a finite partition of $\N$ in congruence classes such that on each such  class the function $c\lgm \b(c)$ is linear for $c$ large enough.
\end{theorem}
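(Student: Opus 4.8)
I would deduce the statement from Greenberg's Theorem \ref{Greenberg} together with quantifier elimination for the valued fields involved and the structure theory of Presburger arithmetic. First I would reformulate $\beta$ so as to exhibit it as a maximum of a definable family of integers. Call $\bar y\in V^m$ \emph{$c$-liftable} if there is $\wdt y\in V^m$ with $f(\wdt y)=0$ and $\wdt y-\bar y\in\m_V^c$. The set of $d\in\N$ for which ``$f(\bar y)\in\m_V^d\Rightarrow\bar y$ is $c$-liftable for all $\bar y\in V^m$'' is upward closed. A non $c$-liftable $\bar y$ satisfies $f(\bar y)\neq 0$, hence $\ord(f(\bar y))<\infty$, where I write $\ord(f(\bar y)):=\min_i\ord(f_i(\bar y))$; and by Theorem \ref{Greenberg} these orders are bounded above by $ac+b-1$. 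Hence
$$\beta(c)=1+\max\bigl\{\ord(f(\bar y))\ :\ \bar y\in V^m\ \text{is not}\ c\text{-liftable}\bigr\},$$
with the convention $\beta(c)=0$ when every $\bar y\in V^m$ is $c$-liftable.

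Next I would bring in model theory. I would work in a first-order language admitting quantifier elimination for the valued field in question: Macintyre's language, the ring language enlarged by predicates for the $n$-th powers, when $V=\Z_p$; and the Denef--Pas language, with separate sorts for the valued field, the residue field and the value group $\Z$, in the equicharacteristic zero case, where the residue field is a model of $\mathrm{ACF}_0$ and the value group a model of Presburger arithmetic. The relation ``$\bar y$ is $c$-liftable'' is
$$\exists\wdt y\ \Bigl(\bigwedge_i f_i(\wdt y)=0\ \wedge\ \bigwedge_j\ord(\wdt y_j-\bar y_j)\geq c\Bigr),$$
a formula whose free variables are $\bar y$, ranging over the valued-field sort and cut out to $V^m$ by the conditions $\ord(\bar y_j)\geq 0$, and $c$, in the value-group sort; likewise ``$\ord(f(\bar y))\geq d$'' is a formula in $\bar y$ and $d$. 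Therefore the set over which the maximum is taken above is uniformly defined by a formula $\Theta(c,d)$ with free variables $c,d$ in the value-group sort. After eliminating the valued-field quantifiers, $\Theta$ has no free valued-field or residue-field variables, so its residue-field content collapses to a truth value (a sentence about $\mathrm{ACF}_0$, or about the finite residue field) and $\Theta$ defines a subset of $\Z^2$ definable in Presburger arithmetic. A further Presburger manipulation ($\beta(c)=1$ plus the maximum of the $c$-section of $\Theta$, or $0$ when that section is empty) then shows that the graph of $\beta$ is a Presburger-definable subset of $\N^2$. Since a function $\N\lgw\N$ with Presburger-definable graph agrees, for all large $c$, with an affine function $c\lgm a_ic+b_i$ (with $a_i,b_i\in\Q$) on each class of a suitable finite partition of $\N$ into arithmetic progressions, the theorem follows.

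The main difficulty is this middle step: passing from the maximum over the infinite, valued-field-parametrized family in the reformulation above to a purely value-group condition. This rests on the relevant quantifier elimination results (Macintyre in the $p$-adic case, Pas in the equicharacteristic zero case) and on the orthogonality of the residue-field and value-group sorts, and it crucially uses Greenberg's Theorem \ref{Greenberg}: without the bound $ac+b-1$ the maximum could be infinite and $\beta$ would not be a well-defined function at all. An alternative approach, closer to Denef's original one, would be to establish the rationality, with denominator a finite product of factors $1-p^aT^b$, of the Poincar\'e-type generating series attached to the families $\{\bar y\bmod\m_V^c:f(\bar y)\in\m_V^c\}$ and $\pi_c(\{f=0\})$, and to extract the eventual quasi-linear behaviour of $\beta$ from the shape of that denominator; this still relies on the same model-theoretic inputs.
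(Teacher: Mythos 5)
Your plan is correct and follows essentially the same route as the paper: express the defining condition for $\b(c)$ as a first-order formula in the appropriate language (Pas's three-sorted language in the equicharacteristic zero case, Macintyre's in the $\Z_p$ case), apply quantifier elimination to reduce to the value-group sort, and conclude that $\b$ is Presburger-definable, hence eventually linear on the classes of a finite partition of $\N$ into congruence classes. Your packaging of $\b(c)$ as a maximum over non-liftable points (with Theorem \ref{Greenberg} guaranteeing finiteness) is only a minor variant of the paper's direct logical definition of $\b(c)$.
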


\begin{proof}[Hints on the proof in the case the  residue field has characteristic zero]
 Let us consider the following \index{first order language} first order language with three sorts:
\begin{enumerate}
\item[1)] the field $(\K:=\Frac(V),+,\times,0,1)$
\item[2)] the group $(\Z,+,<,\equiv_d (\forall d\in\N^*),0)$ ($\equiv_d$ is the relation $a\equiv_d b$ if and only if $a-b$ is divisible by $d$ for $a$, $b\in \Z$)
\item[3)] the residue field $(\k:=\Frac\left(\frac{V}{\m_V}\right),+,\times,0,1)$
\end{enumerate}
with both following functions:
\begin{enumerate}
\item[a)] $\nu : \K\lgw \Z^*$
\item[b)] $ac : \K\lgw \k$ ("angular component")
\end{enumerate}
The function $\nu$ is the valuation of the valuation ring $V$. The function $ac$ may be characterized by axioms, but here let us just give an example: let us assume that $V=\k\llbracket t\rrbracket$. Then $ac$  is defined by $ac(0)=0$ and $ac\left(\sum_{n=n_0}^{\infty}a_nt^n\right)=a_{n_0}$ if $a_{n_0}\neq 0$.\\
The second sort $(\Z,+,<,\equiv_d,0)$ admits elimination of quantifiers (\cite{Pr}) and the elimination of quantifiers of $(\k,+,\times,0,1)$ is a classical result of Chevalley since $\k$ is algebraically closed. J. Pas proved that  the three sorted language admits elimination of quantifiers \cite{Pa89}. This means that any subset of $\K^{n_1}\times\Z^{n_2}\times\k^{n_3}$ defined by a first order formula in this three sorts language (i.e. a logical formula involving 0, 1, +, $\times$ (but not $a\times b$ where $a$ and $b$ are integers), (, ), =, <, $\wedge$, $\vee$, $\neg$, $\forall$, $\exists$, $\nu$, $ac$, and variables for elements of $\K$, $\Z$ and $\k$ may be defined by a formula involving the same symbols except $\forall$, $\exists$.\\
Then we notice that $\b$ is defined by the following formula:
$$\left[\forall c\in\N\  \forall \ovl{y}\in \K^m\left( \nu(f(\ovl{y}))\geq \b(c)\right)\wedge\left(\nu(\ovl{y})\geq 0\right)\, \exists \wdt{y}\in \K^m\, \left(f(\wdt{y})=0 \wedge \nu(\wdt{y}-\ovl{y})\geq c\right)\right]$$
\begin{equation*}\begin{split}\wedge\left[\forall c\in\N\ \exists \ovl{y}\in \K^m\left( \nu(f(\ovl{y}))\geq \b(c)+1\right)\wedge\left(\nu(\ovl{y})\geq 0\right)\right.&\\
 \neg\exists \wdt{y}\in \K^m\  &\left.\left( f(\wdt{y})=0 \wedge \nu(\wdt{y}-\ovl{y})\geq c\right)\right]\end{split}\end{equation*}
Applying the latter elimination of quantifiers result we see that $\b(c)$ may be defined without $\forall$ and $\exists$. Thus $\b(c)$ is defined by a formula using $+$, <, $\equiv_d$ (for a finite set of integers $d$). This proves the result.\\
The case where $V=\Z_p$ requires more work since the residue field of $\Z_p$ is not algebraically closed, but the idea is the same.
  \end{proof}
 
 \begin{remark}\label{Greenberg_ana_set}
  When $V=\C\{ t\}$, $t$ being a single variable, it is tempting to link together the Greenberg's function of a system of equations with coefficients in $\C$, or even in $V$, and some geometric invariants of the germ of the complex set defined by this system of equations. This has been done in several cases:
  \begin{enumerate}
  \item[i)] In \cite{El}, a bound (involving the multiplicity and the Milnor number) of the Greenberg's function is given when the system of equations defines the germ of a curve in $(\C^m,0)$. 
  \item[ii)] Using Theorem \ref{Hi1} M. Hickel  gives the following bound of the Greenberg's function $\b$ of the germ of a complex hypersurface with an isolated singularity (cf. \cite{Hi93}): $\b(c)\leq \lfloor \lambda c\rfloor +c$ for all $c\in\N$, and this bound is sharp for plane curves. Here $\lambda$ denotes the \L ojasiewicz exponent of the germ, i.e.
  \begin{equation*}\begin{split}\lambda:=\inf\left\{\theta\in \R\ /\ \exists C>0 \ \exists U \text{ neighborhood of } 0 \right.&\text{ in }\C^m,\\
  |f(z)|+\left|\frac{\partial f}{\partial z_1}(z)\right|+\cdots+\left|\frac{\partial f}{\partial z_m}(z)\right|&\left.\geq C|z|^{\theta}\ \forall z\in U\right\}.\end{split}\end{equation*}
  \item[iii)] \cite{Hi04} gives the complete computation of the Greenberg's function of one branch of plane curve and proves that it is a topological invariant. This computation has been done for two branches in \cite{Sa}. Some particular cases depending on the Newton polygon of the plane curve singularity are computed in \cite{W2}.
  
  \item[iv)] In the case where $V$ is the ring of $p$-adic integers and the variety defined by $f(y)=0$ is non-degenerate with respect to  its Newton polyhedron, D. Bollaerts \cite{Bol} gives a bound on the infimum of numbers $a$ such that Theorem \ref{Greenberg} is satisfied for some constant $b$. This bound is defined in terms of  the Newton polyhedra of the components of $f$.
  
  \end{enumerate}
    \end{remark}
 
 Finally we mention the following recent result that extends Theorem \ref{Greenberg} to non-Noetherian valuation rings and whose proof is based on \index{ultraproduct} ultraproducts methods used in \cite{BDLvdD} to prove Theorem \ref{Greenberg} (see Section \ref{ultraproducts}):
 
 \begin{theorem}\cite{M-B}
 Let $V$ be a Henselian valuation ring and $\nu: V\lgw \G$ its associated valuation. Let us denote by $\wdh{V}$ its $m_V$-adic completion, $\K:=\Frac(V)$ and $\wdh{\K}:=\Frac(\wdh{V})$. Let us assume that $\K\lgw \wdh{\K}$ is a separable field extension. Then for any $f(y)\in V[y]^r$ there exist $a\in\N$, $b\in \G^+ $ such that
 
$$\forall c\in\G\  \forall \ovl{y}\in V^m\left( \nu(f(\ovl{y}))\geq ac+b\right)\Longrightarrow\ \exists \wdt{y}\in V^m\ \left(f(\wdt{y})=0 \wedge \nu(\wdt{y}-\ovl{y})\geq c\right).$$

  \end{theorem}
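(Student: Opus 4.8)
The method is an ultraproduct argument, following the proof of Greenberg's Theorem in \cite{BDLvdD}: for a valuation ring that need not be Noetherian one has no induction on the height of an ideal available (contrary to the proof of Theorem \ref{Greenberg}), and a compactness argument replaces it. First I would reformulate the conclusion as a \L ojasiewicz-type inequality: for $\ovl y\in V^m$ put
\[
\b(\ovl y):=\sup\bigl\{\nu(\wdt y-\ovl y)\ :\ \wdt y\in V^m,\ f(\wdt y)=0\bigr\}\in\G\cup\{-\infty,+\infty\}
\]
(with $\sup\emptyset=-\infty$); the statement says exactly that there are $a\in\N$ and $b\in\G^+$ with $\nu(f(\ovl y))<a\,c+b$ whenever $\b(\ovl y)<c$. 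One reduces at once to $V$ not a field, so $\G^+\neq\{0\}$; fix $\g_0\in\G$ with $\g_0>0$. The case where $f$ has no zero in $V^m$ is disposed of separately by a compactness argument (if $\sup_{\ovl y}\nu(f(\ovl y))$ were not attained in $\G$, an ultrapower argument as below would produce a zero of $f$ in a Henselian valuation ring elementarily equivalent to $V$, hence in $V$), and then $a=0$ works.

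So assume $f$ has a zero in $V^m$ while the conclusion fails. Then for every $n\in\N$, applying the failure to $(a,b)=(n,n\g_0)$, there are $c_n\in\G$ with $c_n>0$ and $\ovl y_n\in V^m$ such that $\nu(f(\ovl y_n))\geq n(c_n+\g_0)$ while no $\wdt y\in V^m$ satisfies $f(\wdt y)=0$ and $\nu(\wdt y-\ovl y_n)\geq c_n$. Fix a non-principal ultrafilter $\omega$ on $\N$ and set $W:=V^{\N}/\omega$. Then $W$ is again a Henselian valuation ring — "valuation ring" and, axiom by axiom through Hensel's Lemma in each degree, "Henselian valuation ring" are elementary conditions — with value group $\G^{\N}/\omega$, and $\Frac(W)\lgw\Frac(\wdh W)$ is again separable (automatic in characteristic $0$, with a short extra check in characteristic $p$). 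Let $\ovl y_*\in W^m$ be the class of $(\ovl y_n)_n$ and $c_*:=(c_n)_n/\omega$. Since $N:=(n)_n/\omega$ is infinite in $\Z^{\N}/\omega$, \L o\'s's Theorem gives $\nu(f(\ovl y_*))\geq N(c_*+\g_0)\geq k\g_0$ for every standard $k\in\N$, so $f(\ovl y_*)\in\bigcap_k\m_W^k=\Ker(W\lgw\wdh W)$; hence $f$ vanishes at the image of $\ovl y_*$ in the $\m_W$-adic completion $\wdh W$. But, again by \L o\'s's Theorem, there is no $\wdt y\in W^m$ with $f(\wdt y)=0$ and $\nu(\wdt y-\ovl y_*)\geq c_*$. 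These two facts contradict the Artin Approximation property of the pair $(W,\wdh W)$: the solution of $f=0$ just exhibited in $\wdh W$ must be a limit, for the valuation, of solutions in $W$. It is precisely the separability hypothesis that guarantees this property, which is the valuation-ring counterpart of the General N\'eron Desingularization Theorem \ref{Popescu}; since $W$ is not Noetherian it must be established directly from the ultraproduct presentation.

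The step I expect to be the main obstacle is this Artin Approximation input for the non-Noetherian ultrapower $W$. One cannot invoke the General N\'eron Desingularization Theorem (which is Noetherian), and one moreover needs an approximation statement measured by the valuation $\nu$, not merely $\m_W$-adically — the two differ as soon as the value group is non-discrete — so as to reach precisely the order $c_*$. In effect the ultraproduct step reduces the whole theorem to establishing this property of the pair $(W,\wdh W)$, which takes over the role played by the induction on the height of an ideal in the classical proof of Theorem \ref{Greenberg}; the discrete case (Theorem \ref{Greenberg} itself), where it is classical and all Archimedean-class subtleties disappear, is the guiding example. The remaining, more routine, difficulties are the bookkeeping forced by the possibly uncountable cofinality of $\G$ when extracting the countable sequence $(\ovl y_n)_n$, the verification that separability of $\Frac(V)\lgw\Frac(\wdh V)$ transfers to $W$ in positive characteristic, and the behaviour of the additive constant $b$ across the Archimedean classes of $\G^+$.
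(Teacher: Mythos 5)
A preliminary remark: the paper contains no proof of this statement; it is quoted from \cite{M-B} with only the indication that the proof rests on the ultraproduct method of \cite{BDLvdD}. Your proposal follows that indicated route in outline, but as a proof it has a genuine gap, and it is the one you yourself flag: the approximation input for the ultrapower $W$. After the compactness step, everything is reduced to an Artin Approximation property for the non-Noetherian Henselian valuation ring $W$, measured by the valuation and reaching the prescribed level $c_*$. Nothing in the paper (nor in the classical theory) supplies this: Theorem \ref{Pop_app} and General N\'eron Desingularization (Theorem \ref{Popescu}) require Noetherian rings, and the proof of Theorem \ref{Greenberg} uses Noetherian induction on height, which is unavailable here. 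This missing statement is precisely the mathematical content of Moret-Bailly's theorem — in particular it is where the separability hypothesis on $\K\lgw\wdh{\K}$ actually does its work, with genuine difficulties in positive characteristic — so declaring it ``the main obstacle'' reduces the theorem to itself rather than proving it. The ultraproduct step is the easy half; the proposal stops exactly where the real proof begins.

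Two further points in the reduction are incorrect or incomplete as written. First, for a valuation ring whose value group is dense one has $\mathfrak{m}_W^2=\mathfrak{m}_W$, hence $\bigcap_k\mathfrak{m}_W^k=\mathfrak{m}_W$ and the $\mathfrak{m}_W$-adic completion of $W$ degenerates; the assertion that $f$ ``vanishes at the image of $\overline{y}_*$ in $\widehat{W}$'' is then vacuous. To make sense of the limit object you must, as in the proof of Theorem \ref{SAP_formal} where one passes from $A^*$ to $A_1=A^*/\mathfrak{m}^*_\infty$, quotient $W$ by a suitable convex (prime) ideal adapted to $c_*$, or work with the completion for the valuation topology — and the approximation statement must then be formulated and proved for that object. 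Second, the case where $f$ has no zero in $V^m$ is not ``disposed of separately'': extracting from elements on which $\nu(f)$ is unbounded an exact zero in $W$ (and then in $V$ — note you need the diagonal embedding $V\lgw W$ to be elementary, which it is by \L o\'s, not mere elementary equivalence) requires the very same missing approximation input; moreover if $\Gamma$ has uncountable cofinality a countably indexed ultrapower does not even produce an element of unbounded valuation, so the index set or the argument must change. These are fixable bookkeeping issues, but the central gap — the valuation-theoretic approximation theorem for the Henselian pair built from $W$ — is not.
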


 %%%%%%%%%%%%%%%%%%%%%%%%%%%%%%%%%%%%%%%%%%%%%%%%%%%%%%%%%%%%%%%%%%%%%%

\subsection{Strong Artin Approximation: the general case}
\index{strong Artin approximation Theorem}
In the general case (when $V$ is not a valuation ring) there still exists an approximation function $\b$ analogous to the Greenberg's function. The analogue of Greenberg's Theorem in the general case is the following:
\begin{theorem}[Strong Artin Approximation Theorem]\cite{P-P,Po86}\label{SAP_formal} Let $A$ be a complete local ring whose maximal ideal is denoted by $\m_A$. Let  $f(y,z)\in A\lb y\rb[z]^r$, with $z:=(z_1,\ldots,z_s)$. Then there exists a function $\b : \N\lgw \N$ such that the following holds:\\
 For any $c\in\N$ and any $\ovl{y}\in (\m_A.A)^m$, $\ovl{z}\in A^s$ such that $f(\ovl{y},\ovl{z})\in \m_A^{\b(c)}$, there exist $\wdt{y}\in(\m_A.A)^m$ and $\wdt{z}\in A^s$  such that $f(\wdt{y},\wdt{z})=0$ and $\wdt{y}-\ovl{y}$, $\wdt{z}-\ovl{z}\in\m_A^c$.
 \end{theorem}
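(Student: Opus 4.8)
The plan is to prove the statement by contradiction, using an ultraproduct (or, equivalently, a diagonal/compactness) argument built on top of the General Artin Approximation Theorem (Theorem \ref{Pop_app}), which applies because a complete local ring $A$ is excellent and $(A,\m_A)$ is a Henselian pair. First I would fix $c\in\N$ and suppose, for contradiction, that no $\b(c)$ works: then for every $n\in\N$ there exist $\ovl{y}^{(n)}\in(\m_A\cdot A)^m$ and $\ovl{z}^{(n)}\in A^s$ with $f(\ovl{y}^{(n)},\ovl{z}^{(n)})\in\m_A^n$, but such that no exact solution $(\wdt{y},\wdt{z})$ of $f=0$ agrees with $(\ovl{y}^{(n)},\ovl{z}^{(n)})$ modulo $\m_A^c$. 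The idea is to assemble these ``approximate solutions of unbounded quality'' into a genuine formal solution of a suitable system over a larger complete local ring, to which Theorem \ref{Pop_app} can be applied to produce an exact solution contradicting the choice of $(\ovl{y}^{(n)},\ovl{z}^{(n)})$ for $n$ large.

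Concretely, I would pass to a non-principal ultrafilter $\mathcal{U}$ on $\N$ and form the ultrapower $A^\ast := A^\N/\mathcal{U}$, together with its ideal $\m^\ast$ generated by $\m_A$; because $A$ is Noetherian local and $\m_A$ is finitely generated, one checks that $A^\ast$ is a (non-Noetherian, but still useful) local ring with $\bigcap_n (\m^\ast)^n$ equal to the ideal of sequences converging $\m_A$-adically to $0$ along $\mathcal{U}$. The sequence $(\ovl{y}^{(n)},\ovl{z}^{(n)})_n$ defines an element $(\ovl{y}^\ast,\ovl{z}^\ast)\in A^\ast$, and since $f(\ovl{y}^{(n)},\ovl{z}^{(n)})\in\m_A^n$ for each $n$, we get $f(\ovl{y}^\ast,\ovl{z}^\ast)\in\bigcap_n(\m^\ast)^n$. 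Reducing modulo this intersection and invoking the appropriate completeness/saturation statement (this is the role played by Proposition \ref{prop_mod} and the ultraproduct machinery of Part \ref{ultraproducts}), one obtains from $(\ovl{y}^\ast,\ovl{z}^\ast)$ an honest solution $(\wdh{y},\wdh{z})$ of $f(y,z)=0$ with $\wdh{y}\in\m_A\wdh{A}$ in the completion of some Noetherian local ring to which $A$ maps, reducing to exactly the hypothesis of Theorem \ref{Pop_app}; applying that theorem with the same $c$ yields an exact solution in $A$ agreeing with $(\ovl{y}^{(n)},\ovl{z}^{(n)})$ modulo $\m_A^c$ for $\mathcal{U}$-almost all $n$, the desired contradiction. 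One subtlety to handle carefully is the mixed polynomial/power-series nature of $f\in A\lb y\rb[z]$: I would either absorb the power-series variables $y$ into the coefficient ring (working over $A\lb y\rb$, which is again complete local) or truncate and keep track of the $\m_A$-adic error, so that at the end one is genuinely applying Theorem \ref{Pop_app} to a system of polynomial equations over an excellent Henselian pair.

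The main obstacle I expect is not the logical skeleton of the contradiction argument but the passage from the ultrapower $A^\ast$ back to a \emph{Noetherian} complete local ring in which Theorem \ref{Pop_app} is available: ultrapowers of Noetherian rings are almost never Noetherian, so one cannot directly apply General Artin Approximation to $A^\ast$. The standard fix is to note that each instance only involves finitely many elements and finitely many equations, so the relevant data live in a Noetherian complete local subring (for instance a suitable $\k'\lb x\rb$ or the original $A$ after extending residue fields), and to quote the precise ultraproduct lemma (as in \cite{BDLvdD}, \cite{Po86}, and Proposition \ref{prop_mod}) that the reduction $A^\ast/\!\bigcap_n(\m^\ast)^n$ is itself a complete local ring whose formal solutions descend. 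Alternatively, one avoids ultraproducts entirely and argues by a direct diagonal extraction: by Noetherianity of $A$, for each fixed $c$ the sets of ``liftable $c$-jets'' form a descending chain that stabilizes, and a König's-lemma-style argument produces a coherent sequence assembling into a formal solution — this is the route foreshadowed by the proof of Theorem \ref{ex_Artin} in the introduction. Either way, once a formal solution is in hand the conclusion is immediate from Theorem \ref{Pop_app}.
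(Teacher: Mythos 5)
Your plan is essentially the paper's own argument: the paper proves Theorem \ref{SAP_formal} by the same contradiction-plus-ultraproduct scheme, forming $A^*$ and the quotient $A_1:=A^*/\m_\infty^*$, quoting the lemma that $A_1$ is a Noetherian complete local ring with $A\lgw A_1$ regular, and then running the proof of Theorem \ref{Pop_app} (General N\'eron Desingularization for the regular morphism $A\lgw A_1$) to produce an exact solution in $A$ close to $\ovl{y}^{(n)}$ for almost all $n$, contradicting the choice of the sequence. The subtlety you flag about $f\in A\lb y\rb[z]$ is real but is also left implicit in the paper's sketch (which treats polynomial systems and refers to \cite{P-P}, \cite{Po86} and the Weierstrass-system reduction of Remark \ref{tobedone} for the general case), so your proposal matches the paper's route.
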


\begin{remark}
This theorem can be extended to the case where $A$ is an excellent Henselian local ring by using Theorem \ref{Pop_app}.\\
Let us also mention that there  exists a version of this theorem for analytic equations \cite{W1} or Weierstrass systems \cite{D-L}.
\end{remark}

In the case of polynomial equations over a field the approximation function $\b$ may be chosen to depend only on the degree of the equations and the number of variables:

\begin{theorem}\label{theo1}\cite{Ar69,BDLvdD}
For all  $n,m,d\in\N$, there exists a function  $\b_{n,m,d}:\N\lgw \N$ such that the following holds:\\
Let $\k$ be a field and set $x:=(x_1,\ldots,x_n)$ and $y:=(y_1,\ldots,y_m)$. Then for all $f(x,y)\in\k[x,y]^r$ of total degree $\leq d$, for all $c\in\N$, for all $\ovl{y}(x)\in\k\llbracket x\rrbracket^m$ such that  
 $$f(x,\ovl{y}(x))\in (x)^{\b_{n,m,d}(c)},$$  there exists $\wdt{y}(x)\in\k\llbracket x\rrbracket^m$ such that $f(\wdt{y}(x))=0$ and $\wdt{y}(x)-\ovl{y}(x)\in (x)^c$.\\
\end{theorem}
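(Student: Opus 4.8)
The plan is to deduce this uniformity statement from a compactness (ultraproduct) argument, feeding in the algebraic Artin Approximation Theorem \ref{Ar69} as the only substantial ingredient. I would begin with two routine reductions. First, it suffices to treat systems with a bounded number of equations: any ideal of $\k[x,y]$ generated by polynomials of total degree $\leq d$ is already generated by $\leq N:=\binom{n+m+d}{d}$ of them (the degree $\leq d$ part of the ideal lies in an $N$-dimensional space), and — just as in Remark \ref{artin_ideal}, with the same one-line computation — both of the conditions ``$f(x,\ovl{y}(x))\in(x)^{\lambda}$'' and ``$f(x,\ovl{y}(x))=0$'' depend only on the ideal $(f_1,\dots,f_r)$; so we may assume $r\leq N$. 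Second, it is enough to produce, for each fixed $c$, a single finite integer $\b_{n,m,d}(c)$, after which the function is assembled pointwise.

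Fix $n,m,d,c$ and suppose, for contradiction, that no finite $\b_{n,m,d}(c)$ works. Then for every $i\in\N$ there are a field $\k_i$, a system $f^{(i)}\in\k_i[x,y]^{r_i}$ with $r_i\leq N$ and all components of total degree $\leq d$, and $\ovl{y}^{(i)}\in\k_i\lb x\rb^m$ with $f^{(i)}(x,\ovl{y}^{(i)})\in(x)^i$, such that no $\wdt{y}\in\k_i\lb x\rb^m$ satisfies $f^{(i)}(x,\wdt{y})=0$ and $\wdt{y}-\ovl{y}^{(i)}\in(x)^c$. Fix a nonprincipal ultrafilter $\mathcal U$ on $\N$; shrinking to a set in $\mathcal U$ we may assume $r_i\equiv r$. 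Put $\k^*:=\prod_i\k_i/\mathcal U$ and let $f^*\in\k^*[x,y]^r$, of total degree $\leq d$, be the system whose finitely many coefficients are the $\mathcal U$-classes of those of the $f^{(i)}$. Consider $R:=\big(\prod_i\k_i\lb x\rb\big)/\mathcal U$, a local ring with maximal ideal $\mathfrak m_R$ generated by the images of $x_1,\dots,x_n$, and set $B:=R/\bigcap_{\lambda}\mathfrak m_R^{\lambda}$. Since $\mathfrak m_R^{\lambda}=\big(\prod_i(x)^{\lambda}\k_i\lb x\rb\big)/\mathcal U$ and $\k_i[x]/(x)^{\lambda}$ is a $\k_i$-algebra of dimension independent of $i$, one gets $B/\mathfrak m_B^{\lambda}\cong\k^*[x]/(x)^{\lambda}$ for all $\lambda$; hence $B$ is local with $\bigcap_{\lambda}\mathfrak m_B^{\lambda}=0$ and $\mathfrak m_B$-adic completion $\wdh{B}=\k^*\lb x\rb$, so $B\subseteq\k^*\lb x\rb$. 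Let $\wdh{y}^*\in B^m$ be the class of $(\ovl{y}^{(i)})_i$. Then $f^*(x,\wdh{y}^*)=0$ in $B$: it is represented by $(f^{(i)}(x,\ovl{y}^{(i)}))_i$, which for each fixed $\lambda$ lies in $(x)^{\lambda}$ for all $i\geq\lambda$, hence $\mathcal U$-almost everywhere, so it belongs to $\bigcap_{\lambda}\mathfrak m_R^{\lambda}$.

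Next I view $\wdh{y}^*$ as a formal power series solution in $\k^*\lb x\rb^m$ of the polynomial system $f^*(x,y)=0$. The algebraic Artin Approximation Theorem \ref{Ar69}, applied over the field $\k^*$, then provides an algebraic power series solution $y^*\in\k^*\langle x\rangle^m\subseteq\k^*\lb x\rb^m$ with $y^*\equiv\wdh{y}^*$ modulo $(x)^c$; in particular $f^*(x,y^*)=0$ and $y^*$ agrees modulo $(x)^c$ with the truncation in degrees $<c$ of $(\ovl{y}^{(i)})_i$, which has finitely many coefficients, all in $\k^*$.

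The remaining step — transporting this solution back to almost all of the $\k_i$ — is where I expect the real work to lie. The point is that the configuration is now encoded by finitely much data in $\k^*$: each $y_j^*$ is the Hensel root of an explicit monic polynomial $P_j\in\k^*[x][T]$ having a prescribed simple root in $\k^*$ modulo $(x)$, hence is determined by the coefficients of $P_j$ and that root; and once the $P_j$ are fixed, the equalities ``$f^*(x,y^*)=0$'' and ``$y^*\equiv(\ovl{y}^{(i)})_i\bmod(x)^c$'' become finitely many polynomial identities among this data, the coefficients of $f^*$, and the low-order coefficients of $(\ovl{y}^{(i)})_i$ — here it is essential that $r$ and $d$ are bounded, so that $f^*$ itself is finite data and $f^*(x,y^*)$, expanded over the standard \'etale algebra attached to the $P_j$, is a bounded-degree expression. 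By \L o\'{s}'s theorem there is a set $S\in\mathcal U$ such that for $i\in S$ the corresponding data over $\k_i$ defines $y^{(i)}\in\k_i\langle x\rangle^m\subseteq\k_i\lb x\rb^m$ with $f^{(i)}(x,y^{(i)})=0$ and $y^{(i)}\equiv\ovl{y}^{(i)}$ modulo $(x)^c$. Since $S\neq\emptyset$, any $i\in S$ contradicts the choice of the $i$-th counterexample, proving that $\b_{n,m,d}(c)$ exists for every $c$, hence that $\b_{n,m,d}$ exists.
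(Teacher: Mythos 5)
Your architecture is the ultraproduct route of \cite{BDLvdD}, which is precisely the reference the paper cites for Theorem \ref{theo1} (the paper itself only sketches the analogous argument for Theorem \ref{SAP_formal}, where the base ring is fixed and the transfer back is achieved through General N\'eron Desingularization over that ring rather than through algebraic power series). Your reduction to $r\leq N$ equations, the ultraproduct construction, the identification of $R/\bigcap_{\lambda}\m_R^{\lambda}$ with a subring of $\k^*\lb x\rb$, the verification that the class of $(\ovl{y}^{(i)})_i$ is an exact solution of $f^*=0$ there, and the application of Theorem \ref{Ar69} over $\k^*$ are all correct. The gap sits exactly at the step you flag as ``the real work'', and the two specific claims you make there are not right as stated. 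First, an algebraic power series is in general \emph{not} the Hensel root of any polynomial $P_j\in\k^*[x][T]$ having its constant term as a simple residual root: for $z=x\sqrt{1+x}$ one has $z^2=x^2(1+x)\in\k[x]$, so any $Q$ with $Q(x,z)=0$ reduces to $A(x)+B(x)z=0$ with $A,B\in\k[x]$, forcing $A=B=0$ since $1,z$ are $\k(x)$-independent, and $B=0$ forces the coefficient of $T$ in $Q$ to vanish at $x=0$, i.e. $\frac{\partial Q}{\partial T}(0,0)=0$. What is true, and what you need, is the standard \'etale presentation of the henselization $\k^*\langle x\rangle$ of $\k^*[x]_{(x)}$ (Theorem \ref{iversen} and Lemma \ref{henselization_A[x]}): a single monic $P$, a point $a\in\k^*$ with $P(0,a)=0$, $\frac{\partial P}{\partial T}(0,a)\neq 0$, and polynomials $g_j,h_j$ with $h_j(0,a)\neq 0$ such that $y_j^*=g_j(x,t)/h_j(x,t)$ for the common Hensel root $t$.

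Second, and more seriously, the equality $f^*(x,y^*)=0$ does not ``become finitely many polynomial identities among this data'': it is an equality of power series, hence a priori an infinite conjunction of congruences modulo $(x)^e$, and transferring these one at a time is useless, since a countable intersection of sets of the ultrafilter need not belong to the ultrafilter. You must first replace it by a finite certificate. This can be done: setting $F_l:=(\prod_j h_j)^{d}\,f_l^*(x,g/h)\in\k^*[x,T]$, the local ring $C^*:=\bigl(\k^*[x][T]/(P)\bigr)_{(x,T-a)}$ is \'etale over $\k^*[x]_{(x)}$, Noetherian local, and injects into its completion $\k^*\lb x\rb$ via $T\mapsto t$; hence $F_l(x,t)=0$ if and only if there exist $s_l,A_l\in\k^*[x,T]$ with $s_l(0,a)\neq 0$ and $s_lF_l=PA_l$. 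Adjoin the coefficients of $P$, $a$, the $g_j,h_j$, the $s_l,A_l$ to your parameters; then the simple-root and nonvanishing conditions, the identities $s_lF_l=PA_l$, and the finitely many identities matching the $c$-truncation of $g_j(x,t)/h_j(x,t)$ (whose coefficients are universal rational functions of the data with denominators powers of $\frac{\partial P}{\partial T}(0,a)$ and $h_j(0,a)$) with the $c$-truncation of $\ovl{y}^{(i)}$ are genuinely first order, the \L o\'s theorem applies, and in each good component the unit $s_l^{(i)}(x,t^{(i)})$ kills the certificate, giving $f^{(i)}(x,y^{(i)})=0$ exactly and $y^{(i)}\equiv\ovl{y}^{(i)}$ modulo $(x)^c$, which completes your contradiction. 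With this repair your argument is essentially that of the cited reference.
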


\begin{remark}\label{Lascar}
By following the proof of Theorem \ref{theo1} given in \cite{Ar69}, D. Lascar proved that there exists a recursive function $\b$ that satisfies the conclusion of Theorem \ref{theo1} \cite{La}. But the proof of Theorem \ref{theo1} uses a double induction on the height of the ideal (like in Theorem \ref{Greenberg}) and on $n$ (like in Theorem \ref{Ar68}). In particular, in order to apply the Jacobian Criterion, we need to work with prime ideals (at least radical ideals), and replace the original ideal $I$ generated by $f_1,\ldots,f_r$ by one of its associated primes and then make a reduction to the case of $n-1$ variables. But the bounds on the degree of the generators of such an associated prime may be very large compared to the degree of the generators of $I$. This is essentially the reason why the proof of this theorem does not give much more information about the quality of $\b$ than Lascar's  result.
\end{remark}

\begin{example}\cite{Spi}
Set $f(x_1,x_2,y_1,y_2):=x_1y_1^2-(x_1+x_2)y_2^2$.
Let $$\sqrt{1+t}=1+\sum_{n\geq 1}a_nt^n\in\Q\llbracket t\rrbracket$$ be the unique power series  such that $(\sqrt{1+t})^2=1+t$ and whose value at the origin is 1.
For every $c\in\N$ we  set $y_2^{(c)}(x):=x_1^c$ and $y_1^{(c)}(x):=x_1^c+\sum_{n=1}^ca_nx_1^{c-n}x_2^n$. Then 
$$f(x_1,x_2,y_1^{(c)}(x),y_2^{(c)}(x))\in (x_2)^c.$$
On the other side the equation  $f(x_1,x_2,y_1(x),y_2(x))=0$ has no other solution $(y_1(x)$, $y_2(x))\in\Q\llbracket x\rrbracket^2$ but $(0,0)$. This proves that
 Theorem \ref{SAP_formal} is not valid for general Henselian pairs because $(\Q\llbracket x_1,x_2\rrbracket, (x_2))$ is a Henselian pair.\\
 \\
 Let us notice that L. Moret-Bailly proved that if a pair $(A,I)$ satisfies Theorem \ref{SAP_formal}, then $A$ has to be an excellent Henselian local ring \cite{M-B'}. On the other hand A. More proved that a pair $(A,I)$, where $A$ is an equicharacteristic excellent regular Henselian local ring, satisfies Theorem \ref{SAP_formal} if and only if $I$ is $\m$-primary \cite{Mo}.\\
 It is still an open question to know under which conditions on $I$ the pair $(A,I)$ satisfies Theorem \ref{SAP_formal} when $A$ is a general excellent Henselian local ring.
 \end{example}

   \begin{remark}\label{no_sol}
   As for Theorem \ref{Greenberg}, Theorem \ref{SAP_formal} implies that, if $f(y)$ has no solution in $A$,  there exists a constant $c$ such that $f(y)$ has no solution in $\frac{A}{\m_A^c}$.
   \end{remark}

 \begin{definition}
 Let $f$ be as in Theorem \ref{SAP_formal}. The least function $\b$ that satisfies Theorem \ref{SAP_formal} is called the Artin function \index{Artin function} of $f$.
 \end{definition}
 
 \begin{remark}
 When $f$ is a vector of polynomials of $A[y]$ for some complete local ring $A$, then we can consider the Artin function of $f$ seen as a vector of formal power series in $y$ (i.e. we restrict to approximate solutions vanishing at 0) or we can consider the Artin function of $f$ seen as a vector of polynomials (i.e. we consider every approximate solution, not only the ones vanishing at 0). The two may not be equal in general even if the first one is bounded by the second one (exercise!). We hope that there will be no ambiguity in the rest of the text.
 \end{remark}
 
 \begin{remark}\label{int_closure2}\index{integral closure}
 As before, the Artin function of $f$ depends only on the integral closure of the ideal $I$ generated by $f_1,\ldots,f_r$ (see Lemma \ref{int_closure}). 
  \end{remark}
  
  \begin{remark}\label{smooth} (See also Remark \ref{smooth'} just below)
  Let $f(y)\in A[ y]^r$  and  $\ovl{y}\in (\m_A)^m$ satisfy $f(\ovl{y})\in \m_A^c$ and let us assume that $A\lgw B:=\frac{A[ y]_{\m_A+(y)}}{(f(y))}$ is a smooth morphism. This morphism is local thus it factors as $A\lgw C:=A[z]_{\m_A+(z)}\lgw B$ such that $C\lgw B$ is \'etale (see Definition \ref{standard_etale}) and $z:=(z_1,\ldots,z_s)$. We remark that $\ovl{y}$ defines a morphism of $A$-algebras $\phi : B\lgw \frac{A}{\m_A^c}$.  Let us choose any $\wdt{z}\in A^s$ such that $\ovl{z}_i-\wdt{z}_i\in\m_A^c$ for all $1\leq i\leq s$ ($\ovl{z}_i$ denotes the image of $z_i$ in $\frac{A}{\m_A^c}$). Then $A\lgw \frac{B}{(z_1-\wdt{z}_1,\ldots,z_s-\wdt{z})}$ is \'etale and admits a section in $\frac{A}{\m_A^c}$. By Proposition \ref{lift} this section lifts to a section in $A$. Thus we have a section $B\lgw A$ equal to $\phi$ modulo $\m_A^c$.\\
   This proves that $\b(c)=c$ when $A\lgw \frac{A[ y]_{\m_A+(y)}}{(f(y))}$ is smooth. \\
   On the other hand we can prove that if $\b$ is the identity function then $A\lgw \frac{A[ y]_{\m_A+(y)}}{(f(y))}$ is smooth \cite{Hi93}. This shows that the Artin function of $f$ may be seen as a measure of the non-smoothness of the morphism $A\lgw \frac{A[y]_{\m_A+(y)}}{(f)}$.

  \end{remark}
  
    \begin{remark}\label{smooth'}
 For the convenience of some readers we can express the previous remark in the setting of convergent power series equations. The proof is the same but the language is a bit different:\\
  Let $f(x,y)\in \k\{x,y\}^m$ be a vector of convergent power series in two sets of variables $x$ and $y$ where $y=(y_1,\ldots,y_m)$. Let us assume that $f(0,0)=0$ and
  $$\frac{\partial (f_1,\ldots,f_m)}{\partial( y_1,\ldots,y_m)}(0,0) \text{ is invertible.}$$
  Let $\ovl y(x)\in\k\lb x\rb^m$ be a vector of formal power series vanishing at the origin such that
  $$f(x,\ovl y(x))\in (x)^c$$
  for some integer $c$. We can write
  $$\ovl y(x)=y^0(x)+y^1(x)$$
  where $y^0(x)$ is a vector of polynomials of degree $ <c$ and $y^1(x)$ is a vector of formal power series whose components have order equal at least to $c$. We set 
  $$g(x,z):=f(x,y^0(x)+z)$$
  for new variables $z=(z_1,\ldots,z_m)$.
Then $g(0,0)=0$ and
$$\frac{\partial (g_1,\ldots,g_m)}{\partial( z_1,\ldots,z_m)}(0,0)=\frac{\partial (f_1,\ldots,f_m)}{\partial( y_1,\ldots,y_m)}(0,0) \text{ is invertible.}$$
By the Implicit Function Theorem for convergent power series there exists a unique vector of convergent power series $z(x)$ vanishing at the origin such that 
$$g(x,z(x))=0.$$

 Since
 $$g(x,z(x))=g(x,0)+\frac{\partial (g_1,\ldots,g_m)}{\partial( z_1,\ldots,z_m)}(0,0)\cdot z(x)+\e(x)$$
 where the components of $\e(x)$ are linear combinations of products of the components of $z(x)$, we have
 $$\ord(z_i(x))=\ord(g_i(x,0))\ \ \forall i.$$
 Moreover
 $$g(x,0)=f(x,y^0(x))=f(x,y(x)) \text{ modulo } (x)^c,$$
thus $z(x)\in (x)^c$. Thus $\wdt y(x):=y^0(x)+z(x)$ is a solution of $f(x,y)=0$ with
$$\wdt y(x)-\ovl y(x)\in (x)^c.$$
This shows that the Artin function of $f(x,y)$ is the identity function. 
 \end{remark}
 %%%%%%%%%%%%%%%%%%%%%%%%%%%%%%%%%%%%%%%%%%%%%%%%%%%%%%%%%%%%%%%%%%%%%%
 
 \subsection{Ultraproducts and  Strong Approximation type results}\label{ultraproducts}
 Historically M. Artin proved Theorem \ref{theo1} in \cite{Ar69} by a   modification of the proof of Theorem \ref{Ar68}, i.e. by  induction on $n$ using the Weierstrass Division Theorem. Roughly speaking it is a concatenation of his proof of Theorem \ref{Ar68} and of the proof of Greenberg's Theorem \ref{Greenberg}. Then several authors  provided proofs of  generalizations of his result using the same kind of proof but this was not always easy, in particular when the base field is not  a characteristic zero field (for example there is a gap in the inseparable case of \cite{P-P}). On the other hand, in 1970 A. Robinson gave a new proof of Greenberg's Theorem \cite{Rob} based on the use of ultraproducts. Then  ultraproducts methods have been successfully used to give more direct proofs of this kind of Strong Approximation type results (see \cite{BDLvdD} and \cite{D-L}; see also \cite{Po79} for the general case), even if the authors of these works seemed unaware of the work of A. Robinson. The general principle is the following:  ultraproducts  transform  approximate solutions into exact solutions of a given system of polynomial equations defined over a complete local ring $A$. So they are a tool to reduce Strong Artin Approximation Problems to Artin Approximation Problems.  But these new exact solutions are not living anymore in the given base ring $A$ but in bigger rings that also satisfy Theorem \ref{Pop_app}. In the case where the equations are not polynomial but analytic or formal, this reduction based on ultraproducts transforms the given equations into equations belonging to a different Weierstrass System (see Definition \ref{W-sys} and Theorem \ref{W}) which is a first justification to the introduction the Weierstrass Systems. We will present here the main ideas.\\
 \\
  Let us start with some terminology.
A \emph{filter} $D$ (over $\N$) is a non-empty subset of $\mathcal{P}(\N)$, the set of subsets of $\N$, that satisfies the following properties:
$$\text{a) }\emptyset\notin D,\ \ \ \text{b) } \E,\,\F\in D\Longrightarrow \E\cap \F\in D,\ \ \ \text{c) } \E\in D,\, \E\subset\F\Longrightarrow \F\in D.$$
A filter  $D$ is \emph{principal} if  $D=\{\F\ /\ \E\subset \F\}$ for some non empty subset $\E$ of $\N$. An \textit{ultrafilter} is a filter which is maximal for the inclusion. It is easy to check that a filter $D$ is an ultrafilter if and only if for any subset $\E$ of $\N$,  $D$ contains $\E$ or its complement  $\N-\E$.
In the same way an ultrafilter is non-principal if and only if it contains the filter $E:=\{\E\subset\N\ /\ \N-\E \text{ is finite}\}$. Zorn's  Lemma yields the existence of non-principal ultrafilters.\\
\\
Let $A$ be a Noetherian ring. Let $D$ be a  non-principal ultrafilter. We define the \emph{ultrapower} (or \emph{ultraproduct}) of $A$ as follows: \index{ultraproduct}
$$A^*:=\frac{\left\{(a_i)_{i\in\N}\in\prod_iA\right\}}{\left((a_i)\sim (b_i) \text{ iff } \{i\, /\, a_i=b_i\}\in D\right)}.$$
The ring structure of $A$ induces a ring structure on $A^*$
and the map $A\lgw A^*$ that sends $a$ onto the class of $(a)_{i\in\N}$ is a ring morphism.\\
\\
We have the following fundamental result that shows that several properties of $A$ are also satisfied by $A^*$:
\begin{theorem}[\L o\'s Theorem]\label{CKL}\cite{C-K} Let $L$ be a first order language, let $A$ be a structure for  $L$ and let  $D$ be an ultrafilter over $\N$. Then for any $(a_i)_{i\in\N}\in A^*$ and for any first order formula $\phi(x)$, $\phi((a_i))$ is true in $A^*$ if and only if $$\{i\in\N\ /\ \phi(a_i)\text{ is true in }A\}\in D.$$
\end{theorem}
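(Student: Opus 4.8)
The plan is to prove this statement --- the theorem of \L o\'s, i.e. the fundamental theorem on ultraproducts --- by induction on the structure of the first order formula. Although the statement is phrased for a formula $\phi(x)$ with a single free variable, the induction has to be carried out for formulas $\phi(x_1,\dots,x_k)$ with finitely many free variables, since quantifiers introduce auxiliary variables; so I would fix once and for all representatives $(a^{(1)}_i)_i,\dots,(a^{(k)}_i)_i$ of elements of $A^*$ and, for each $L$-formula $\phi$ in these variables, set $T(\phi):=\{i\in\N\ /\ A\models\phi(a^{(1)}_i,\dots,a^{(k)}_i)\}$; the claim to be proven is that $A^*\models\phi$ at these arguments if and only if $T(\phi)\in D$. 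The first step is a preliminary induction on terms showing that every term of $L$ is evaluated in $A^*$ coordinatewise, $t^{A^*}\big((a^{(1)}_i)_i,\dots\big)=\big(t^A(a^{(1)}_i,\dots)\big)_i$; this is immediate from the fact that the operations of $A^*$ are, by construction, those of $A$ applied coordinatewise and then read modulo $D$, and that this is well defined on $D$-classes because $D$ is a filter. Granting this, the atomic case (formulas $t_1=t_2$ and $R(t_1,\dots,t_n)$) reduces directly to the definition of the equivalence relation on the product defining $A^*$ and of the interpretation of $R$ in $A^*$; here again only the filter axioms (closure under finite intersections and supersets) enter, to see that these interpretations are well defined on classes.

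For the inductive step I would dispose of the connectives first. For $\phi=\psi_1\wedge\psi_2$ one has $T(\phi)=T(\psi_1)\cap T(\psi_2)$; since $D$ is closed under finite intersections and supersets, $T(\phi)\in D$ if and only if both $T(\psi_1)\in D$ and $T(\psi_2)\in D$, which by the induction hypothesis is equivalent to $A^*\models\psi_1\wedge\psi_2$. The negation case $\phi=\neg\psi$, where $T(\phi)=\N\setminus T(\psi)$, is the point at which the \emph{ultrafilter} property (and not merely the filter property) is indispensable: an ultrafilter contains $\N\setminus T(\psi)$ precisely when it does not contain $T(\psi)$, so $T(\neg\psi)\in D\iff T(\psi)\notin D\iff A^*\not\models\psi\iff A^*\models\neg\psi$, using the induction hypothesis in the middle equivalence. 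Disjunction and the universal quantifier are then obtained by rewriting $\vee$ and $\forall$ via $\neg$, $\wedge$ and $\exists$, so only $\phi=\exists y\,\psi(x_1,\dots,x_k,y)$ remains.

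For the existential case, the implication ``$A^*\models\exists y\,\psi\Rightarrow T(\exists y\,\psi)\in D$'' is easy: a witness is an element $(b_i)_i\in A^*$ with $A^*\models\psi$ at $\big((a^{(1)}_i)_i,\dots,(b_i)_i\big)$, and by the induction hypothesis $\{i\ /\ A\models\psi(a^{(1)}_i,\dots,b_i)\}\in D$; this set is contained in $T(\exists y\,\psi)$, which is therefore in $D$ by upward closure. The converse is the genuinely non-formal point: assuming $T(\exists y\,\psi)=\{i\ /\ A\models\exists y\,\psi(a^{(1)}_i,\dots,y)\}\in D$, one uses the axiom of choice to pick, for every $i$ in this set, some $b_i\in A$ with $A\models\psi(a^{(1)}_i,\dots,b_i)$, and chooses $b_i$ arbitrarily otherwise; then $\{i\ /\ A\models\psi(a^{(1)}_i,\dots,b_i)\}\supseteq T(\exists y\,\psi)\in D$, so by the induction hypothesis $A^*\models\psi$ at $\big((a^{(1)}_i)_i,\dots,(b_i)_i\big)$, hence $A^*\models\exists y\,\psi$. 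I expect the main obstacles to be exactly these two places: making the atomic/term base case genuinely routine requires being explicit that every operation and relation symbol of the language is interpreted coordinatewise on $A^*$ (and well defined on $D$-classes via the filter axioms), while the negation and existential steps are precisely where one must actually invoke the maximality of the ultrafilter and the axiom of choice respectively; everything else is bookkeeping with the filter axioms.
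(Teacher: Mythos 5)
The paper does not prove this statement at all: it is quoted as the fundamental theorem on ultraproducts (\L o\'s's theorem) with a citation to Chang--Keisler, and the surrounding text only remarks that the consequences actually used later can also be checked by hand. Your argument is the standard textbook proof from that reference, and it is correct: the strengthening of the statement to formulas with finitely many free variables (needed to run the induction through quantifiers), the coordinatewise evaluation of terms, the use of the filter axioms for atomic formulas and conjunction, the maximality of the ultrafilter exactly at the negation step, and the axiom of choice to assemble a witness in the existential step are precisely the points that carry the proof. The only convention worth making explicit is that the structure $A$ is nonempty, so that the ``choose $b_i$ arbitrarily'' clause in the existential case makes sense; with that, there is no gap.
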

Roughly speaking this statement means that any logical sentence involving the special elements and the operations of the language $L$ (for instance 0, 1,+  and $\times$ for the language of commutative rings) along with (, ), =, $\wedge$, $\vee$, $\neg$, $\forall$, $\exists$,  and variables for the elements of the structure is true in $A$ if and only if it is true in $A^*$.\\
In particular we can deduce the following properties:\\
The ultrapower $A^*$ is equipped with a structure of a commutative ring. If $A$ is a field then $A^*$ is a field. If $A$ is an algebraically closed field then $A^*$ is an algebraically closed field. If $A^*$ is a local ring with maximal ideal $\m_A$ then $A^*$ is a local ring with maximal ideal $\m_A^*$ defined by $(a_i)_i\in \m_A^*$ if and only if $\{i\,/\, a_i\in\m_A\}\in D$. If $A$ is a local Henselian ring, then $A^*$ is a local Henselian ring.  In fact all these properties are elementary and can be checked directly by hand without the help of Theorem \ref{CKL}. Elementary proofs of these results can be found in  \cite{BDLvdD}.\\
Nevertheless if $A$ is Noetherian, then $A^*$ is not  Noetherian in general, since Noetherianity is a condition on ideals of $A$ and not on elements of $A$. For example, if $A$ is a Noetherian local ring, then $\m^*_{\infty}:=\bigcap _{n\geq 0} {\m_A^*}^n\neq (0)$ in general. But we have the following lemma:

\begin{lemma}\cite{Po00}
Let $(A,\m_A)$ be a Noetherian complete local ring. Let us denote $A_1:=\frac{A^*}{\m_{\infty}^*}$. Then $A_1$ is a Noetherian complete local ring of the same dimension as $A$ and the composition $A\lgw A^*\lgw A_1$ is flat. 
\end{lemma}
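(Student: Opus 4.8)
The plan is to identify $A_1$ with the $\m_{A^*}$-adic completion of $A^*$ and then to read off the Noetherian property, the dimension and the flatness from the associated graded ring. Write $\kappa:=A/\m_A$ and $\kappa^*:=A^*/\m_{A^*}$. Since $A$ is Noetherian, $\m_A$ is finitely generated, hence $\m_{A^*}=\m_AA^*$ and $\m_{A^*}^n=\m_A^nA^*$ for every $n$; in particular $\m_\infty^*=\bigcap_n\m_A^nA^*$, so $A_1=A^*/\m_\infty^*$ is by construction separated for its maximal-ideal-adic topology, with maximal ideal $\m_AA_1$ and residue field $\kappa^*$. First I would note that $A\lgw A^*$ is flat: $A^*$ is the filtered colimit of the products $\prod_{i\in\E}A$, $\E\in D$, and over a Noetherian ring an arbitrary product of free modules is flat. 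It follows that $\m_{A^*}^n/\m_{A^*}^{n+1}=(\m_A^n/\m_A^{n+1})\otimes_AA^*=(\m_A^n/\m_A^{n+1})\otimes_\kappa\kappa^*$, and, since $\m_\infty^*\subseteq\m_{A^*}^n$ for all $n$,
$$\operatorname{gr}_{\m_{A_1}}(A_1)\;=\;\operatorname{gr}_{\m_{A^*}}(A^*)\;\cong\;\operatorname{gr}_{\m_A}(A)\otimes_\kappa\kappa^*,$$
a finitely generated $\kappa^*$-algebra, hence Noetherian. So the only genuine work left is to prove that $A_1$ is $\m_{A_1}$-adically complete.

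This completeness statement is the step I expect to be the main obstacle, because it is the one point where the combinatorics of the ultrafilter is really used. Concretely, I would show that $A^*\lgw\wdh{A^*}:=\varprojlim_nA^*/\m_{A^*}^n$ is surjective, so that (its kernel being $\m_\infty^*$) one gets $A_1\cong\wdh{A^*}$. For this I would use that a non-principal ultrafilter on $\N$ is countably incomplete, i.e. that there is a descending chain $\N=\F_0\supseteq\F_1\supseteq\cdots$ with each $\F_n\in D$ and $\bigcap_n\F_n=\emptyset$. Given a compatible system $(\ovl z_n)_n\in\varprojlim_nA^*/\m_{A^*}^n$, pick representatives $z_n=(z_{n,i})_i\in\prod_iA$; since $\ovl z_{n+1}-\ovl z_n\in\m_{A^*}^n=\m_A^nA^*$, the \L o\'s Theorem gives $\{\,i:z_{n+1,i}-z_{n,i}\in\m_A^n\,\}\in D$, and after intersecting this set with $\F_n$ and with the analogous sets for all smaller indices we obtain a descending sequence $(\E_n)_n$ in $D$ with $\bigcap_n\E_n=\emptyset$ along which the chosen representatives are compatible modulo the relevant power of $\m_A$. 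Putting $z_i:=z_{n(i),i}$ with $n(i):=\max\{\,n:i\in\E_n\,\}$ yields a class $z\in A^*$ with $\{\,i:z_i-z_{n,i}\in\m_A^n\,\}\supseteq\E_n$ for every $n$, i.e. $z$ maps to $(\ovl z_n)_n$. Since $\m_A$ is finitely generated, $\wdh{A^*}$ is complete for its $\m_A\wdh{A^*}$-adic topology, so $A_1\cong\wdh{A^*}$ is a complete separated local ring.

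With both facts in hand the rest is formal. A local ring that is complete and separated for its maximal-ideal-adic filtration and whose associated graded ring is Noetherian is itself Noetherian, so $A_1$ is a Noetherian complete local ring. The map $A\lgw A_1$ is flat by the graded form of the local flatness criterion: $A_1$ is $\m_A$-adically ideal-separated (Krull's intersection theorem holds in the Noetherian ring $A_1$, and $\m_AA_1=\m_{A_1}$), while the canonical map $\operatorname{gr}_{\m_A}(A)\otimes_\kappa(A_1/\m_AA_1)\lgw\operatorname{gr}_{\m_AA_1}(A_1)$ is, by the computation of the first paragraph, the identity of $\operatorname{gr}_{\m_A}(A)\otimes_\kappa\kappa^*$, hence bijective; being a local homomorphism of local rings it is in fact faithfully flat. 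Finally, for a faithfully flat local homomorphism of Noetherian local rings $\dim A_1=\dim A+\dim(A_1/\m_AA_1)$, and $A_1/\m_AA_1=\kappa^*$ is a field of dimension $0$, so $\dim A_1=\dim A$, as required.
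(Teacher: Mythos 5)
Your proof is correct. Note that the paper itself does not prove this lemma: it only cites \cite{Po00} and merely remarks afterwards that $A\lgw A_1$ is even regular. Your argument is essentially the standard one behind that citation, and all the delicate points are handled: the identification $\m_{A^*}^n=\m_A^nA^*$ (which genuinely needs finite generation of $\m_A$, as you say), flatness of $A\lgw A^*$ via Chase's theorem on products of flat modules over a Noetherian (coherent) ring, and above all the surjectivity of $A^*\lgw \varprojlim_n A^*/\m_{A^*}^n$ via countable incompleteness of the non-principal ultrafilter --- this diagonal gluing along a chain $\F_n\in D$ with empty intersection is exactly the point where the ultrafilter is used, and your bookkeeping with $n(i)=\max\{n: i\in\E_n\}$ works (taking $\E_0=\N$ so that $n(i)$ is defined for every $i$). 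Two small remarks. First, the sentence asserting that $\wdh{A^*}$ is complete for its $\m_A\wdh{A^*}$-adic topology is redundant: you have already shown that the canonical map $A_1\lgw\varprojlim_nA_1/\m_{A_1}^n$ is bijective, which is the completeness you need. Second, where you deduce Noetherianity from ``complete, separated, with Noetherian associated graded ring,'' the usual shortcut in this context is the theorem that a complete separated local ring whose maximal ideal is finitely generated is Noetherian (here $\m_{A_1}=\m_AA_1$ is generated by the images of generators of $\m_A$); your graded-ring computation is needed anyway for the local flatness criterion, so nothing is lost, but the alternative makes the Noetherian step independent of it. The flatness argument (local criterion with ideal-separatedness from Krull intersection in the Noetherian local ring $A_1$) and the dimension count via $\dim A_1=\dim A+\dim(A_1/\m_AA_1)$ for the flat local map $A\lgw A_1$ are both correct.
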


In fact, since $A$ is excellent and $\m_AA_1$ is the maximal ideal of $A_1$, it is not difficult to prove that $A\lgw A_1$ is even a regular morphism. Details can be found in \cite{Po00}.\\
\\
Let us sketch the idea of the use of ultraproducts to prove the existence of an approximation function in the case of Theorem \ref{SAP_formal}:
\begin{proof}[Sketch of the proof of Theorem \ref{SAP_formal}]
 Let us assume that some system of algebraic equations over an excellent Henselian local ring $A$, denoted by $f=0$, does not satisfy Theorem \ref{SAP_formal}. Using Theorem \ref{Pop_app}, we may assume that $A$ is complete. Thus it means that there exists an integer $c_0\in\N$ and, for every $c\in\N$, there exists $\ovl{y}^{(c)}\in A^m$ such that $f(\ovl{y}^{(c)})\in\m_A^c$ and there is no  $\wdt{y}^{(c)}\in A^m$ solution of $f=0$  with $\wdt{y}^{(c)}-\ovl{y}^{(c)}\in \m_A^{c_0}$.\\
Let us denote by $\ovl{y}$ the image of $(\ovl{y}^{(c)})_c$ in $(A^*)^m$. Since $f(y)\in A[y]^r$, we may assume that $f(y)\in A^*[y]^r$ using the morphism $A\lgw A^*$. Then $f(\ovl{y})\in \m_{\infty}^*$. Thus $f(\ovl{y})=0$ in $A_1$. Let us choose $c>c_0$. Since $A\lgw A_1$ is regular, $A$ is Henselian and excellent (because $A$ is complete), we can copy the proof of Theorem \ref{Pop_app} to show that for any $c\in\N$ there exists $\wdt{y}\in A^m$ such that $f(\wdt{y})=0$ and $\wdt{y}-\ovl{y}\in \m_A^cA_1$. Thus $\wdt{y}-\ovl{y}\in \m_A^cA^*$. Hence the set $\{i\in\N\ /\ \wdt{y}-\ovl{y}^{(i)}\in\m_A^cA^*\}\in D$ is non-empty. This is a contradiction. \end{proof}

\begin{remark}\label{tobedone}
If, instead of working with polynomial equations over a general excellent Henselian local ring, we work with a more explicit subring of $\k\lb x,y\rb$ satisfying the Implicit Function Theorem and the Weierstrass Division Theorem (like the rings of algebraic or convergent power series) the use of ultraproducts enables us to reduce the problem of the existence of an approximation function to a problem of approximation of formal solutions of a system of equations by solutions in a Weierstrass System (see \cite{D-L}). This is also true in the case of constraints.
\end{remark}
We can also prove easily the following proposition with the help of ultraproducts (see also Theorem \ref{ex_Artin} of Example \ref{ex_art} in the introduction):

\begin{proposition}\cite{BDLvdD}\label{prop_mod}
Let $f(x,y)\in\C[x,y]^r$. For any $1\leq i\leq m$ let $J_i$ be a subset of $\{1,\ldots,n\}$. \\Let us assume that for every $c\in\N$ 
there exist $\ovl{y}_i^{(c)}(x)\in\C[x_j, j\in J_i]$, $1\leq i\leq m$,  such that $$f(x,\ovl{y}^{(c)}(x))\in (x)^c.$$ Then there exist $\wdt{y}_i(x)\in\C\llbracket x_j, j\in J_i\rrbracket$, $1\leq i\leq m$, such that $f(x,\wdt{y}(x))=0$.
\end{proposition}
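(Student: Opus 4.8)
The plan is to run an ultraproduct argument that converts the family $(\ovl y^{(c)})_c$ of approximate solutions into a single exact solution of $f=0$ over a larger algebraically closed field, and then to descend that solution back to $\C$. The decisive feature is that the ultraproduct is taken coefficient by coefficient, so the nested shape $y_i\in\C\lb x_j,\ j\in J_i\rb$ is preserved for free; this mirrors the fact that the elementary jet-space argument behind Theorem \ref{ex_Artin} also respects such constraints, and indeed an alternative route would be to rerun the Chevalley--Noetherianity--uncountability argument of Example \ref{ex_art} verbatim on the \emph{constrained} jet spaces, taking as coordinates the coefficients $y_{i,\beta}$ with $\beta$ supported on $J_i$. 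I will describe the ultraproduct route.

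First I would fix a non-principal ultrafilter $D$ on $\N$ and form the ultrapower $\C^*$ of $\C$; by Theorem \ref{CKL} it is an algebraically closed field of characteristic zero containing $\C$. Writing $\ovl y_i^{(c)}(x)=\sum_\beta \ovl y^{(c)}_{i,\beta}x^\beta$ with the sum over multi-indices $\beta\in\N^n$ supported on $J_i$, I would set
$$\ovl y_i^*(x):=\sum_\beta[(\ovl y^{(c)}_{i,\beta})_c]\,x^\beta\ \in\ \C^*\lb x_j,\ j\in J_i\rb\subseteq\C^*\lb x\rb.$$
For each component $f_k$ and each $\alpha\in\N^n$, the coefficient of $x^\alpha$ in $f_k(x,y(x))$ is a polynomial $P_{k,\alpha}$, with coefficients in $\C$, in finitely many of the coefficients of $y$, namely those of degree bounded in terms of $\alpha$ and $f_k$ only. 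Since $\C\to\C^*$ is a ring morphism and ultrapower operations are componentwise, the coefficient of $x^\alpha$ in $f_k(x,\ovl y^*(x))$ is the class of the sequence whose $c$-th term is the coefficient of $x^\alpha$ in $f_k(x,\ovl y^{(c)}(x))$. That sequence vanishes for all $c>|\alpha|$ because $f_k(x,\ovl y^{(c)}(x))\in(x)^c$, and $\{c:c>|\alpha|\}\in D$; hence $f(x,\ovl y^*(x))=0$ in $\C^*\lb x\rb$.

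Then I would descend to $\C$. Let $k_0\subseteq\C$ be the subfield generated by the finitely many coefficients of $f$, and let $L_0\subseteq\C^*$ be the subfield generated over $k_0$ by the countably many coefficients of the power series $\ovl y_i^*$; thus $L_0$ is a countably generated extension of the finitely generated field $k_0$. Because $\C$ is algebraically closed and $\trdeg(\C/k_0)$ is uncountable, there is an embedding $\iota\colon L_0\hookrightarrow\C$ fixing $k_0$ pointwise: send a transcendence basis of $L_0/k_0$ to algebraically independent elements of $\C$ over $k_0$, then extend over the algebraic part using that $\C$ is algebraically closed. Applying $\iota$ coefficient by coefficient yields a ring morphism $L_0\lb x\rb\to\C\lb x\rb$ carrying $L_0\lb x_j,\ j\in J_i\rb$ into $\C\lb x_j,\ j\in J_i\rb$ and fixing $f$; putting $\wdt y_i:=\iota(\ovl y_i^*)\in\C\lb x_j,\ j\in J_i\rb$ gives $f(x,\wdt y(x))=\iota\big(f(x,\ovl y^*(x))\big)=0$ with the constraints intact.

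I expect the hard part to be the descent step, namely extracting an honest solution over $\C$ from one over the enormous field $\C^*$. Faithful flatness of $\C\to\C^*$ will not suffice, since the equations are nonlinear, and one genuinely has to use that $\C$ is uncountable --- here through uncountable transcendence degree, and in the alternative approach through the Noetherianity/Chevalley mechanism of Example \ref{ex_art}. The only other point requiring care, and a routine one, is that substitution into $f$ commutes with the ring homomorphisms in play, which holds precisely because each coefficient of $f(x,y(x))$ depends polynomially on just finitely many coefficients of $y$.
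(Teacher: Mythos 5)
Your proof is correct and follows essentially the same ultraproduct route as the paper's: pass to an ultrapower, obtain an exact, constraint-respecting solution of $f=0$ with coefficients in $\C^*$, and transfer it back to $\C$ using algebraic closedness and uncountable transcendence degree. The only (harmless) differences are that you build the series in $\C^*\lb x\rb$ coefficientwise instead of invoking the identification $\C[x]^*/(x)^*_{\infty}\simeq\C^*\lb x\rb$, and you embed the countably generated field of relevant coefficients into $\C$ rather than using the isomorphism $\C^*\simeq\C$ over the subfield generated by the coefficients of $f$.
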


\begin{proof}
Let us denote by $\ovl{y}\in \C[x]^*$ the image of $(\ovl{y}^{(c)})_c$. Then $f(x,\ovl{y})=0$ modulo $(x)^*_{\infty}$. It is not very difficult to check that $\frac{\C[x]^*}{(x)^*_{\infty}}\simeq \C^*\llbracket x\rrbracket$ as $\C^*[x]$-algebras. Moreover $\C^*\simeq \C$ as $\k$-algebras (where $\k$ is the subfield of $\C$ generated by the coefficients of $f$). Indeed both  are field of transcendence degree over $\Q$ equal to the cardinality of the continuum, so their transcendence degree over $\k$ is also the cardinality of the continuum. Since both are algebraically closed they are isomorphic over $\k$. Then the image of $\ovl{y}$ by the isomorphism yields the desired solution in $\C\llbracket x\rrbracket$.
 \end{proof}
Let us remark that the proof of this result remains valid if we replace $\C$ by any uncountable algebraically closed field $\K$.  If we replace $\C$ by $\Q$, this result is no more true in general (see Example \ref{ex_Q}).

\begin{remark}
Several authors proved "uniform" Strong Artin approximation results, i.e. they proved the existence of a function $\b$ satisfying Theorem \ref{SAP_formal} for a parametrized family of  equations $(f_{\la}(y,z))_{\la\in\Lambda}$ which satisfy tameness properties that we do not describe here (essentially this condition is that the coefficients of $f_{\la}(y,z)$ depend analytically on the parameter $\la$). The main example is Theorem \ref{theo1} that asserts that the Artin functions of polynomials in $n+m$ variables of degree less than $d$ are uniformly bounded.  There are also two types of proof for these kind of "uniform" Strong Artin approximation results: the ones using ultraproducts (see Theorems 8.2  and 8.4 of \cite{D-L} where uniform Strong Artin approximation results are proven for families of polynomials whose coefficients depend analytically on some parameters) and the ones using the scheme of proof due to Artin (see \cite{EKT} where more or less the same results as those of  \cite{BDLvdD} and \cite{D-L} are proven).
\end{remark}

%%%%%%%%%%%%%%%%%%%%%%%%%%%%%%%%%%%%%%%%%%%%%%%%%%%%%%%%%%%%%%%%%%%%%%

\subsection{Effective examples of Artin functions }\label{effective}
In general the proofs of Strong Artin Approximation results do not give much information about  the Artin functions. Indeed there are two kinds of proofs: the proofs based on ultraproducts methods use a proof by contradiction and are not effective, and the proofs based on the classical argument of Greenberg and Artin are not direct and require too many steps (see also Remark \ref{Lascar}). In fact this latter kind of proof gives uniform versions the Strong Artin Approximation Theorem (as Theorem \ref{theo1}) which is a more general result. Thus this kind of proof is not optimal to bound effectively  a given Artin function. The problem of finding  estimates of Artin functions was first raised  in \cite{Ar70} and only a very few general results are known (the only ones in the case of Greenberg's Theorem  are Theorems  \ref{Hi1}, \ref{De84} and Remark \ref{Greenberg_ana_set},  and Remark \ref{Lascar} in the general case). We present here a list of examples of equations for which we can bound the Artin function\index{Artin function}.

%%%%%%%%%%%%%%%%%%%%%%

\subsubsection{The Artin-Rees Lemma}
\index{Artin-Rees Lemma}
The following result has been known for long by the specialists without appearing in the literature and has been communicated to the author by M. Hickel:
\begin{theorem}\cite{Ro3}\label{linear}
Let $f(y)\in A[y]^r$ be a  vector of  linear homogeneous polynomials  with coefficients in a Noetherian ring $A$. Let $I$ be an ideal of $A$. Then there exists a constant $c_0\geq 0$ such that:
$$\forall c\in\N \ \forall \ovl{y}\in A^m \text{ such that  } f(\ovl{y})\in I^{c+c_0}$$
$$\exists  \wdt{y}\in A^m \text{ such that  } f(\wdt{y})=0 \text{ and } \wdt{y}-\ovl{y}\in I^c.$$
\end{theorem}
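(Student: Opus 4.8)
The plan is to deduce the statement directly from the Artin--Rees Lemma, exploiting that the equations are linear. Since each component $f_i(y)$ is linear and homogeneous in $y$, the assignment $y\lgm f(y)$ is the $A$-linear map $\phi\colon A^m\lgw A^r$ given by the $r\times m$ coefficient matrix of $f$; set $K:=\Im(\phi)\subseteq A^r$. First I would reformulate the conclusion: producing $\wdt{y}\in A^m$ with $f(\wdt{y})=0$ and $\wdt{y}-\ovl{y}\in I^cA^m$ is the same as producing $z\in I^cA^m$ with $\phi(z)=\phi(\ovl{y})$ (one then sets $\wdt{y}:=\ovl{y}-z$), and such a $z$ exists precisely when $\phi(\ovl{y})\in\phi(I^cA^m)$. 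By $A$-linearity of $\phi$ one has $\phi(I^cA^m)=I^c\phi(A^m)=I^cK$, so the whole problem is reduced to the implication: if $f(\ovl{y})\in I^{c+c_0}$, then $\phi(\ovl{y})\in I^cK$.

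Next I would invoke the Artin--Rees Lemma, which applies because $A$ is Noetherian and $A^r$ is a finitely generated $A$-module: there is an integer $c_0\geq 0$, depending only on $A$, $I$ and $f$ and \emph{not} on $c$, such that
$$I^nA^r\cap K=I^{n-c_0}\bigl(I^{c_0}A^r\cap K\bigr)\subseteq I^{\,n-c_0}K\qquad\text{for all }n\geq c_0.$$
Applying this with $n:=c+c_0$, and noting that $\phi(\ovl{y})$ lies in $K$ automatically while the hypothesis $f(\ovl{y})\in I^{c+c_0}$ says exactly that $\phi(\ovl{y})\in I^{c+c_0}A^r$, we obtain $\phi(\ovl{y})\in I^{c+c_0}A^r\cap K\subseteq I^cK$, which is what the first paragraph required. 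This closes the argument.

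There is essentially no obstacle here; the proof is a one-line translation of Artin--Rees, and this is no doubt why the result "has been known for long by the specialists without appearing in the literature". The only points I would be careful to spell out are: (i) that the Artin--Rees constant $c_0$ is genuinely independent of $c$, which holds because it is attached once and for all to the fixed inclusion $K\subseteq A^r$; (ii) the identity $\phi(I^cA^m)=I^cK$, immediate from $A$-linearity; and (iii) the degenerate case $c=0$, where the conclusion is automatic (any $c_0$ works). The same argument goes through verbatim with $A^m$ and $A^r$ replaced by arbitrary finitely generated $A$-modules, which is the form in which it is normally used (compare Example \ref{flat_ana} and Remark \ref{flat}).
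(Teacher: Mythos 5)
Your proof is correct and takes essentially the same route as the paper: the Artin--Rees Lemma applied to the image of the linear map defined by $f$, followed by correcting $\ovl{y}$ by an element of $I^cA^m$ having the same image, so that $\wdt{y}:=\ovl{y}-z$ solves the system. The only difference is cosmetic (and welcome): you invoke the submodule form of Artin--Rees for $K=\Im(\phi)\subseteq A^r$, treating all $r$ equations at once, whereas the paper reduces ``for convenience'' to a single linear form and applies the ideal version to $\mathcal{I}=(a_1,\dots,a_m)$.
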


This theorem asserts that the Artin function of $f$ is bounded by the function $c\lgm c+c_0$.
Moreover let us remark that this theorem is valid for any Noetherian ring and any ideal $I$ of $A$. This can be compared with the fact that, for linear equations, Theorem \ref{Pop_app} is true for any Noetherian ring $A$ and that the Henselian condition is unnecessary in this situation (see Remark \ref{flat}).

\begin{proof}
For convenience, let us assume that there is only one linear polynomial: $$f(y)=a_1y_1+\cdots+a_my_m.$$ Let us denote by $\I$ the ideal of $A$ generated by $a_1,\ldots,a_m$. The Artin-Rees Lemma implies that there exists $c_0>0$ such that $\I\cap  I^{c+c_0}\subset \I.I^c$ for any $c\geq 0$.\\
If $\ovl{y}\in A^m$ is such that $f(\ovl{y})\in I^{c+c_0}$ and since $f(\ovl{y})\in \I$,  there exists $\e\in(\I^{c}A)^m$ such that $f(\ovl{y})=f(\e)$. If we define $\wdt{y}_i:=\ovl{y}_i-\e_i$, for $1\leq i\leq m$, we have the result.
 \end{proof}

We have the following result whose proof is similar:
\begin{proposition}\label{quotient}
Let $(A,\m_A)$ be an  excellent Henselian local ring, $I$ an ideal of $A$ generated by $a_1,\ldots,a_q$ and $f(y)\in A[y]^r$. Set $$F_i(y,z):=f_i(y)+a_1z_{i,1}+\cdots+a_qz_{i,q}\in A[y,z],\ 1\leq i\leq r$$ where the $z_{i,k}$  are new variables and let $F(y,z)$ be the vector whose coordinates are the $F_i(y,z)$. Let us denote by $\b$ the Artin function of $f(y)$ seen as a vector of polynomials of $\frac{A}{I}[y]$ and $\g$ the Artin function of $F(y,z)\in A[y,z]^r$. Then there exists a constant $c_0$ such that:
$$\b(c)\leq \g(c)\leq \b(c+c_0),\ \ \forall c\in\N.$$
\end{proposition}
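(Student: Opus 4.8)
The plan is to establish the two inequalities $\b(c)\le\g(c)$ and $\g(c)\le\b(c+c_0)$ separately; the first is an elementary lifting argument, while the second is where the Artin--Rees Lemma enters, exactly as in the proof of Theorem \ref{linear}. For $\b(c)\le\g(c)$, I would start from $\ovl{y}\in(A/I)^m$ with $f(\ovl{y})\in(\m_A/I)^{\g(c)}$, lift it to $y^0\in A^m$, so that $f_i(y^0)\in\m_A^{\g(c)}+I$, and write $f_i(y^0)=\mu_i-\sum_k a_k\zeta_{i,k}$ with $\mu_i\in\m_A^{\g(c)}$ and $\zeta_{i,k}\in A$. Then $(y^0,\zeta)$ satisfies $F(y^0,\zeta)\in\m_A^{\g(c)}$, so by definition of $\g$ there is an exact solution $(\wdt{y},\wdt{z})$ of $F=0$ with $\wdt{y}-y^0$ and $\wdt{z}-\zeta$ in $\m_A^c$. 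From $F(\wdt{y},\wdt{z})=0$ one gets $f_i(\wdt{y})=-\sum_k a_k\wdt{z}_{i,k}\in I$, so the image of $\wdt{y}$ in $A/I$ is an exact solution of $f=0$ congruent to $\ovl{y}$ modulo $(\m_A/I)^c$. Hence $\g(c)$ is an admissible value for the Artin function of $f$ over $A/I$ at level $c$, and minimality of $\b$ yields $\b(c)\le\g(c)$.

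For $\g(c)\le\b(c+c_0)$, I would first fix, by the Artin--Rees Lemma applied to the ideal $I$ of the Noetherian ring $A$ and the $\m_A$-adic filtration, an integer $c_0\ge 0$ with $\m_A^{\,n}\cap I\subseteq\m_A^{\,n-c_0}I$ for all $n\ge c_0$. Now take $(\ovl{y},\ovl{z})$ with $F(\ovl{y},\ovl{z})\in\m_A^{\b(c+c_0)}$; here I would work in the essential range $\b(c+c_0)\ge c+c_0$, which holds unless the situation is degenerate, by Taylor's formula applied to an exact solution. Reducing modulo $I$, $\ovl{y}$ is a solution of $f=0$ over $A/I$ to order $\b(c+c_0)$, so there is an exact solution $\wh{y}\in(A/I)^m$ of $f=0$ with $\wh{y}-\ovl{y}\in(\m_A/I)^{c+c_0}$. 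Choosing a lift $y^1\in A^m$ of $\wh{y}$ with $y^1-\ovl{y}\in\m_A^{c+c_0}$ (possible since $(\m_A^{c+c_0}+I)/I=(\m_A/I)^{c+c_0}$), one has $f_i(y^1)\in I$ and, by Taylor's formula,
$$f_i(y^1)+\sum_k a_k\ovl{z}_{i,k}=\bigl(f_i(y^1)-f_i(\ovl{y})\bigr)+F_i(\ovl{y},\ovl{z})\in\m_A^{c+c_0}\cap I\subseteq\m_A^{c}I.$$
Writing this element as $\sum_k a_k\eta_{i,k}$ with $\eta_{i,k}\in\m_A^c$ and setting $\wdt{y}:=y^1$, $\wdt{z}_{i,k}:=\ovl{z}_{i,k}-\eta_{i,k}$ gives $F(\wdt{y},\wdt{z})=0$ with $\wdt{y}-\ovl{y}\in\m_A^{c+c_0}$ and $\wdt{z}-\ovl{z}\in\m_A^c$, so $\b(c+c_0)$ is admissible for $F$ at level $c$ and $\g(c)\le\b(c+c_0)$.

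The delicate part is the right-hand inequality. The three steps that must fit together are: transporting an approximate solution of $F$ to one of $f$ modulo $I$; lifting the resulting exact solution of $f$ over $A/I$ back to $A$ so that it stays $\m_A^{c+c_0}$-close to the original $\ovl{y}$; and correcting the auxiliary variables $z_{i,k}$ at the end. The loss of exactly $c_0$ orders, together with the need to be in the range $\b(c+c_0)\ge c+c_0$, both come from this last correction: Artin--Rees is precisely what turns membership in $\m_A^{c+c_0}\cap I$ into membership in $\m_A^cI$, i.e. into a genuinely $\m_A^c$-small adjustment of $\ovl{z}$. Everything else is routine bookkeeping with liftings between $A$ and $A/I$, using that $A/I$ is again an excellent Henselian local ring so that $\b$ is well defined.
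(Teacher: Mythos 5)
Your proof is correct and follows essentially the same route as the paper: the first inequality by lifting an approximate solution modulo $I$ and applying $\g$, and the second by the Artin--Rees constant $c_0$ with $\m_A^{c+c_0}\cap I\subseteq \m_A^c I$, solving modulo $I$ via $\b$, lifting, and then correcting the $z_{i,k}$ exactly as in the proof of Theorem \ref{linear} (a step the paper only cites, which you spell out). Your explicit remark about needing $\b(c+c_0)\ge c+c_0$ outside degenerate cases is a point the paper leaves implicit, so it is a welcome clarification rather than a deviation.
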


\begin{proof}
Let $\ovl{y}\in \frac{A}{I}^m$ satisfy $f(\ovl{y})\in\m_A^{\g(c)}\frac{A}{I}^r$. Then there exists $\ovl{z}\in A^{qr}$ such that $F(\ovl{y},\ovl{z})\in\m_A^{\g(c)}$ (we denote again by $\ovl{y}$ a lifting of $\ovl{y}$ in $A^m$).  Thus there exist $\wdt{y}\in A^m$  and $\wdt{z}\in A^{qr}$ such that $F(\wdt{y},\wdt{z})=0$ and $\wdt{y}-\ovl{y}$, $\wdt{z}-\ovl{z}\in \m_A^{c}$. Thus $f(\wdt{y})=0$ in $\frac{A}{I}^r$.\\
On the other hand let $c_0$ be a constant such that $I\cap \m_A^{c+c_0}\subset I.\m_A^c$ for all $c\in\N$ (such constant exists by  Artin-Rees Lemma). Let $\ovl{y}\in A^m$, $\ovl{z}\in A^{qr}$ satisfy $F(\ovl{y},\ovl{z})\in\m_A^{\b(c+c_0)}$. Then $f(\ovl{y})\in \m_A^{\b(c+c_0)}+I$. Thus there exists $\wdt{y}\in A^m$ such that $f(\wdt{y})\in I$ and $\wdt{y}-\ovl{y}\in \m_A^{c+c_0}$. Thus $F(\wdt{y},\ovl{z})\in \m_A^{c+c_0}\cap  I$. Then we conclude by following the proof of Theorem \ref{linear}.
 \end{proof}

\begin{remark}\label{reduction_series}
By Theorem \ref{Pop_app}, in order to  study the behaviour of the Artin function of some ideal we may assume that $A$ is a complete local ring. Let us assume that $A$ is an equicharacteristic local ring.  Then $A$ is the quotient of a power series ring over a field by Cohen Structure Theorem \cite{Ma}. Thus Proposition \ref{quotient} enables us to reduce the problem to the case $A=\k\lb x_1,\ldots,x_n\rb$ where $\k$ is a field.
\end{remark}

%%%%%%%%%%%%%%%%%%%%%%

\subsubsection{Izumi's  Theorem and Diophantine Approximation}

Let $(A,\m_A)$ be a Noetherian local ring. We denote by $\nu$ the $\m_A$-adic order on $A$, i.e.
$$\nu(x):=\max\{n\in\N\ /\ x\in\m_A^n\}\  \ Ê \text{ for any }x\neq 0.$$
We always have $\nu(x)+\nu(y)\leq \nu(xy)$ for all $x$, $y\in A$. But we do not have the equality in general. For instance, if $A:=\frac{\C\lb x,y\rb}{(x^2-y^3)}$ then $\nu(x)=\nu(y)=1$ but $\nu(x^2)=\nu(y^3)=3$. Nevertheless we have the following theorem: \index{Izumi's  Theorem}

\begin{theorem}[Izumi's  Theorem]\label{Iz}\cite{Iz,Re}
Let $A$ be a local Noetherian ring whose maximal ideal is denoted by $\m_A$. Let us assume that $A$ is analytically irreducible, i.e. $\wdh{A}$ is irreducible. Then there exist $b\geq 1$ and $d\geq 0$ such that 
$$\forall x,y\in A,\ \ \nu(xy)\leq b(\nu(x)+\nu(y))+d.$$
\end{theorem}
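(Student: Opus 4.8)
The plan is to reduce the statement to a linear Łojasiewicz-type inequality comparing the $\m_A$-adic order $\nu$ with its "linearization" $\bar\nu$, the reduced order defined by $\bar\nu(x):=\lim_{k\to\infty}\nu(x^k)/k$ (this limit exists because the sequence $\nu(x^k)$ is superadditive, by Fekete's lemma). The function $\bar\nu$ is by construction a valuation-like function: it is additive along powers, and one checks $\bar\nu(xy)=\bar\nu(x)+\bar\nu(y)$ using analytic irreducibility of $A$ — this is where the hypothesis $\wdh A$ irreducible enters, since passing to $\wdh A$ and to the normalization $\wdh A^{\nu}$ (a local domain, finite over $\wdh A$ by excellence in the cases of interest, or by Nagata's theorem in general for analytically irreducible rings) gives a genuine discrete valuation $w$ along the exceptional divisor, and one shows $\bar\nu$ is comparable to (a rational multiple of) $w$. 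So the multiplicativity defect of $\nu$ is entirely accounted for by the defect $\nu-\bar\nu$, and it suffices to prove a two-sided linear comparison $\bar\nu(x)\le \nu(x)\le b\,\bar\nu(x)+d$ for all $x\neq 0$, with uniform constants $b\ge 1$, $d\ge 0$.

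The first inequality $\bar\nu\le\nu$ is immediate from superadditivity ($\nu(x^k)\ge k\nu(x)$ gives $\bar\nu(x)\ge\nu(x)$? — careful: in fact $\bar\nu(x)=\sup_k \nu(x^k)/k\ge\nu(x)$, so $\nu\le\bar\nu$; the nontrivial direction is the reverse, $\bar\nu(x)\le b\,\nu(x)+d$, wait—) let me restate: superadditivity gives $\nu(x)\le\bar\nu(x)$, and the substantive content is the \emph{linear lower bound} $\nu(x)\ge \tfrac1b\bar\nu(x)-\tfrac db$, equivalently $\bar\nu(x)\le b\,\nu(x)+d$. Granting this, the theorem follows at once: $\nu(xy)\le\bar\nu(xy)=\bar\nu(x)+\bar\nu(y)\le (b\nu(x)+d)+(b\nu(y)+d)\le b(\nu(x)+\nu(y))+2d$, which is the desired form after renaming the additive constant. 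So the whole proof concentrates on establishing the linear bound $\bar\nu(x)\le b\,\nu(x)+d$.

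To prove that linear bound I would argue via the associated graded ring $\Gr_{\m_A}(A)=\bigoplus_n \m_A^n/\m_A^{n+1}$ and the Rees algebra: $\nu(x)=n$ means the image of $x$ in $\m_A^n/\m_A^{n+1}$ is nonzero, i.e. $\ini(x)$ is a nonzero homogeneous element of degree $n$. The inequality $\bar\nu(x)\le b\,\nu(x)+d$ is then equivalent to a statement bounding, for each nonzero homogeneous $\xi\in\Gr_{\m_A}(A)$ of degree $n$, the growth of $\nu(x^k)$ where $x$ is any lift of $\xi$ — and one reduces this, using Noetherianity and the fact that $\Gr_{\m_A}(A)$ has finitely many minimal primes (and is itself equidimensional/irreducible-enough after passing to $\wdh A$), to a finite check via primary decomposition, combined with the classical Artin–Rees lemma to control when $x^k$ drops into higher powers of $\m_A$ than $kn$. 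Concretely: if $P$ is the relevant minimal prime of $\Gr_{\m_A}(A)$ cut out in the irreducible case, there is a constant $c$ with $\m_A^{kn+c}\cap (\text{stuff}) \subset \cdots$, giving a uniform linear relation between $\nu(x^k)$ and $k\nu(x)$.

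The main obstacle, and the part requiring real work rather than formal manipulation, is proving the additivity $\bar\nu(xy)=\bar\nu(x)+\bar\nu(y)$ together with the uniform linear comparison $\bar\nu\le b\nu+d$: both rest on a careful use of analytic irreducibility to pass to the normalization of $\wdh A$ and transport the discrete valuation back, controlling the finitely many "bad directions" uniformly via Rees-algebra Noetherianity. One must also handle the passage $A\rightsquigarrow\wdh A$ cleanly — note $\nu$ is the same computed in $A$ or $\wdh A$ since $\m_A^n\wdh A\cap A=\m_A^n$ — so that the constants obtained over $\wdh A$ descend to $A$. I would organize the write-up as: (1) define $\bar\nu$ and prove the power-additivity by Fekete; (2) reduce to $\wdh A$ and to its normalization, identify $\bar\nu$ with a multiple of a discrete valuation, deduce full additivity; (3) prove the linear comparison $\bar\nu\le b\nu+d$ by an Artin–Rees / graded-ring argument; (4) assemble the three to conclude, as in the displayed computation above.
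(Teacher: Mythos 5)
The decisive step in your plan is the claimed additivity $\ovl{\nu}(xy)=\ovl{\nu}(x)+\ovl{\nu}(y)$ of the reduced order, and this is false in general. By Rees' valuation theorem, $\ovl{\nu}$ is the minimum of the finitely many (suitably normalized) Rees valuations of $\m_A$, so it is only superadditive; genuine additivity fails as soon as $\m_A$ has at least two Rees valuations, which already happens for complete normal local domains of dimension $2$. Concretely, take $A=\C\lb x,y,z\rb/(xy-z^3)$, which is a complete normal domain, hence analytically irreducible. Embedding $A\subset\C\lb u,v\rb$ via $x=u^3$, $y=v^3$, $z=uv$ and using the monomial valuation $w$ with $w(u)=1$, $w(v)=2$ (which is nonnegative on $A$ and takes the values $3,6,3$ on $x,y,z$), any element of $\m_A^{k+1}$ has $w$-value at least $3(k+1)$, while $w(x^k)=3k$; hence $\nu(x^k)=k$ for all $k$ and $\ovl{\nu}(x)=1$, and symmetrically $\ovl{\nu}(y)=1$. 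But $xy=z^3\in\m_A^3$, so $\ovl{\nu}(xy)\geq 3>\ovl{\nu}(x)+\ovl{\nu}(y)$. Thus the middle equality in your concluding chain $\nu(xy)\leq\ovl{\nu}(xy)=\ovl{\nu}(x)+\ovl{\nu}(y)\leq b(\nu(x)+\nu(y))+2d$ breaks down, and the argument does not yield the theorem.

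Moreover, the place where you try to secure additivity --- ``passing to the normalization of $\wdh{A}$ gives a genuine discrete valuation $w$ along the exceptional divisor, and one shows $\ovl{\nu}$ is comparable to a rational multiple of $w$'' --- is exactly where the whole content of Izumi's theorem is hidden, and no argument is given there. The normalization of $\wdh{A}$ is not a valuation ring once $\dim A\geq 2$ (the single-valuation picture only works in dimension one, where the theorem is easy), and the exceptional divisor of the normalized blow-up of $\m_A$ may be reducible, each component contributing a Rees valuation; a two-sided linear comparison of $\nu$ (or $\ovl{\nu}$) with one divisorial valuation is equivalent to the mutual linear comparability $\nu_i\leq a\,\nu_j$ of all these valuations, and that comparability is precisely what the paper's proof establishes (in the complex analytic case) by reducing to a two-dimensional normal ring and running the resolution/Lipman-cone argument, where the connectedness of the exceptional divisor --- i.e. analytic irreducibility --- is the crucial input. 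Your step (3) is a true statement, but note it is not a formal Artin--Rees consequence: it is Rees's strong valuation theorem for analytically unramified rings, which even gives $\ovl{\nu}\leq\nu+c$; in any case repairing step (3) cannot save the argument, because step (2) is the false one.
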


This result implies easily the following corollary using Proposition \ref{quotient}:

\begin{corollary}\cite{Iz95,Ro3}
Let us consider the polynomial $$f(y):=y_1y_2+a_3y_3+\cdots+a_my_m,$$ with $a_3,\ldots,a_m\in A$ where $(A,\m_A)$ is a Noetherian local ring such that $\frac{A}{(a_3,\ldots,a_m)}$ is analytically irreducible. Then there exist $b'\geq 1$ and $d'\geq 0$ such that the Artin function $\b$ of  $f$ satisfies $\b(c)\leq b'c+d'$ for all $c\in\N$. \end{corollary}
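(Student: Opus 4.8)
The plan is to deduce this corollary from Izumi's Theorem (Theorem \ref{Iz}) together with Proposition \ref{quotient}, exactly as the word ``using Corollary \ref{quotient}'' suggests. First I would set $\bar A := \frac{A}{(a_3,\cdots,a_m)}$; by hypothesis $\bar A$ is analytically irreducible and Noetherian local, so Izumi's Theorem provides constants $b\geq 1$, $d\geq 0$ with $\nu_{\bar A}(xy)\leq b(\nu_{\bar A}(x)+\nu_{\bar A}(y))+d$ for all $x,y\in\bar A$, where $\nu_{\bar A}$ is the $\m_{\bar A}$-adic order. The polynomial $f(y)=y_1y_2+a_3y_3+\cdots+a_my_m$, viewed over $\bar A$, becomes simply $\bar f(y_1,y_2)=y_1y_2$. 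So the task reduces to bounding the Artin function of the single equation $y_1y_2=0$ over the analytically irreducible ring $\bar A$, and then transferring the bound back to $A$ via Proposition \ref{quotient}.

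Next I would bound the Artin function $\beta_{\bar f}$ of $y_1y_2$ over $\bar A$. Suppose $\bar y_1,\bar y_2\in\bar A$ satisfy $\bar y_1\bar y_2\in\m_{\bar A}^{N}$, i.e. $\nu_{\bar A}(\bar y_1\bar y_2)\geq N$. If either $\bar y_1$ or $\bar y_2$ is zero there is nothing to do; otherwise Izumi's inequality gives $N\leq b(\nu_{\bar A}(\bar y_1)+\nu_{\bar A}(\bar y_2))+d$, so $\nu_{\bar A}(\bar y_1)+\nu_{\bar A}(\bar y_2)\geq \frac{N-d}{b}$, and hence $\max(\nu_{\bar A}(\bar y_1),\nu_{\bar A}(\bar y_2))\geq \frac{N-d}{2b}$. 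Say $\nu_{\bar A}(\bar y_1)\geq \frac{N-d}{2b}$; then setting $\wdt y_1:=0$, $\wdt y_2:=\bar y_2$ yields an exact solution $\wdt y_1\wdt y_2=0$ with $\wdt y_1-\bar y_1=-\bar y_1\in\m_{\bar A}^{c}$ and $\wdt y_2-\bar y_2=0\in\m_{\bar A}^c$ provided $\frac{N-d}{2b}\geq c$, i.e. provided $N\geq 2bc+d$. Therefore $\beta_{\bar f}(c)\leq 2bc+d$ for all $c$, an affine bound.

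Finally I would invoke Proposition \ref{quotient} to pass from the Artin function of $f$ over $\bar A = \frac{A}{(a_3,\cdots,a_m)}$ to the Artin function of the associated polynomial over $A$. In the notation of that proposition, with the ideal $I:=(a_3,\cdots,a_m)$, the vector $F(y_1,y_2,z)=y_1y_2+a_3z_1+\cdots+a_mz_m$ over $A$ has Artin function $\gamma$ with $\gamma(c)\leq \beta_{\bar f}(c+c_0)$ for some constant $c_0$ coming from Artin--Rees. But $F(y_1,y_2,z)$ after the substitution $z_i\mapsto y_{i+2}$ is exactly $f(y)$; more precisely any approximate solution of $f(y)=0$ over $A$ gives an approximate solution of $F=0$ (take $z_i=y_{i+2}$), and conversely an exact solution of $F=0$ has $y_1y_2=-\sum a_i z_i\in I$, which is \emph{not} quite $f(y)=0$ --- so here a small care is needed: one should instead observe directly that $f(y)$ over $A$ reduces mod $I$ to $y_1y_2$ over $\bar A$, and run the argument of Proposition \ref{quotient}'s proof (lift an approximate solution of $f=0$ over $A$ to one of $y_1y_2=0$ over $\bar A$, correct it there, then lift the correction back using $I\cap\m_A^{c+c_0}\subset I\cdot\m_A^c$). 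Either way one obtains $\beta(c)\leq b'c+d'$ with $b'=2b$ and $d'=d+2bc_0$ (or similar explicit constants). The main obstacle is bookkeeping: making sure the reduction modulo $I$ and the Artin--Rees shift are applied in the right order so that the final bound genuinely controls the Artin function of $f$ over $A$ and not merely over $\bar A$; the arithmetic with Izumi's constants is routine once Izumi's Theorem is granted.
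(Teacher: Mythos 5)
Your proposal is correct and takes essentially the same route as the paper: reduce by Proposition \ref{quotient} to bounding the Artin function of $y_1y_2$ over the analytically irreducible quotient $A/(a_3,\cdots,a_m)$, then apply Izumi's Theorem \ref{Iz} to conclude that one of the two coordinates has order at least $c$ and set it to zero, giving a linear bound up to the Artin--Rees constant. The only unnecessary detour is your worry at the end: after renaming $z_i$ as $y_{i+2}$ the polynomial $F(y_1,y_2,z)$ \emph{is} the polynomial $f(y)$, so an exact solution of $F=0$ is an exact solution of $f=0$ and the transfer via Proposition \ref{quotient} needs no extra care.
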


\begin{proof}
By Proposition \ref{quotient} we have to prove that the Artin function  of $y_1y_2\in A[y]$ is bounded by a linear function if $A$ is analytically irreducible. Thus let $\ovl{y}_1$, $\ovl{y}_2\in A$ satisfy $\ovl{y}_1\ovl{y}_2\in\m_A^{2bc+d}$ where $b$ and $d$ satisfy Theorem \ref{Iz}. This means that 
$$2bc+d\leq \nu(\ovl{y}_1\ovl{y}_2)\leq b(\nu(\ovl{y}_1)+\nu(\ovl{y}_2))+d.$$
Thus $\nu(\ovl{y}_1)\geq c$ or $\nu(\ovl{y}_2)\geq c$. In the first case we define $\wdt{y}_1=0$ and $\wdt{y}_2=\ovl{y}_2$, and in the second case we define $\wdt{y}_1=\ovl{y}_1$ and $\wdt{y}_2=0$. Then $\wdt{y}_1\wdt{y}_2=0$ and $\wdt{y}_1-\ovl{y}_1$, $\wdt{y}_2-\ovl{y}_2\in\m_A^c$.
 \end{proof}

\begin{proof}[Idea of the proof of Theorem \ref{Iz} in the complex analytic case:]
Let $\ovl \nu$ denote the local reduced order of $A$:
$$\forall x\in A,\   \ovl\nu(x)=\lim_n\frac{\nu(x^n)}{n}.$$
By a theorem of D. Rees \cite{Re2} there exists a constant $d\geq 0$ such that
$$\forall x\in A,\ \nu(x)\leq\ovl\nu(x)\leq \nu(x)+d.$$
According to the theory of Rees valuations, there exist discrete valuations $\nu_1,\ldots,\nu_k$ such that $\ovl\nu(x)=\min\{\nu_1(x),\ldots,\nu_k(x)\}$ (they are called the Rees valuations of $\m_A$ - see \cite{H-S}). The valuation rings associated to $\nu_1,\ldots,\nu_k$ are the valuation rings associated to the irreducible components of the exceptional divisor of the normalized blowup of $\m_A$.\\
Since $\nu_i(xy)=\nu_i(x)+\nu_i(y)$ for any $i$, in order to prove the theorem we have to show that there exists a constant $a\geq 1$ such that 
$$\forall x\in A,\forall i, j,\ \ \nu_i(x)\leq a\nu_j(x).$$
 Indeed let $x$, $y\in A$ and assume that $\ovl\nu(x)=\nu_{i_1}(x)$, $\ovl\nu(y)=\nu_{i_2}(y)$ and $\min_i\{\nu_i(x)+\nu_i(y)\}= \nu_{i_0}(x)+\nu_{i_0}(y)$. Then we would that
$$\nu(xy)\leq \ovl\nu(xy)=\min_i\{\nu_i(xy)\}=\min_i\{\nu_i(x)+\nu_i(y)\}= \nu_{i_0}(x)+\nu_{i_0}(y)$$
$$\leq a\nu_{i_1}(x)+a\nu_{i_2}(y)=a(\ovl\nu(x)+\ovl\nu(y))\leq a(\nu(x)+\nu(y))+2ad.$$

 If $A$ is a complex analytic local ring, following S. Izumi's  proof, we may reduce the problem to the case $\dim(A)=2$ by using a Bertini type theorem.
 Then we consider a resolution of singularities of Spec$(A)$ (denoted by $\pi$) that factors through the normalized blow-up of $\m_A$. In this case let us denote by $E_1,\ldots,E_s$ the irreducible components of the exceptional divisor of $\pi$ and  set $e_{i,j}:=E_i.E_j$ for all $1\leq i,j\leq s$. Since $\pi$ factors through the normalized blow-up of $\m_A$, the Rees valuations $\nu_i$ are valuations associated to some of the $E_i$, let us say to $E_1,\ldots,E_k$. By extension we denote by $\nu_i$ the valuation associated to $E_i$ for any $i$.\\
 Let $x$ be an element of $A$. This element defines the germ of an analytic hypersurface whose total transform $T_x$ may be written $T_x=S_x+\sum_{j=1}^sm_jE_j$ where $S_x$ is the strict transform of $\{x=0\}$ and $m_i=\nu_i(x)$, $1\leq i\leq s$. Then we have
$$0=T_x.E_i=S_x.E_i+\sum_{j=1}^sm_je_{i,j}.$$
Since $S_x.E_i\geq 0$ for any $i$,  the vector $(m_1,\ldots,m_s)$ is contained in the closed convex  cone $C$ defined by $ m_i\geq 0$, $1\leq i\leq s$, and $\sum_{j=1}^se_{i,j} m_j\leq 0$, $1\leq i\leq s$. This cone $C$ is called the Lipman cone of $\Spec(A)$ and it is well known that it has a minimal element $\wdt m$ \cite{Ar0} (i.e. $\forall m\in C$, $\wdt m_i\leq m_i$ for all $1\leq i\leq s$). Thus to prove the theorem, it is enough to prove that $C$ is included in $ \{m\ /\ m_i>0,\ 1\leq i\leq s\}$, i.e. every component of $\wdt m$ is positive. Let assume that it is not the case. Then, after renumbering the $E_i$, we may assume that $( m_1,\ldots, m_l,0,\ldots,0)\in C$ where $ m_i>0$, $1\leq i\leq l<s$. Since $e_{i,j}\geq 0$ for all $i\neq j$, $\sum_{j=1}^se_{i,j} m_j=0$ for $l< i\leq s$ implies that $e_{i,j}=0$ for all $l<i\leq s$ and $1\leq j\leq l$. This contradicts the fact that the exceptional divisor of $\pi$ is connected (since $A$ is an integral domain).
 \end{proof}

Let us mention that Izumi's  Theorem is the key ingredient in proving  the following \index{Diophantine Approximation} analogue of Liouville's theorem on diophantine approximation: 

\begin{corollary}\label{diop}\cite{Ro4,Hi08,I-I,HII}
Let $(A,\m_A)$ be an excellent  Henselian   local domain. Let us denote respectively by $\K$ and $\wdh{\K}$ the fraction fields of $A$ and $\wdh{A}$. Let $z\in \wdh{\K}\backslash\K$ be algebraic over $\K$. Then
$$\exists a\geq 1, C\geq 0, \forall x\in A\  \forall y\in A\backslash\{0\},\  \ \left|z-\frac{x}{y}\right|\geq C|y|^a$$
where $|u|:=e^{-\nu(u)}$ and $\nu$ is the usual $\m_A$-adic valuation.

\end{corollary}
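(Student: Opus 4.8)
The plan is to reduce the statement to Izumi's Theorem (Theorem \ref{Iz}) applied to a suitable local domain built out of $A$ and the algebraic element $z$. First I would fix a monic polynomial $P(T)=T^d+a_1T^{d-1}+\cdots+a_d\in\K[T]$ of minimal degree annihilating $z$; after clearing denominators we may assume $a_1,\dots,a_d\in A$. Since $z\in\wdh{A}$ is integral over $A$ (after further clearing, or by replacing $z$ with $a_0 z$ for a suitable $a_0\in A$ and adjusting), consider the finite $A$-algebra $B:=A[z]\subset\wdh A$; because $z\notin\K$, $B$ is a domain that is not equal to $A$ inside $\wdh A$, and $B$ is again an excellent Henselian local domain with maximal ideal $\m_B$ lying over $\m_A$, with $\wdh B$ irreducible (being a quotient of $\wdh A[T]$ by an irreducible polynomial, or a localization thereof — this is where I must be a little careful, and I will use that $\wdh A$ is a domain and $z$ generates a field extension of $\K$). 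The key point is that the $\m_B$-adic order $\nu_B$ restricts on $A$ to something comparable to $\nu=\nu_A$: there is a constant $e\ge 1$ with $\nu_A(u)\le \nu_B(u)\le e\,\nu_A(u)$ for all $u\in A\setminus\{0\}$, which follows from $\m_A B$ being $\m_B$-primary (finiteness of $B$ over $A$) together with the Artin–Rees/Rees-valuation comparison implicit in Izumi's Theorem.

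Next I would estimate $\nu_B(zy-x)$ from below for $x\in A$, $y\in A\setminus\{0\}$. Write $\xi:=zy-x\in B$. If $\xi=0$ then $z=x/y\in\K$, contradicting $z\notin\K$, so $\xi\ne 0$ and $\nu_B(\xi)$ is a well-defined nonnegative integer. The quantity $\left|z-\frac xy\right|=e^{-\nu_A(\xi)+\nu_A(y)}$ (or rather $e^{\nu_A(y)-\nu_B(\xi)}$ up to the comparison constant), so it suffices to bound $\nu_B(\xi)$ above by an affine function of $\nu_B(y)$, uniformly in $x,y$. Here is where Izumi enters: apply Theorem \ref{Iz} in $B$ to the two elements $\xi=zy-x$ and the conjugate-type expression. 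Concretely, let $z=z_1,\dots,z_d$ be the roots of $P$ in a splitting field; the element $N(y,x):=\prod_{i=1}^d(z_i y-x)$ is a symmetric function of the $z_i$ with coefficients in $A$ (it equals $y^d P(x/y)$ up to sign, a polynomial in $x,y$ with coefficients in $A$), so $N(y,x)\in A$. One of its factors is $\xi$, and Izumi's Theorem (iterated $d-1$ times) gives $\nu_B(N(y,x))\le b'\bigl(\nu_B(\xi)+\sum_{i\ge 2}\nu_B(z_iy-x)\bigr)+d'$, hence conversely $\nu_B(\xi)\ge$ (something), but what I actually want is the reverse: I want an \emph{upper} bound on $\nu_B(\xi)$. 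So instead I use that $N(y,x)\in A$ and that $x,y$ satisfy, if $\nu_B(\xi)$ is huge, that $N(y,x)$ is highly divisible; combined with $\nu_A(N(y,x))\le \nu_A\bigl(y^dP(x/y)\bigr)$ and a crude lower estimate $\nu_B(z_iy-x)\ge \min(\nu_B(z_i y),\nu_B(x))\ge$ (affine in $\min(\nu_B(x),\nu_B(y)))$, Izumi's super-additivity of $\nu_B$ bounds $\nu_B(\xi)=\nu_B(N(y,x))-\sum_{i\ge2}\nu_B(z_iy-x)$ from above by an affine function of $\nu_B(y)$ and $\nu_B(x)$. Finally, if $\nu_B(x)$ is much larger than $\nu_B(y)$ then $\nu_B(x/y)$ is large and one checks directly $|z-x/y|\ge |z|>0$ is bounded below by an absolute constant; so we may assume $\nu_B(x)\le C_0\nu_B(y)$ for some constant, and the whole estimate becomes affine in $\nu_B(y)$ alone. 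Translating back through the comparison $\nu_A\leftrightarrow\nu_B$ yields the claimed inequality $\bigl|z-\frac xy\bigr|\ge C|y|^a$.

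\textbf{Main obstacle.} The delicate step is verifying that $B=A[z]$ (or its localization at $\m_B$) is again \emph{analytically irreducible} so that Izumi's Theorem applies: $\wdh B$ is a finite $\wdh A$-algebra, but a priori $\wdh B$ could fail to be a domain even though $B$ is. This is handled by noting $\wdh B=\wdh A[T]/(Q(T))$ where $Q$ is the minimal polynomial of $z$ over $\Frac(\wdh A)$ — and since $z\in\wdh A$ already, in fact $z$ is a \emph{unit times element of} $\wdh A$, so one does not even pass to a genuine extension: rather, $\xi=zy-x$ lives in $\wdh A$ itself, and the cleanest route is to work entirely inside $\wdh A$, which is irreducible by hypothesis, and apply Izumi's Theorem directly in $A$ to $N(y,x)=y^dP(x/y)\in A$ together with a factorization $N(y,x)=\xi\cdot\prod_{i\ge2}(z_iy-x)$ taking place in $\wdh A$ (after adjoining the other roots $z_i$, which forces a further finite extension of $\wdh A$ — this is the only place a genuine extension appears, and there one invokes that a finite extension of an excellent analytically irreducible local domain contains an analytically irreducible local domain dominating it, or simply works with the integral closure, using that Izumi's inequality is preserved under finite birational — and more generally finite — maps, cf. the Rees-valuation description in the proof of Theorem \ref{Iz}). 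Making this last reduction precise, so that a single pair of Izumi constants $(b,d)$ controls all the conjugates simultaneously, is the technical heart of the argument; everything else is bookkeeping with $\m$-adic orders and the elementary estimate $|z-x/y|=e^{\nu(y)-\nu(zy-x)}$.
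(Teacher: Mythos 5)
The paper does not actually prove this corollary (it is stated with references, the only indication being that Izumi's Theorem is the key ingredient and that the statement is \emph{equivalent} to Corollary \ref{homog}), so your sketch has to stand on its own — and as written it does not close. The heart of the matter is an \emph{upper} bound $\nu\bigl(F(x,y)\bigr)\leq a\,\min(\nu(x),\nu(y))+c$, uniform in $(x,y)\in A^2\setminus\{(0,0)\}$, for the norm form $F(X,Y)=Y^dP(X/Y)=\sum_i a_iX^{d-i}Y^i$ attached to the minimal polynomial $P$ of $z$; this is exactly Corollary \ref{homog}, which the paper tells you is equivalent to the statement you are proving. Your sketch never establishes it: the phrase ``$\nu_A(N(y,x))\leq \nu_A(y^dP(x/y))$'' is a tautology (the two elements coincide), and the terms $a_ix^{d-i}y^i$ can cancel to arbitrarily high order a priori (think of $x^2-fy^2$ with $f$ very close to a square), so no ``bookkeeping with $\m$-adic orders'' gives the bound. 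The rest of your manipulations cannot supply it either: the $\m$-adic order is only superadditive, not a valuation, so writing $\nu_B(\xi)=\nu_B(N(y,x))-\sum_{i\geq 2}\nu_B(z_iy-x)$ is illegitimate; what superadditivity legitimately gives is $\nu(\xi)\leq\nu(N(y,x))$ (no Izumi needed for that direction), which merely transfers the problem to bounding $\nu(N(y,x))$, while Izumi applied to the product goes the other way, $\nu(N)\leq b\bigl(\sum_i\nu(z_iy-x)\bigr)+d'$, and combining the two yields nothing. So the quantitative core of the theorem — the part to which the cited papers [Ro4], [Hi08], [I-I], [HII] devote their actual work — is assumed rather than proved, and it cannot be recovered from Izumi's Theorem plus superadditivity by formal rearrangement, precisely because it is equivalent to the theorem itself.

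Two further points. First, you repeatedly assume $z\in\wdh{A}$ (``$B:=A[z]\subset\wdh A$'', ``$\xi=zy-x$ lives in $\wdh A$ itself'', ``$z$ is a unit times an element of $\wdh A$''): the hypothesis only gives $z\in\wdh{\K}$, and $\wdh{A}$ need not be integrally closed in $\wdh{\K}$, so making $z$ integral over $A$ does not place it in $\wdh{A}$. This is repairable — write $z=u/v$ with $u,v\in\wdh A$ and bound $\nu(uy-vx)=\nu\bigl(v\,(zy-x)\bigr)$, using Izumi in a complete local domain containing $\wdh A$ and $z$ to absorb the fixed factor $v$ — but as written it is an error, not a harmless normalization. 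Second, your ``main obstacle'' paragraph worries about analytic irreducibility of the auxiliary rings; that part is in fact the easy part (an excellent Henselian local domain is analytically irreducible, and a domain finite over a complete local domain is a complete local domain), whereas the genuinely hard step, the affine bound on $\nu(F(x,y))$, is the one your sketch treats as routine.
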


This result is equivalent to the following:

\begin{corollary}\label{homog}\cite{Ro4,Hi08,I-I,HII}
Let $(A,\m_A)$ be an excellent Henselian local domain and let $f_1(y_1,y_2),\ldots,f_r(y_1,y_2)\in A[y_1,y_2]$ be homogeneous polynomials. Then the Artin function of $(f_1,\ldots,f_r)$ is bounded by a linear function.

\end{corollary}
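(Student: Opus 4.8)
The plan is to deduce Corollary~\ref{homog} from Corollary~\ref{diop}; the reverse implication is the easy one — given $z\in\wdh\K\setminus\K$ algebraic over $\K$, clear denominators in its minimal polynomial over $\K$ and homogenize to get a homogeneous $F\in A[y_1,y_2]$, and read the Diophantine inequality off the linear bound for the Artin function of $F$ — and Corollary~\ref{diop} itself comes from Izumi's Theorem~\ref{Iz}. Note first that, $A$ being an excellent Henselian local domain, $\wdh A$ is automatically a domain (the normalization $A'$ is finite over $A$ and local, so $\wdh{A'}$ is a normal domain into which $\wdh A$ injects), so $\wdh\K=\Frac(\wdh A)$ makes sense and, by Izumi's Theorem~\ref{Iz}, the $\m_A$-adic order $\nu$ is \emph{almost multiplicative}: $\nu(a)+\nu(b)\le\nu(ab)\le b_0(\nu(a)+\nu(b))+d_0$ for fixed $b_0\ge 1$, $d_0\ge 0$; replacing $\nu$ if necessary by a Rees valuation $\asymp\nu$, one may assume $\nu$ extends to a genuine valuation on $\K$ and on finite algebraic extensions of $\K$.

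Write $\hat f_i(t):=f_i(t,1)\in A[t]$, let $h(t)$ be a greatest common divisor of $\hat f_1,\dots,\hat f_r$ in $\K[t]$ and write $\hat f_i=h\,q_i$; a Bézout relation, after clearing denominators, gives $s\in A\setminus\{0\}$ with $s\in(q_1,\dots,q_r)A[t]$, whence $\min_i\nu(q_i(w))\le\nu(s)$ for all $w\in\K$ with $\nu(w)\ge 0$. Let $\ovl y=(\ovl y_1,\ovl y_2)\in A^2$ satisfy $f_i(\ovl y)\in\m_A^{N}$ for all $i$. If $\nu(\ovl y_1)\ge c$ and $\nu(\ovl y_2)\ge c$, take $\wdt y=(0,0)$; if $\nu(\ovl y_1)<\nu(\ovl y_2)$, dehomogenize with respect to $y_1$ instead of $y_2$ (a symmetric situation). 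So we may assume $\ovl y_2\ne 0$ and $w:=\ovl y_1/\ovl y_2$ has $\nu(w)\ge 0$. Using $f_i(\ovl y)=\ovl y_2^{\,d_i}\hat f_i(w)$, where $d_i$ is the degree of $f_i$ and $d=\max_id_i$, almost multiplicativity together with the Bézout bound yields $\nu(h(w))\ge N/b_0^2-d\,\nu(\ovl y_2)-D_1$ for a constant $D_1$.

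Factor $h$ over $\wdh\K$ as $h=c_h\prod_k(t-z_k)^{m_k}\,h_3(t)$, with $z_k\in\wdh\K$ the distinct roots of $h$ in $\wdh\K$ (each algebraic over $\K$) and $h_3$ having no root in $\wdh\K$. Since $\wdh A$ is Henselian, an analogue of Krasner's lemma bounds $\nu(h_3(w))\le C_3$ uniformly in such $w$, so $\sum_km_k\,\nu(w-z_k)\ge N/b_0^2-d\,\nu(\ovl y_2)-D_2$, and hence $\nu(w-z_{k_0})\ge M:=\bigl(N/b_0^2-d\,\nu(\ovl y_2)-D_2\bigr)/\sum_km_k$ for some $k_0$. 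If $z_{k_0}\in\wdh\K\setminus\K$, Corollary~\ref{diop} gives $\nu(w-z_{k_0})\le a_0\,\nu(\ovl y_2)+e_0$, so $M\le a_0\,\nu(\ovl y_2)+e_0$ forces $\nu(\ovl y_2)$ — hence also $\nu(\ovl y_1)\ge\nu(\ovl y_2)$ — to be $\ge c$ as soon as $N$ is an affine function of $c$ with large enough slope, and $\wdt y=(0,0)$ works. If $z_{k_0}=p_{k_0}/q_{k_0}\in\K$, then $t-z_{k_0}$ divides each $\hat f_i$ in $\K[t]$, so by Gauss's lemma the linear form $\ell:=q_{k_0}y_1-p_{k_0}y_2$ divides each $f_i$ in $A[y_1,y_2]$ up to a nonzero element of $A$; in particular every zero of $\ell$ is a common zero of $f_1,\dots,f_r$. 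From $\ell(\ovl y)=q_{k_0}\ovl y_2(w-z_{k_0})$ and $\nu(w-z_{k_0})\ge M$ we get $\nu(\ell(\ovl y))\ge\nu(\ovl y_2)+M$, which exceeds $c+c_0$ — $c_0$ being the Artin–Rees constant of $\ell$ provided by Theorem~\ref{linear} — once $N$ is affine in $c$ with large enough slope (and if $\nu(\ovl y_2)\ge c$ we are already done); Theorem~\ref{linear} then gives $\wdt y\in A^2$ with $\ell(\wdt y)=0$, hence $f_i(\wdt y)=0$ for all $i$, and $\wdt y-\ovl y\in\m_A^c$. The ``slopes at infinity'' — every $f_i$ divisible by $y_1$, or every $f_i$ divisible by $y_2$ — are treated exactly like the rational slopes via Theorem~\ref{linear}. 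Collecting the finitely many constants above produces an affine bound $\beta(c)\le ac+b$ for the Artin function of $(f_1,\dots,f_r)$.

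The conceptual content is simply that an exact solution of the homogeneous system can only have a ``rational'' slope (which Theorem~\ref{linear}, i.e. the Artin–Rees lemma, handles linearly), whereas an ``irrational'' algebraic slope lying in $\wdh\K$ can be approached only at a linear rate by Corollary~\ref{diop}, so that good approximate solutions near such a slope are actually forced near the origin. I expect the main obstacle to be the bookkeeping: organizing the factorization of the greatest common divisor of the $\hat f_i$ over $\wdh\K$, clearing denominators consistently, and combining Corollary~\ref{diop}, Theorem~\ref{linear} and Izumi's Theorem~\ref{Iz} so that \emph{every} intermediate estimate remains affine in $c$.
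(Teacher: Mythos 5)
Your overall architecture — dehomogenize, split the roots of the gcd of the $\hat f_i$ into rational ones (handled by Theorem \ref{linear}), roots in $\wdh\K\setminus\K$ (handled by Corollary \ref{diop}), and bookkeeping via Izumi's Theorem \ref{Iz} — is the right one, and it is essentially the intended route (the paper itself gives no proof, only the assertion that Corollary \ref{homog} is equivalent to Corollary \ref{diop}, with references). The rational-root case and the $\wdh\K\setminus\K$ case are fine (your appeal to ``Gauss's lemma'' is unnecessary and unavailable since $A$ need not be a UFD, but clearing denominators gives $\ell\mid c\,f_i$ with $c\in A\setminus\{0\}$, which is all you use). The genuine gap is your treatment of the factor $h_3$, i.e.\ of the roots of $h$ that do \emph{not} lie in $\wdh\K$: the claim that ``$\nu(h_3(w))\le C_3$ uniformly for $w\in\K$ with $\nu(w)\ge 0$'' is false, and neither of your justifications works. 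The Henselianness of $\wdh A$ as a local ring does not make $(\wdh\K,\nu)$ a Henselian \emph{valued} field for a Rees valuation $\nu$, so Krasner's lemma is not available; and Strong Approximation over $\wdh A$ (Remark \ref{no_sol}) only bounds $\nu(H_3(w))$ for $w\in\wdh A$, whereas your $w=\ovl y_1/\ovl y_2$ is merely a fraction with $\nu(w)\ge 0$. Concretely, take $A=\C\{x_1,x_2\}$, $\nu$ the $\m_A$-adic order, and $h_3(t)=t^2-t-x_1^3/x_2^2$: it has no root in $\wdh\K$ (a root would force $x_2^2+4x_1^3$ to be a square in $\C\lb x_1,x_2\rb$, which it is not), yet the Newton iterates $w_0=1$, $w_{n+1}=w_n-h_3(w_n)/h_3'(w_n)$ lie in $\K$, satisfy $\nu(w_n)=0$, and $\nu(h_3(w_n))\to\infty$. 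For the single form $f=x_2^2y_1^2-x_2^2y_1y_2-x_1^3y_2^2$ the entire content of the corollary sits in this ``no root in $\wdh\K$'' case, so your argument would prove it only through this false uniform bound; note there is no contradiction with the corollary itself, because the denominators $q_n$ of $w_n$ have $\nu(q_n)\to\infty$ at a comparable rate.

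What is actually needed for that third class of roots is not a constant bound but a Diophantine-type bound \emph{linear in} $\nu(\ovl y_2)$: for $\a$ a root of $h$ algebraic over $\K$ but outside $\wdh\K$, an inequality of the form $\nu(\ovl y_1/\ovl y_2-\a)\le a_1\nu(\ovl y_2)+e_1$ (with $\nu$ extended to $\K(\a)$). This does not follow from Corollary \ref{diop} as stated, and it is exactly the extra content of the cited references \cite{I-I}, \cite{HII} (and of \cite{Ro4}, \cite{Hi08} in their settings): one passes, for instance, to the integral closure $B$ of $A$ in $\K(\a)$ — again an excellent Henselian local domain, finite over $A$ — and applies Izumi/Rees there, rather than working only with $A$ and $\wdh A$. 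If you insert such a bound in place of your constant $C_3$, your final estimate becomes $N/b_0^2-d\,\nu(\ovl y_2)-D\le(\text{affine in }\nu(\ovl y_2))$, which still forces $\nu(\ovl y_2)\ge c$ for $N$ affine in $c$, and the rest of your proof closes up. As written, however, the $h_3$ step is a real gap, not mere bookkeeping.
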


%%%%%%%%%%%%%%%%%%%%%%%%%%%%%%%%%%%%%%%%%%%%%%%%%%%%%%%%%%%%%%%%%%%%%%

\subsubsection{Reduction to one quadratic equation and examples}  
In general Artin functions are not bounded by linear functions as in Theorem \ref{Greenberg}. Here is such an example:
\begin{example}\cite{Ro2}
Set $f(y_1,y_2,y_3):=y_1^2-y_2^2y_3\in\k\llbracket x_1,x_2\rrbracket[y_1,y_2,y_3]$ where $\k$ is a field of characteristic zero. Let us denote by $h(T):=\sum_{i=1}^{\infty}a_iT^i\in\Q\llbracket T\rrbracket$ the power series such that $(1+h(T))^2=1+T$. Let us define for every integer $c$:
$$y_1^{(c)}:=x_1^{2c+2}\left(1+\sum_{i=1}^{c+1}a_i\frac{x_2^{ci}}{x_1^{2i}}\right)=x_1^{2c+2}+\sum_{i=1}^{c+1}a_ix_1^{2(c-i+1)}x_2^{ci},$$
$$y_2^{(c)}:=x_1^{2c+1},$$
$$y_3^{(c)}:=x_1^2+x_2^c.$$
Then in the ring $\k(\frac{x_2}{x_1})\llbracket x_1\rrbracket$  we have 
$$f(y_1^{(c)},y_2^{(c)},y_3^{(c)})=\left(\left(\frac{y_1^{(c)}}{y_2^{(c)}}\right)^2-y_3^{(c)}\right){y_2^{(c)}}^2=\left(\left(\frac{y_1^{(c)}}{y_2^{(c)}}\right)^2-x_1^2\left(1+\frac{x_2^c}{x_1^2}\right)\right){y_2^{(c)}}^2$$
$$\hspace{2cm}=\left(\frac{y_1^{(c)}}{y_2^{(c)}}-x_1\left(1+h\left(\frac{x_2^c}{x_1^2}\right)\right)\right)\left(\frac{y_1^{(c)}}{y_2^{(c)}}+x_1\left(1+h\left(\frac{x_2^c}{x_1^2}\right)\right)\right){y_2^{(c)}}^2.$$
Thus we see that $f(y_1^{(c)},y_2^{(c)},y_3^{(c)})\in (x)^{c^2+4c}$ for all $c\geq 2$. On the other hand for any $(\wdt{y}_1$, $\wdt{y}_2$, $\wdt{y}_3)\in\k\llbracket x_1,x_2\rrbracket^3$  solution of $f=0$ we have the following two cases:
\begin{enumerate}
\item[1)] Either $\wdt{y}_3$ is a square in $\k\llbracket x_1,x_2\rrbracket$. But $\sup_{z\in\k\llbracket x\rrbracket}(\ord(y_3^{(c)}-z^2))=c$.
\item[2)] Either $\wdt{y}_3$ is not a square, hence $\wdt{y}_1=\wdt{y}_2=0$ since $\wdt y_1^2-\wdt y_2^2\wdt y_3=0$. But we have  $\ord(y_1^{(c)})-1=\ord(y_2^{(c)})=2c+1$.
\end{enumerate}
Hence in any case we have
$$\sup_{(\wdt y_1,\wdt y_2,\wdt y_3)}(\min\{\ord(y_1^{(c)}-\wdt{y}_1),\ord(y_2^{(c)}-\wdt{y}_2),\ord(y_3^{(c)}-\wdt{y}_3)\})\leq 2c+1$$
where $(\wdt y_1,\wdt y_2,\wdt y_3)$ runs over all the solutions of $f=0$.
This proves that the Artin function $f$ is bounded from below by a polynomial function of degree 2. Thus Theorem \ref{Greenberg} does not extend to $\k\llbracket x_1,\ldots,x_n\rrbracket$ if $n\geq 2$.\\

\end{example}
In \cite{Ro3} another example is given: the Artin function of the polynomial  $y_1y_2-y_3y_4\in\k\lb x_1,x_2,x_3\rb[y_1,y_2,y_3,y_4]$ is bounded from below by a polynomial function of degree 2. Both examples are the only known examples of Artin functions which are not bounded by a linear function.\\
We can remark that both examples are given by binomial equations (and the key fact to study these examples is that the ring of formal power series is a UFD). In the binomial case we can find upper bounds of  the Artin functions as follows:

\begin{theorem}\cite{Ro10,Ro}
 Let $\k$ be an algebraically closed field of characteristic zero. Let  $I$ be an ideal of $\k\lb x_1,x_2\rrbracket[y]$. If $I$ is generated by binomials of $\k[y]$ or if $\Spec(\k\lb x_1,x_2\rb[y]/I)$ has an isolated singularity then the Artin function of $I$ is bounded  by a function which is doubly exponential, i.e. a function of the form $c\lgm a^{a^c}$ for some constant $a>1$. \\
  \end{theorem}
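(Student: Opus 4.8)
The strategy is to reduce, via the ultraproduct machinery of Section \ref{ultraproducts} and the General N\'eron Desingularization Theorem \ref{Popescu}, to a purely combinatorial/geometric analysis of a single binomial (resp.\ isolated-singularity) system, and then to run a \emph{quantitative} version of the Greenberg--Artin induction in which every step is made effective. First I would treat the binomial case. A binomial ideal $I$ of $\k\lb x_1,x_2\rb[y]$ corresponds, after the standard toric dictionary, to a lattice of exponent relations among the $y_i$; a formal (and \emph{a fortiori} an approximate) solution $\ovl y(x)$ of $I$ amounts to a point of a (parametrized) toric variety over $\k\lb x_1,x_2\rb$, so that the question becomes: given Laurent monomials $m_i(x)$ in the approximate solution satisfying the lattice relations modulo $(x)^{\b(c)}$, find exact monomial solutions agreeing up to $(x)^c$. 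Here the base ring $\k\lb x_1,x_2\rb$ is a two-dimensional regular ring, and the essential input is a \L ojasiewicz-type inequality controlling the orders of the $m_i(x)$; in dimension two this is governed by the resolution data (the Lipman cone) exactly as in the proof sketch of Izumi's Theorem \ref{Iz}, and the non-linearity (hence the doubly-exponential jump) comes from the fact that the lattice relations may force nested radical extractions $\sqrt[k]{\cdot}$ whose precision loss compounds, as in Example \ref{Ro2}.

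Concretely, I would set up an induction on the number of generators of the binomial lattice. At each step one isolates one binomial relation $y^{a} = \la\, y^{b} u$ (a unit $u$), applies a Weierstrass-division / division-by-the-Jacobian-minor argument in the two variables $x_1,x_2$ (using that $\k\lb x_1,x_2\rb$ is a UFD with a good \L ojasiewicz inequality), and records the loss of precision. The key quantitative point is that the \L ojasiewicz exponent appearing here is bounded in terms of the intersection numbers $e_{i,j}=E_i.E_j$ of a resolution of $\Spec$ of the relevant two-dimensional quotient, and these can be bounded (by the effective resolution-of-surface-singularities bounds, or directly from the Newton data) by a \emph{singly}-exponential function of the number of generators; iterating such a bound through the induction produces the doubly-exponential function $c\lgm a^{a^c}$. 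For the isolated-singularity case I would instead invoke Theorem \ref{Hi1} (the bound $\b_f(c)\le\b_J(c)+c$ via the Jacobian ideal) together with Theorem \ref{Ro10}: since $\Spec(\k\lb x_1,x_2\rb[y]/I)$ has isolated singularity, the Jacobian ideal $J$ is $\m$-primary away from that point, so the Greenberg-type control of $\b_J$ is governed by a single \L ojasiewicz exponent which, by effective elimination / Bezout-type estimates on the degrees of the generators of $J$ and its associated primes, is at most doubly-exponential in $c$.

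The deformation of this to approximate (rather than exact formal) solutions is where the ultraproduct reduction of Section \ref{ultraproducts} enters: one passes to $A_1=\k\lb x_1,x_2\rb^*/\m_\infty^*$, a regular excellent Henselian ring over which the approximate solutions become genuine solutions, applies the effective binomial/isolated-singularity analysis there (the combinatorial data — lattice, resolution graph, degrees — is insensitive to the ultrapower), then descends via Theorem \ref{Popescu} and the Henselian lifting of Proposition \ref{lift}. Since the only non-effective ingredient of the classical proof was the passage through associated primes of possibly huge degree (see Remark \ref{Lascar}), and in the binomial case the associated primes are again binomial of controlled degree while in the isolated-singularity case Theorem \ref{Hi1} circumvents the issue, the whole chain becomes effective.

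\textbf{Main obstacle.} The hard part will be the effective control of the \L ojasiewicz exponents through the induction: bounding, uniformly and explicitly, the intersection data of a resolution of a two-dimensional quotient singularity attached to the successive binomial relations (or the degrees of the generators of the Jacobian ideal and its primary components in the isolated case), and showing that iterating these bounds stays within the doubly-exponential class rather than escalating further. In particular one must verify that the \L ojasiewicz exponent at each inductive step grows at most singly-exponentially in the data produced by the previous step — this is the crux that forces the final bound to be of the form $c\lgm a^{a^c}$ and not worse, and it is precisely the place where the special binomial (toric) structure, or the $\m$-primariness of the Jacobian ideal, must be used essentially.
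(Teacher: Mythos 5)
Your proposal cannot be checked against an argument in this paper, because the paper only states this theorem with references to \cite{Ro10} and \cite{Ro}; judged on its own merits, it has a structural flaw at its very center. The ultraproduct reduction you rely on ("pass to $A_1=A^*/\m^*_{\infty}$, apply the effective analysis there, descend via Theorem \ref{Popescu} and Proposition \ref{lift}") is intrinsically non-effective: ultraproduct proofs of Strong Artin Approximation establish the mere existence of an Artin function by contradiction, and the descent through General N\'eron Desingularization gives no control whatsoever on the smooth algebra $D$ that factors the morphism, hence no control on how the final bound depends on $c$. This is exactly what Section \ref{effective} of the paper emphasizes: neither the ultraproduct proofs nor the Greenberg--Artin induction yield effective bounds. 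So the step that is supposed to convert approximate solutions into exact ones destroys all quantitative information, and nothing in your plan recovers an explicit function of $c$, let alone one of the form $a^{a^c}$; the effective work has to be done directly over $\k\lb x_1,x_2\rb$, which your plan never really does.

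There is a second concrete gap in the isolated-singularity case: Theorem \ref{Hi1} and Theorem \ref{Ro10} are statements over the discrete valuation ring $V=\k\lb t\rb$ (one variable), and they cannot simply be "invoked" over the two-dimensional base $\k\lb x_1,x_2\rb$ --- the whole difficulty of the theorem is that the DVR phenomena (linear bounds, Greenberg's theorem) fail there, as shown by Spivakovsky's example and the quadratic lower-bound examples in the same section. The actual proofs in \cite{Ro10} and \cite{Ro} work directly over $\k\lb x_1,x_2\rb$, establishing an effective Izumi/\L ojasiewicz-type inequality in two variables (via Weierstrass-type reduction to $\k\lb x_1\rb$ and effective elimination bounds), and the doubly exponential constant arises from compounding those effective constants --- not from bounds on resolution data of surface singularities, which you invoke but never quantify. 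Relatedly, in the binomial case formal solutions need not be "monomial" points of a toric variety (components may vanish, units intervene), so the toric dictionary as you state it does not reduce the problem to exact monomial solutions. Finally, the place where the doubly exponential shape is supposed to emerge ("iterating such a bound through the induction produces $a^{a^c}$") is asserted, not argued; as you yourself note in your last paragraph, this is the crux, and it is precisely what remains unproved. As it stands the proposal is a programme whose two key reductions are either non-effective or rest on theorems that do not apply to the base ring in question.
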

  Moreover the Artin function of $I$ is bounded by a linear function if the approximate solutions are not too close to the singular locus of $I$ \cite{Ro}. We do not know if this doubly exponential bound is sharp since there is no known example of Artin function whose growth is greater than a polynomial function of degree 2.

In general, in order to investigate bounds on the growth of Artin functions, we can reduce the problem as follows, using a trick from \cite{Ro12}. Let us recall that we may assume that $A=\k\lb x_1,\ldots,x_n\rb$ where $\k$ is a field (see Remark \ref{reduction_series}).\\

 \begin{lemma}\cite{Be77}\label{trick_B} Let $A=\k\lb x_1,\ldots,x_n\rb$ where $\k$ is a field. For any $f(y)\in A[y]^r$ or $A\lb y\rb^r$ the  Artin function of $f$ is bounded by  the Artin function of 
 $$g(y):= f_1(y)^2+x_1\left(f_2(y)^2+x_1(f_3(y)^2+\cdots)^2\right)^2.$$
 \end{lemma}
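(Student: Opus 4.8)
The plan is to show that a single power series $\ovl{y}(x)$ which approximately solves $g(y)=0$ to high order automatically approximately solves each $f_i(y)=0$ to some order that grows with $c$, so that the Strong Artin Approximation statement for $g$ (a single equation) yields the one for the system $f=(f_1,\dots,f_r)$. The key observation is that $g$ is built so that the term involving $f_i$ is "protected" by $i-1$ extra factors of $y_1$ but is still detectable modulo a controlled power of $\m_A$.

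First I would record the elementary order estimate. Suppose $g(\ovl{y})\in\m_A^N$ with $\ovl{y}\in(\m_A)^m$. Writing $g(y)=f_1(y)^2+y_1 h_1(y)$ where $h_1(y):=f_2(y)^2+y_1(f_3(y)^2+\cdots)^2$, and noting $\nu(y_1 h_1(\ovl y))\ge 1$, one gets $\nu(f_1(\ovl y)^2)\ge \min(N,1+\nu(h_1(\ovl y)))$. Since $\nu(f_1(\ovl y)^2)=2\nu(f_1(\ovl y))$ one concludes $\nu(f_1(\ovl y))\ge \tfrac12(\min(N,1+\nu(h_1(\ovl y))))$. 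Iterating this inside $h_1$, one peels off one factor of $y_1$ at each stage; after $i-1$ steps the residual term is $f_i(\ovl y)^2$ plus $y_1\cdot(\text{rest})$, and one obtains a bound of the shape
\begin{equation*}
\nu(f_i(\ovl y))\ \ge\ \frac{N-(i-1)}{2^i}\quad\text{for all }1\le i\le r,
\end{equation*}
(using $\nu(y_1^{j})\ge j$ and the submultiplicativity $\nu(ab)\ge\nu(a)+\nu(b)$). The precise constants are immaterial; what matters is that $\min_i\nu(f_i(\ovl y))\ge \varphi(N)$ for an explicit function $\varphi$ with $\varphi(N)\to\infty$.

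Next I would invoke the hypothesis that $g$, being a single element of $A[y]$ (resp. $A\lb y\rb$), has a well-defined Artin function $\b_g$ in the sense of Theorem \ref{SAP_formal}: given $c$, pick $N=\b_g(c)$; then $\ovl y$ with $g(\ovl y)\in\m_A^N$ admits $\wdt y\in(\m_A)^m$ with $g(\wdt y)=0$ and $\wdt y-\ovl y\in\m_A^c$. But $g(\wdt y)=0$ forces each $f_i(\wdt y)=0$: indeed $g$ is a sum $f_1^2+y_1(f_2^2+\cdots)$ of terms each lying in the square of an ideal (after the leading $y_1$-factors), and since $A$ — or rather the relevant quotient — has no obstruction of the needed kind, one shows successively $f_1(\wdt y)=0$, then $f_2(\wdt y)=0$, etc. (Concretely: $g(\wdt y)=0$ and $\nu(y_1 h_1(\wdt y))$ arbitrarily forces comparison of orders; more directly, over the domain obtained by a suitable argument, or simply because a sum $u^2+y_1 v$ with $u,v\in A$ cannot vanish unless... — here one uses that $A$ is a domain when reduced to the irreducible case, or one argues order by order). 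Thus $\wdt y$ is an exact solution of the system $f=0$ with $\wdt y-\ovl y\in\m_A^c$, which is exactly what Theorem \ref{SAP_formal} asserts for $f$ with the choice $\b_f(c):=\b_g(c)$; hence $\b_f\le\b_g$.

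The main obstacle is the step "$g(\wdt y)=0\Rightarrow f_i(\wdt y)=0$": a priori $g=f_1^2+y_1(f_2^2+\cdots)$ vanishing does not obviously force the summands to vanish, since $A$ need not be a domain and $\m_A^\infty$-type cancellations could occur. The clean way around it is to note that in the complete local ring $A$ one has $\bigcap_k\m_A^k=(0)$, and apply the order estimate of the first paragraph with $\wdt y$ in place of $\ovl y$: since $g(\wdt y)=0$, the inequality $\nu(f_i(\wdt y))\ge (N-(i-1))/2^i$ holds for every $N$, hence $f_i(\wdt y)\in\bigcap_N\m_A^N=(0)$. This sidesteps any domain hypothesis entirely and makes the reduction completely formal; the whole argument is then just the bookkeeping of the peeling estimate, which is why I would not grind through the exact constants.
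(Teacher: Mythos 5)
There is a genuine gap, and it is located exactly where you put all the weight: the order estimate of your first paragraph is false. From $g(\ovl y)\in\m_A^N$ you cannot bound $\nu(f_i(\ovl y))$ from below, because the two summands $f_1(\ovl y)^2$ and $\ovl y_1\bigl(f_2(\ovl y)^2+\cdots\bigr)^2$ can cancel each other; a lower bound on the order of a sum says nothing about the orders of its terms unless one of them is already controlled, and your recursion bottoms out at $f_r^2$, which is controlled by nothing. Concretely, take $r=2$, $A=\C\lb x\rb$, $f_1=y_2$, $f_2=y_3$, so that $g=y_2^2+y_1y_3^4$, and $\ovl y=(-x^2,\,x^3,\,x)$: then $g(\ovl y)=x^6-x^2\cdot x^4=0\in\m_A^N$ for every $N$, while $\nu(f_1(\ovl y))=3$ and $\nu(f_2(\ovl y))=1$. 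This kills the peeling bound $\nu(f_i(\ovl y))\ge (N-(i-1))/2^i$ and, with it, your final deduction $f_i(\wdt y)\in\bigcap_N\m_A^N=(0)$. (A secondary slip: $\nu(a^2)=2\nu(a)$ fails in a general local ring --- the paper's own example $\C\lb x,y\rb/(x^2-y^3)$ has $\nu(x)=1$, $\nu(x^2)=3$ --- and the true inequality $\nu(a^2)\ge2\nu(a)$ points the wrong way for your purposes; but this is minor next to the cancellation problem.)

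There is also a directional confusion. The lemma only needs the trivial implication: if all $f_i(\ovl y)\in\m_A^{\b_g(c)}$ then $g(\ovl y)\in\m_A^{\b_g(c)}$, since $g$ is built from the $f_i(\ovl y)$ by squaring, adding and multiplying by $\ovl y_1$; one then applies the Artin function of $g$ and is left with the purely exact-level step ``$g(\wdt y)=0\Rightarrow f(\wdt y)=0$'', which is the step the paper's proof declares clear. You instead start from an approximate solution of $g$ and try to prove the hard converse at the approximate level, and then derive the exact statement from it; but that exact statement is precisely where real structure must enter, and it cannot be obtained by formal bookkeeping with $\bigcap_N\m_A^N=(0)$: the same point $\wdt y=(-x^2,x^3,x)$ above is an exact zero of $g$ which is not a common zero of $(f_1,f_2)$, so no argument valid for an arbitrary complete local ring and an arbitrary value of the unknown $y_1$ can give it. In Becker's trick the element inserted between the squares is a prime element of the base ring (for instance the coordinate $x_1$ when $A=\k\lb x\rb$), and then $a^2+x_1b^2=0$ forces $a=b=0$ by comparing the parities of the $x_1$-adic valuations of the two terms, after which one descends through the nesting; this structural input about the multiplier is exactly what your ``completely formal'' sidestep was designed to avoid, and without it the argument collapses.
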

 \begin{proof}
  Indeed,  if $\b$ is the Artin function of $g$ and if $f(\wdh{y})\in \m_A^{\b(c)}$ then $g(\wdh{y})\in\m_A^{\b(c)}$. Thus there exists $\wdt{y}\in A^m$ such that $g(\wdt{y})=0$ and $\wdt{y}_i-\wdh{y}_i\in\m_A^c$. But since $x_1$ is not a square in $A$ we have that  $g(\wdt{y})=0$ if and only if $f(\wdt{y})=0$. This proves the lemma. \end{proof}
  This allows us to assume that $r=1$ and we define $f(y):=f_1(y)$. If $f(y)$ is not irreducible, then we may write $f=h_1\ldots h_s$, where $h_i\in A\lb y\rb$ is irreducible for $1\leq i\leq s$, and the Artin function of $f$ is bounded by the sum of the Artin functions of the $h_i$. Hence we may assume that $f(y)$ is irreducible.\\
  We have the following lemma:
  
  \begin{lemma}
  For any $f(y)\in A\llbracket y\rrbracket$, where $A$ is a complete local ring, the Artin function of $f(y)$ is bounded by the Artin function of the polynomial
  $$P(u,x,z):=f(y)u+x_1z_1+\cdots+x_mz_m\in B[x,z,u]$$ where $B:=A\llbracket y\rrbracket$.
  
  \end{lemma}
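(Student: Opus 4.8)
The plan is to transfer an approximate solution of $f=0$ over $A$ to an approximate solution of $P=0$ over $B=A\lb y\rb$, apply the Artin function of $P$ (which exists by Theorem \ref{SAP_formal}, since $B$ is again a complete local ring, with maximal ideal $\m_B=\m_AB+(y_1,\dots,y_m)B$), and then specialise the resulting exact solution of $P$ back along a suitable $A$-algebra retraction $B\lgw A$ to recover an exact solution of $f$. Write $\b_f$ for the Artin function of $f$ over $A$ and $\b_P$ for the Artin function of the polynomial $P\in B[x,z,u]$, viewed as a system in the $2m+1$ free variables $u,x,z$ over $B$.

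Fix $c$ and an approximate solution $\ovl y\in\m_A^m$ with $f(\ovl y)\in\m_A^{\b_P(c)}$. Set $\ovl x_i:=y_i-\ovl y_i\in\m_B$ for $1\le i\le m$. The $A$-algebra morphism $B\lgw A$, $y_i\mapsto\ovl y_i$ (well defined because $\ovl y_i\in\m_A$ and $A$ is complete) has kernel $(\ovl x_1,\dots,\ovl x_m)B$ and sends $f(y)$ to $f(\ovl y)$; hence $f(y)-f(\ovl y)\in(\ovl x_1,\dots,\ovl x_m)B$, say $f(y)=f(\ovl y)+\sum_{i=1}^m\ovl x_ig_i$ with $g_i\in B$. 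Taking $\ovl u:=1$ and $\ovl z_i:=-g_i$ yields
$$P(\ovl u,\ovl x,\ovl z)=f(y)-\sum_{i=1}^m\ovl x_ig_i=f(\ovl y)\in\m_A^{\b_P(c)}\subseteq\m_B^{\b_P(c)},$$
so $(\ovl u,\ovl x,\ovl z)\in B^{2m+1}$ is an approximate solution of $P=0$ of order $\b_P(c)$. By definition of $\b_P$ there is an exact solution $(\wdt u,\wdt x,\wdt z)\in B^{2m+1}$ of $P=0$ with $\wdt u-1\in\m_B^c$ and $\wdt x_i-\ovl x_i\in\m_B^c$ (and $\wdt z_i-\ovl z_i\in\m_B^c$, which we shall not use).

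Now assume $c\ge 2$; the cases $c\le 1$ follow by the same argument with trivial modifications, or by monotonicity of the Artin functions. Since $\wdt x_i-\ovl x_i\in\m_B^c\subseteq\m_B^2$, the $A$-algebra endomorphism $\s$ of $B$ defined by $\s(y_i)=y_i+(\wdt x_i-\ovl x_i)$ is tangent to the identity, hence an automorphism: it induces the identity on $\Gr_{\m_B}(B)$, which is generated in degrees $\le 1$, and $B$ is $\m_B$-adically complete and separated. Because $\s(y_i-\ovl y_i)=\wdt x_i$, we obtain $A$-algebra isomorphisms $B/(\wdt x_1,\dots,\wdt x_m)\cong B/(y_1-\ovl y_1,\dots,y_m-\ovl y_m)\cong A$; let $\psi:B\lgw A$ be the resulting local $A$-algebra morphism and put $\wdt y_i:=\psi(y_i)\in\m_A$. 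Applying $\psi$ to $\wdt x_i=(y_i-\ovl y_i)+(\wdt x_i-\ovl x_i)$ and using $\psi(\wdt x_i)=0$ gives $\wdt y_i-\ovl y_i=-\psi(\wdt x_i-\ovl x_i)\in\m_A^c$; applying $\psi$ to $0=P(\wdt u,\wdt x,\wdt z)=f(y)\wdt u+\sum_i\wdt x_i\wdt z_i$ gives $f(\wdt y)\,\psi(\wdt u)=0$, and $\psi(\wdt u)\equiv 1\bmod\m_A$ is a unit, so $f(\wdt y)=0$. Thus $\wdt y\in\m_A^m$, $f(\wdt y)=0$ and $\wdt y-\ovl y\in\m_A^c$, which proves $\b_f(c)\le\b_P(c)$.

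The one genuinely delicate step is the identification $B/(\wdt x_1,\dots,\wdt x_m)\cong A$: the $\wdt x_i$ form an a priori uncontrolled perturbation, by an element of $\m_B^c$, of the regular parameters $y_i-\ovl y_i$, and one must check that the tangent-to-identity automorphism $\s$ really carries one system of parameters onto the other. This is where the hypothesis $c\ge 2$ (rather than merely $c\ge 1$) is essential, together with the elementary fact that $A\lb y\rb/(y_1-\ovl y_1,\dots,y_m-\ovl y_m)\cong A$ whenever the $\ovl y_i$ lie in $\m_A$. The rest is bookkeeping with local morphisms and the $\m$-adic filtrations.
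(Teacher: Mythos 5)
Your proof is correct and follows essentially the same route as the paper: lift $\ovl y$ to the approximate solution $(1,\,y-\ovl y,\,-g)$ of $P$ over $B$ via the Taylor formula, apply the Artin function of $P$ over the complete local ring $B$, and then untwist the perturbed coordinates $\wdt x_i$ to produce the exact solution of $f=0$ in $A$. The only (inessential) difference is the last step, where the paper solves $\wdt x(y)=0$ directly by Hensel's Lemma, the Jacobian of $(\wdt x_i)$ with respect to $y$ being the identity modulo $\m_B$ when $c\geq 2$, whereas you re-prove this instance of the implicit function theorem by hand through the tangent-to-identity automorphism $\s$ and the retraction $B\lgw B/(\wdt x_1,\cdots,\wdt x_m)\cong A$; both constructions yield the same point $\wdt y$.
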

  
  \begin{proof}
 Let us assume that $f(\ovl{y})\in \m_A^{\b(c)}$ where $\b$ is the Artin function of $P$ and $\ovl y_i\in\m_A$ for every $i$.\\
  Then, by Lemma \ref{Taylor}, there exist $\ovl{z}_{i}(y)\in A\llbracket y\rrbracket$, $1\leq i\leq m$, such that
$$f(y)+\sum_{i=1}^m(y_i-\ovl{y}_i)\ovl{z}_i(y)\in (\m_A+(y))^{\b(c)}.$$
Thus there exist $u(y)$, $f_i(y),\,z_i(y)\in A\llbracket y\rrbracket$, $1\leq i\leq m$, such that
$$u(y)-1,\,z_i(y)-\ovl{z}_i(y),\,x_i(y)-(y_i-\ovl{y}_i)\in (\m_A+(y))^c, 1\leq i\leq n$$
$$\text{ and }f(y)u(y)+\sum_{i=1}^mx_i(y)z_i(y)=0.$$
In particular $u(y)$ is invertible in $A\llbracket y\rrbracket$ if $c>0$. Let us assume that  $c\geq 2$. In this case the determinant of the matrix of the partial derivatives of $(x_i(y),\ 1\leq i\leq m)$ with respect to $y_1,\ldots,\,y_m$ is equal to 1 modulo $\m_A+(y)$, since $\ovl y_i\in\m_A$ for every $i$. By  Hensel's Lemma there exist $y_{j,c}\in \m_A$ such that
$x_i(y_{1,c},\ldots,\,y_{m,c})=0\text{ for } 1\leq i\leq m.$
Hence, since $u(y_{i,c})$ is invertible,
$f(y_{1,c},\ldots,\,y_{m,c})=0$
 and $y_{i,c}-\ovl{y}_i\in \m_A^c,\ 1\leq i\leq m.$
    \end{proof}
  
Thus, by Proposition \ref{quotient}, in order to study the general growth of Artin functions, it is enough to study the Artin function of the polynomial 
$$y_1y_2+y_3y_4+\cdots+y_{2m+1}y_{2m}\in A[y]$$ where $A$ is a complete local ring.

 %%%%%%%%%%%%%%%%%%%%%%%%%%%%%%%%%%%%%%%%%%%%%%%%%%%%%%%%%%%%%%
 %%%%%%%%%%%%%
 %%%%%%%%%%%%%%%%%%%%%%%%%%%%%%%%%%%%%%%%%%%%%%%
 \section {Examples of Applications}
 Artin approximation Theorems have numerous applications in commutative algebra, local analytic geometry, algebraic geometry, analysis. Most of these applications require extra material that is too long to be presented here 
 so we choose to present only basic applications to commutative algebra of Theorem \ref{Pop_app} and Theorem \ref{SAP_formal}. 
    
   \begin{proposition}\label{int_domain}
 Let $A$ be an excellent Henselian local ring. Then $A$ is reduced (resp. is an integral domain, resp. an integrally closed domain) if and only if $\wdh{A}$ is reduced (resp. is an integral domain, resp. an integrally closed domain).
 \end{proposition}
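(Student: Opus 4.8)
The plan is to treat the two implications separately. The ``easy'' direction, in which a ring-theoretic property of $\wdh{A}$ descends to $A$, will use only that $A\to\wdh{A}$ is faithfully flat and needs neither excellence nor approximation; the ``hard'' direction, in which a property of $A$ ascends to $\wdh{A}$, is where Theorem \ref{Pop_app} will do the work --- it applies because $(A,\m_A)$ is an excellent Henselian pair whose $\m_A$-adic completion is $\wdh{A}$, and recall $\m_A^c\wdh{A}=\m_{\wdh{A}}^c$.

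For the descent direction I would first recall that $A\hookrightarrow\wdh{A}$ is injective and that $I\wdh{A}\cap A=I$ for every ideal $I$ of $A$. Then ``$\wdh{A}$ reduced $\Rightarrow$ $A$ reduced'' and ``$\wdh{A}$ a domain $\Rightarrow$ $A$ a domain'' are immediate, since a nilpotent, resp.\ a zero-divisor, of $A$ would be one in $\wdh{A}$. For ``$\wdh{A}$ normal $\Rightarrow$ $A$ normal'' I would observe that $A$ is then a domain, so $\Frac(A)\subset\Frac(\wdh{A})$; an $x\in\Frac(A)$ integral over $A$ is integral over $\wdh{A}$ via the same monic polynomial, hence lies in $\wdh{A}$, and writing $x=a/b$ with $a,b\in A$ gives $a=bx\in b\wdh{A}\cap A=bA$, so $x\in A$.

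For the ascent direction I would argue by contradiction, using that ``there is a nilpotent'', ``there is a zero-divisor'' and ``there is an integral element of the fraction field'' can each be phrased by polynomial equations with integer (hence $A$-) coefficients, so that a hypothetical witness in $\wdh{A}$ can be approximated by one in $A$. If $\wdh{A}$ had $\wdh{x}\neq 0$ with $\wdh{x}^k=0$, I would apply Theorem \ref{Pop_app} to $y^k=0$ at approximation order $c>\ord(\wdh{x})$ to obtain $\wdt{x}\in A$ with $\wdt{x}^k=0$ and $\wdt{x}-\wdh{x}\in\m_{\wdh{A}}^c$; since $\wdh{x}\notin\m_{\wdh{A}}^c$ this forces $\wdt{x}\neq 0$, contradicting $A$ reduced. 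The same argument with $y_1y_2=0$ settles the domain case. For normality, I would assume $A$ is an integrally closed domain; then $\wdh{A}$ is a domain by the previous case, and for $\wdh{z}=\wdh{a}/\wdh{b}\in\Frac(\wdh{A})$ integral over $\wdh{A}$, say $\wdh{a}^n+\wdh{a}_1\wdh{a}^{n-1}\wdh{b}+\cdots+\wdh{a}_n\wdh{b}^n=0$ with $\wdh{b}\neq 0$, I would regard this as a polynomial identity in the unknowns $(\wdh{a},\wdh{b},\wdh{a}_1,\dots,\wdh{a}_n)$ with coefficients in $\Z$, approximate its solution at order $c>\ord(\wdh{b})$ by $(a,b,a_1,\dots,a_n)\in A^{n+2}$ satisfying the same identity with $b\neq 0$, deduce from normality of $A$ that $a=bw$ for some $w\in A$, and then compute $\wdh{a}-\wdh{b}w=(b-\wdh{b})w-(a-\wdh{a})\in\m_{\wdh{A}}^c$. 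Since $c$ is arbitrary and ideals of the Noetherian local ring $\wdh{A}$ are closed, this gives $\wdh{a}\in(\wdh{b})\wdh{A}$, i.e.\ $\wdh{z}\in\wdh{A}$, so $\wdh{A}$ is integrally closed.

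I expect the normal ascent step to be the only genuine difficulty. Its two essential moves are: first, noticing that a monic integral-dependence relation is literally a polynomial identity with integer coefficients, so that Theorem \ref{Pop_app} applies to it without any modification; and second, upgrading the ``approximate divisibility'' $\wdh{a}\in(\wdh{b})\wdh{A}+\m_{\wdh{A}}^c$, valid for every $c$, to genuine divisibility by invoking the closedness of $(\wdh{b})\wdh{A}$ in $\wdh{A}$ (Krull intersection theorem). Everything else is formal once $(A,\m_A)$ is recognized as an excellent Henselian pair.
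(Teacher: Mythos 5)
Your proposal is correct and follows essentially the same route as the paper: descend via faithful flatness, then ascend by applying Theorem \ref{Pop_app} to the equations $y^k=0$, $y_1y_2=0$, and the monic integral-dependence relation, choosing the approximation order larger than the order of the relevant element to keep it nonzero, and finally upgrading $\wdh{f}\in(\wdh{g})\wdh{A}+\m_{\wdh A}^c$ for all $c$ to $\wdh{f}\in(\wdh{g})\wdh{A}$ by closedness of ideals. The only cosmetic difference is that you invoke the Krull intersection theorem by name where the paper cites Nakayama's Lemma for the same final step.
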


\begin{proof}\label{red}
First of all it is clear that $A$ is reduced (resp. is an integral domain) if $\wdh{A}$ is reduced (resp. is an integral domain). Thus for these two properties we only need to prove the converse.\\
If $\wdh{A}$ is not reduced, then there exists $\wdh{y}\in\wdh{A}$, $\wdh{y}\neq 0$, such that $\wdh{y}^k=0$ for some positive integer $k$. Thus we apply Theorem \ref{Pop_app} to the polynomial $y^k$ with $c\geq \ord(\wdh{y})+1$ in order to find $\wdt{y}\in A$ such that $\wdt{y}^k=0$ and $\wdt{y}\neq 0$. So $A$ is not reduced.\\
In order to prove that $\wdh{A}$ is an integral domain if $A$ is an integral domain, we apply the same procedure to the polynomial $y_1y_2$.\\
Now if $\wdh A$ is an integrally closed domain and $f/g\in\Frac(A)$ is integral over $A$, then $f/g$ is integral over $\wdh A$  and so  is in $\wdh A$. Thus there is $\wdh h\in\wdh A$ such that $f=g\wdh h$. By Theorem \ref{Pop_app} applied to the equation $f-gy=0$ we see that there is $h\in A$ such that $f=gh$ so $f/g\in A$. This shows that $A$ is integrally closed.\\
If $A$ is an integrally closed domain, then $A$ is an integral domain. Let  $P(z):=z^d+\wdh{a}_1z^{d-1}+\cdots+\wdh{a}_d\in \wdh{A}[z]$, $\wdh{f}$, $\wdh{g}\in \wdh{A}$, $\wdh{g}\neq 0$, satisfy $P\left(\frac{\wdh{f}}{\wdh{g}}\right)=0$, i.e. $\wdh{f}^d+\wdh{a}_1\wdh{f}^{d-1}\wdh{g}+\cdots+\wdh{a}_d\wdh{g}^d=0$. By Theorem \ref{Pop_app}, for any $c\in \N$, there exist $\wdt{a}_{i,c}$, $\wdt{f}_c$, $\wdt{g}_c\in A$ such that $\wdt{f}_c^d+\wdt{a}_{1,c}\wdt{f}_c^{d-1}\wdt{g}_c+\cdots+\wdt{a}_{d,c}\wdt{g}_c^d=0$ and $\wdt{f}_c-\wdh{f}$, $\wdt{g}_c-\wdh{g}\in \m_A^c\wdh A$. Then for $c>c_0$, where $c_0=\ord(\wdh g)$, we have $\wdt{g}_c\neq 0$. Since $A$ is an integrally closed domain,  $\wdt{f}_c\in (\wdt{g}_c)$ for $c>c_0$. Thus $\wdh{f}\in (\wdh{g})+\m^c$ for every $c$ large enough. By Nakayama  Lemma this implies that $\wdh{f}\in (\wdh{g})$ and $\wdh{A}$ is integrally closed.
 \end{proof}
 
 \begin{proposition}\label{UFD}\cite{KPPRM,Po86}
Let $A$ be an excellent Henselian local domain. Then $A$ is a unique factorization domain if and only if $\wdh{A}$ is a unique factorization domain. 
\end{proposition}

\begin{proof}
If $\wdh{A}$ is a unique factorization domain, then any irreducible element of $\wdh{A}$ is prime. Now let $a\in A$ be an irreducible element of $A$. By applying Theorem \ref{Pop_app} to the polynomial $P:=a-y_1y_2$ we see that $a$ remains irreducible in $\wdh A$ (indeed if $a$ is not irreducible in $\wdh A$ then $P$ has a solution $(\wdh y_1,\wdh y_2)$ where the $\wdh y_i$ are in $\m \wdh A$ - so Theorem \ref{Pop_app} shows that $P$ has a solution $(\wdt y_1,\wdt y_2)$ where the $\wdt y_i\in\m$ contradicting the fact that $a$ is irreducible). So $a$  is prime in $\wdh A$, thus it is prime in $A$. Since $A$ is a Noetherian integral domain, this proves that $A$ is a unique factorization domain.\\
Let us assume that $\wdh{A}$ is  not a unique factorization domain (but it is Noetherian since $A$ is Noetherian). Thus there exists an irreducible element $\wdh{x}_1\in\wdh{A}$ that is not prime. This is equivalent to the following assertion:
$$\exists \wdh{x}_2, \wdh{x}_3, \wdh{x}_4\in \wdh{A}\  \text{ such that } \ \wdh{x}_1\wdh{x}_2-\wdh{x}_3\wdh{x}_4=0$$
$$ \not\exists \wdh{z}_1\in\wdh A   \text{ such that  }  \wdh{x}_1\wdh{z}_1-\wdh{x}_3=0$$
$$ \not\exists  \wdh{z}_2\in\wdh{A}  \text{ such that  }  \wdh{x}_2\wdh{z}_2-\wdh{x}_4=0$$
$$\text{and }\not\exists \wdh{y}_1, \wdh{y}_2\in\m_A\wdh{A}\ \text{ such that  } \wdh{y}_1\wdh{y}_2-\wdh{x}_1=0.$$
Let us denote by $\b$ the Artin function of 
$$f(y,z):=(\wdh{x}_1z_1-\wdh{x}_3)( \wdh{x}_2z_2-\wdh{x}_4)( y_1y_2-\wdh{x}_1)\in \wdh{A}\lb y\rb[z].$$ Since $f(y,z)$ has no solution in $(\m_A\wdh{A})^2\times\wdh{A}^2$, by Remark \ref{no_sol} $\b$ is a constant, and $f(y,z)$ has no solution in $(\m_A\wdh{A})^2\times\wdh{A}^2$ modulo $\m_A^{\b}$.\\
On the other hand by Theorem \ref{Pop_app} applied to $x_1x_2-x_3x_4$,  there exists $\wdt{x}_i\in A$, $1\leq i\leq 4$, such that $\wdt{x}_1\wdt{x}_2-\wdt{x}_3\wdt{x}_4=0$ and $\wdt{x}_i-\wdh{x}_i\in\m_A^{\b+1}$, $1\leq i\leq 4$. Hence
$$g(y,z):=(\wdt{x}_1z_1-\wdt{x}_3)( \wdt{x}_2z_2-\wdt{x}_4)( y_1y_2-\wdt{x}_1)\in \wdh{A}\lb y\rb[z]$$ has no solution in $(\m_A\wdh{A})^2\times\wdh{A}^2$ modulo $\m_A^{\b}$, hence has no solution in $(\m_AA)^2\times A^2$. This means that $\wdt{x}_1$ is an irreducible element of $A$ but it is not prime. Hence $A$ is not a unique factorization domain.
 \end{proof}

\begin{proposition}\label{primary}
Let $A$ be an excellent Henselian local ring. Let $Q$ be a primary ideal of $A$. Then $Q\wdh{A}$ is a primary ideal of $\wdh{A}$.
\end{proposition}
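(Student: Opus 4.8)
The plan is to deduce this from the already established fact that the completion of an excellent Henselian local domain is again a domain (Proposition \ref{int_domain}), by tracking associated primes through the faithfully flat morphism $A\lgw\wdh A$. Recall that an ideal $I$ of a Noetherian ring $R$ is primary if and only if $\operatorname{Ass}_R(R/I)$ consists of a single prime; hence it suffices to show that $\operatorname{Ass}_{\wdh A}(\wdh A/Q\wdh A)$ is a singleton. To this end I would set $\p:=\sqrt Q$, which is prime since $Q$ is primary, so that $\operatorname{Ass}_A(A/Q)=\{\p\}$. Since $A$ is a Noetherian local ring, $A\lgw\wdh A$ is faithfully flat, so by the standard description of associated primes under flat base change (see \cite{Ma}) applied to the finite $A$-module $A/Q$ one gets
$$\operatorname{Ass}_{\wdh A}\big(\wdh A/Q\wdh A\big)=\bigcup_{\mathfrak{r}\in\operatorname{Ass}_A(A/Q)}\operatorname{Ass}_{\wdh A}\big(\wdh A/\mathfrak{r}\wdh A\big)=\operatorname{Ass}_{\wdh A}\big(\wdh A/\p\wdh A\big),$$
using $\wdh A/Q\wdh A=(A/Q)\otimes_A\wdh A$.

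The next step is to analyse $\wdh A/\p\wdh A$. Completion commutes with quotients, so $\wdh A/\p\wdh A=\wdh{A/\p}$. Now $A/\p$ is a quotient of the excellent Henselian local ring $A$, hence is again an excellent Henselian local ring, and it is a domain because $\p$ is prime. Proposition \ref{int_domain} then yields that $\wdh{A/\p}$ is a domain; in particular $\p\wdh A$ is a prime ideal of $\wdh A$ and $\operatorname{Ass}_{\wdh A}(\wdh A/\p\wdh A)=\{\p\wdh A\}$ (the only associated prime of a domain is its zero ideal). Combining this with the displayed equality gives $\operatorname{Ass}_{\wdh A}(\wdh A/Q\wdh A)=\{\p\wdh A\}$, a singleton, so $Q\wdh A$ is a primary ideal of $\wdh A$, and in fact $\sqrt{Q\wdh A}=\p\wdh A$.

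The only substantive ingredient is Proposition \ref{int_domain}, which itself rests on General N\'eron Desingularization; everything else is formal. The reason one cannot attack the statement directly with Artin approximation is that primarity of $Q\wdh A$ requires, besides $\wdh f\wdh g\in Q\wdh A$ and $\wdh f\notin Q\wdh A$, the condition $\wdh g\notin\sqrt{Q\wdh A}$, i.e.\ the infinite conjunction $\bigwedge_{k\geq1}(\wdh g^{\,k}\notin Q\wdh A)$, which is not detectable from finitely many coefficients; passing to associated primes — and thereby to the domain case, where the radical disappears — is precisely what circumvents this. The two commutative-algebra facts used (an ideal $I$ is primary iff $\operatorname{Ass}(R/I)$ is a singleton, and the flat base-change formula for $\operatorname{Ass}$) are standard for Noetherian rings and would only need to be pinned down to a precise reference.
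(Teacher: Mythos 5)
Your argument is correct, but it takes a genuinely different route from the paper's. The paper proves primarity of $Q\wdh{A}$ directly from Theorem \ref{Pop_app}: given $\wdh{f}\wdh{g}\in Q\wdh{A}$ with $\wdh{g}\notin\sqrt{Q\wdh{A}}$, it approximates $(\wdh{f},\wdh{g})$ by $(\wdt{f}_c,\wdt{g}_c)\in A^2$ with $\wdt{f}_c\wdt{g}_c\in Q$; since $\sqrt{Q}\,\wdh{A}\subset\sqrt{Q\wdh{A}}$ and $\bigcap_c\left(\sqrt{Q}\,\wdh{A}+\m_A^c\wdh{A}\right)=\sqrt{Q}\,\wdh{A}$ by Krull's intersection theorem, the hypothesis on $\wdh{g}$ forces $\wdt{g}_c\notin\sqrt{Q}$ for $c$ large, so primarity of $Q$ in $A$ gives $\wdt{f}_c\in Q$, hence $\wdh{f}\in\bigcap_c\left(Q\wdh{A}+\m_A^c\wdh{A}\right)=Q\wdh{A}$. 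You instead reduce to the domain case: the characterization of primary ideals by $\operatorname{Ass}$, the flat base change formula $\operatorname{Ass}_{\wdh{A}}(\wdh{A}/Q\wdh{A})=\operatorname{Ass}_{\wdh{A}}(\wdh{A}/\sqrt{Q}\,\wdh{A})$, the identification $\wdh{A}/\sqrt{Q}\,\wdh{A}\simeq\wdh{A/\sqrt{Q}}$, and Proposition \ref{int_domain} applied to the excellent Henselian local domain $A/\sqrt{Q}$. This is clean and correct, and it even yields the extra information that $\sqrt{Q\wdh{A}}=\sqrt{Q}\,\wdh{A}$ is prime; the price is invoking two external (standard) facts from \cite{Ma}, whereas the paper's proof is self-contained modulo Theorem \ref{Pop_app} and is deliberately written to illustrate the approximation technique this section showcases. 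One side remark of yours is off, though it does not affect your proof: a direct attack by approximation is not blocked by the condition $\wdh{g}\notin\sqrt{Q\wdh{A}}$, because that condition is a hypothesis rather than something to be verified; one only has to transfer it to the approximants, which the Krull intersection argument above does.
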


\begin{proof}
Let $\wdh{f}\in \wdh{A}$ and  $\wdh{g}\in \wdh{A}\backslash\sqrt{Q\wdh{A}}$ satisfy $\wdh{f}\wdh{g}\in Q\wdh{A}$. By Theorem \ref{Pop_app}, for any integer $c\in\N$, there exist $\wdt{f}_c$, $\wdt{g}_c\in A$ such that $\wdt{f}_c\wdt{g}_c\in Q$ and $\wdt{f}_c-\wdh{f}$, $\wdt{g}_c-\wdh{g}\in \m_A^c$. For all $c$ large enough, $\wdt{g}_c\notin \sqrt{Q}$. Since $Q$ is a primary ideal, this proves that $\wdt{f}_c\in Q$ for $c$ large enough, hence $\wdh{f}\in Q\wdh{A}$.

 \end{proof}

\begin{corollary}\label{primary2}
Let $A$ be an excellent Henselian local ring. Let $I$ be an ideal of $A$ and let $I=Q_1\cap \cdots\cap  Q_s$ be a primary decomposition of $I$ in $A$. Then 
$$Q_1\wdh{A}\cap \cdots \cap Q_s\wdh{A}$$ is a primary decomposition of $I\wdh{A}$.
\end{corollary}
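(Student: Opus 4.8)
The plan is to deduce Corollary \ref{primary2} from Proposition \ref{primary} and Proposition \ref{int_domain} together with the standard behaviour of primary decomposition under flat base change. First I would recall the two things a primary decomposition $I\wdh A = Q_1\wdh A\cap\cdots\cap Q_s\wdh A$ must satisfy: each $Q_i\wdh A$ must be a primary ideal of $\wdh A$, and the decomposition must actually hold, i.e. $I\wdh A$ equals the intersection of the $Q_i\wdh A$. (If one wants an \emph{irredundant} decomposition one should also check no term is superfluous and the associated primes $\sqrt{Q_i\wdh A}$ are distinct, but I would state the corollary as giving a primary decomposition, not necessarily irredundant, matching the phrasing.)

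Next I would handle the ``primary'' part: by Proposition \ref{primary}, since $A$ is an excellent Henselian local ring, each $Q_i\wdh A$ is primary in $\wdh A$. This is immediate and is exactly what that proposition was set up to give.

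Then I would handle the ``intersection'' part, which is where flatness enters and which I expect to be the only real point needing an argument. The morphism $A\lgw \wdh A$ is flat (it is even faithfully flat, $A$ being Noetherian local and $\wdh A$ its completion). For a flat ring extension $A\lgw B$ and ideals $\mathfrak a,\mathfrak b$ of $A$ one has $(\mathfrak a\cap\mathfrak b)B=\mathfrak a B\cap\mathfrak b B$ (this follows by tensoring the exact sequence $0\lgw A/(\mathfrak a\cap\mathfrak b)\lgw A/\mathfrak a\oplus A/\mathfrak b$ with $B$, using that flatness preserves injectivity); by induction on $s$ this extends to finite intersections. Applying it to $I=Q_1\cap\cdots\cap Q_s$ gives $I\wdh A=Q_1\wdh A\cap\cdots\cap Q_s\wdh A$. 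Combined with the previous paragraph, this exhibits the right-hand side as a primary decomposition of $I\wdh A$, proving the corollary. The main (mild) obstacle is simply being careful that ``primary decomposition'' here does not require irredundancy; if it does, one additionally notes that distinctness of the $\sqrt{Q_i\wdh A}$ and irredundancy can fail only by coincidences among the completed radicals, and one can either pass to an irredundant subfamily or invoke that $\sqrt{Q_i\wdh A}=\sqrt{Q_i}\wdh A$ when the $\sqrt{Q_i}$ are already the distinct associated primes of $I$ in the excellent ring $A$ — but for the statement as written, the two observations above suffice.
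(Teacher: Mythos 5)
Your proof is correct and follows essentially the same route as the paper: the paper also obtains $I\wdh{A}=Q_1\wdh{A}\bigcap\cdots\bigcap Q_s\wdh{A}$ from (faithful) flatness of $A\lgw\wdh{A}$ and then concludes with Proposition \ref{primary} for the primariness of each $Q_i\wdh{A}$. Your extra remarks on irredundancy (and the unused mention of Proposition \ref{int_domain}) are harmless but not needed, since the statement only asks for a primary decomposition.
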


\begin{proof}
Since $\displaystyle I=\cap_{i=1}^sQ_i$, $\displaystyle I\wdh{A}=\cap _{i=1}^s(Q_i\wdh{A})$ by faithful flatness (or by Theorem \ref{Pop_app} for linear equations). We conclude with the help of Proposition \ref{primary}.
 \end{proof}
 
 \begin{corollary}\label{ufd_fact}
 Let $A$ be an excellent Henselian unique factorization local domain and let $f\in A$. If $\wdh g\in \wdh A$ divides $f$ in $\wdh A$ then there exists a unit $\wdh u\in\wdh A$ such that $\wdh u\wdh g\in A$.
 \end{corollary}
 
 \begin{proof}
 By Proposition \ref{UFD} the ring $\wdh A$ is a unique factorization domain. Since $\wdh g$ is a product of irreducible divisors of $f$ in $\wdh A$ it is enough to prove the corollary when $\wdh g$ is an irreducible divisor of $f$ in $\wdh A$. Moreover the ideal $\sqrt{fA}$ is a radical ideal of $A$, thus it generates a radical ideal of $\wdh A$ by Proposition \ref{int_domain}. So we can replace $f$ by a generator of $\sqrt{fA}$ and assume that $f$ is square free.\\ 
So the ideal generated by $\wdh g$ is a primary component of $f\wdh A$ and by Corollary \ref{primary2} it is generated by a primary component of $fA$. This means that $\wdh g\wdh A=P\wdh A$ where $P$ is a primary component of $fA$ (in fact a prime ideal associated to $fA$ since $f$ is square free). But the height of $P$ is one and $P$ is prime, so $P$ is a principal ideal since $A$ is a unique factorization domain. Let $g\in A$ be a generator of $P$. Then $\wdh g \wdh A=g\wdh A$ so there exists a unit $\wdh u\in\wdh A$ such that $\wdh u\wdh g=g\in A$.
 \end{proof}

The following result is a generalization of this corollary to integrally closed domains:

\begin{corollary}\label{Iz92}\cite{Iz92}
Let $A$ be an excellent Henselian  integrally closed local domain. If $\wdh{f}\in \wdh{A}$ and if there exists $\wdh{g}\in \wdh{A}$ such that $\wdh{f}\wdh{g}\in A\backslash\{0\}$, then there exists a unit $\wdh{u}\in \wdh{A}$ such that $\wdh{u}\wdh{f}\in A$. 

\end{corollary}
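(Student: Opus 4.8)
\textit{Sketch of the intended argument.} Put $a:=\wdh f\,\wdh g\in A\setminus\{0\}$, so that $\wdh f\neq 0$ and $\wdh g\neq 0$. The plan is to produce an element $b\in A$ with $b\wdh A=\wdh f\wdh A$: once this is done, $\wdh u:=b\wdh f^{-1}$ lies in $\wdh A$ (because $b\in\wdh f\wdh A$), is a unit of $\wdh A$ (because $\wdh f\in b\wdh A$), and satisfies $\wdh u\wdh f=b\in A$, which is exactly the assertion. The whole problem is thus to show that the principal ideal $\wdh f\wdh A$ of $\wdh A$ is \emph{defined over $A$}.

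First I would set up the divisorial bookkeeping. By Proposition \ref{int_domain}, $\wdh A$ is again an integrally closed Noetherian local domain; hence $A$ and $\wdh A$ are both Krull domains and every nonzero principal ideal in them is divisorial. Fix an irredundant primary decomposition $(a)=\mathfrak q_1\cap\cdots\cap\mathfrak q_s$ in $A$; since $A$ is normal it is $S_2$, so $(a)$ is unmixed and each $\p_i:=\sqrt{\mathfrak q_i}$ is a prime of height one, $A_{\p_i}$ is a discrete valuation ring, and $\mathfrak q_i=\p_i^{(n_i)}$ with $n_i:=v_{\p_i}(a)$. Passing to $\wdh A$: for each $i$ the ring $A/\p_i$ is an excellent Henselian local domain, so $\wdh A/\p_i\wdh A=\widehat{A/\p_i}$ is a domain by Proposition \ref{int_domain}; hence $\mathfrak P_i:=\p_i\wdh A$ is a prime of $\wdh A$, necessarily of height one since $\dim\wdh A/\mathfrak P_i=\dim A/\p_i=\dim A-1$. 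A uniformizer of $A_{\p_i}$ is also one of $\wdh A_{\mathfrak P_i}$, so $v_{\mathfrak P_i}$ restricts to $v_{\p_i}$ on $A$ and $\mathfrak q_i\wdh A=\mathfrak P_i^{(n_i)}$; combining this with Corollary \ref{primary2} gives $(a)\wdh A=\mathfrak P_1^{(n_1)}\cap\cdots\cap\mathfrak P_s^{(n_s)}$, i.e. $\operatorname{div}_{\wdh A}(a)=\sum_i n_i\mathfrak P_i$ (the $\mathfrak P_i$ being pairwise distinct).

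The core step comes next. Because $\wdh f$ divides $a$ in $\wdh A$, one has $\operatorname{div}_{\wdh A}(\wdh f)\leq\operatorname{div}_{\wdh A}(a)=\sum_i n_i\mathfrak P_i$, so $\operatorname{div}_{\wdh A}(\wdh f)=\sum_i m_i\mathfrak P_i$ with $0\leq m_i\leq n_i$ and $m_i:=v_{\mathfrak P_i}(\wdh f)$. I would then put $I:=\p_1^{(m_1)}\cap\cdots\cap\p_s^{(m_s)}$, an ideal of $A$; it contains $a$, hence is nonzero. Each $\p_i^{(m_i)}$ is $\p_i$-primary, so $\p_i^{(m_i)}\wdh A$ is $\mathfrak P_i$-primary by Proposition \ref{primary} and, by the discrete valuation ring computation above, equals $\mathfrak P_i^{(m_i)}$; since $\wdh A$ is $A$-flat, finite intersections commute with this base change, so $I\wdh A=\mathfrak P_1^{(m_1)}\cap\cdots\cap\mathfrak P_s^{(m_s)}$. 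But this last ideal is exactly the divisorial ideal of $\wdh A$ attached to the divisor $\sum_i m_i\mathfrak P_i=\operatorname{div}_{\wdh A}(\wdh f)$, which is $\wdh f\wdh A$ itself. Hence $I\wdh A=\wdh f\wdh A$. Finally, faithful flatness of $A\to\wdh A$ implies that $I$ and $I\wdh A=\wdh f\wdh A$ have the same minimal number of generators, namely $1$; therefore $I=bA$ for some $b\in A$, with $b\neq 0$ since $a\in I$, and $b\wdh A=I\wdh A=\wdh f\wdh A$, which is what was needed.

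The delicate point — and essentially the only place the hypotheses are used — is the claim that $\wdh f\wdh A$ descends to an ideal of $A$. This rests on the Henselian hypothesis (through Proposition \ref{int_domain}, which forces $\p_i\wdh A$ to remain prime so that the divisors match up) together with Propositions \ref{primary} and \ref{primary2}, which control how primary and symbolic powers behave under completion. If $A$ is not assumed integrally closed the whole divisor-theoretic framework collapses, and the statement genuinely fails: the example over $\C\{x,y,z\}/(y^2-x^3)$ recalled in the introduction produces $\wdh f\wdh g\in A$ for which there is no unit $\wdh u\in\wdh A$ with $\wdh u\wdh f\in A$.
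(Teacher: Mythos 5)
Your argument is correct and follows essentially the same route as the paper's proof: decompose $(\wdh f\wdh g)$ into symbolic powers of height-one primes of $A$, transfer this decomposition to $\wdh A$ via Propositions \ref{primary} and \ref{primary2} (i.e.\ via Artin approximation), and identify $\wdh f\wdh A$ with $I\wdh A$ for the ideal $I=\p_1^{(m_1)}\cap\cdots\cap\p_s^{(m_s)}$ of $A$. The only deviation is in the endgame: where the paper takes generators $h_1,\dots,h_r$ of $I$, writes $\wdh f=\sum_i\wdh a_ih_i$, $h_i=\wdh b_i\wdh f$, and uses locality of $\wdh A$ to force some $\wdh b_i$ to be a unit, you instead use faithful flatness to conclude that $I$ itself is principal — a minor, equally valid variant of the same approach.
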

\begin{proof}
Let $(\wdh{f}\wdh{g})A=Q_1\cap  \cdots\cap  Q_s$ be a primary decomposition of the principal ideal of $A$ generated by $\wdh{f}\wdh{g}$. Since $A$ is an integrally closed domain, it is a Krull ring and $Q_i=\p_i^{(n_i)}$ for some prime ideal $\p_i$, $1\leq i\leq s$, where $\p^{(n)}$ denote the $n$-th symbolic power of $\p$ (see \cite[p.88]{Ma}). In fact $n_i:=\nu_{\p_i}(\wdh{f}\wdh{g})$ where $\nu_{\p_i}$ is the $\p_i$-adic valuation of the valuation ring $A_{\p_i}$. By Corollary \ref{primary2}, $\p_1^{(n_1)}\wdh{A}\cap \cdots\cap \p_s^{(n_s)}\wdh{A}$ is a primary decomposition of $(\wdh{f}\wdh{g})\wdh{A}$. Since $\nu_{\p_i}$ are valuations we have 
$$\wdh{f}\wdh{A}=\p_1^{(k_1)}\wdh{A}\cap \cdots\cap \p_s^{(k_s)}\wdh{A}=\left(\p_1^{(k_1)}\cap \cdots\cap \p_s^{(k_s)}\right)\wdh{A}$$
for some non negative integers $k_1,\ldots, k_s$.
Let $h_1,\ldots,h_r\in A$ be generators of the ideal $\p_1^{(k_1)}\cap \cdots\cap \p_s^{(k_s)}$. Then $\displaystyle\wdh{f}=\sum_{i=1}^r\wdh{a}_ih_i$ and $h_i=\wdh{b}_i\wdh{f}$ for some $\wdh{a}_i$, 
$\wdh{b}_i\in A$, $1\leq i\leq r$. Thus $\displaystyle\sum_{i=1}^r\wdh{a}_i\wdh{b}_i=1$, since $\wdh{A}$ is an integral domain. Thus one of the $\wdh{b}_i$  is invertible and we choose $\wdh{u}$ to be this invertible $\wdh{b}_i$.
 \end{proof}

\begin{corollary}\cite{To}
Let $A$ be an excellent Henselian local domain. For  $f(y)\in A[y]^r$  let $I$ be the ideal of $A[y]$ generated by $f_1(y),\ldots,f_r(y)$. Let us assume that ht$(I)=m$.
Let $\wdh{y}\in\wdh{A}^m$ satisfy $f(\wdh{y})=0$. Then $\wdh{y}\in A^m$.
\end{corollary}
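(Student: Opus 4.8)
The plan is to turn the hypothesis $\haut(I)=m$ into the statement that the formal solution $\wdh y$ is \emph{algebraic} over $A$, and then to conclude with Lemma~\ref{henselization_A[x]} which, since $A$ is Henselian (so $A^h=A$), identifies the algebraic closure of $A$ inside $\wdh A$ with $A$ itself. I would first treat the case where $A$ is an integral domain, which carries all the content. We may assume $m\geq 1$. Since $A\lgw\wdh A$ is injective (Krull intersection) and $f(\wdh y)=0$, one has $I\cap A=(0)$: writing $a\in I\cap A$ as a combination $\sum_j g_j f_j$ with $g_j\in A[y]$, its image in $\wdh A$ is $\sum_j g_j(\wdh y)f_j(\wdh y)=0$, so $a=0$ in $A$. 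Set $K:=\Frac(A)$ and $S:=A\setminus\{0\}$, so that $S^{-1}A[y]=K[y]$ and $IK[y]=S^{-1}I$ is a \emph{proper} ideal of $K[y]$.

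The key point is that $\haut(IK[y])=m$. Indeed, every prime of $A[y]$ containing $I$ has height at least $\haut(I)=m$, and localization preserves heights, so $\haut(IK[y])\geq m$; conversely $\haut(IK[y])\leq\dim K[y]=m$. Hence $\dim\bigl(K[y]/IK[y]\bigr)=0$, i.e. $K[y]/IK[y]$ is an Artinian ring. This is where the height hypothesis is used, and it is the crux of the argument: by itself $\haut(I)=m$ says nothing about $\wdh y$, but after inverting $S$ it becomes the finiteness of $K[y]/IK[y]$ over the field $K$.

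Next I would push the substitution morphism through this quotient. By Proposition~\ref{int_domain}, $\wdh A$ is again a domain, so the image of $S$ in $\wdh A$ consists of nonzero divisors; thus the $K$-algebra map $\psi\colon A[y]\lgw\wdh A$, $y_i\lgm\wdh y_i$, extends to $K[y]\lgw S^{-1}\wdh A$ and annihilates $IK[y]$. Its image $K[\wdh y_1,\dots,\wdh y_m]$ is at once a subring of the domain $S^{-1}\wdh A$ and a quotient of the Artinian ring $K[y]/IK[y]$, hence an Artinian domain, that is, a field finite over $K$. Therefore each $\wdh y_i$ is algebraic over $K=\Frac(A)$, hence over $A$; being also an element of $\wdh A$, it lies in $A^h=A$ by Lemma~\ref{henselization_A[x]}. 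This proves the statement when $A$ is a domain.

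For a general excellent Henselian local ring $A$ the same scheme works after a routine dévissage: only the inequality $\haut(I)\geq m$ is actually used, and it passes to $(A/\p)[y]=A[y]/\p A[y]$ for any minimal prime $\p$ of $A$, so one first handles the reduced case by applying the domain case to each of the quotients $A/\p_1,\dots,A/\p_k$ (the minimal primes of $A$) and gluing via the faithful‑flatness identity $A=\wdh A\cap\prod_j A/\p_j$ inside $\prod_j\wdh A/\p_j\wdh A$; the reduced case is then promoted to the general one by a further dévissage along the powers of the nilradical of $A$ (alternatively, in the reduced case one may replace $K$ by the total quotient ring of $A$ and Lemma~\ref{henselization_A[x]} by a direct use of Theorem~\ref{Pop_app} together with the fact that a polynomial with unit leading coefficient over the reduced ring $\wdh A$ has only finitely many roots). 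The main obstacle is the key step above; once it is in place everything else is formal, and the non‑domain bookkeeping, while slightly tedious, introduces no new difficulty.
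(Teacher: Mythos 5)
Your argument in the case where $A$ is a domain is correct, and it takes a genuinely different route from the paper. The paper shows that $(y_1-\wdh y_1,\cdots,y_m-\wdh y_m)$ is a minimal prime of $I\wdh A[y]$ (heights being preserved under the faithfully flat map $A[y]\lgw\wdh A[y]$), hence is extended from a minimal prime $\p'$ of $I$, and then applies Theorem \ref{Pop_app} to the system $\p'=0$, whose unique zero in $\wdh A^m$ is $\wdh y$. You instead localize at the generic point of $\Spec A$, observe that $K[y]/IK[y]$ is Artinian, deduce that $\wdh y$ is algebraic over $A$, and conclude by Lemma \ref{henselization_A[x]} together with $A^h=A$. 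Both proofs ultimately rest on Theorem \ref{Pop_app} (it is behind Lemma \ref{henselization_A[x]} and Proposition \ref{int_domain}); yours avoids the discussion of the minimal primes of the extended ideal and the uniqueness-of-the-zero argument, and in the domain case it is arguably cleaner.

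The dévissage you propose for general $A$ is where the gap lies, and it is not routine. First, the claim that $\operatorname{ht}(I)\geq m$ passes to $(A/\p)[y]$ for every minimal prime $\p$ of $A$ is false when $A$ is not equidimensional: with $A=\k\langle u,v_1,v_2\rangle/(uv_1,uv_2)$, $m=2$, $I=(y_1,y_2)\cap(u,v_1,v_2)$ and the solution $\wdh y=(0,0)$, one has $\operatorname{ht}(I)=2$, yet the image of $I$ in $(A/(v_1,v_2))[y]=\k\langle u\rangle[y_1,y_2]$ is contained in $(u)$ and has height $1<m$. This defect is repairable: your argument only needs the primes of $(A/\p)[y]$ containing the image of $I$ and lying over $(0)\subset A/\p$ to have height $\geq m$, and such a prime contracts to a prime $\q\subset A[y]$ with $\q\cap A=\p$, whose unique minimal prime below it is $\p A[y]$, whence $\operatorname{ht}(\q/\p A[y])=\operatorname{ht}(\q)\geq m$; with this correction the reduced case does go through.

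Second, and decisively, no dévissage along the nilradical can work, because the statement is false for non-reduced $A$: take $A=\k\langle x\rangle[\e]/(\e^2)$ with $x$ a single variable, $m=1$, $f_1=y^2$, so that $\operatorname{ht}(I)=\operatorname{ht}((\e,y))=1=m$; then $\wdh y:=\e\,\xi(x)$ satisfies $f_1(\wdh y)=0$ in $\wdh A=\k\lb x\rb[\e]/(\e^2)$ for every $\xi\in\k\lb x\rb$, but $\wdh y\notin A$ as soon as $\xi$ is not algebraic. This is consistent with the paper's own proof, which declares $(y_1-\wdh y_1,\cdots,y_m-\wdh y_m)$ to be a prime ideal and hence tacitly assumes that $\wdh A$, equivalently $A$, is a domain; the corollary should be read with that hypothesis (it is Tougeron's setting). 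So your domain argument proves the statement in its correct generality, but the reduction beyond the reduced case is impossible, and the reduced case needs the fix above rather than the height-passing claim you invoke.
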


\begin{proof}
Set $\p:=(y_1-\wdh{y}_1,\ldots, y_m-\wdh{y}_m)$. It is a prime ideal of $\wdh{A}[y]$ and ht$(\p)=m$. Of course $I\wdh{A}\subset\p$ and ht$(I\wdh{A})=m$ by Corollary \ref{primary2}. Thus $\p$ is of the form $\p'\wdh{A}$ where $\p'$ is minimal prime of $I$. Then $\wdh{y}\in \wdh{A}^m$ is the only common zero of all  the elements of $\p'$. By Theorem \ref{Pop_app}, $\wdh{y}$ can be approximated by a common zero of all the elements of $\p'$ which is in $A^m$. By uniqueness of such a common zero we have that $\wdh{y}\in A^m$.
 \end{proof}

%%%%%%%%%%%%%%%%%%%%%%%%%%%%%%%%%%%
%%%%%%%%%%%%%%%%
  
 \section {Approximation with constraints}\label{constraint}
 We will now discuss the problem of the Artin Approximation with constraints. By constraints we mean that some of the components of the power series solutions do not depend on all the variables $x_i$ but only on some of them (as in Examples \ref{IMP} or \ref{biholo} of the introduction). In fact there are two different problems: one is the existence of convergent or algebraic solutions with constraints under the assumption that there exist formal solutions with the same constraints - the second one is the existence of formal solutions with constraints under the assumption that there exist approximate solutions with the same constraints. We can describe more precisely these two problems as follows:\\ 
 \\
 \textbf{Problem 1} (Artin Approximation with constraints):\\
  \emph{Let $A$ be an excellent Henselian local subring of $\k\llbracket x_1,\ldots,x_n\rrbracket$ and  $f(y)\in A[y]^r$. Let us assume that we have a formal solution $\wdh{y}\in \wdh{A}^m$ of $f=0$ and assume moreover that 
  $$\wdh{y}_i(x)\in\wdh{A}\cap \k\llbracket x_j, j\in J_i\rrbracket$$ for some subset $J_i\subset \{1,\ldots, n\}$, $1\leq i\leq m$.\\
   Is it possible to approximate $\wdh{y}(x)$ by a solution $\wdt{y}(x)\in A^m$  of $f=0$ such that $$\wdt{y}_i(x)\in A\cap  \k\llbracket x_j, j\in J_i\rrbracket,\  1\leq i\leq m?$$}\\
 \\
 The second problem is the following one:\\
 \\
\textbf{Problem 2} (Strong Artin Approximation with constraints): \\
\emph{Let us consider $f(y)\in\k\llbracket x\rrbracket[y]^r$ and $J_i\subset \{1,\ldots,n\}$, $1\leq i\leq m$. Is there a function $\b:\N\lgw \N$ such that:\\
 for all $c\in\N$ and all $\ovl{y}_i(x)\in\k\llbracket x_j,j\in J_i\rrbracket $, $1\leq i\leq m$, such that 
 $$f(\ovl{y}(x))\in (x)^{\b(c)},$$ there exist $\wdt{y}_i(x)\in\k\llbracket x_j, j\in J_i \rrbracket$ such that $f(\wdt{y}(x))=0$ and $\wdt{y}_i(x)-\ovl{y}_i(x)\in (x)^c$, $1\leq i\leq m$?}\\
 \\
 If such a function $\b$ exists, the smallest function satisfying this property is called the \emph{Artin function} of the system $f=0$.\\
 \\
 Let us remark that we have already given a positive answer to a similar weaker problem (see Proposition \ref{prop_mod}).\\
In general there are counterexamples to both problems stated in such a generality. But for some particular cases these two problems have a positive answer. We present here some positive and negative known results concerning these problems.  We will see that some systems yield a positive answer to one problem but a negative answer to the other one.

 %%%%%%%%%%%%%%%%%%%%%%%%%%%%%%%%
 
 \subsection{Examples}\label{examples}
 First of all we give here a list of examples that show that there is no hope, in general, to have a positive answer to Problem 1 without any more specific hypothesis, even if $A$ is the ring of algebraic or convergent power series. These examples are constructed by looking at the  Artin Approximation Problem for equations involving differentials  (Examples \ref{eq_dif_1} and \ref{eq_dif_2}) and operators on germs of functions (Examples \ref{K-G} and \ref{Becker}). To construct these examples the following lemma will be used repeatedly (most of the time when $A$ and $B$ are complete, $B=A\lb y\rb$ and $I=(0)$):
 
 \begin{lemma}\label{Taylor}\cite{Be}
 Let $(A,\m_A)$ be a Noetherian local ring and let $B$ be a Noetherian  local subring of $A\lb y\rb$  such that $\wdh{B}=\wdh{A}\lb y\rb$. Let $I$ be an ideal of $A$ such that $IB\cap A=I$. For any $P(y)\in B$ and  $\wdt{y}\in A^m$ whose components are in $\m_A$,  $P(\wdt{y})\in I$ if and only if there exists ${h}(y)\in B^m$ such that
 $$P(y)+\sum_{i=1}^m(y_i-\wdt{y}_i){h}_i(y)\in IB.$$
 \end{lemma}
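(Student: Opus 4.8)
The plan is to first establish the required identity in the completion $\widehat B=\widehat A\lb y\rb$, where it is the elementary ``Taylor expansion at the point $\widehat y$'' statement, and then to descend it from $\widehat B$ to $B$ using that $B\lgw\widehat B$ is faithfully flat. Throughout I use implicitly that $A$ and the variables $y_1,\dots,y_m$ lie in $B$, so that $y_i-\widehat y_i\in B$ and $P(\widehat y)\in A\subseteq B$ for $P\in B$. Since each $\widehat y_i$ lies in $\m_A\subseteq\m_{\widehat A}$ and $\widehat A$ is $\m_A$-adically complete, the substitution homomorphism $\operatorname{ev}_{\widehat y}\colon\widehat A\lb y\rb\lgw\widehat A$, $y_i\mapsto\widehat y_i$, is well defined, and its kernel is exactly $\mathfrak q:=(y_1-\widehat y_1,\dots,y_m-\widehat y_m)\widehat A\lb y\rb$ (the translation $y_i\mapsto y_i+\widehat y_i$ is a continuous automorphism of $\widehat A\lb y\rb$ carrying $\mathfrak q$ to $(y_1,\dots,y_m)$, and $\widehat A\lb y\rb/(y_1,\dots,y_m)=\widehat A$).

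The ``if'' direction is then immediate: applying $\operatorname{ev}_{\widehat y}$ to the relation $P(y)+\sum_i(y_i-\widehat y_i)\widehat h_i(y)=0$, valid in $B\subseteq\widehat B$, gives $P(\widehat y)+\sum_i 0\cdot\widehat h_i(\widehat y)=0$, i.e. $P(\widehat y)=0$. For the ``only if'' direction, assume $P(\widehat y)=0$. Then $\operatorname{ev}_{\widehat y}\bigl(P(y)-P(\widehat y)\bigr)=0$, hence $P(y)-P(\widehat y)\in\mathfrak q$. Put $I:=(y_1-\widehat y_1,\dots,y_m-\widehat y_m)B$, an ideal of $B$ with $I\widehat B=\mathfrak q$. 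Since $B$ is Noetherian local, $B\lgw\widehat B$ is faithfully flat, so $I\widehat B\cap B=I$; as $P(y)-P(\widehat y)$ lies in $B$, we get $P(y)-P(\widehat y)\in I$, that is, $P(y)-P(\widehat y)=\sum_{i=1}^m(y_i-\widehat y_i)g_i(y)$ for some $g_1,\dots,g_m\in B$. Because $P(\widehat y)=0$, setting $\widehat h_i:=-g_i\in B$ yields $P(y)+\sum_i(y_i-\widehat y_i)\widehat h_i(y)=0$, as required.

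The argument is short, and the only step carrying any real weight is the descent $I\widehat B\cap B=I$: it is precisely what upgrades the soft formal membership $P(y)-P(\widehat y)\in\mathfrak q$ into an equation with coefficients $\widehat h_i$ in $B$ rather than merely in $\widehat B$. The two preliminary facts about $\widehat A\lb y\rb$ invoked above (that $\operatorname{ev}_{\widehat y}$ is well defined, and that $\ker\operatorname{ev}_{\widehat y}=\mathfrak q$) are routine consequences of $\widehat y_i\in\m_A$ and the completeness of $\widehat A$, and I would dispatch them in a sentence.
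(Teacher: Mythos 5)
Your proof is correct and follows essentially the same route as the paper's: both first establish that $P(y)$ lies in the ideal $(y_1-\wdh y_1,\cdots,y_m-\wdh y_m)\wdh B$ of the completion and then descend the coefficients to $B$ via faithful flatness of $B\lgw \wdh B$. The only cosmetic differences are that you obtain the formal membership from the kernel of the evaluation map (via the translation automorphism) instead of the explicit Taylor expansion with the factors $\frac{1}{\a_1!\cdots\a_m!}$, and you phrase the descent as the contraction $I\wdh B\cap B=I$ rather than as solvability in $B$ of a linear equation with coefficients in $B$ — the same flatness fact in a slightly different guise, with the incidental benefit that your version is visibly characteristic-free.
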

 
 \begin{proof}[Proof of Lemma \ref{Taylor}]
 By Taylor expansion, we have:
 $$P(y)-P(\wdt{y})=\sum_{\a\in\N^m\backslash\{0\}}\frac{1}{\a_1!\ldots \a_m!}(y_1-\wdt{y}_1)^{\a_1}.\ldots(y_m-\wdt{y}_m)^{\a_m}\frac{\partial^{\a}P(\wdt{y})}{\partial y^{\a}}.$$
So  if $P(\wdt{y})\in IB$  there exists ${h}(y)\in \wdh A\lb y\rb^m$ such that
 $$P(y)+\sum_{i=1}^m(y_i-\wdt{y}_i){h}_i(y)\in IB.$$
 Since $B\lgw \wdh{B}=\wdh{A}\lb x\rb$ is faithfully  flat  we may assume that ${h}(y)\in B^m$ (See Example \ref{faithfully_flat}). This proves the necessary condition.\\
 On the other hand if $P(y)+\sum_{i=1}^m(y_i-\wdt{y}_i){h}_i(y)\in I B$, by substitution of $y_i$ by $\wdt{y}_i$, we get $P(\wdt{y})\in IB\cap A=I$.  \end{proof}
 
 \begin{example}\label{diff}
 Let us consider  $P(x,y,z)\in\k\lb x,y,z\rb$ where $\k$ is a field and $x$, $y$ and $z$ are single variables and $\wdh{y}\in(x)\k\lb x\rb$. Then $P(x,\wdh{y},\frac{\partial \wdh{y}}{\partial x})=0$ if and only if there is a formal power series $\wdh z$ such that $P(x,\wdh{y},\wdh{z})=0$ and $\wdh{z}-\frac{\partial \wdh{y}}{\partial x}=0$.\\
 We can remark that $\frac{\partial \wdh{y}}{\partial x}(x)$ is the coefficient of $t$ in the Taylor expansion of 
 $$\wdh{y}(x+t)-\wdh{y}(x).$$
So the equation  $\wdh{z}-\frac{\partial \wdh{y}}{\partial x}=0$ is equivalent to the existence of a formal power series $\wdh h(x,t)$ such that
  $$\wdh{y}(x+t)-\wdh{y}(x)=t\wdh z(x)+t^2\wdh h(x,t).$$
 By Lemma \ref{Taylor} this is equivalent to the existence of a formal power series
$ \wdh{k}(x,t,u)\in\k\lb x,t,u\rb$ such that   

$$\wdh{y}(u)-\wdh{y}(x)-t\wdh{z}(x)-t^2\wdh{h}(x,t)+(u-x-t)\wdh{k}(x,t,u)=0.$$
We remark that we may even assume that $\wdh h$ depends on the three variables $x$, $t$ and $u$: even in this case the previous equation implies that  $\wdh{z}=\frac{\partial \wdh{y}}{\partial x}$.\\
Now we introduce a new power series $\wdh g(u)$ such that  $\wdh g(u)=\wdh y(u)$. This equality is equivalent to the existence of  a formal power series $\wdh l(x,u)$ such that
$$\wdh g(u)-\wdh y(x)-(u-x)\wdh l(x,u)=0.$$
Once more we can even assume that $\wdh l$ depends on $x$, $u$ and $t$.\\
 Finally we see that
  \begin{equation*}\begin{split}P\left(x,\wdh{y}(x),\frac{\partial \wdh{y}}{\partial x}(x)\right)=0\Longleftrightarrow &\\
  \exists \wdh{z}(x)\in\k\lb x\rb, \ \wdh{h}(x,t,u),&\ \wdh{k}(x,t,u),\ \wdh{l}(x,t,u)\in\k\lb x,t,u\rb,\  \wdh{g}(u)\in\k\lb u\rb \text{ s.t.}\end{split}\end{equation*}
  $$\left\{\begin{aligned}  & P(x,\wdh{y}(x),\wdh{z}(x))=0\\
&  \wdh{g}(u)-\wdh{y}(x)-t\wdh{z}(x)-t^2\wdh{h}(x,t,u)+(u-x-t)\wdh{k}(x,t,u)=0\\
&  \wdh{g}(u)-\wdh{y}(x)+(u-x)\wdh{l}(x,t,u)=0\\ \end{aligned}\right.$$\\
\\
  
    \end{example}
    
  Assuming that the characteristic of the base field $\k$ is zero, Lemma \ref{Taylor} and Example \ref{diff} enable us to transform any system of equations involving partial differentials and compositions of power series into a system of algebraic equations whose solutions depend only on some of the $x_i$. Indeed we can also perform the same trick as in Example \ref{diff}  to handle higher order derivatives of $g$ since $\frac{1}{n!}\frac{\partial^n g}{\partial x^n}$ is the coefficient of $t^n$ in the Taylor expansion of 
  $$\wdh{y}(x+t)-\wdh{y}(x).$$
  Thus every equation of the form
  $$P(x,y,y',\ldots,y^{(n)})=0$$
  has a formal power series solution $\wdh y(x)$ if and only if there exist formal power series 
  $$ \wdh{z}_1(x),\ldots, \wdh z_n(x)\in\k\lb x\rb, \ \wdh{h}(x,t,u),\ \wdh{k}(x,t,u),\ \wdh{l}(x,t,u)\in\k\lb x,t,u\rb,\  \wdh{g}(u)\in\k\lb u\rb$$
  such that 
  $$\left\{\begin{aligned}  & P(x,\wdh{y}(x),\wdh{z}_1(x),\ldots,\wdh z_n(x))=0\\
& \wdh{g}(u)-\wdh{y}(x)-t\wdh{z}_1(x)-\cdots-t^n\frac{\wdh z_n(x)}{n!}-t^{n+1}\wdh{h}(x,t,u)+(u-x-t)\wdh{k}(x,t,u)=0\\
&  \wdh{g}(u)-\wdh{y}(x)+(u-x)\wdh{l}(x,t,u)=0\\ \end{aligned}\right.$$\\
We can also apply the same trick for differential equations involving power series in several variables (see for instance Example \ref{lastone}).\\
Let us remark that these transformations preserve the approximate solutions, i.e. every approximate solution up to degree $c$ of a system of equations involving partial differentials and compositions of power series provides an approximate solution up to degree $c$ of this system of algebraic equations whose solutions depend only on some of the $x_i$.\\
\\

  Of course there exist plenty of examples of such systems of equations with algebraic or analytic coefficients that do not have algebraic or analytic solutions but only formal solutions. These kinds of examples will enable us to construct counterexamples to Problem 1 as follows:

 \begin{example}\label{eq_dif_1}
 Let us consider the following differential equation: $y'=y$. The solutions of this equation are the convergent but not algebraic power series $ce^x\in\C\{x\}$ where $c$ is a complex number.\\
 On the other hand, by Example \ref{diff},  $\wdh{y}(x)$ is a convergent power series solution  of this equation if and only if there exist $\wdh{y}(x)$, $\wdh z(x)\in\C\{x\}$, $\wdh{g}(u)\in\C\{u\}$ and $\wdh{h}(x,t,u)$, $\wdh{k}(x,t,u)$, $\wdh{l}(x,t,u)\in\C\{ x,t,u\}$ such that:
 $$\left\{\begin{aligned}
 &\wdh z(x) -\wdh y(x) =0\\
& \wdh g(u)-\wdh y(x)-t\wdh z(x)-t^2\wdh h(x,t,u)+(u-x-t)\wdh k(x,t,u)  =0\\
 &\wdh g(u)-\wdh y(x)+(u-x)\wdh l(x,t,u)  =0
 \end{aligned}\right.$$
 Thus the former system of equations has a nonzero convergent solution $$(\wdh{y},\wdh z, \wdh g ,\wdh{h},\wdh{k},\wdh{l})\in\C\{x\}^2\times\C\{u\}\times\C\{x,t,u\}^3,$$ but no nonzero algebraic solution in $\C\langle x\rangle^2\times\C\langle u\rangle\times\C\langle x,t,u\rangle^3$.
  \end{example}

 \begin{example}[Kashiwara-Gabber  Example]\label{K-G}\cite[p. 75]{Hir}\index{Kashiwara-Gabber  Example}
 Let us perform the division of $xy$ by $$g:=(x-y^2)(y-x^2)=xy-x^3-y^3+x^2y^2$$ as formal power series in $\C\{ x,y\}  $ with respect to the monomial $xy$ (see Example \ref{IMP} in the introduction). The remainder of this division can be written as a sum $r(x)+s(y)$ where $r(x)\in(x)\C\{ x\}  $ and $s(y)\in(y)\C\{y\}$ since this remainder has no monomial divisible by $xy$. By symmetry, we get $r(x)=s(x)$, and by substituting $y$ by $x^2$ we get the Mahler equation:
 
 $$r(x^2)+r(x)-x^3=0.$$
 This relation yields the expansion
 $$r(x)=\sum_{i=0}^{\infty}(-1)^ix^{3.2^i}$$
 and shows that the remainder of the division is not algebraic since the gaps in the expansion of an algebraic power series over a characteristic zero field are bounded. This proves that the equation
 $$xy-gQ(x,y)-R(x)-S(y)=0$$
 has a convergent solution $(\wdh{q}(x,y),\wdh{r}(x),\wdh{s}(y))\in\C\{x,y\} \times\C\{x\} \times\C\{y\} $
  but has no algebraic solution $(q(x,y),r(x),s(y))\in\C\langle x,y\rangle\times\C\langle x\rangle\times\C\langle y\rangle$.

 \end{example}

 \begin{example}[Becker  Example]\label{Becker}\cite{Be77}\index{Becker  Example}
By a direct computation we can show that there exists a unique power series $f(x)\in\C\llbracket x\rrbracket  $ such that $f(x+x^2)=2f(x)-x$ and that this power series is not convergent. But, by Lemma \ref{Taylor}, we have:
$$f(x+x^2)-2f(x)+x=0$$
\begin{equation*}\begin{split}\displaystyle\Longleftrightarrow \exists g(y)\in\C\llbracket y\rrbracket  ,h(x,y), k&(x,y)\in\C\llbracket x,y\rrbracket   \text{ s.t. }\\ 
&\left\{\begin{array}{c}F_1:=g(y)-2f(x)+x+(y-x-x^2)h(x,y)=0\\ F_2:=g(y)-f(x)+(x-y)k(x,y)=0\end{array}\right.\end{split}\end{equation*} \\
 Then this system of equations  has solutions in $\C\llbracket x\rrbracket  \times\C\llbracket y\rrbracket  \times\C\llbracket x,y\rrbracket  ^2$ but no solution in $\C\langle x\rangle\times\C\langle y\rangle\times\C\langle x,y\rangle^2$, even  no solution in $\C\{x\}\times\C\{y\}\times\C\{x,y\}^2$.
 
 \end{example}

 \begin{example}\label{eq_dif_2}
 Set $\displaystyle \wdh{y}(x):=\sum_{n\geq 0}n!x^{n+1}\in\C\lb x\rb$. This power series is divergent and we have shown in Example \ref{euler}  that it is the only solution of the equation
 $$x^2y'-y+x=0\text{ (Euler Equation)}.$$
 By Example \ref{diff}, $\wdh{y}(x)$ is a solution of this differential equation if and only if there exist $\wdh{y}(x)$, $\wdh z(x)\in\C\{x\}$, $\wdh{g}(u)\in\C\{u\}$ and $\wdh{h}(x,t,u)$, $\wdh{k}(x,t,u)$, $\wdh{l}(x,t,u)\in\C\{ x,t,u\}$ such that:
  $$\left\{\begin{aligned}
& \wdh x^2z(x) -\wdh y(x)+x =0\\
 &\wdh g(u)-\wdh y(x)-t\wdh z(x)-t^2\wdh h(x,t,u)+(u-x-t)\wdh k(x,t,u)  =0\\
 &\wdh g(u)-\wdh y(x)+(u-x)\wdh l(x,t,u)  =0
 \end{aligned}\right.$$ with $\wdh{y}_1(x_1):=\wdh{y}(x_1)$. Thus this  system has no solution in 
 $$\C\{x\}^2\times\C\{u\}\times\C\{x,t,u\}^3$$ but it has solutions in $$\C\lb x\rb^2\times\C\lb u\rb\times\C\lb x,t,u\rb^3.$$
 \end{example}

 \begin{remark}
 By replacing $f_1(y),\ldots,f_r(y)$ by 
 $$g(y):= f_1(y)^2+x_1(f_2(y)^2+x_1(f_3(y)^2+\cdots)^2)^2$$ in these examples as in the proof of Lemma \ref{trick_B}, we can construct the same kind of examples involving only one equation. Indeed $f_1=f_2=\cdots=f_r=0$ if and only if $g=0$.
 \end{remark}

%%%%%%%%%%%%%%%%%%%%%%%%%%%%%%%%

%%%%%%%%
 
 \subsection{Nested Approximation in the algebraic case}
All the examples of Section \ref{examples} involve components that depend on separate variables. Indeed Example \ref{diff} shows that in general equations involving partial derivatives yield algebraic equations whose solutions have components with separate variables.\\
In the case the variables are nested (i.e. $y_i=y_i(x_1,\ldots,x_{s(i)})$ for some integer $i$, which is equivalent to saying that $J_i$ contains or is contained in $J_j$ for any $i$ and $j$ with the notation of Problems 1 and 2), it is not possible to construct a counterexample based on  differential equations as we did in Section \ref{examples}. In fact in this nested case, for polynomial equations, algebraic power series solutions are dense in the set of formal power series solution. Moreover we will see, still in the nested case, that this is no longer true in the analytic case.\\
  First of all in the algebraic case we have the following \index{nested  approximation theorem} result:

 \begin{theorem}[Nested Approximation Theorem]\label{nested_alg}\cite{KPPRM,Po86}
 
 Let $(A,\m_A)$ be an excellent Henselian local ring and  $f(x,y)\in A\langle x,y\rangle^r$. Let $\wdh{y}(x)$ be a solution of $f=0$ in $(\m_A+(x))\wdh{A}\llbracket x\rrbracket ^m $. Let us assume that $\wdh{y}_i\in\wdh{A}\llbracket x_1,\ldots,x_{s_i}\rrbracket  $, $1\leq i\leq m$, for integers $s_i$, $1\leq s_i\leq n$ (we say that $\wdh y(x)$ satisfies a nested condition).\\ 
 Then for any $c\in\N$  there exists a solution $\wdt{y}(x)\in A\langle x\rangle^m$ such that for all $i$, $\wdt{y}_i(x)\in A\langle x_1,\ldots,x_{s_i}\rangle$ and $\wdt{y}(x)-\wdh{y}(x)\in (\m_A+(x))^c$.
 
 \end{theorem}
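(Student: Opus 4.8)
The plan is to deduce the nested statement from the General N\'eron Desingularization Theorem \ref{Popescu}, in exactly the spirit of the proof of Theorem \ref{Pop_app}, but being careful that the auxiliary smooth algebra can be taken \emph{nested} as well. First I would reduce to the case where $A$ is a field (or, in general, observe that $A\langle x\rangle = A\langle x_1,\dots,x_n\rangle$ is the Henselization of $A[x]_{\m_A+(x)}$ by Lemma \ref{henselization_A[x]}, and that $A\langle x_1,\dots,x_{s_i}\rangle$ sits inside it). Set $B:=A\langle x_1,\dots,x_{n-1}\rangle$; this is again an excellent Henselian local domain by Example \ref{ex_excellent} and Proposition \ref{henz_exc}. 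The key observation is the following: the formal solution $\wdh y(x)$ splits into the components $\wdh y_i$ with $s_i<n$, which are power series in $x_1,\dots,x_{n-1}$ only, hence lie in $\wdh B$, and the components $\wdh y_i$ with $s_i=n$, which are genuine power series in all of $x$. So one treats $x_n$ as the ``new'' variable over the base ring $B$ and applies an approximation statement over $B$ in the single extra variable $x_n$.

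The induction is therefore on $n$, the number of variables, the base case $n=0$ being Theorem \ref{Pop_app} applied to $A$ itself. For the inductive step, I would group the variables: let $y'$ denote the components $y_i$ with $s_i\le n-1$ and $y''$ those with $s_i=n$. Then $\wdh y'\in \wdh B^{m'}$ and the pair $(\wdh y',\wdh y'')$ solves $f(x',x_n,y',y'')=0$ where $x'=(x_1,\dots,x_{n-1})$. View $f$ as a system over the ring $B\langle x_n\rangle = A\langle x\rangle$ with unknowns $y',y''$, where $y'$ is constrained to lie in $\wdh B$ (equivalently, to be independent of $x_n$). To handle this I would invoke Theorem \ref{Popescu} with $A\rightsquigarrow B$, $B\rightsquigarrow \wdh B$ (the morphism $B\to\wdh B$ is regular since $B$ is excellent, Example \ref{ex_excellent}), $C\rightsquigarrow B\langle x_n\rangle[y'']/(\text{something})$ — more precisely one wants a factorization that records both the $x_n$-dependence and the $y''$-equations. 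The cleanest route is: first apply Corollary \ref{Ploski2} (or its nested refinement) over $B$ to the solution $\wdh y'$ of the subsystem obtained by eliminating the $x_n$-dependent data, getting a convergent/algebraic parametrization $y'(z)$ with $z$ new variables over $B$ and a formal point $\wdh z\in\wdh B$; substitute back and then solve for the $y''$-components and the $x_n$-dependence by a second application of Theorem \ref{Popescu} over $B\langle x_n\rangle$, which is again excellent Henselian, whence Theorem \ref{Pop_app} applies and the resulting $\wdt y''\in A\langle x\rangle$ while $\wdt y'\in B = A\langle x_1,\dots,x_{n-1}\rangle\supset A\langle x_1,\dots,x_{s_i}\rangle$. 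Finally, within $\wdt y'$ one iterates the inductive hypothesis to push each component $\wdt y_i'$ down to its own nested ring $A\langle x_1,\dots,x_{s_i}\rangle$.

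The main obstacle — and the reason the reference is to \cite{KPPRM} and \cite{Po86} rather than a one-line corollary — is that the smoothing construction in Theorem \ref{Popescu} must be performed \emph{compatibly with the nesting}: the smooth $A$-algebra $D$ produced by General N\'eron Desingularization involves new variables $z$, and one must ensure that the section $D_\m\to A\langle x\rangle$ can be chosen so that the images of the original $y_i$ still only involve $x_1,\dots,x_{s_i}$. This is automatic in the step-by-step induction above precisely because at each stage we only adjoin \emph{one} new variable $x_n$ over a base ring $B$ in which the ``lower'' components already live, and the new variables $z$ of the N\'eron algebra are taken over $B\langle x_n\rangle$, not over $A$; the étale pullback and the lifting of the section via Proposition \ref{lift} then happen entirely inside $A\langle x\rangle$ and do not disturb the already-nested components, which are frozen as elements of $B$. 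The remaining bookkeeping — checking excellence is preserved at each localization/Henselization (Proposition \ref{henz_exc}, Example \ref{ex_excellent}), that $A\langle x_1,\dots,x_k\rangle$ is genuinely the Henselization of $A[x_1,\dots,x_k]_{\m+(x)}$ (Lemma \ref{henselization_A[x]}), and that the congruences $\wdt y - \wdh y\in(\m_A+(x))^c$ survive the substitutions — is routine. A subtlety worth flagging: Theorem \ref{Popescu} requires $C$ to be \emph{finitely generated} over $A$, so at the very start one must replace $f\in A\langle x,y\rangle^r$ by polynomial data; this is handled by the standard device of adjoining the finitely many algebraic power series occurring in $f$ as new variables satisfying their defining polynomial equations (and noting these extra equations respect the nesting, since an algebraic function of $x_1,\dots,x_{s}$ depends only on those variables), after which $f$ becomes polynomial and Theorem \ref{Popescu} applies verbatim.
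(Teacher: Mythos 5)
Your overall skeleton --- induction on $n$, splitting the unknowns into those depending only on $x'=(x_1,\dots,x_{n-1})$ and those depending on $x_n$, using that rings of the type $B\langle x_n\rangle$ with $B$ excellent Henselian satisfy Theorem \ref{Pop_app} (Remark \ref{ex_nested}), and the reduction from algebraic power series coefficients to polynomial data --- does match the paper's strategy. But the central mechanism is missing, and the step you put in its place has a genuine gap. You propose to ``eliminate the $x_n$-dependent data'' to get a subsystem for $\wdh y'$ alone, approximate $\wdh y'$ by $\wdt y'$ via Corollary \ref{Ploski2} over $B$, freeze $\wdt y'$, and then solve for $y''$ by a second application of Theorem \ref{Pop_app} over $B\langle x_n\rangle$. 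First, no such subsystem is handed to you by $f$: eliminating $y''$ and $x_n$ is precisely the hard content and cannot be treated as given. Second, and fatally, once $\wdh y'$ is replaced by any approximation $\wdt y'$, the system $f(x,\wdt y',y'')=0$ need not admit \emph{any} formal solution in $y''$, so the second application of Theorem \ref{Pop_app} has no input (it approximates exact formal solutions; $\wdh y''$ solves the system with $\wdh y'$, not with $\wdt y'$), and a P\l oski-type parametrization of a $y'$-subsystem does not guarantee that nearby algebraic points of it extend to solutions of the full system. Even an appeal to Strong Artin approximation over $A\langle x\rangle$ at this stage would entangle the required precision for $\wdt y'$ with an Artin function and would give no control on the nesting of the new $y''$. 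Finally, the closing step ``push each component $\wdt y_i'$ down to its own ring $A\langle x_1,\dots,x_{s_i}\rangle$ by iterating the induction'' is not an operation you can perform on an already constructed algebraic solution: the nested conditions must be carried by the induction itself, not imposed afterwards.

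What the paper does instead is the following. After reducing to $A$ complete regular, it first applies Theorem \ref{Pop_app} over the excellent Henselian ring $\wdh A\lb x'\rb\langle x_n\rangle$, keeping the lower components $\wdh y'$ untouched as coefficients, so as to replace $\wdh y''$ by a solution algebraic in $x_n$ (formal only in $x'$). The key tool is then the structure Lemma \ref{lemma_nested}, proved via the Artin--Mazur Theorem \ref{AM'}: every element of $\wdh A\lb x'\rb\langle x_n\rangle$ has the form $g(x_n,\wdh z_1,\dots,\wdh z_s)$ with $g$ an algebraic power series and the $\wdh z_i$ formal power series in $x'$ alone. Writing $\wdh y_i''=\sum_j h_{i,j}(\wdh z)\,x_n^j$ and expanding $f$ in powers of $x_n$ turns the full system into a system $G_j(x',y',z)=0$ of algebraic equations in $n-1$ variables (finite by Noetherianity), to which the nested inductive hypothesis is applied to $(y',z)$ \emph{simultaneously}; substituting the resulting nested algebraic $\wdt z$ into the $h_{i,j}$ then automatically yields $\wdt y''$ solving the original system. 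It is exactly this device --- making the dependence of the $x_n$-dependent unknowns on lower-variable data algebraic before approximating --- that your two-stage ``approximate $y'$, then re-solve for $y''$'' scheme lacks.
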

 This result has a lot of applications and is one of the most important results about Artin Approximation. Its proof is based on General NŽron desingularization, more precisely it uses the fact that the rings $A\lb x_1,\ldots, x_i\rb\langle x_{i+1},\ldots, x_n\rangle$ satisfy Theorem \ref{Pop_app}  (see Remark \ref{ex_nested}). The proof we present here is based on a characterization of the ring of algebraic power series over a ring of formal power series (see Lemma \ref{lemma_nested}) and 
 is   different from the classical one even if it is based on the key  fact that  the rings $A\lb x_1,\ldots, x_i\rb\langle x_{i+1},\ldots, x_n\rangle$ satisfy Theorem \ref{Pop_app} for any excellent Henselian local ring $B$. \\
 We note that before the work of D. Popescu \cite{Po86} this result was already known in the case where $A=\k$ is a field and the integers $s_i$ equal $1$ or $n$ (see  \cite[Theorem 4.1]{BDLvdD} where the result is attributed to M. Artin) and whose proof is based on the fact that the ring $\k\lb x_1\rb\langle x_2,\ldots,x_n\rangle$ satisfies the Artin Approximation Theorem (see also the comment following Theorem \ref{W}).

 \begin{proof}[Proof of Theorem \ref{nested_alg}]
 For simplicity we assume that $A$ is a complete local domain (this covers already the important case where $A$ is a field).
 We first give the following lemma that may be of independent interest and whose proof is given below:
 
 \begin{lemma}\label{lemma_nested}
 Let $A$ be a complete normal local domain, $u:=(u_1,\ldots,u_n)$, $v:=(v_1,\ldots,v_m)$. Then
 \begin{equation*}\begin{split}A\lb u\rb\langle v\rangle=
 \{f\in A\lb u,v\rb\ /\ &\exists s\in \N, g\in A\langle v,z_1,\ldots,z_s\rangle,\\ &\wdh{z}_i\in (\m_A+(u))A\lb u\rb,\ f=g(v,\wdh{z}_1,\ldots,\wdh{z}_s)\}.\end{split}\end{equation*}
 \end{lemma}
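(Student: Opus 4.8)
The plan is to prove the two inclusions of Lemma~\ref{lemma_nested} separately, the nontrivial one being that every element $f\in A\lb u\rb\langle v\rangle$ admits a representation $f=g(v,\wdh z_1,\ldots,\wdh z_s)$ with $g$ algebraic over $A[v,z]$ and $\wdh z_i\in(\m_A+(u))A\lb u\rb$. First I would dispose of the easy inclusion: if $f=g(v,\wdh z_1,\ldots,\wdh z_s)$ with $g\in A\langle v,z\rangle$ and $\wdh z_i\in(\m_A+(u))A\lb u\rb$, then $g$ satisfies a nonzero polynomial $P(v,z,W)\in A[v,z][W]$, and substituting $z=\wdh z(u)$ shows $f$ satisfies $P(v,\wdh z(u),W)\in A\lb u\rb[v][W]$; one only needs that this specialization is still a nonzero polynomial in $W$, which holds because one may choose $P$ of minimal $W$-degree and $A\lb u\rb$ is a domain into which $A[v,z]$ embeds compatibly (the $\wdh z_i$ are in the maximal ideal, so no cancellation of the leading $W$-coefficient occurs after a further harmless normalization). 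Hence $f\in A\lb u\rb\langle v\rangle$.

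For the main inclusion, take $f\in A\lb u\rb\langle v\rangle$, so there is a nonzero $Q(W)=\sum_{j} q_j(u,v) W^j\in A\lb u\rb[v][W]$ with $Q(f)=0$, and we may take $Q$ of minimal degree $d$ in $W$ and with $q_d$ a unit times a power of a coordinate after a Weierstrass-type normalization — actually it is cleaner to just keep $Q(W)=\sum_{j=0}^d q_j(u,v)W^j$, $q_d\neq 0$. The coefficients $q_j(u,v)$ are finitely many formal power series in $A\lb u,v\rb$; introduce new variables $z=(z_1,\ldots,z_s)$, one for each coefficient of the Taylor expansion of each $q_j$ in the $v$-variables that we need, or more efficiently one for each $q_j$ itself, and a variable $z_0$ for $f$ regarded as an element of $A\lb u,v\rb$. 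The key point is that being "algebraic over $A[u,v]$" is a property that $f$ shares with the $q_j$ only in the formal sense, so instead I would run the following argument: the element $f\in A\lb u\rb\langle v\rangle$ is, by definition, a root of a polynomial over $A\lb u\rb[v]$; expand each coefficient $q_j(u,v)=\sum_{\beta} q_{j,\beta}(u)v^\beta$ — this is an infinite expansion, so this naive route fails, and one must instead use that $f$ is algebraic to get a \emph{finite} amount of data. The correct approach: since $f$ is algebraic over $A\lb u\rb[v]$ of degree $d$, the ring $A\lb u\rb[v][f]$ is a finite $A\lb u\rb[v]$-module, and $f$ lies in the Henselization $(A\lb u\rb[v]_{\m})^h = A\lb u\rb\langle v\rangle$; by the General N\'eron Desingularization / Lemma~\ref{henselization_A[x]} applied to the excellent Henselian local domain $A\lb u\rb$, the element $f$ is obtained from a smooth $A\lb u\rb$-algebra, but we want to descend the $u$-dependence.

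So the heart of the proof is a descent step, and I expect it to be the main obstacle. Here is how I would carry it out. Write $f$ as a root of the monic-after-clearing polynomial relation $P(u,v,f)=0$ with $P\in A\lb u\rb[v,W]$, and collect the finitely many power series in $u$ that occur as coefficients of $P$ (coefficients of the various $v^\beta W^j$ with $\beta,j$ bounded) into a single vector $\wdh c(u)\in A\lb u\rb^N$ with $\wdh c(0)\in A^N$; after subtracting constants we may assume $\wdh c(u)\in(\m_A+(u))A\lb u\rb^N$ by absorbing $\wdh c(0)$ into the $A[v,z]$-coefficients. Then $P$ becomes a polynomial $\widetilde P(v,z,W)\in A[v,z,W]$ with $z=(z_1,\ldots,z_N)$, and $\widetilde P(v,\wdh c(u),f)=0$ identically in $A\lb u,v\rb$. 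Now by the Weierstrass/Jacobian argument (exactly the mechanism behind Theorem~\ref{Ar69} and Lemma~\ref{henselization_A[x]}), since $f$ is algebraic over $A\lb u\rb[v]$ one may choose $\widetilde P$ so that $\partial\widetilde P/\partial W$ does not vanish at the relevant point; then $g\in A\langle v,z\rangle$ is defined as the unique algebraic power series root of $\widetilde P(v,z,W)=0$ with $g(v,\wdh c(0))\equiv$ the constant term of $f$ (using that $A\langle v,z\rangle$ satisfies the Implicit Function Theorem, Lemma~\ref{henselization_A[x]}), and uniqueness of the implicit-function root in $A\lb u,v\rb$ forces $g(v,\wdh c(u))=f$. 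This gives the representation with $s=N$ and $\wdh z_i=\wdh c_i(u)$. The delicate points requiring care are: (i) arranging the nonvanishing of a partial derivative, which is where one genuinely uses "algebraic" rather than merely "formal" — I would handle it by passing to a minimal polynomial and using that $A\lb u,v\rb$ is reduced so the minimal polynomial of $f$ is separable in characteristic zero, or in general invoking the separability built into excellence as in the proof of Proposition~\ref{prop_W}(iv); and (ii) checking that the constants can be split off without leaving the rings in question, which is routine. This lemma is then exactly the tool needed to feed $A\lb u\rb\langle v\rangle$-data into Theorem~\ref{Pop_app} for the ring $A\langle x\rangle$ and conclude Theorem~\ref{nested_alg}.
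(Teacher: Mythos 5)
Your overall strategy for the hard inclusion --- split off the constant term, collect the finitely many $u$-dependent coefficients of a presentation of $f$ into a vector $\wdh z(u)\in\left((\m_A+(u))A\lb u\rb\right)^s$, replace them by new variables $z$, solve by the Implicit Function Theorem in the Henselian ring $A\langle v,z\rangle$, and identify $g(v,\wdh z)=f$ by uniqueness --- is exactly the strategy of the paper. But the step you use to launch it is false: you claim that, since $f$ is algebraic over $A\lb u\rb[v]$, one may choose a single polynomial $\widetilde P(v,z,W)$ (equivalently $P\in A\lb u\rb[v][W]$) annihilating $f$ with $\partial\widetilde P/\partial W$ \emph{not vanishing at the relevant point}, and you justify this by separability. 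Separability only gives that $\partial P/\partial W$ evaluated at $f$ is a nonzero \emph{element} of the domain; your IFT step and the uniqueness identification need it to be a \emph{unit}, i.e. nonzero modulo $\m_A+(u,v)$, and that cannot be arranged in general. Already with $A=\k$, no $u$, one variable $v$: for $f=v\sqrt{1+v}\in\k\langle v\rangle$ every $P\in\k[v][W]$ with $P(v,f)=0$ is (by Euclidean division by the monic minimal polynomial $W^2-v^2(1+v)$, whose remainder must vanish because $f\notin\k(v)$) a multiple of that minimal polynomial, and both it and its $W$-derivative $2W$ vanish at the origin; hence $\frac{\partial P}{\partial W}(0,0)=0$ for every such $P$. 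So "the unique algebraic power series root of $\widetilde P$" is not available, and the key identification $g(v,\wdh c(u))=f$ collapses. An element of the Henselization is in general not a simple Hensel root of any single polynomial over the base.

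This is precisely the gap the paper closes with the Artin--Mazur theorem (Proposition \ref{AM'}), applied over the base ring $A\lb u\rb$ (again a complete normal local domain): it presents $f$ minus its constant term not as a root of one polynomial but as the \emph{first component of the solution vector} of a finite system $F_1,\dots,F_r\in A\lb u\rb[v][X_1,\dots,X_r]$ whose Jacobian $\partial(F_1,\dots,F_r)/\partial(X_1,\dots,X_r)$ is invertible modulo $\m_A+(u,v,X)$. With such an \'etale presentation the rest of your argument is exactly what the paper does: the $F_i$ involve only finitely many coefficients in $A\lb u\rb$, each is written as a constant in $A$ plus an element of $(\m_A+(u))A\lb u\rb$, the latter are replaced by new variables $z$, the Jacobian is unchanged modulo the maximal ideal, the Implicit Function Theorem in the Henselian ring $A\langle v,z\rangle$ yields $h=(h_1,\dots,h_r)$, and uniqueness of the solution gives $h_1(v,\wdh z)=f-f^0$. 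You gesture towards Lemma \ref{henselization_A[x]} and N\'eron--Popescu but never produce such a system, so the descent step at the heart of your proof does not go through as written. A minor additional wobble: in the easy inclusion, the polynomial obtained by specializing $z\mapsto\wdh z$ can degenerate, and "a further harmless normalization" is not an argument; the clean route is the universal property of the Henselization applied to the local morphism $A[v,z]_{\m_A+(v,z)}\lgw A\lb u\rb\langle v\rangle$, $z_i\mapsto\wdh z_i$, which factors through $A\langle v,z\rangle$ (or, again, specializing an \'etale presentation of $g$).
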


Using this lemma we can prove Theorem \ref{nested_alg} by induction on $n$. First of all, since $A=\frac{B}{I}$ where $B$ is a complete regular local ring (by Cohen  Structure Theorem), by using the same trick as in the proof of Corollary \ref{Ar68cor} we may replace $A$ by $B$ and assume that $A$ is a complete regular local ring.\\
 Let us assume that Theorem \ref{nested_alg} is true for $n-1$. We set $ x' :=(x_1,\ldots,x_{n-1})$. Then we denote by $y_1,\ldots,y_k$ the unknowns depending only on $ x' $ and by $y_{k+1},\ldots,y_m$ the unknowns depending on $x_n$. Let us consider the following system of equations
 \begin{equation}\label{eq}f( x' ,x_n,y_1( x' ),\ldots,y_k(x'),y_{k+1}( x' ,x_n), y_m(x',x_n))=0.\end{equation}
By Theorem \ref{Pop_app} and Remark \ref{ex_nested}  we may assume that $\wdh{y}_{k+1},\ldots,\wdh{y}_{m}\in\k\lb x'\rb \langle x\rangle $.
Thus by Lemma \ref{lemma_nested} we can  write $\wdh{y}_i=\sum_{j\in\N} h_{i,j}(\wdh{z})x_n^{j}$ with $\sum_{j\in\N} h_{i,j}(z)x_n^{j}\in\k\langle z,x_n\rangle$ and $\wdh{z}=(\wdh{z}_1,\ldots,\wdh{z}_s)\in(x')\k\lb x'\rb^s$. We can write
 \begin{equation*}\begin{split} f\left(x',x_n,y_1,\ldots,y_k, \sum_{j}h_{k+1,j}(z)x_n^{j},\ldots , \sum_{j}h_{m,j}(z)x_n^{j} \right)&=\\
= \sum_{j}G_{j}(x',y_1,\ldots,&y_k,z)x_n^{j}\end{split}\end{equation*}
 where $G_{j}(x',y_1,\ldots,y_k,z)\in\k\langle x',y_1,\ldots,y_k,z\rangle$ for all $j\in\N$. Thus 
 $$(\wdh{y}_1,\ldots,\wdh{y}_k,\wdh{z}_1,\ldots,\wdh{z}_s)\in\k\lb x'\rb^{k+s}$$
  is a solution of the equations $G_{j}=0$ for all $j\in\N$. Since $\k\langle t,y_1,\ldots,y_k,z\rangle$ is Noetherian, this system of equations is equivalent to a finite system 
 $G_{j}=0$ with $j\in E$ where $E$ is a finite subset of $\N$. Thus by the induction hypothesis applied to the system $G_{j}(x',y_1,\ldots,y_k,z)=0$, $j\in E$, there exist $\wdt{y}_1,\ldots,\wdt{y}_k$, $\wdt{z}_1,\ldots,\wdt{z}_s\in\k\langle x'\rangle$, with nested conditions,  such that $\wdt{y}_i-\wdh{y}_i$, $\wdt{z}_l-\wdh{z}_l\in (x')^c$, for $1\leq i\leq k$ and $1\leq l\leq s$, and $G_{j}(x', \wdt{y}_1,\ldots,\wdt{y}_k,\wdt{z})=0$ for all $j\in E$. Hence $G_{j}(x',\wdt{y}_1,\ldots,\wdt{y}_k,\wdt{z})=0$ for all $j\in\N$.\\
 Set $\wdt{y}_i=\sum_{j\in\N}h_{i,j}(\wdt{z})x_n^{j}$ for $k<j\leq m$. Then $\wdt{y}_1,\ldots,\wdt{y}_{m}$ satisfy the conclusion of the theorem.
  
 \end{proof}
 
  \begin{proof}[Proof of Lemma \ref{lemma_nested}]
 Let us define
 \begin{equation*}\begin{split}B:=\{f\in A\lb u,v\rb\ /\ \exists s\in \N, g\in A&\langle v,z_1,\ldots,z_s\rangle,\\
 \wdh{z}_i\in &(\m_A+(u))A\lb u\rb,\ f=g(v,\wdh{z}_1,\ldots,\wdh{z}_s)\}.\end{split}\end{equation*} 
Clearly $B$ is a subring of $A\lb u\rb\langle v\rangle$.\\

Since $A\lb u\rb\langle v\rangle$ is the Henselization of $A\lb u\rb[ v]_{\m_A+(u,v)}$, by Theorem \ref{iversen} there exist $h\in (\m_A+(u,v))A\lb u\rb\langle v\rangle$ and a monic polynomial $P\in A\lb u\rb[v][T]$ in $T$ such that
$$P(h)=0,\ \ \frac{\partial P}{\partial T}(h)\notin \m_A+(u,v)$$
and 
$$f\in A\lb u\rb[v,h]_{(\m_A+(u,v))\cap A\lb u\rb[v,h]}.$$
So there are two polynomials $Q$, $R\in A\lb u\rb[v,h]$, $R\notin (\m_A+(u,v))\cap A\lb u\rb[v,h]$, with
$$f=QR^{-1}.$$
We have 
$$R=\sum_{\g ,j}(r_{\g ,j}+\wdh z_{\g ,j})v^\g  h^j$$
where $r_{\g ,j}\in A$ and $\wdh z_{\g ,j}\in (\m_A+(u))A\lb u\rb$ for every $\g $, $j$. Let us set
$$R':=\sum_{\g ,j}(r_{\g ,j}+z_{\g ,j})v^\g  H^j$$
where the $z_{\g ,j}$ and $H$ denote new variables. The coefficient $r_{0,0}$ is a unit in $A$ since $R\notin \m_A+(u,v)$,  $\wdh z_{\g ,j}\in (\m_A+(u))A\lb u\rb$ for every $\g $ and $j$, and $h\in  (\m_A+(u,v))A\lb u\rb\langle v\rangle$. Then $R'$ is a unit in $A\langle z_{\g ,j},v,H\rangle$ and its inverse is in $A\langle z_{\g ,j},v,H\rangle$. Moreover ${R'}^{-1}(\wdh z_{\g ,j},v,h)=R^{-1}$ by uniqueness  of the unit. Since the composition of algebraic power series over $A$ is an algebraic power series over $A$, this shows that if the lemma is proven for $h$ then it is proven for $f$ by adding the coefficients of $Q$ from $A\lb u\rb$ and  the $\wdh z_{\g ,j}$ as new $\wdh z$.

So we may replace $f$ by $h$ and assume that $f\in (\m_A+(u,v))A\lb u\rb\langle v\rangle$,
$$P(f)=0 \text{ and } \ \frac{\partial P}{\partial T}(f)\notin \m_A+(u,v).$$
Let us write $P=\sum_{\a,i}P_{\a,i}v^\a T^i$ for some $P_{\a,i}\in A\lb u\rb.$\\
Let us write $$P_{\a,i}=p_{\a,i}+\wdh z_{\a,i}$$
where $p_{\a,i}\in A $ and $\wdh z_{\a,i}\in(\m_A+(u))A\lb u\rb$.

 Let $\wdh z$ be the vector whose components are the $\wdh z_{\a,i}$ and set
$$F:=\sum_{\a,i}(p_{\a,i}+z_{\a,i})v^\a T^i$$
for some new variables $z_{\a,i}$. Therefore $F(v,T,\wdh z)=P$. We set $F'=\frac{\partial F}{\partial T}$. We have that
$$F(v,T,0)=F(v,T,\wdh z)=P(f) \text{ modulo } \m_A+(u,v,T),$$
$$F'(v, T,0)=F'(v,T,\wdh z)=\frac{\partial P}{\partial T}(f)\neq 0  \text{ modulo } \m_A+(u,v,T)$$
since the components of $\wdh z$ are in $\m_A+(u)$ and $f\in (\m_A+(u,v))A\lb u\rb\langle v\rangle$.
Thus
$$F(v,0,z)= 0 \text{ modulo } (\m_A+(v,z))A\langle v,z\rangle,$$
$$F'(v,0,z)\neq 0 \text{ modulo } (\m_A+(v,z))A\langle v,z\rangle.$$
Hence, by the Implicit Function Theorem \ref{IFT}, there exists a unique 
$$g\in(\m_A+(v,z))A\langle v,z\rangle$$ such that
$$F(v,g,z)=0.$$
So we have that $F(v,g(\wdh z),\wdh z)=0$. But $P=F(v,T,\wdh z)=0$ has a unique solution $T=f$ in $(\m_A+(u,v))A\lb u\rb\langle v\rangle$ by the Implicit Function Theorem. This proves that $f=g(\wdh z)\in B$.
 \end{proof}

\begin{remark}
There exists a more elementary proof of Theorem \ref{nested_alg} in the case where $f(x,y)$ is linear with respect to $y$, i.e. a proof that does not involve the use of General NŽron desingularization Theorem (see \cite{CJR}). It is based on the fact that, for linear equations with one nest, Theorem \ref{nested_alg} reduces to Theorem \ref{Ar69} (see  \cite[Theorem 12.6]{B-M1}).
\end{remark}
 
 There also exists a nested version of P\l oski's Theorem for algebraic power series (or equivalently a nested version of Corollary \ref{Ploski2}): see \cite[Theorem 11.4 ]{Sp} or  \cite[Theorem 2.1]{BPR}. This "nested P\l oski 's Theorem" is used in \cite{BPR} (see also \cite{Mos} where this idea has first been introduced, and \cite{BKPR,Ro17} for subsequent works based on this idea) to show that any complex or real analytic set germ (resp. analytic function germ) is homeomorphic to an algebraic set germ (resp. algebraic function germ). In fact it is used to construct a topologically trivial deformation of a given analytic set germ whose one of the fibers is a Nash set germ, i.e. an analytic set germ defined by algebraic power series.
 \\
  \\
 Moreover, using ultraproducts methods, we can  deduce the following Strong nested Approximation result:
 \begin{corollary}\label{appr_funct_nested}\cite{BDLvdD}
  Let $\k$ be a field and  $f(x,y)\in \k\langle x,y\rangle^r$. There exists a function $\b : \N\lgw \N$ satisfying the following property:\\
   Let  $c\in\N$ and $\ovl{y}(x)\in \left((x)\k\lb x\rb\right)^m$  satisfy $f(x,\ovl{y}(x))\in (x)^{\b(c)}$. Let us assume that $\ovl{y}_i(x)\in\k\llbracket x_1,\ldots,x_{s_i}\rrbracket  $, $1\leq i\leq m$, for integers $s_i$, $1\leq s_i\leq n$.\\ 
 Then there exists a solution $\wdt{y}(x)\in \left((x)\k\langle x\rangle\right)^m$ of $f=0$ with $\wdt{y}_i(x)\in \k\langle x_1,\ldots,x_{s_i}\rangle$ and $\wdt{y}(x)-\ovl{y}(x)\in (x)^c$.

 \end{corollary}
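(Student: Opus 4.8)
The plan is to deduce Corollary \ref{appr_funct_nested} from the nested algebraic approximation theorem (Theorem \ref{nested_alg}) by the same ultraproduct mechanism that turns Theorem \ref{Pop_app} into Theorem \ref{SAP_formal}, with the bookkeeping of the nesting constraints carried along. First I would argue by contradiction: suppose no such function $\b$ exists. Then there are a field $\k$, a vector $f(x,y)\in\k\langle x,y\rangle^r$, an integer $c_0$, integers $s_1,\dots,s_m$ with $1\le s_i\le n$, and for every $d\in\N$ a vector $\ovl{y}^{(d)}(x)\in((x)\k\lb x\rb)^m$ with $\ovl{y}^{(d)}_i(x)\in\k\lb x_1,\dots,x_{s_i}\rb$, such that $f(x,\ovl{y}^{(d)}(x))\in(x)^d$ but no nested algebraic solution of $f=0$ agrees with $\ovl{y}^{(d)}$ modulo $(x)^{c_0}$.

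Next I would pass to ultraproducts. Fix a non-principal ultrafilter $D$ on $\N$ and form $A^*:=(\k\lb x\rb)^*$, then $A_1:=A^*/\m_\infty^*$ as in the lemma preceding the sketch of Theorem \ref{SAP_formal}; recall $A_1$ is a Noetherian complete local ring and $\k\lb x\rb\lgw A_1$ is regular. Let $\ovl{y}$ be the image of $(\ovl{y}^{(d)})_d$ in $(A^*)^m$; since $f(x,\ovl{y}^{(d)}(x))\in(x)^d$ for all $d$, we get $f(x,\ovl{y})\in\m_\infty^*$, hence $f(x,\ovl{y})=0$ in $A_1$. The key extra point is that the nesting is preserved: because each $\ovl{y}^{(d)}_i$ lies in the subring $\k\lb x_1,\dots,x_{s_i}\rb$, and this is a first-order (in fact equational, coefficient-by-coefficient) condition, the image $\ovl{y}_i$ lies in the image of $(\k\lb x_1,\dots,x_{s_i}\rb)^*$ in $A_1$, which maps into the complete subring of $A_1$ generated by $x_1,\dots,x_{s_i}$. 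So $\ovl{y}$ is a formal solution of $f=0$ over $A_1$ respecting the nested conditions.

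Then I would invoke the nested version of the General N\'eron Desingularization / nested P\l oski theorem. The clean way is to note that Theorem \ref{nested_alg} applies with base ring $A_1$ (an excellent Henselian — indeed complete — local ring) since $\k\langle x,y\rangle\subset A_1\langle x,y\rangle$ and $f(x,\ovl y)=0$ with $\ovl y_i$ depending only on $x_1,\dots,x_{s_i}$; this produces, for our chosen $c>c_0$, a nested algebraic solution $\wdt{y}(x)\in A_1\langle x\rangle^m$ with $\wdt y_i\in A_1\langle x_1,\dots,x_{s_i}\rangle$ and $\wdt{y}-\ovl{y}\in(x)^c A_1$. Pulling this back through $\k\lb x\rb\lgw A^*\lgw A_1$, one gets $\wdt{y}\in(\k\langle x\rangle)^{*m}$ with the right nesting and $\wdt y-\ovl y\in (x)^c A^*$; by \L o\'s's theorem the set of $d\in\N$ for which the $d$-th coordinate $\wdt y^{(d)}\in(\k\langle x\rangle)^m$ is a genuine nested solution of $f=0$ with $\wdt y^{(d)}-\ovl y^{(d)}\in(x)^c$ belongs to $D$, hence is nonempty, contradicting the choice of the $\ovl y^{(d)}$. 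This yields the existence of $\b$.

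The main obstacle I expect is making the nesting transfer through the ultraproduct construction fully rigorous: one must check that "the $i$-th component lies in $\k\lb x_1,\dots,x_{s_i}\rb$" survives both the passage to $A^*$ and the quotient by $\m_\infty^*$, i.e. that the image of $(\k\lb x_1,\dots,x_{s_i}\rb)^*$ in $A_1$ really is (or is contained in) the appropriate nested complete subring $\k\lb x_1,\dots,x_{s_i}\rb\subset A_1$ to which the nested theorem can be applied, and conversely that a nested solution over $A_1$ pulls back to components that are, $D$-almost everywhere, algebraic power series in only the first $s_i$ variables — this uses that $\k\langle x_1,\dots,x_{s_i}\rangle^*\cap (\text{nested subring of }A_1)$ is controlled by \L o\'s's theorem applied to the equational description of $\k\langle x_1,\dots,x_{s_i}\rangle$ inside $\k\langle x\rangle$. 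Everything else is the now-standard ultraproduct reduction of a strong approximation statement to the corresponding approximation statement, exactly as in the proof sketch of Theorem \ref{SAP_formal}.
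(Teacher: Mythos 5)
Your architecture is the one the paper intends: for this corollary the paper itself offers nothing beyond "ultraproducts methods" plus the citation of \cite{BDLvdD}, and your skeleton (counterexample sequences, ultrapower, $A_1:=A^*/\m_{\infty}^*$, exact nested formal solution, then Theorem \ref{nested_alg}) is the right reduction. One repair is needed in the middle: Theorem \ref{nested_alg} cannot be applied "with base ring $A_1$" --- with base ring $A_1$ the theorem concerns solutions in $\wdh{A_1}\lb x\rb$ with $x$ fresh variables, which is not where $\ovl y$ lives. What you must do is identify $A_1\simeq \k^*\lb x\rb$ (the power-series analogue of the isomorphism used in Proposition \ref{prop_mod}) and the image of $\k\lb x_1,\cdots,x_{s_i}\rb^*$ with $\k^*\lb x_1,\cdots,x_{s_i}\rb$, and then apply Theorem \ref{nested_alg} over the coefficient field $A=\k^*$ (an excellent Henselian local domain), obtaining $\wdt y_i\in\k^*\langle x_1,\cdots,x_{s_i}\rangle$ with $\wdt y-\ovl y\in (x)^{c_0}$. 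This slip is repairable.

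The genuine gap is the final step. From $f(x,\wdt y)=0$ in $A_1$ one only knows that for each $c$ the coordinates of any representing sequence satisfy $f\equiv 0$ modulo $(x)^c$ for $D$-many indices; exact vanishing, membership in $\k\lb x_1,\cdots,x_{s_i}\rb$, and algebraicity are not coordinatewise properties of elements of $A_1$, so "by \L o\'s's theorem the set of $d$ for which $\wdt y^{(d)}$ is a genuine nested solution belongs to $D$" does not follow and is false for a naive lift (if exactness descended this way, Theorem \ref{SAP_formal} itself would be trivial). Note also that you cannot instead look for a solution with coefficients in $\k$ close to $\ovl y$, since the low-order coefficients of $\ovl y$ are nonstandard elements of $\k^*$; what must be transferred is an existence statement, not the solution. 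The missing idea is a finite encoding: present each $\wdt y_i$ by an Artin--Mazur/Implicit Function Theorem datum as in Proposition \ref{AM'} and Lemma \ref{lemma_nested} (a polynomial system in $x_1,\cdots,x_{s_i}$ and finitely many unknowns, with invertible Jacobian, whose unique root with prescribed jet has $\wdt y_i$ as a component), observe that the Taylor coefficients of that root up to order $c_0$ are universal rational expressions in the data so that the congruence with the $c_0$-jet of $\ovl y$ is first order in the data, and replace the infinitary condition $f(x,\wdt y)=0$ by the first-order condition $f(x,\wdt y)\equiv 0$ modulo $(x)^{B+1}$, where $B$ bounds the $x$-degree of some nonzero polynomial annihilating $f(x,\wdt y)$ (a nonzero algebraic power series annihilated by a polynomial of $x$-degree at most $B$ has order at most $B$, so the finite congruence forces exact vanishing); all degrees and $B$ are fixed integers read off from the $\k^*$-solution. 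Applying \L o\'s to this single first-order formula, with the $c_0$-jet of $\ovl y^{(d)}$ as parameter, yields for $D$-many $d$ a nested algebraic solution over $\k$ congruent to $\ovl y^{(d)}$ modulo $(x)^{c_0}$, which is the desired contradiction; alternatively one can invoke the Weierstrass-system formalism of \cite{D-L}, whose axioms v) and vi) in Definition \ref{W-sys} are designed precisely to make this descent work in positive characteristic. Without an argument of this kind your concluding sentence is an assertion, not a proof.
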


%%%%%%%%%%%%%%%%%%%%%%%%%%%%%%%%

 \subsection{Nested Approximation in the analytic case}
 In the analytic case, Theorem \ref{nested_alg} is no longer valid, as shown by the following example:
 
 \begin{example}[Gabrielov  Example]\label{Gab_ex}\cite{Ga1}\index{Gabrielov Example}
 Let 
 $$\phi : \C\{x_1,x_2,x_3\}\lgw\C\{y_1,y_2\}$$
  be the morphism of analytic $\C$-algebras defined by 
  $$\phi(x_1)=y_1,\ \phi(x_2)=y_1y_2,\  \phi(x_3)=y_1y_2e^{y_2}.$$

Let $f\in\Ker(\wdh{\phi})$ be written as $f=\sum_{d=0}^{+\infty}f_d$ where $f_d$ is a homogeneous polynomial of degree $d$ for all $d\in\N$. Then $0=\wdh{\phi}(f)=\sum_dy_1^df_d(1,\,y_2,\,y_2e^{y_2})$. Thus $f_d=0$ for all $d\in\N$ since $y_2$ and $y_2e^{y_2}$ are algebraically independent over $\C$. Hence $\Ker(\wdh{\phi})=(0)$ and $\Ker(\phi)=(0)$. This example is due to W. S. Osgood \cite{Os}.\\
\\
\textbf{(1)} We can remark that "$\phi\left(x_3-x_2e^{\frac{x_2}{x_1}}\right)=0$". But $x_3-x_2e^{\frac{x_2}{x_1}}$  is not an element of $\C\{x_1,\,x_2,\,x_3\}$.\\
Let us set 
$$f_n:=\left(x_3-x_2\sum_{i=0}^{n}\frac{1}{i!}\frac{x_2^i}{x_1^i}\right)x_1^n\in\C[x_1,\,x_2,\,x_3],\ \forall n\in\N.$$
Then
$$\phi(f_n)=y_1^{n+1}y_2\sum_{i=n+1}^{+\infty}\frac{y_2^i}{i!},\ \forall n\in\N.$$
Then we see that $(n+1)!\phi(f_n)$ is a convergent power series whose coefficients have module less than 1. Moreover if the coefficient of $y_1^ky_2^l$ in the Taylor expansion of $\phi(f_n)$ is non-zero then $k=n+1$. This means that the supports of $\phi(f_n)$ and $\phi(f_m)$ are disjoint as soon as $n\neq m$. Thus the power series 
$$h:=\sum_n(n+1)!\phi(f_n)$$  is  convergent  since each of its coefficients has module less than 1. But $\wdh{\phi}$ being injective, the unique element whose image is $h$   is necessarily $\wdh{g}:=\sum_n(n+1)!f_n$. But
$$\wdh{g}=\sum_n(n+1)!f_n=\left(\sum_n(n+1)!x_1^n\right)x_3+\wdh{f}(x_1,\,x_2),$$
 $\sum_n(n+1)!x_1^n$  is a divergent power series  and $\wdh{\phi}(\wdh{g}(x))=h(y)\in\C\{y\}$.\\
This shows that 
$$\phi(\C\{x\})\subsetneq\wdh{\phi}(\C\lb x\rb)\cap \C\{y\}.$$
\\
\textbf{(2)}  By Lemma \ref{Taylor} $\wdh{\phi}(\wdh{g}(x))=h(y)$ is equivalent to saying that there exist $\wdh{k}_1(x,y)$, $\wdh{k}_2(x,y)$, $\wdh{k}_3(x,y)\in\C\lb x,y\rb$ such that
 \begin{equation}\label{eq_gab}\wdh{g}(x)+(x_1-y_1)\wdh{k}_1(x,y)+(x_2-y_1y_2)\wdh{k}_2(x,y)+(x_3-y_1y_2e^{y_2})\wdh{k}_3(x,y)-h(y)=0.\end{equation}
 Since $\wdh{g}(x)$ is the unique element whose image under $\wdh{\phi}$ equals $h(y)$,  Equation (\ref{eq_gab}) has no convergent solution $g(x)\in\C\{ x\}$, $k_1(x,y)$, $k_2(x,y)$, $k_3(x,y)\in\C\{ x,y\}$. Thus Theorem \ref{nested_alg} is not true in the analytic setting. \\
 \\
\textbf{(3)} We can modify a little bit the previous example as follows.
 Let us define $\wdh{g}_1(x_1,x_2):=\sum_n(n+1)!x_1^n$ and $\wdh{g}_2(x_1,x_2):=\wdh{f}(x_1,x_2)$. By replacing $y_1$ by $x_1$, $y_2$ by $y$ and $x_3$ by $x_1ye^y$ in Equation (\ref{eq_gab}) we see that the equation
 \begin{equation}\label{eq_gab'}\wdh{g}_1(x_1,x_2)x_1ye^{y}+\wdh{g}_2(x_1,x_2)+(x_2-x_1y)\wdh{k}(x,y)-h(x_1,y)=0.\end{equation}
 has a nested formal solution 
 $$(\wdh g_1,\wdh g_2,\wdh k)\in\C\lb x_1,x_2\rb^2\times \C\lb x_1,x_2,y\rb$$
 but no nested convergent solution in $\C\{ x_1,x_2\}^2\times \C\{ x_1,x_2,y\}$.\\
  \end{example}
 
  Nevertheless there are, at least, three positive results about the nested approximation problem in the analytic category. We present them here.
    
  %%%%%%%%%%%%%%%%%%%%%%
  
  \subsubsection{Grauert  Theorem} \index{Grauert Theorem}
 The first one is due to H. Grauert who proved it in order to construct analytic deformations of a complex analytic  germ in the case where it has an isolated singularity. The approximation result of H. Grauert   may be reformulated as: "if a system of complex analytic equations, considered as a formal nested system, admits an Artin function (as in Problem 2) which is the Identity function, then it has nested analytic solutions". We present here the result.\\
Set $x:=(x_1,\ldots,x_n)$, $t:=(t_1,\ldots,t_l)$, $y=(y_1,\ldots,y_m)$ and $z:=(z_1,\ldots,z_p)$.
Let $f:=(f_1,\ldots,f_r)$ be in $ \C\{t,x,y,z\}^r$. Let $I$ be an ideal of $\C\{t\}$.

\begin{theorem}\label{Grauert_thm}\cite{Gr72}
Let $d_0\in\N$ and  $(\ovl{y}(t),\ovl{z}(t,x))\in\C[t]^{m}\times\C\{x\}[t]^p$ satisfy 
$$f(t,x,\ovl{y}(t),\ovl{z}(t,x))\in I+(t)^{d_0}.$$
Let us assume that for any $d\geq d_0$ and for any $(y^{(d)}(t),z^{(d)}(t,x))\in \C[t]^{m}\times\C\{x\}[t]^p$ such that, $\ovl{y}(t)-y^{(d)}(t)\in (t)^{d_0}$ et $\ovl{z}(t,x)-z^{(d)}(t,x)\in (t)^{d_0}$, and such that
$$f\left(t,x,y^{(d)}(t),z^{(d)}(t,x)\right)\in I+(t)^{d},$$
there exists $(\e(t),\eta(t,x))\in \C[t]^{m}\times\C\{x\}[t]^p$ homogeneous in $t$ of degree $d$ such that
$$f(t,x,y^{(d)}(t)+\e(t),z^{(d)}(t,x)+\eta(t,x))\in I+(t)^{d+1}.$$
Then there exists $(\wdt{y}(t),\wdt{z}(t,x))\in\C\{t\}^m\times\C\{t,x\}^p$ such that
$$f(t,x,\wdt{y}(t),\wdt{z}(t,x))\in I\ \text{ 
and }\ 
\wdt{y}(t)-\ovl{y}(t),\ \wdt{z}(t,x)-\ovl{z}(t,x)\in (t)^{d_0}.$$
\end{theorem}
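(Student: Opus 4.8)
The plan is to carry out a convergent construction "order by order in $t$": build the analytic solution $(\wdt y(t), \wdt z(t,x))$ as the limit of a sequence of polynomial-in-$t$ approximate solutions obtained by successive corrections, and then prove that the correction series converges. First I would reduce to the case $d_0$ being as large as we wish, since shrinking nothing is lost, and I would set $y^{(d_0)}(t) := \ovl y(t)$, $z^{(d_0)}(t,x) := \ovl z(t,x)$. The hypothesis then provides, inductively, for each $d\geq d_0$, a homogeneous-degree-$d$ correction $(\e^{(d)}(t),\eta^{(d)}(t,x))$ with
$$f\bigl(t,x, y^{(d)}+\e^{(d)}, z^{(d)}+\eta^{(d)}\bigr)\in I+(t)^{d+1},$$
and we set $y^{(d+1)} := y^{(d)}+\e^{(d)}$, $z^{(d+1)} := z^{(d)}+\eta^{(d)}$. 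Formally the limits $\wdt y := \ovl y+\sum_{d\geq d_0}\e^{(d)}$ and $\wdt z := \ovl z+\sum_{d\geq d_0}\eta^{(d)}$ are power series solving $f\in I$ and agreeing with the data mod $(t)^{d_0}$; the entire content is convergence.

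The key steps, in order: (1) Make the induction hypothesis uniform, i.e. extract from the qualitative hypothesis a quantitative one. This is where Grauert's technique enters: one must show the corrections $(\e^{(d)},\eta^{(d)})$ can be chosen with controlled size — concretely, with coefficients (in $\C\{x\}$ for $\eta$, in $\C$ for $\e$) whose norms on some fixed polydisc grow at most geometrically in $d$. The mechanism is linear: at stage $d$ the equation $f(t,x,y^{(d)}+\e,z^{(d)}+\eta)\in I+(t)^{d+1}$, after isolating the degree-$d$ part in $t$, becomes an inhomogeneous $\C\{x\}$-linear system for the coefficients of $(\e,\eta)$, whose right-hand side is the degree-$d$ obstruction term $f(t,x,y^{(d)},z^{(d)})\bmod (I+(t)^{d+1})$. (2) Control that linear system uniformly in $d$: the relevant module of relations / the image of the linear map stabilizes (Noetherianity of $\C\{x\}\{t\}$, Artin–Rees), so one gets a single bounded right inverse on a fixed polydisc, giving $\|\e^{(d)}\|,\|\eta^{(d)}\| \leq C\rho^{-d}\|\text{obstruction}_d\|$. (3) Feed this back: since the obstruction at stage $d+1$ is quadratic in the already-made corrections plus carries over the linear tail, a standard majorant / Banach fixed point estimate on the generating functions $\sum \|\e^{(d)}\| s^d$ shows geometric decay survives, so the series converge on a smaller polydisc. (4) Conclude $\wdt y\in\C\{t\}^m$, $\wdt z\in\C\{t,x\}^p$, $f(t,x,\wdt y,\wdt z)\in I$ (the ideal membership passes to the limit because $I$ is closed in the Krull, hence in the analytic, topology — or directly because the cofactors also converge), and the congruences mod $(t)^{d_0}$ hold by construction.

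I expect the main obstacle to be step (2): turning the order-by-order solvability into a genuinely uniform estimate with a polydisc-independent-of-$d$ bound. The subtlety is that the linear systems at successive stages have varying right-hand sides and a priori varying "inverses"; one must invoke the Noetherian/Artin–Rees stabilization (or Grauert's privileged neighborhoods / the openness of the map of free modules over $\C\{x\}\lb t\rb$) to secure a uniform bounded solution operator. Once that uniform linear bound is in hand, the nonlinear iteration is routine majorant-series bookkeeping. A secondary technical point is handling the auxiliary variable $x$ (the "nested" aspect): $\e$ depends only on $t$ while $\eta$ depends on $(t,x)$, so the linear algebra must be done over $\C\{x\}$ throughout and the norms must be sup-norms on a polydisc in $x$; this is exactly what forces the use of the analytic (Grauert) machinery rather than a purely algebraic Artin-style induction, and it is also why the statement requires the strong hypothesis that the formal obstruction can be killed at every order.
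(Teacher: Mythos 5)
Your outline is essentially the proof the paper has in mind: the paper gives no details but states that the main ingredient is Cartan's theorem on privileged neighborhoods (``voisinages privil\'egi\'es''), which is exactly the tool you invoke in your step (2) to get a bounded solution operator for the linearized system over $\C\{x\}$, uniformly in the degree $d$, after which the majorant/fixed-point bookkeeping and the Krull-intersection argument for $f(t,x,\wdt{y},\wdt{z})\in I$ are standard. So your route coincides with the one the paper sketches (and with the detailed treatment it cites in de Jong--Pfister).
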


The main ingredient of the proof is a result of Functional Analysis called "voisinages privil\'egi\'es" and proven by H. Cartan \cite[Th\'eor\`eme $\a$]{Ca}. We do not give the details here but the reader may consult \cite{JP}.\\
Let us also mention that a completely similar result for differential equations, called Cartan-K\"ahler Theorem, has been proven by B. Malgrange and its proof is based on the same tools used in the proof of Theorem \ref{Grauert_thm} (see the appendix of \cite{Mal}).

%%%%%%%%%%%%%%%%%%%%%%

  \subsubsection{Gabrielov  Theorem}
  The second positive result about the nested approximation problem in the analytic category is due to A. Gabrielov. Before giving his result, let us explain the context.\\
    Let $\phi :A\lgw B$ be a morphism of complex analytic algebras where $A:=\frac{\C\{x_1,\ldots,x_n\}}{I}$ and $B:=\frac{\C\{y_1,\ldots,y_m\}}{J}$ are analytic algebras. Let us denote by $\phi_i$ the image of $x_i$ by $\phi$ for $1\leq i\leq n$. Let us denote by $\wdh{\phi} : \wdh{A}\lgw \wdh{B}$ the morphism induced by $\phi$. A. Grothendieck \cite{Gro} and S. S. Abhyankar \cite{Ar71} raised the following question: Does $\Ker(\wdh{\phi})=\Ker(\phi).\wdh{A}$?\\
   Without loss of generality we may assume that $A$ and $B$ are regular, i.e. $A=\C\{x_1,\ldots,x_n\}$ and $B=\C\{y_1,\ldots,y_m\}$. \\
 In this case, an element of $\Ker(\phi)$ (resp. of $\Ker(\wdh{\phi})$) is called an \emph{analytic} (resp.  \index{formal relation}\emph{formal}) \emph{relation} \index{analytic relation}between $\phi_1(y),\ldots,\phi_m(y)$.  Hence the previous question is equivalent to the following: is any formal relation $\wdh{S}$ between $\phi_1(y),\ldots,\phi_n(y)$ a linear combination of analytic relations?\\
 This question is also equivalent to the following: may every formal relation between $\phi_1(y),\ldots,\phi_n(y)$ be approximated by analytic relations for the $(x)$-adic topology? In this form the problem is the "dual" problem to the Artin Approximation Problem.\\
  In fact this problem is also a nested approximation problem. Indeed let $\wdh{S}$ be a formal relation between $\phi_1(y),\ldots,\phi_n(y)$. This means that $\wdh{S}(\phi_1(y),\ldots,\phi_n(y))=0$. By Lemma \ref{Taylor}  this is equivalent to the existence of formal power series 
  $$\wdh{h}_1(x,y),\ldots,\wdh{h}_n(x,y)\in\C\llbracket x,y\rrbracket$$
   such that
  $$\wdh{S}(x_1,\ldots,x_n)-\sum_{i=1}^n(x_i-\phi_i(y))\wdh{h}_i(x,y)=0.$$
  Thus we see that the equation
  \begin{equation}\label{S_gab}S-\sum_{i=1}^n(x_i-\phi_i(y))H_i=0\end{equation}
  has a formal nested solution 
  $$(\wdh S(x),\wdh h_1(x,y),\ldots,\wdh h_n(x,y))\in\C\lb x\rb\times\C\lb x,y\rb^n.$$
  On the other hand if this equation has an analytic nested solution 
  $$(S(x),h_1(x,y),\ldots,h_n(x,y))\in\C\{x\}\times\C\{x,y\}^n,$$ this would provide an analytic relation between $\phi_1(y),\ldots,\phi_n(y)$:
  $$S(\phi_1(y),\ldots,\phi_n(y))=0.$$
  Example \ref{Gab_ex} yields a negative answer to this problem by modifying in the following way the example of Osgood (see Example \ref{Gab_ex}):
  
  \begin{example}\cite{Ga1}
 Let us consider now the morphism $$\psi\ :\ \C\{x_1,\,x_2,\,x_3,\,x_4\}\lgw \C\{y_1,\,y_2\}$$ defined by
$$\psi(x_1)=y_1,\ \ \psi(x_2)=y_1y_2,\ \ \psi(x_3)=y_1y_2e^{y_2},\ \ \psi(x_4)=h(y_1,\,y_2)$$
where $h$ is the convergent power series defined in Example \ref{Gab_ex}.\\
Let $\wdh g$ be the power series defined in Example \ref{Gab_ex}.
Then $x_4-\wdh{g}(x_1,\,x_2,\,x_3)\in\Ker(\wdh{\psi})$. On the other hand the morphism induced by $\wdh{\psi}$ on  $\C\llbracket x_1,\ldots,\,x_4\rrbracket/(x_4-\wdh{g}(x_1,\,x_2,\,x_3))$  is isomorphic to $\wdh{\phi}$ (where $\phi$ is the morphism of Example \ref{Gab_ex}) that is injective. Thus we have $\Ker(\wdh{\psi})=(x_4-\wdh{g}(x_1,\,x_2,\,x_3))$. \\
Since $\Ker(\psi)$ is a prime ideal of  $\C\{x\}$,  $\Ker(\psi)\C\llbracket x\rrbracket$ is a prime ideal of $\C\llbracket x\rrbracket$  included in $\Ker(\wdh{\psi})$ by Proposition \ref{int_domain}. Let us assume that $\Ker(\psi)\neq (0)$, then $\Ker(\psi)\C\llbracket x\rrbracket=\Ker(\wdh{\psi})$ since ht$(\Ker(\wdh{\psi}))=1$. Thus $\Ker(\wdh{\psi})$  is generated by one convergent power series  denoted by $f\in\C\{x_1,\ldots,\,x_4\}$ (in unique factorization domains, prime ideals of height one are principal ideals). Since $\Ker(\wdh{\psi})=(x_4-\wdh{g}(x_1,\,x_2,\,x_3))$, there exists $u(x)\in\C\llbracket x\rrbracket$, $u(0)\neq 0$, such that  $f=u(x).(x_4-\wdh{g}(x_1,\,x_2,\,x_3))$. By the uniqueness  of the decomposition given by the Weierstrass Preparation Theorem of $f$ with respect to $x_4$ we see that $u(x)$ and $x_4-\wdh{g}(x_1,\,x_2,\,x_3)$ must be convergent power series, which is impossible since  $\wdh{g}$ is a divergent power series. Hence $\Ker(\psi)=(0)$  but $\Ker(\wdh{\psi})\neq (0)$.
  \end{example}

Nevertheless    A. Gabrielov proved the following theorem: \index{Gabrielov Theorem}
  
  \begin{theorem}\label{Gab}\cite{Ga2}
  Let $\phi :A\lgw B$ be a morphism of complex analytic algebras.  Let us assume that the generic rank of the Jacobian matrix is equal to $\dim(\frac{A}{\Ker(\wdh{\phi})})$. Then $\Ker(\wdh{\phi})=\Ker(\phi).\wdh{A}$. In particular the equation \eqref{S_gab} satisfies the nested approximation property.
    \end{theorem}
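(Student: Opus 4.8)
The plan is to reduce the statement to the General N\'eron Desingularization Theorem (Theorem \ref{Popescu}) applied to the completion morphism of a suitable local ring, in the spirit of Theorem \ref{Pop_app} and its nested refinements. The key observation is that the hypothesis on the Jacobian rank is exactly what is needed to guarantee that the map $\frac{A}{\Ker(\phi)} \lgw \frac{\wdh{A}}{\Ker(\wdh\phi)}$ is a \emph{regular} morphism. First I would reduce, as indicated in the paragraph preceding Theorem \ref{Gab}, to the case where $A = \C\{x_1,\cdots,x_n\}$ and $B = \C\{y_1,\cdots,y_m\}$ are regular, so that $\phi$ is given by a vector $\phi(y) = (\phi_1(y),\cdots,\phi_n(y))$ of convergent power series vanishing at $0$, and $\Ker(\phi)$ is the ideal of analytic relations. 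Set $P := \Ker(\phi)$, a prime ideal of $A$, and $\wdh P := \Ker(\wdh\phi)$, a prime ideal of $\wdh A$ containing $P\wdh A$. Denote $C := \frac{A}{P}$ and note that $\wdh C = \frac{\wdh A}{P\wdh A}$; the claim $\Ker(\wdh\phi) = \Ker(\phi)\wdh A$ is precisely the assertion that $P\wdh A = \wdh P$, i.e. that the completion of $C$ (as a quotient of $A$) is reduced and has the ``same'' kernel, which follows if $C \lgw \frac{\wdh A}{\wdh P}$ is injective with the right image; in fact it suffices to show $P\wdh A$ is prime, and by Proposition \ref{int_domain} this holds once $C$ is an excellent Henselian local domain — but $C$ is not Henselian, so the argument must instead go through approximation directly.

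The core step I would carry out is the following. Let $\wdh S(x) \in \wdh P = \Ker(\wdh\phi)$; by Lemma \ref{Taylor}, $\wdh S(\phi(y)) = 0$ is equivalent to the existence of $\wdh h_1(x,y),\cdots,\wdh h_n(x,y) \in \C\lb x,y\rb$ with
$$\wdh S(x) - \sum_{i=1}^n (x_i - \phi_i(y))\wdh h_i(x,y) = 0,$$
that is, a formal nested solution of Equation \eqref{S_gab}, the nesting being: $S$ depends only on $x$, the $h_i$ depend on $(x,y)$. To approximate this by an analytic nested solution, I would interpret the system over the excellent Henselian local ring $\k\{x\}\langle\text{(nothing)}\rangle = \C\{x\}$ and use a nested-variables version of Artin Approximation in the analytic category. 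Here the hypothesis on the Jacobian rank enters: because the generic rank of $\left(\frac{\partial \phi_i}{\partial y_j}\right)$ equals $d := \dim\left(\frac{A}{\Ker(\wdh\phi)}\right) = \dim(C)$, the completion morphism $C \lgw \wdh C$ is regular (one checks the fibres are geometrically regular using that $C$ is excellent and the Jacobian criterion applies at the relevant primes — this is where the rank condition is used in an essential way, to control the singular locus of $C$ over $A$ via the ideal $H_{C/A}$ of Lemma \ref{smooth_loc}). Then General N\'eron Desingularization (Theorem \ref{Popescu}), applied to $A \lgw \wdh A$ (regular, by Example \ref{ex_excellent}) together with the $A$-morphism $C \lgw \wdh A/\wdh P$, or rather applied in the nested form to factor the formal solution of \eqref{S_gab} through a smooth $\C\{x\}$-algebra, yields the analytic relation after truncating the formal point of the smooth parametrization.

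The hard part will be making precise the ``nested'' bookkeeping: in Equation \eqref{S_gab} the unknown $S$ is constrained to depend only on $x$ while the $h_i$ may depend on all of $(x,y)$, and one must ensure the desingularization/approximation procedure respects this constraint. The right framework is Theorem \ref{nested_alg} (or its analytic analogue, which fails in general by Gabrielov's own Example \ref{Gab_ex}!) — so one cannot simply invoke a general nested approximation theorem; the point is that the Jacobian rank hypothesis is exactly the obstruction that Gabrielov's counterexample violates (there $\psi$ has generic Jacobian rank $2$ but $\dim(A/\Ker(\wdh\psi)) = 3$). Thus the real content is: under the rank condition, the formal relation module $\wdh P$ is generated over $\wdh A$ by the image of a finitely generated $A$-module, and this I would establish by combining the regularity of $C \lgw \wdh C$ with a careful analysis of $\Ker(\phi)$ as the kernel of a morphism between analytic algebras whose fibre dimension is controlled by $d$; concretely, one reduces via Weierstrass preparation (as in the proof of Theorem \ref{Ar68}) to a situation where the Jacobian minor of size $d$ is a unit after the appropriate localization, and then Proposition \ref{TougeronIFT}-type arguments promote the formal nested solution to an analytic one. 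I expect the technical heart — and the step most likely to require the full strength of \cite{Ga2} rather than a soft citation — to be the verification that the generic Jacobian rank condition forces $C \lgw \wdh C$ to be regular, equivalently that $\wdh P$ has no embedded behaviour and $\dim(\wdh A/\wdh P) = \dim(A/P)$.
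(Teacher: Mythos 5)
There is a genuine gap: the step you identify as the technical heart --- ``the generic Jacobian rank condition forces $C\lgw\wdh C$ to be regular'' --- is both automatic and insufficient, so your reduction collapses. Since $A=\C\{x\}$ is excellent and Henselian, so is every quotient $C=\frac{A}{\Ker(\phi)}$ (Example \ref{ex_excellent}, Proposition \ref{henz_exc}), hence $C\lgw\wdh C$ is regular for \emph{any} $\phi$, with no Jacobian hypothesis; likewise $\Ker(\phi)\wdh A$ is automatically prime by Proposition \ref{int_domain}. What is actually at stake is the dimension (height) equality: one always has $\Ker(\phi)\wdh A\subset\Ker(\wdh\phi)$ with both ideals prime, so the theorem amounts to $\dim\bigl(\frac{A}{\Ker(\phi)}\bigr)=\dim\bigl(\frac{A}{\Ker(\wdh\phi)}\bigr)$, i.e.\ to producing enough \emph{convergent} relations. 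No soft application of Theorem \ref{Popescu} or Theorem \ref{Pop_app} to $A\lgw\wdh A$ can deliver this, because the unknown $S$ in \eqref{S_gab} is constrained to depend only on $x$ while the $h_i$ depend on $(x,y)$: this nested constraint is exactly what general approximation does not respect in the analytic category (your own citation of Example \ref{Gab_ex}), Theorem \ref{nested_alg} is only available for algebraic power series, Theorem \ref{nested_ana} only when the inner variables reduce to one, and the Tougeron-type implicit function argument you invoke destroys constraints (Remark \ref{rmkAr68} iii). So the proposal never produces the missing convergent relations; it reformulates the statement and then appeals to tools that are either vacuous here or explicitly known to fail under the constraint.

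The paper's proof is of a completely different nature. One argues by contradiction: assuming $\operatorname{ht}(\Ker(\phi))<\operatorname{ht}(\Ker(\wdh\phi))$, a Bertini-type reduction brings the situation to $n=3$, $m=2$, $\phi$ injective, $\dim\bigl(\frac{\C\lb x\rb}{\Ker(\wdh\phi)}\bigr)=2$, and after coordinate changes and quadratic transforms one may take $\phi_1=y_1$, $\phi_2=y_1y_2$, so that $f:=\phi_3$ satisfies an irreducible Weierstrass relation $P(Z)=Z^d+\wdh a_1(x_1,x_2)Z^{d-1}+\cdots+\wdh a_d(x_1,x_2)$ with \emph{formal} coefficients; the whole content is to prove the $\wdh a_i$ are convergent. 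This is done by a genuinely analytic argument: a description of the algebraic closure of $\C((x_1,x_2))$ (resp.\ of $\Frac(\C\{x_1,x_2\})$) by series of homogeneous elements, resolution of the discriminant curve of $P(Z)$, the Abhyankar--Jung Theorem to control the roots over the exceptional divisor, analytic continuation of the coefficients along the divisor via the Maximum Principle, and finally Grauert's Direct Image Theorem to descend the resulting factor to a convergent polynomial dividing the irreducible $P(Z)$, forcing $P(Z)\in\C\{x\}[Z]$ and giving the contradiction. None of these ingredients (nor any substitute for them) appears in your proposal, and I see no way to complete it along the lines you sketch.
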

    
    \begin{remark}
 A morphism satisfying the hypothesis of Theorem \ref{Gab} is called \emph{regular} (but this notion of regularity has nothing to do with Definition \ref{regular}). The morphisms of analytic spaces being regular on stalks have been widely studied. The reader may consult \cite{B-M,B-M1,B-M3,Paw} for a study of global properties of regular morphisms and the relation with the composite functions problem in the    $\mathcal{C}^{\infty}$ case.   
    \end{remark}

    \begin{remark}
  Let $\phi : \C\{x_1,\ldots, x_n\}\lgw \C\{y_1,\ldots ,y_m\}$ be a morphism of complex analytic algebras.  Assume for simplicity that $\phi$ and $\wdh\phi$ are injective, and let us denote by $\phi_i(y)$ the image of $x_i$ for every $i$. By a Theorem of P. Eakin and G. Harris \cite{EH} if the generic rank of the Jacobian matrix is strictly less than $n$ then there exists a divergent formal series $\wdh g\in\C\lb x\rb\backslash \C\{x\}$ such that
  $$\wdh\phi(\wdh g)=h\in\C\{y\}.$$
  Since $\wdh\phi$ is injective $h$ is not the image of convergent power series. So, exactly as in  Example \ref{Gab_ex} (2), there exist $\wdh k_i(x,y)\in\C\lb x,y\rb$ for $1\leq i\leq n$ such that
  \begin{equation}\label{pp} 
  \wdh g(x)+\sum_{i=1}^n(x_i-\phi_i(y))\wdh k_i(x,y)=h(y)\end{equation}
  but there is no $g\in\C\{x\}$, $k_i(x,y)\in\C\{x,y\}$, $1\leq i\leq n$, such that
  $$  g(x)+\sum_{i=1}^n(x_i-\phi_i(y)) k_i(x,y)=h(y).$$
    In particular if we define $\phi$ (with $n=3$ and $m=2$) by
    $$\phi(x_1)=y_1,\ \phi(x_2)=y_1y_2,\ \phi(x_3)=y_1\xi(y_2)$$
    where $\xi(y_2)$ is a transcendental power series, exactly as done for Osgood's Example  \ref{Gab_ex}, $\phi$ and $\wdh\phi$ are injective, but the generic rank of the Jacobian matrix is 2 since $m=2$ so the preceding result of P. Eakin and G. Harris applies. This gives a systematic way to construct examples of (linear) equations having nested formal solutions but no convergent nested equations.
   \end{remark}
  
  \begin{proof}[Sketch of the proof of Theorem \ref{Gab}]
  We give a sketch of the proof given by J.-Cl. Tougeron \cite{To2}.
  As before we may assume that $A=\C\{x_1,\ldots,x_n\}$ and $B=\C\{y_1,\ldots,y_m\}$. Let us assume that $\Ker(\phi).\wdh{A}\not\subset \Ker(\wdh{\phi})$ (which is equivalent to ht$(\Ker(\phi))\leq$ ht$( \Ker(\wdh{\phi}))$ since both ideals are prime). Using a Bertini type theorem we may assume that $n=3$, $\phi$ is injective and  $\dim(\frac{\C\llbracket x\rrbracket}{\Ker(\wdh{\phi})})=2$ (in particular $\Ker(\wdh{\phi})$ is a principal ideal). Moreover, in this case we may assume that $m=2$. After a linear change of coordinates we may assume that $\Ker(\wdh{\phi})$ is generated by an irreducible Weierstrass polynomial of degree $d$ in $x_3$. Using changes of coordinates and quadratic transforms on $\C\{y_1,y_2\}$ and using changes of coordinates of $\C\{x\}$ involving only $x_1$ and $x_2$, we may assume that $\phi_1=y_1$ and $\phi_2=y_1y_2$. Let us define $f(y):=\phi_3(y)$. Then we have
 $$f(y)^d+\wdh{a}_1(y_1,y_1y_2)f(y)^{d-1}+\cdots +\wdh{a}_d(y_1,y_1y_2)=0$$
 for some $\wdh{a}_i(x)\in\C\llbracket x_1,x_2\rrbracket$, $1\leq i\leq d$.
  Then we want to prove that the $\wdh{a}_i$  may be chosen convergent in order to get a contradiction. Let us denote
  $$P(Z):=Z^d+\wdh{a}_1(x_1,x_2)Z^{d-1}+\cdots+\wdh{a}_d(x_1,x_2)\in\C\llbracket x\rrbracket [Z].$$
  By assumption  $P(Z)$ is irreducible since $\Ker(\wdh{\phi})$ is prime.
  J.-Cl. Tougeron studies the algebraic closure $\ovl{\K}$ of the field $\C(\!( x_1,x_2)\!)$. 
  Let consider the following valuation ring
  $$V:=\left\{\frac{f}{g}\ /\ f,g\in \C\lb x_1,x_2\rb, g\neq 0, \ord(f)\geq \ord(g)\right\},$$
  let $\wdh{V}$ be its completion and $\wdh{\K}$ the fraction field of $\wdh{V}$.
  J.-Cl. Tougeron proves that the algebraic extension $\K\lgw\ovl{\K}$ factors into $\K\lgw \K_1\lgw \ovl{\K}$ where
  $\K_1$ is a subfield of the following field
    \begin{equation*}\begin{split} \LL:=\left\{A\in\wdh{\K}\ / \ \exists \d,\ a_i\in \k[x]  \text{ is homogeneous } \forall i,\right.&\\
      \ord\left(\frac{a_i}{\d^{m(i)}}\right)= i, \ \exists a,b \text{ such that } m(i)\leq ai+b&\left.\ \forall i   \text{ and }A=\sum_{i=0}^{\infty}\frac{a_i}{\d^{m(i)}}\right\}.\end{split}\end{equation*}

   Moreover the algebraic extension $\K_1\lgw \ovl{\K}$ is the extension of $\K_1$ generated by all the roots of polynomials of the form
   $Z^q+g_1(x)Z^{q-1}+\cdots+g_q$ where $g_i\in\C(x)$ are homogeneous rational fractions of degree $ei$, $1\leq i\leq q$, for some integer $e\in\Q$. A root of such a polynomial is called a homogeneous element of degree $e$. For example, square roots of $x_1$ or of $x_1+x_2$ are homogeneous elements of degree $1/2$. We have $\ovl{\K}\cap \LL=\K_1$.\\
   In the same way he proves that the algebraic closure $\ovl{\K^{an}}$ of the field $\K^{an}$, the fraction field of $\C\{x_1,x_2\}$, can be factorized as $\K^{an}\lgw \K_1^{an}\lgw \ovl{\K^{an}}$ with $\K_1^{an}\subset \LL^{an}$
     where $$ \LL^{an}:=\left\{A\in\wdh{\K}\ / \ \exists \d, \ a_i\in \k[x]  \text{ is homogeneous }\forall i,\  \ord\left(\frac{a_i}{\d^{m(i)}}\right)= i, A= \sum_{i=0}^{\infty}\frac{a_i}{\d^{m(i)}}\hspace{5cm}\right.$$
$$\left. \hspace{1.2cm} \exists a,b \text{ such that } m(i)\leq ai+b\ \forall i \ \text{ and }\ \exists r>0 \text{ such that }  \ \sum_i||a_i||r^i<\infty \right\}$$
and $\displaystyle ||a(x)||:=\max_{|z_i|\leq 1}|a(z_1,z_2)|$ for a homogeneous polynomial $a(x)$.\\
Clearly, $\xi:=f(x_1,\frac{x_2}{x_1})$ is  an element of $\ovl{\K}$ since it is a root of $P(Z)$. Moreover $\xi$ may be written $\xi=\sum_{i=1}^q\xi_i\g^i$ where $\g$ is a homogenous element and $\xi_i\in\LL^{an}\cap \ovl{\K}$ for any $i$, i.e. $\xi\in\LL^{an}[\g]$. Thus the problem is to show that $\xi_i\in\K_1^{an}$ for any $i$, i.e. $\LL^{an}\cap \ovl{\K}=\K_1^{an}$.\\
Then the idea is to resolve, by a sequence of blowing-ups, the singularities of the discriminant locus of $P(Z)$ which is the germ of a plane curve. Let us call $\pi$ this resolution map. Then the discriminant of $\pi^*(P)(Z)$ is normal crossing and $\pi^*(P)(Z)$ defines a germ of surface along the exceptional divisor of $\pi$, denoted by $E$. Let $p$ be a point of  $E$. At this point $\pi^*(P)(Z)$ may factor as a product of polynomials in $Z$ with analytic coefficients, $\xi$ is a root of one of these factors denoted by $Q_1(Z)$ and this root is a germ of an analytic (multivalued) function at $p$. Then the other roots of $Q_1(Z)$ are also in  $\LL^{an}[\g']$ according to the Abhyankar-Jung Theorem \cite{P-R}, for some homogeneous element $\g'$. Thus the coefficients of  $Q_1(Z)$ are in $\LL^{an}$ and are analytic at $p$.\\
Then the idea is to use the special form of the elements of $\LL^{an}$ to prove that the coefficients of $Q_1(Z)$ may be extended as  analytic functions along the exceptional divisor $E$ (the main ingredient in this part is  the Maximum Principle). We can repeat the latter procedure at another point $p'$: we take the roots of $Q_1(Z)$  at $p'$ and using Abhyankar-Jung Theorem we construct new roots of $\pi^*(P)(Z)$ at $p'$ and the coefficients of $Q_2(Z):=\prod_i(Z-\s_i)$, where $\s_i$ runs over all these roots, are in $\LL^{an}$ and are analytic at $p'$ (notice that the decomposition of $P(Z)$ into irreducible factors at $p'$ may be completely different from its decomposition at $p$). Then we extend the coefficients of $Q_2(Z)$ everywhere along $E$. Since $\pi^*(P)(Z)$ has exactly $d$ roots, this process stops after a finite number of steps. The polynomial $Q(Z):=\prod(Z-\s_k)$, where the $\s_k$  are the roots of $\pi(P)(Z)$ that we have constructed, is a polynomial whose coefficients are analytic in a neighborhood of $E$ and it divides $\pi^*(P)(Z)$. Thus,  by Grauert  Direct Image Theorem, there exists a monic polynomial $R(Z)\in\C\{x\}[Z]$ such that $\pi^*(R)(Z)=Q(Z)$. Hence $R(Z)$ divides $P(Z)$, but since $P(Z)$ is irreducible and both are monic,  $P(Z)=R(Z)\in\C\{x\}[Z]$ and the result is proven.  \end{proof}

%%%%%%%%%%%%%%%%%%%%%%
  
   \subsubsection{One variable Nested Approximation}
 
 In the example of A. Gabrielov \ref{Gab_ex} (3) we can remark that there is only one nest and the nested part of the solutions depends on two variables $x_1$ and $x_2$. In the case this nested part depends only on one variable the nested approximation property is true. This is the following theorem that we state in the more general framework of Weierstrass systems:
 
  \begin{theorem}\label{nested_ana}  \cite[Theorem 5.1]{D-L}
Let $\k$ be a  field and  $\k\llceil x\rrceil$ be a W-system over $\k$. Let $t$ be one variable, $x=(x_1,\ldots,x_n)$, $y=(y_1,\ldots, y_{m+k})$, $f\in\k\llceil t,x,y\rrceil^r$. Let  $\wdh{y}_1,\ldots,\wdh{y}_m\in(t)\k\llbracket t\rrbracket  $ and $\wdh{y}_{m+1},\ldots,\wdh{y}_{m+k}\in(t,x)\k\llbracket t,x\rrbracket  $ satisfy 
$$f(t,x,\wdh{y})=0.$$
 Then for any $c\in\N$ there exist $\wdt{y}_1,\ldots,\wdt{y}_m\in(t)\k\llceil t\rrceil$, $\wdt{y}_{m+1}$,...., $\wdt{y}_{m+k}\in(t,x)\k\llceil t, x\rrceil$ such that 
 $$f(t,x,\wdt{y})=0 \text{ and }\wdh{y}-\wdt{y}\in (t,x)^c.$$
 
 \end{theorem}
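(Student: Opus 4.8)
The plan is to imitate the proof of Theorem \ref{Ar68} (equivalently of Theorem \ref{W}), running an induction on $n$, the number of $x$-variables, and to make sure that every Weierstrass preparation and division is performed with respect to one of the variables $x_1,\ldots,x_n$, never with respect to $t$. Leaving $t$ untouched is the whole point: it is what preserves the constraint that $y_1,\ldots,y_m$ depend on $t$ alone. For the base case $n=0$ all unknowns are allowed to involve only $t$, and the statement is precisely Theorem \ref{W} applied to the given W-system with the single variable $t$ in the role of $x$: the formal solution $\wdh y\in (t)\wdh\k\llbracket t\rrbracket^{m+k}$ of $f(t,\wdh y)=0$ is approximated, for each $c$, by some $\wdt y\in (t)\k\llceil t\rrceil^{m+k}$ with $f(t,\wdt y)=0$ and $\wdt y\equiv\wdh y$ modulo $(t)^c$.

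For the inductive step I would first carry out the standard reductions from the proof of Theorem \ref{Ar68}: replace the ideal generated by $f_1,\ldots,f_r$ in $\k\llceil t,x,y\rrceil$ by the kernel $I$ of the substitution morphism $y\mapsto\wdh y$, and use the Jacobian criterion --- available because the members of a W-system are regular and excellent, with properties v)--vi) of Definition \ref{W-sys} taking care of positive characteristic --- to reduce to the case $r=h:=\mathrm{ht}(I)$ together with an $h\times h$ minor $\delta$ of $\partial(f_1,\ldots,f_h)/\partial y$ such that $\delta(t,x,\wdh y)\neq 0$. Here the hypothesis enters: since $\wdh y_i$ depends on $t$ only for $i\le m$, one has $\partial\wdh y_i/\partial x_j=0$, and differentiating $f_i(t,x,\wdh y)=0$ in $x_j$ shows that the columns of $\partial f/\partial x$ evaluated at $\wdh y$ lie in the span of the columns of $\partial f/\partial(y_{m+1},\ldots,y_{m+k})$; this is what will eventually let me solve, by the implicit function theorem, for a block of the $y'':=(y_{m+1},\ldots,y_{m+k})$ coordinates while leaving the nested block $y_1,\ldots,y_m$ fixed.

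The core of the induction is a nested analogue of Lemma \ref{lemma1}: taking $g:=\delta^2$ and knowing $f_i(t,x,\wdh y)\in(g(t,x,\wdh y))$, one must produce $\ovl y$ obeying the same nesting constraints, close to $\wdh y$ in the $(t,x)$-adic topology, with $f_i(t,x,\ovl y)\in(g(t,x,\ovl y))$. I would prove this exactly as Lemma \ref{lemma1}: truncate $\wdh y$ if $g(t,x,\wdh y)$ is a unit --- truncations respect the nesting because truncating a series in $t$ alone gives a series in $t$ alone --- and otherwise put $g(t,x,\wdh y)$ in Weierstrass form with respect to $x_n$, Weierstrass-divide the $\wdh y_j$ by that polynomial, and feed the resulting coefficient equations to the inductive hypothesis in the $n-1$ variables $x'=(x_1,\ldots,x_{n-1})$; the nested block is simply carried along because $t$ never participated in these operations. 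One then finishes as in the proof of Theorem \ref{Ar68}, via the Tougeron Implicit Function Theorem (Proposition \ref{TougeronIFT2}; $\k\llceil t,x\rrceil$ is Henselian by Proposition \ref{prop_W}), applied so that only an $h$-element block of $y''$ moves.

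The hard part will be the one step with no counterpart in the unconstrained proof: putting $g(t,x,\wdh y)$ in Weierstrass form while performing a linear change of $x_1,\ldots,x_n$ only (a change touching $t$ is forbidden). This is impossible exactly when the lowest-degree form of $g(t,x,\wdh y)$ is divisible by $t$, i.e. when $g(t,x,\wdh y)$ is regular with respect to $t$; that case needs a separate treatment, e.g. restricting the whole system to $x=0$ to get a one-variable problem over $\k\llceil t\rrceil$, solving it by the base case, and then recovering the $x$-dependence of the $y''$-block by the implicit function theorem --- equivalently, splitting off the equations that only pin down the nested variables from those that genuinely involve $x$. Organizing this case distinction and the attendant bookkeeping is the real work; it is done in Theorem 5.1 of \cite{D-L}.
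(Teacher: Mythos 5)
There is a genuine gap, and it sits exactly where you locate the difficulty. To run Artin's induction on $n$ without ever touching $t$, you must put $g(t,x,\wdh y)=\d^2(t,x,\wdh y)$ into Weierstrass form with respect to $x_n$ after a linear change of the variables $x_1,\cdots,x_n$ only; this is possible precisely when $g(t,x,\wdh y)\notin (t)\k\lb t,x\rb$, not, as you write, when the lowest-degree form of $g(t,x,\wdh y)$ is not divisible by $t$ (the series $t^2+x_n^3$ has lowest form $t^2$ and is nevertheless $x_n$-regular). In the bad case $g(t,x,\wdh y)\in(t)$ your text only gestures at restricting the system to $x=0$ and ``recovering the $x$-dependence by the implicit function theorem'', and then defers to Theorem 5.1 of \cite{D-L} --- but that is the statement you are supposed to prove, so the appeal is circular and the case carrying all the difficulty is simply missing. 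A second unjustified step: you want the Tougeron Implicit Function Theorem (Proposition \ref{TougeronIFT2}) to move only the block $y'':=(y_{m+1},\cdots,y_{m+k})$, and you argue that the columns of $\partial f/\partial x$ evaluated at $\wdh y$ lie in the span of the columns of $\partial f/\partial y''$. That observation does not show that a nonvanishing $h\times h$ minor of $\partial(f_1,\cdots,f_h)/\partial y$ can be chosen among the $y''$-columns; if every nonvanishing minor involves the nested columns $\partial f/\partial(y_1,\cdots,y_m)$ (which happens as soon as the equations genuinely pin down some of the $y_i$, $i\leq m$), the correction term $\d\cdot z$ with $z\in\k\llceil t,x\rrceil^h$ would introduce $x$-dependence into the nested unknowns and destroy the constraint. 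This is precisely the obstruction pointed out in Remark \ref{rmkAr68} iii), so it cannot be dismissed in one clause.

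For comparison, the paper's proof avoids the induction on $n$ altogether. It introduces the ring $\k\lb t\rb[\langle x\rangle]$ of series of the form $g(z_1(t),\cdots,z_s(t),x)$ with $g\in\k\llceil z,x\rrceil$ and $z_i(t)\in(t)\k\lb t\rb$, notes that these rings form a W-system over $\k\lb t\rb$, and applies Theorem \ref{W} to $f(t,x,\wdh y_1,\cdots,\wdh y_m,y_{m+1},\cdots,y_{m+k})=0$ so as to replace $\wdh y_{m+1},\cdots,\wdh y_{m+k}$ by solutions $\ovl y_i=\sum_{\a}h_{i,\a}(\wdh z)x^{\a}$ of this special shape. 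Expanding $f$ in powers of $x$ then yields equations $G_{\a}(t,y_1,\cdots,y_m,z)=0$ in the single variable $t$, of which finitely many suffice by Noetherianity, and the one-variable instance of Theorem \ref{W} over $\k\llceil t\rrceil$ produces $\wdt y_1,\cdots,\wdt y_m,\wdt z$; one finally sets $\wdt y_i:=\sum_{\a}h_{i,\a}(\wdt z)x^{\a}$ for $i>m$. If you wish to salvage your Weierstrass-induction route, you must supply exactly the analysis you postponed: the case $g(t,x,\wdh y)\in(t)$ and the possibility that the relevant minor meets the nested block; as it stands the proposal is a plan, not a proof.
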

 
 \begin{example}
 The main example  is the case where $\k$ is a valued field and $\k\llceil x\rrceil$ is the ring of convergent power series over $\k$. When $\k=\C$ this statement is already mentioned as a known result in \cite{Ga1} without any proof or reference.\\
 But even for algebraic power series this statement is interesting since its proof is really easier and more effective than Theorem \ref{nested_alg}. 
 \end{example}
 
 \begin{proof}
 The proof is very similar to the  proof of Theorem \ref{nested_alg}.\\
Set $u:=(u_1,\ldots,u_j)$, $j\in\N$ and set 
 \begin{equation*}\begin{split}\k\llbracket t\rrbracket [\langle u\rangle]&:=\{f(t,u)\in\k\lb t,u\rb\ / 
 \exists s\in\N,\ g(z_1,\ldots,z_s,u)\in\k\llceil z,u\rrceil,\\
 & z_1(t),\ldots, z_s(t)\in(t)\k\lb t\rb \text{ such that } f(t,u)=g(z_1(t),\ldots, z_s(t),u) \}.\end{split}\end{equation*}
 The rings $\k\lb t\rb [\langle u\rangle ]$ form a $W$-system over $\k\lb t\rb$  \cite[Lemma 52]{D-L} (it is straightforward to check it since $\k\llceil x\rrceil$ is a $W$-system over $\k$ - in particular, if char$(\k)>0$, vi) of Definition \ref{W-sys} is satisfied since v) of Definition \ref{W-sys} is satisfied for $\k\llceil x\rrceil$). By Theorem \ref{W} applied to 
 $$f(t,\wdh{y}_1,\ldots,\wdh{y}_m, y_{m+1},\ldots,y_{m+k})=0$$ 
 there exist $\ovl{y}_{m+1},\ldots,\ovl{y}_{m+k}\in\k\lb t\rb [\langle x\rangle ]$ such that 
 $$f(t, \wdh{y}_1,\ldots,\wdh{y}_m,\ovl{y}_{m+1},\ldots,\ovl{y}_{m+k})=0$$ 
 and $\ovl{y}_i-\wdh{y}_i\in (t,x)^c$ for $m<i\leq m+k$.\\
  Let  us write 
  $$\displaystyle \ovl{y}_i=\sum_{\a\in\N^n} h_{i,\a}(\wdh{z})x^{\a}$$
   with $\displaystyle \sum_{\a\in\N^n} h_{i,\a}(z)x^{\a}\in\k\llceil z,x\rrceil$, $z=(z_1,\ldots,z_s)$ is a vector of new variables  and $\wdh{z}=(\wdh{z}_1,\ldots,\wdh{z}_s)\in\k\lb t\rb^s$. We can write
\begin{equation*}\begin{split}
f\left(t,x,y_1,\ldots,y_m, \sum_{\a}h_{m+1,\a}(z)x^{\a},\ldots,  \sum_{\a}h_{m+k,\a}(z)x^{\a} \right)&=\\
=\sum_{\a}G_{\a}(t,y_1,\ldots,&y_m,z)x^{\a}\end{split}\end{equation*}
 where $G_{\a}(t,y_1,\ldots,y_m,z)\in\k\llceil t,y_1,\ldots,y_m,z\rrceil$ for all $\a\in\N^n$. Thus 
 $$(\wdh{y}_1,\ldots,\wdh{y}_m,\wdh{z}_1,\ldots,\wdh{z}_s)\in\k\lb t\rb^{m+s}$$ is a solution of the equations $G_{\a}=0$ for all $\a\in\N^n$. Since $\k\llceil t,y_1,\ldots,y_m,z\rrceil$ is Noetherian, this system of equations is equivalent to a finite system 
 $G_{\a}=0$ with $\a\in E$ where $E$ is a finite subset of $\N^n$. Thus by Theorem \ref{W} applied to the system $G_{\a}(t,y_1,\ldots,y_m,z)=0$, $\a\in E$, there exist $\wdt{y}_1,\ldots,\wdt{y}_m$, $\wdt{z}_1,\ldots,\wdt{z}_s\in\k\llceil t\rrceil$ such that $\wdt{y}_i-\wdh{y}_i$, $\wdt{z}_j-\wdh{z}_j\in (t)^c$, for $1\leq i\leq m$ and $1\leq j\leq s$, and $G_{\a}(t, \wdt{y}_1,\ldots,\wdt{y}_m,\wdt{z})=0$ for all $\a\in E$, thus $G_{\a}(t,\wdt{y}_1,\ldots,\wdt{y}_m,\wdt{z})=0$ for all $\a\in\N^n$.\\
 Set $\displaystyle \wdt{y}_i=\sum_{\a\in\N^n}h_{i,\a}(\wdt{z})x^{\a}$ for $m<i\leq m+k$. Then $\wdt{y}_1,\ldots,\wdt{y}_{m+k}$ satisfy the conclusion of the theorem.

 \end{proof}

 \begin{remark}
 The proof of this theorem uses in an essential way the Weierstrass Division Property (in order to show that $\k\llbracket t\rrbracket [\langle u\rangle]$ is a Noetherian local ring, which is the main condition to use Theorem \ref{Pop_app}. The Henselian and excellent conditions may be deduced quite easily from the Weierstrass Division Property).\\
  On the other hand the Weierstrass Division Property (at least in dimension 2) is necessary to obtain this theorem. Indeed if $\k\llceil x\rrceil$ is a family of rings satisfying Theorem \ref{nested_ana} and $f(t,y)\in\k\llceil t,y\rrceil$ is $y$-regular of order $d$ ($t$ and $y$ being single variables) and $g(t,y)\in\k\llceil t,y\rrceil$ is another series, by the Weierstrass Division Theorem for formal power series we can write  in a unique way
 $$g(t,y)=\wdh{q}(t,y)f(t,y)+\wdh{r}_0(t)+\wdh{r}_1(t)y+\cdots+\wdh{r}_{d-1}(t)y^{d-1}$$
 where  $\wdh{q}(t,y)\in\k\lb t,y\rb$ and $\wdh{r}_i(t)\in\k\lb t\rb$ for all $i$. Thus by Theorem \ref{nested_ana}, $\wdh{q}(t,y)\in\k\llceil t,y\rrceil$ and $\wdh{r}_i(t)\in\k\llceil t\rrceil$ for all $i$. This means that $\k\llceil t,y\rrceil$ satisfies the Weierstrass Division Theorem.\\
 \\ 
  \index{Denjoy-Carleman function germs} For example let $C_n\subset \k\lb x_1,\ldots,x_n\rb$ be the ring of  germs of $\k$-valued Denjoy-Carleman functions defined at the origin of $\R^n$, where $\k=\R$ or $\C$. One can look at   \cite{Th} for the precise definitions and properties of these rings. Roughly speaking these are  germs of $\k$-valued $\mathcal{C}^{\infty}$-functions whose derivatives at each point of a neighborhood of the origin satisfy inequalities of the form \eqref{mouze} in Remark \ref{mouze'} for some given logarithmically convex sequence of positive numbers $(m_k)_k$, but we need to require additional properties on $(m_k)_k$  in order to insure that these classes of functions are quasi-analytic, i.e. such that the Taylor map is injective. For a given logarithmically convex sequence $\underline m=(m_k)_k$ we denote by  $C_n(\underline m)$ these rings of function germs.\\
   If $\k\{x_1,\ldots,x_n\}\not\subset C_n(\underline m)$ it is still an open problem to know if $C_n(\underline m)$ is Noetherian or not for $n\geq 2$ ($C_1(\underline m)$ is always a discrete valuation ring, thus it is Noetherian). These rings have similar properties to the Weierstrass systems: these are Henselian local rings whose maximal ideal is generated by $x_1,\ldots,x_n$, the completion of $C_n(\underline m)$ is $\k\lb x_1,\ldots,x_n\rb$, for every $n$ $C_n(\underline m)$ is  stable by partial derivatives, by division by coordinates functions or by composition. The only difference with Weierstrass systems is that $C_n(\underline m)$ does not satisfy the  Weierstrass Division Theorem. \\
   For instance, there exist $f\in C_1(\underline m)$ and $\wdh{g}\in\k\llbracket t\rrbracket \backslash C_1(\underline m)$ such that $f(t)=\wdh{g}(t^2)$ (see the proof of  \cite[Proposition 2]{Th}). This implies that
 \begin{equation}\label{eqDC}f(t)=(t^2-y)\wdh{h}(t,y)+\wdh{g}(y)\end{equation}
for some formal power series $\wdh{h}(t,y)\in\k\llbracket t,y\rrbracket$, but Equation (\ref{eqDC}) has no nested solution  $(g,h)\in C_1(\underline m)\times C_2(\underline m)$. \\
 On the other hand if the rings $C_n(\underline m)$ were Noetherian, since their completions are regular local rings they would be regular. Then using Example \ref{ex_excellent} iii) we see that they would be excellent (see also \cite{ElKh}). Thus these rings would satisfy Theorem \ref{Pop_app} but they do not satisfy Theorem \ref{nested_ana} since Equation \eqref{eqDC} has no solutions in $C_1(\underline m)\times C_2(\underline m)$.
  \end{remark}

 %%%%%%%%%%%%%%%%%%%%%%%%%%%%%%%%
 \subsection{Other examples of approximation with constraints}
 \subsubsection{Some examples}
 We present here some examples of positive or negative answers to Problems 1 and 2 in several contexts.
  \begin{example}[Cauchy-Riemann equations]\cite{Mi2}\index{Cauchy-Riemann equations}
 P. Milman proved the following theorem:
 \begin{theorem}\label{CR_app}
 
 Let $f\in\C\{x,y,u,v\}^r$ where $x:=(x_1,\ldots,x_n)$, $y=(y_1,\ldots,y_n)$, $u:=(u_1,\ldots, u_m)$, $v:=(v_1,\ldots,v_m)$. Then the set of convergent solutions of the following system:
 \begin{equation}\label{CR}\left\{\begin{aligned}&f(x,y,u(x,y),v(x,y))=0\\
 &\frac{\partial u_k}{\partial x_j}(x,y)-\frac{\partial v_k}{\partial y_j}(x,y)=0\\
  &\frac{\partial v_k}{\partial x_j}(x,y)+\frac{\partial u_k}{\partial y_j}(x,y)=0
  \end{aligned}\right.
  \end{equation}
 is dense (for the $(x,y)$-adic topology) in the set of formal solutions of this system.
  \end{theorem}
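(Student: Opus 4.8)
\textbf{Proof strategy for Theorem \ref{CR_app}.}

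The plan is to reduce the Cauchy-Riemann constrained system to a nested Artin approximation problem to which Theorem \ref{nested_alg} (or rather its analytic variant via Gabrielov-type arguments) does not directly apply, so instead I would exploit the holomorphic structure hidden in the system. The key observation is that a solution $(u(x,y),v(x,y))$ of the last two (Cauchy-Riemann) families of equations is exactly the real and imaginary parts of a holomorphic map: setting $w_k := u_k + \mathrm{i} v_k$ and $z_j := x_j + \mathrm{i} y_j$, the Cauchy-Riemann equations say precisely that each $w_k$ is a formal (resp. convergent) power series in the variables $z = (z_1,\cdots,z_n)$ alone, with $\ovl{w}_k$ a series in $\ovl{z} = (\ovl z_1,\cdots,\ovl z_n)$ only. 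Thus the unknowns $u_k, v_k$, which a priori are series in the $2n$ real variables $(x,y)$, are constrained to be of the form $w_k(z)$ with $w_k$ holomorphic. First I would perform this complexification change of coordinates, rewriting $\C\{x,y\} = \C\{z,\ovl z\}$ and expressing $f$ accordingly; the constraint becomes: $u_k, v_k$ depend only on $z$ through a single holomorphic series $w_k(z)$ (and a conjugate series $\ovl{w_k}(\ovl z)$, which is determined by $w_k$).

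Next I would feed the first family $f(x,y,u,v)=0$ together with these constraints into a nested approximation framework. Concretely, writing $w_k = w_k(z)$ one gets a system of analytic equations in which the unknown series $w_k$ are required to depend only on the sub-tuple of variables $z$ (and separately $\ovl{w}_k$ only on $\ovl z$); this is exactly a constrained system in the sense of Problem 1, with the index sets $J_k$ chosen to pick out the $z$-variables for the $w_k$ and the $\ovl z$-variables for the $\ovl w_k$. Since the $z$ and $\ovl z$ variables are \emph{nested} neither inside one another in the naive sense, I would instead observe that the specific structure here — the $w_k$ genuinely holomorphic, with $\ovl{w}_k$ being the formal conjugate — allows one to apply Theorem \ref{Gab} of Gabrielov (or a careful direct argument in the style of Milman's original paper): the point is that the formal solution $\wdh w(z)$ defines a morphism of analytic algebras whose generic Jacobian rank can be controlled, so that the kernel behaves well and the formal solution can be approximated by convergent holomorphic $\wdt w(z)$. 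Then $\wdt u_k = \operatorname{Re}\wdt w_k$, $\wdt v_k = \operatorname{Im}\wdt w_k$ automatically satisfy the Cauchy-Riemann equations, and by continuity of $f$ under the $(x,y)$-adic (equivalently $(z,\ovl z)$-adic) topology the truncation $\wdt w_k \equiv \wdh w_k \bmod (z)^c$ yields $f(x,y,\wdt u,\wdt v)=0$ up to the required order; a final application of the Tougeron Implicit Function Theorem \ref{TougeronIFT} (after reducing to the smooth part via the Jacobian criterion, exactly as in the proof of Theorem \ref{Ar68}) upgrades this to an exact convergent solution with the prescribed agreement.

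The main obstacle — and the reason this is a genuine theorem rather than a corollary of Theorem \ref{Ar68} — is precisely that the Cauchy-Riemann constraint is not preserved by the manipulations in the proof of Theorem \ref{Ar68}: the Weierstrass preparation step requires a linear change of coordinates in $(x,y)$ that destroys the holomorphy constraint (cf. Remark \ref{rmkAr68} iii)), and the Tougeron Implicit Function Theorem applied to the full real system may produce solutions that are not holomorphic. So the heart of the proof must be an argument that stays inside the ring $\C\{z\}$ of holomorphic series while approximating, which is why one cannot avoid invoking either Gabrielov's theorem or Milman's own delicate induction. I would therefore organize the proof so that the nested/holomorphic approximation is carried out \emph{before} any preparation or implicit-function step, reducing $f$ modulo the ideal generated by the Cauchy-Riemann operators to a system purely in the holomorphic unknowns $w_k(z)$, then invoking Theorem \ref{nested_ana} in the one-variable nested case or Theorem \ref{Gab} in general, and only afterwards passing back to the real picture. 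The subtle point to check carefully is that the ideal-theoretic reduction (expressing $\partial u_k/\partial x_j - \partial v_k/\partial y_j$ etc.\ in the style of Example \ref{diff} via Taylor expansion, so that the differential constraints become algebraic constraints among auxiliary series) does not enlarge the solution set in a way that breaks the density statement; this is exactly where Lemma \ref{Taylor} is indispensable.
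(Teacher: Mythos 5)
Your first step (complexifying, $z=x+iy$, $w=u+iv$, and reading the Cauchy--Riemann equations as the statement that $w$ involves only $z$) is exactly how the paper's proof begins, and you correctly identify why Theorem \ref{Ar68} cannot be applied blindly. But the core of your argument has a genuine gap: you replace the actual key step by an appeal to Gabrielov's Theorem \ref{Gab}, asserting that ``the generic Jacobian rank can be controlled'' for the morphism defined by the formal solution $\wdh w(z)$. Nothing forces this rank hypothesis to hold --- it is a restrictive condition on the morphism, it fails in general (Gabrielov's own Example \ref{Gab_ex} lives precisely in the regime where it fails), and you give no argument for it here. Moreover, even if it held, the conclusion of Theorem \ref{Gab} (that $\Ker(\wdh\psi)=\Ker(\psi)\wdh A$) is not by itself the statement you need: the unknowns $w$ and $\ovl w$ depend on the disjoint variable sets $z$ and $\ovl z$, so this is a genuinely non-nested constrained problem, and neither Theorem \ref{nested_alg} nor Theorem \ref{nested_ana} applies --- which you acknowledge, but your fallback (``a careful direct argument in the style of Milman's original paper'') is precisely the content that is missing.

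What the paper does instead is the following. Consider the two morphisms $\phi:\C\{z,\ovl z,w,\ovl w\}\lgw\C\lb z,\ovl z\rb$ and $\psi:\C\{z,w\}\lgw\C\lb z\rb$ induced by substituting $\wdh w(z)$ (and its conjugate series) for $w$ (and $\ovl w$). The rewritten equation $f$ lies in $\Ker(\phi)$, and Milman's theorem is the identity
$$\Ker(\phi)=\Ker(\psi)\,\C\{z,\ovl z,w,\ovl w\}+\ovl{\Ker(\psi)}\,\C\{z,\ovl z,w,\ovl w\},$$
i.e.\ the mixed kernel is generated by the purely holomorphic kernel and its conjugate. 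Once this is known, the constrained problem collapses to an unconstrained one: $\wdh w(z)$ is a formal solution of the generators of the ideal $\Ker(\psi)\subset\C\{z,w\}$, so the ordinary Theorem \ref{Ar68} produces a convergent $\wdt w(z)$ annihilating all of $\Ker(\psi)$ and agreeing with $\wdh w$ to order $c$; its real and imaginary parts then satisfy both $f=0$ exactly (by the displayed identity) and the Cauchy--Riemann equations. In particular no truncation/Tougeron Implicit Function Theorem step is needed at the end --- and your proposed use of Proposition \ref{TougeronIFT} on the full real system would in any case risk destroying the holomorphy constraint, the very obstacle you flagged at the outset. A smaller imprecision: over $\C$ the series $u,v$ are not real, so $u-iv$ is an independent anti-holomorphic unknown rather than being ``determined by'' $u+iv$; the constraint is two-sided, which is exactly why the structure of $\Ker(\phi)$, not a rank condition, is the crux.
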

 \end{example}
 \begin{proof}[Hints on the proof]
 
 Let $(\wdh{u}(x,y),\wdh{v}(x,y))\in\C\llbracket x,y\rrbracket^{2m}$ be a solution of (\ref{CR}). Let us set $z:=x+iy$ and $w:=u+iv$. In this case the Cauchy-Riemann equations of (\ref{CR}) are equivalent to saying that $\wdh{w}(z,\ovl{z}):=\wdh{u}(x,y)+i\wdh{v}(x,y)$ does not depend on $\ovl{z}$. Let $\phi : \C\{z,\ovl{z},w,\ovl{w}\}\lgw \C\llbracket z,\ovl{z}\rrbracket$ and $\psi:\C\{z,w\}\lgw\C\llbracket z\rrbracket$ be the morphisms defined by
 $$\phi(h(z,\ovl{z},w,\ovl{w})):=h(z,\ovl{z},\wdh{w}(z),\ovl{\wdh{w}(z)})\ \text{ and } \ \psi(h(z,w)):=h(z,\wdh{w}(z)).$$
 Then
 $$f\left(\frac{z+\ovl z}{2}, \frac{z-\ovl z}{2i},\frac{w+\ovl w}{2},\frac{w-\ovl w}{2i}\right)\in \Ker(\phi).$$
 Milman proved that 
 $$\Ker(\phi)=\Ker(\psi).\C\{z,\ovl{z},w,\ovl{w}\}+\ovl{\Ker(\psi)}.\C\{z,\ovl{z},w,\ovl{w}\}.$$
 Since $\Ker(\psi)$ (as an ideal of $\C\{z,w\}$) satisfies Theorem \ref{Ar68}, the result follows.
  \end{proof}
 This proof does not give the existence of an Artin function for this kind of system, since the proof consists in reducing Theorem \ref{CR_app} to Theorem \ref{Ar68}, and this reduction depends on the formal solution of (\ref{CR}).  Nevertheless in \cite{Hi-Ro}, it is proven that such a system admits an Artin function using ultraproducts methods. The survey \cite{Mir} is a good introduction for applications of Artin Approximation in CR geometry.
 
 \begin{example}[Approximation of equivariant solutions]\cite{BM79} Let $G$ be a reductive algebraic group. Suppose that $G$ acts linearly on $\C^n$ and $\C^m$. We say that $y(x)\in\C\llbracket x\rrbracket^m$ is equivariant if $y(\s x)=\s y(x)$ for all $\s\in G$. E. Bierstone and P. Milman proved that, in Theorem \ref{Ar68}, the constraint for the solutions of being equivariant may be preserved for convergent solutions:
 
   \begin{theorem}\cite{BM79}\index{equivariant Artin approximation}
 Let $f(x,y)\in\C\{x,y\}^r$. Then the set of equivariant convergent solutions of $f=0$ is dense in the set of equivariant formal solutions of $f=0$ for the $(x)$-adic topology.\\
 This result remains true is we replace $\C$ (resp. $\C\{x\}$ and $\C\{x,y\}$) by any field of characteristic zero $\k$ (resp. $\k\langle x\rangle$ and $\k\langle x,y\rangle$).
 \end{theorem}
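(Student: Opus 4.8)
The plan is to reduce the equivariant approximation statement to the ordinary Artin Approximation Theorem \ref{Ar68} by passing to invariant-theoretic coordinates. Since $G$ is reductive, the invariant rings $\C[\C^n]^G$ and $\C[\C^m]^G$ are finitely generated: choose generators $p_1,\dots,p_a$ of $\C[x_1,\dots,x_n]^G$ and $q_1,\dots,q_b$ of the module of $G$-equivariant polynomial maps $\C^n\to\C^m$ over $\C[x]^G$ (this module is finitely generated, again by reductivity — the equivariants form a finite module over the invariants). The key classical fact I would invoke is that every $G$-equivariant formal power series map $\wdh y(x)\in\C\lb x\rb^m$ can be written as $\wdh y(x)=\sum_{j=1}^b \wdh c_j(p(x))\, q_j(x)$ for some formal power series $\wdh c_j\in\C\lb s_1,\dots,s_a\rb$ in new variables $s=(s_1,\dots,s_a)$, and conversely any such expression yields an equivariant map. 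This is the equivariant analogue of the Schwarz–Mather-type theorem for formal/analytic invariants; one also needs that an equivariant \emph{convergent} map is of the same shape with convergent $c_j$, which follows because $\C\{x\}$ is module-finite-friendly and $p$ is a finite map near $0$.

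The next step is to rewrite the equation $f(x,y)=0$ in these coordinates. Substituting $y=\sum_j c_j(p(x))q_j(x)$ into $f$ gives $f\big(x,\sum_j c_j(p(x))q_j(x)\big)$, which by the Reynolds operator (averaging over $G$) we may replace, without changing its vanishing, by its $G$-invariant part; an invariant power series in $x$ is a convergent power series in the $p_i$, so there is a convergent vector $F(s,c)\in\C\{s,c\}^{r'}$ (with $c=(c_1,\dots,c_b)$) such that $f\big(x,\sum_j c_j(p(x))q_j(x)\big)=0$ if and only if $F(p(x),c(p(x)))=0$, and in fact if and only if $F(s,c(s))=0$ as an identity in $\C\lb s\rb$ — here one uses that $p:\C^n\to\C^a$ is dominant so that a relation holding after substituting $s=p(x)$ holds identically. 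Thus the equivariant formal solution $\wdh y$ produces a genuine formal solution $\wdh c(s)$ of the auxiliary system $F(s,c)=0$ with convergent coefficients.

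Now I apply Theorem \ref{Ar68} to $F(s,c)\in\C\{s,c\}^{r'}$ and the formal solution $\wdh c(s)$: for any $N$ there is a convergent solution $\wdt c(s)\in\C\{s\}^b$ with $\wdt c(s)\equiv\wdh c(s)$ modulo $(s)^N$. Setting $\wdt y(x):=\sum_{j=1}^b \wdt c_j(p(x))\,q_j(x)$ gives a convergent map which is equivariant by construction, satisfies $f(x,\wdt y(x))=0$ because $F(p(x),\wdt c(p(x)))=0$, and agrees with $\wdh y(x)$ modulo $(x)^c$ provided $N$ is chosen large enough relative to $c$ and the orders of the $p_i$ (since $p_i\in(x)$, we have $(s)^N\mapsto (x)^{N}$ at least, so $N\ge c$ suffices). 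For a general characteristic-zero field $\k$ one replaces $\C\{x\}$ by $\k\langle x\rangle$ throughout, using that $\k\langle s,c\rangle$ satisfies Theorem \ref{Ar69}, and uses that reductive group actions and the finiteness of invariants/equivariants are available over any field of characteristic zero; the Reynolds operator still exists since $G$ is linearly reductive in characteristic zero.

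The main obstacle I anticipate is establishing the equivariant ``change of coordinates'' cleanly: namely that the correspondence $\wdh y\leftrightarrow(\wdh c_1,\dots,\wdh c_b)$ is a genuine bijection at the level of formal (and convergent) power series, compatibly with the equation, and in particular that no information is lost when passing from $f(x,y)=0$ to the invariant system $F(s,c)=0$. This requires care with the module structure of equivariants over invariants (possible syzygies among the $q_j$ mean $\wdh c$ is not unique, but existence is all that is needed), with the faithful flatness / finiteness of $\C[x]^G\hookrightarrow\C[x]$ to transfer identities between ``after substitution $s=p(x)$'' and ``identically in $s$'', and with keeping track of orders so that the degree-$c$ agreement is preserved. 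Everything else is a direct invocation of Theorems \ref{Ar68} and \ref{Ar69}.
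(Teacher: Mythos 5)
The paper states this theorem only with the citation to Bierstone--Milman and gives no proof, so I compare your proposal with the cited argument: your overall strategy (use Hilbert finiteness of invariants and of the module of equivariants to rewrite an equivariant solution as $\sum_j c_j(p(x))q_j(x)$ and reduce to ordinary Artin approximation in the variables $s$ over the quotient) is indeed the spirit of that proof. But as written there are genuine gaps. First, the Reynolds step is wrong: $f$ is not assumed equivariant, and replacing $f(x,\sum_j c_j(p(x))q_j(x))$ by its $G$-invariant part does change the vanishing condition (take $f=x_1$ and $G=\Z/2$ acting by $x\mapsto -x$: the invariant part is $0$). One must first replace $f$ by an equivariant system, e.g.\ by finitely many generators of the $G$-stable ideal generated by all translates $f_i(\s x,\s y)$ chosen to span a finite-dimensional $G$-stable space; this itself needs an argument (the action on $\C\{x,y\}$ is not rational, so one averages over the maximal compact subgroup). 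Even for equivariant $f$ the correct device is to expand the equivariant family $f(x,\sum_j c_j q_j(x))$ over finitely many module generators of the target equivariants, not to take its invariant part.

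Second, the claimed equivalence ``$f(x,\sum_j c_j(p(x))q_j(x))=0$ iff $F(s,c(s))=0$'' fails in exactly the direction you need. The map $p:\C^n\to\C^a$ is not dominant: its image is the quotient variety, and the basic invariants generally satisfy relations, so from $F(p(x),\wdh c(p(x)))=0$ you only conclude that $F(s,\wdh c(s))$ lies in the (usually nonzero) kernel of $\C\lb s\rb\to\C\lb x\rb$; similarly, syzygies among the $q_j$ (and among the generators used to expand $f$) mean vanishing of a combination does not give vanishing of its coefficients. The auxiliary system must therefore carry extra unknowns for the relation ideal of $(p_1,\dots,p_a)$ and for the syzygy modules, and to write it with convergent coefficients you need the nontrivial analytic inputs that every invariant (resp.\ equivariant) convergent series is a convergent series in the $p_i$ (resp.\ a combination of the polynomial generators with such coefficients) and that the analytic relations are generated by the polynomial ones. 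Your justifications for these points are not available: $p$ is not a finite map near $0$ once $\dim G>0$, and $\C[x]^G\hookrightarrow\C[x]$ is neither finite nor faithfully flat in general (one only has the Reynolds direct-summand/purity property). Supplying these module-theoretic and analytic facts is precisely the substance of the Bierstone--Milman paper; once they are in place, your final step (apply Theorem \ref{Ar68}, resp.\ Theorem \ref{Ar69}, to the corrected system in the $s$-variables and substitute $s=p(x)$, noting $p_i\in(x)$ so degree-$c$ agreement is preserved) is correct.
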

 Using ultraproducts methods we may probably prove  that Problem 2 has a positive answer in this case.
 \end{example}
 %%%%%%%%%
 \begin{example}\label{ex_Q}\cite{BDLvdD}
 Let $\k$ be a characteristic zero field. Let us consider the following differential equation:
 \begin{equation}\label{EQ1}a^2x_1\frac{\partial f}{\partial x_1}(x_1,x_2)-x_2\frac{\partial f}{\partial x_2}(x_1,x_2)=\sum_{i,j\geq 1}x_1^ix_2^j\ \ \left(=\left(\frac{x_1}{1-x_1}\right)\left(\frac{x_2}{1-x_2}\right)\right).\end{equation}
 For $a\in\k$, $a\neq 0$, this equation has only the following solutions
 $$f(x_1,x_2):=b+\sum_{i,j\geq 1}\frac{x_1^ix_2^j}{a^2i-j},\ \ b\in\k$$
 which are well defined if and only if $a\notin\Q$.
Let us consider the following system of equations with constraints (where $x=(x_1,\ldots,x_5)$):
\begin{equation}\label{EQ2}\left\{\begin{aligned}
&  y_8^2x_1y_5(x_1,x_2)-x_2y_7(x_1,x_2)=\sum_{i,j\geq 1}x_1^ix_2^j\\
  & y_1(x_1,x_2)=y_2(x_3,x_4,x_5)+(x_1-x_3)z_1(x)+(x_2-x_4)z_2(x)\\
& \begin{split}y_2(x_3,x_4,x_5)=y_1(x_1,x_2)+x_5y_4&(x_1,x_2)+\\
          x_5^2y_5(x)+&(x_3-x_1-x_5)z_3(x)+(x_4-x_2)z_4(x)\end{split}\\
 & \begin{split} y_3(x_3,x_4,x_5)=y_1(x_1,x_2)+x_5y_6&(x_1,x_2)+\\
 x_5^2y_7(x)+&(x_3-x_1)z_5(x)+(x_4-x_2-x_5)z_6(x)\end{split}\\
& y_8\in\k\ \text{ and }\  y_8y_{9}=1.
 \end{aligned}\right.\end{equation}
It is straightforward, using the tricks of  Lemma \ref{Taylor} and Example \ref{diff}, to check that $(a,f(x_1,x_2))$ is a solution of the equation (\ref{EQ1}) if and only if the system (\ref{EQ2}) has a constrained solution $(y_1,\ldots,y_{9},z_1,\ldots, z_6)$ with $y_1=f$ and $y_8=a$. Moreover, if $(y_1,\ldots,y_{9},z_1,\ldots, z_6)$ is a constrained solution of  Equation (\ref{EQ2}), then $(y_8,y_1)$ is a solution of (\ref{EQ1}). \\
Thus  (\ref{EQ2}) has no constrained solution in $\Q\llbracket x\rrbracket$. But clearly,  (\ref{EQ1}) has constrained solutions in $\frac{\Q[x]}{(x)^c}$ for any $c\in\N$ and the same is true for  (\ref{EQ2}). This shows that Proposition \ref{prop_mod} is not valid if the base field is not $\C$.
 
 \end{example}
 
 \begin{example}\cite{BDLvdD}\label{lastone}
 Let us assume that $\k=\C$ and consider the previous example.
 The system of equations (\ref{EQ2}) does not admit an Artin function. Indeed, for any $c\in\N$, there is $a_c\in\Q$, such that (\ref{EQ2}) has a solution modulo $(x)^c$ with $y_8=a_c$. But there is no solution in $\C\llbracket x\rrbracket $ with $y_8=a_c$ modulo $(x)$, otherwise $y_8=a_c$ which is not possible.\\
 Thus systems of equations with constraints do not satisfy Problem 2 in general.
 
 \end{example}

 \begin{example}\cite{Ro8} Let $\phi :\C\{x\}\lgw \C\{y\}$ be a morphism of complex analytic algebras and let  $\phi_i(y)$ denote the image of $x_i$ by $\phi$. Let us denote by $\wdh{\phi} :\C\llbracket x\rrbracket \lgw \C\llbracket y\rrbracket$ the induced morphism between the completions. According to a lemma of Chevalley (Lemma 7 of \cite{Ch}), there exists a function $\b:\N\lgw \N$ such that $\wdh\phi^{-1}((y)^{\b(c)})\subset \Ker(\wdh{\phi})+(x)^{c}$ for any $c\in\N$. It is called the \emph{Chevalley function} \index{Chevalley function} of $\phi$. Using Lemma \ref{Taylor} we check easily that this function $\b$ satisfies the following statement (in fact the two statements are equivalent \cite{Ro8}):
 Let $\ovl{f}(x)\in\C\llbracket x\rrbracket$ and $\ovl{h}_i(x,y)\in\C\llbracket x,y\rrbracket$, $1\leq i\leq n$, satisfy
 $$\ovl{f}(x)+\sum_{i=1}^n(x_i-\phi_i(y))\ovl{h}_i(x,y)\in (x,y)^{\b(c)}.$$
 Then there exist $\wdt{f}(y)\in\C\llbracket x\rrbracket$, $\wdt{h}_i(x,y)\in\C\llbracket x,y\rrbracket$, $1\leq i\leq n$, such that
  \begin{equation}\label{gab_iz}\wdt{g}(x)+\sum_{i=1}^n(x_i-\phi_i(y))\wdt{h}_i(x,y)=0\end{equation}
  and $\wdt{f}(x)-\ovl{f}(x)\in (x)^c$, $\wdt{h}_i(x,y)-\ovl{h}_i(x,y)\in(x,y)^c$, $1\leq i\leq n$.\\
  In particular Problem 2 has a positive answer for Equation (\ref{gab_iz}), but not Problem 1 (see Example \ref{Gab_ex}).
  In fact, the conditions of Theorem \ref{Gab} are equivalent to the fact that $\b$ is bounded by a linear function \cite{Iz2}.\\
 \\
The following example is given in  \cite{Ro8} and is inspired by Example \ref{Gab_ex}. Let  $\a\ :\ \N\lgw \N$ be an increasing function. Let $(n_i)_i$ be  a sequence of integers such that $n_{i+1}>\a(n_i+1)$ for all  $i$ and such that the convergent power series  $\xi(Y):=\sum_{i\geq 1}Y^{n_i}$ is not algebraic over $\C(Y)$. Then we define the morphism  $\phi\ : \C\{x_1,\,x_2,\,x_3\}\lgw \C\{y_1,\,y_2\}$ in the following way:
$$(\phi(x_1),\,\phi(x_2),\,\phi(x_3))=(y_1,\,y_1y_2,\,y_1\xi(y_2)).$$
It is easy to prove that $\wdh{\phi}$ is injective exactly as in Example \ref{Gab_ex}.
For any integer $i$ we define:
$$\ovl{f}_i:=x_1^{n_i-1}x_3-\left(x_2^{n_1}x_1^{n_i-n_1}+\cdots+x_2^{n_{i-1}}x_1^{n_i-n_{i-1}}+x_2^{n_i}\right).$$
Then
$$\phi(\ovl{f}_i)=y_1^{n_i}\xi(y_2)-y_1^{n_i}\sum_{k=1}^{i}y_2^{n_k}\in(y)^{n_i+n_{i+1}}\subset (y)^{\a(n_i+1)}$$
but $\ovl{f}_i\notin (x)^{n_i+1}$ for any $i$. Thus the Chevalley function of $\phi$ satisfies $\b(n_i+1)>\a(n_i+1)$ for all $i\in\N$. Hence $\limsup \frac{\b(c)}{\a(c)}\geq 1$.
In particular if the growth of $\a$ is too big, then $\b$ is not recursive. 
  \end{example}
  %%%%%%%%%%%%%%%%%%%%%%%%%%%
  \subsubsection{Artin Approximation for differential equations}

 These examples along with the trick of Example \ref{diff} are a motivation to study the Artin Approximation Property for systems of differential equations. These examples show that there is no direct generalization of the Artin Approximation Theorems to differential equations,  in the sense that the formal solutions of a system of differential equations with polynomial coefficients cannot be approximated in general by convergent solutions. Nevertheless J. Denef and L. Lipshitz showed that there exist differential analogues of them in the one variable case. Let us explain this.
 
 \begin{definition}\cite{DL2}
Let $R\subset \k\lb x\rb$ be a differential ring where $x$ is a single variable (i.e. the  differential $\frac{\partial}{\partial x} : \k\lb x\rb\lgw \k\lb x\rb$ send $R$ into $R$). 
A power series  $f$ is called \emph{differentially algebraic} over $R$ if there is a non-zero differential polynomial in one variable over $R$ which vanishes on $f$.
\end{definition}
%If $R=\k[x]$ then every $D$-finite power series is differentially algebraic over $R$. Moreover when $x$ is a univariate the converse is true.\\
When $R\subset \k\lb x\rb$ is a differential ring and $y=(y_1,\ldots, y_m)$ is a vector of variables we denote by $R[y,\partial y]$ the ring of differential polynomials in $y$, i.e. the ring of polynomials in the countable number of variables $y$ and $y_{i,n}$ where $1\leq i\leq m$ and $n\in \N\backslash \{(0)\}$. The differential operator $\frac{\partial}{\partial x}$ extend to $R[y,\partial y]$ by
$$\frac{\partial}{\partial x} y_j=y_{j,1} \text{ and } \frac{\partial}{\partial x}y_{j,n}:=y_{j,n+1}\ \ \forall i, n.$$
Then we have the following analogue of the Artin Approximation Theorem in the differential case:

\begin{theorem}\cite{DL2}
Let $R\subset \k\lb x\rb$ be a differential ring and let $f(x,y)\in R[y, \partial y]^r$. Let $c\in\N$ and $y(x)$ be a formal power series solution
$$f(x,y(x))=0.$$
Then there exists a solution $\wdt y(x)$, whose components are differentially algebraic over $R$,
$$f(x,\wdt y(x))=0$$
and $\wdt y(x)-y(x)\in (x)^c.$

\end{theorem}
As seen in Example \ref{euler} a  power series differentially algebraic over $\C[x]$ is not convergent in general, but they are always Gevrey power series \cite{Ma03}. But these have also  good combinatorial properties (see \cite{St} or \cite{D-L} for instance).\\
\\
Moreover we have the following analogue of the Strong Artin approximation Theorem for differential equations (see \cite{D-L} for the one variable case and \cite{PR} for the several variables case):

\begin{theorem}\label{DLDL}\cite{D-L,PR}
Let $f(x,y)$ be a system of differential polynomials in $y$ with formal power series coefficients over a field $\k$. 
\begin{enumerate}

\item If $x$ is a single variable, assume that $\k$ is a characteristic zero field which is either algebraically closed, a real closed field or a Henselian valued field.

\item If $x$ is a vector of variables, assume that $\k$ is a finite field, an uncountable algebraically closed field or an ultraproduct of fields.
\end{enumerate}
If $f=0$ has approximate solutions up to any order then $f=0$ has a solution in $\k[[x]]^m$.
\end{theorem}

\begin{remark}
The case when $\k=\C$ is quite trivial as indicated in Remark 2.11 \cite{D-L}.\\
Let us mention that there are examples of  partial differential equations with coefficients in $\R[x_1,\ldots, x_n]$ (resp. $\ovl\Q[x_1,\ldots, x_n]$) with $n\geq 2$, that do not satisfy the conclusion of Theorem \ref{DLDL} (see \cite{D-L}). These show that there is really a difference between the univariate case and the case of several variables.
\end{remark}

\begin{remark}
Let us consider the partial differential equation of Example \ref{ex_Q} where $\k=\C$. Let us replace $a$ by a function $g(x_1,x_2)$. The condition for $a$ to be in $\C$ is equivalent to saying that $\frac{\partial g}{\partial x_1}=\frac{\partial g}{\partial x_2}=0$.\\
Thus for the following system of partial differential equations
$$\left\{\begin{array}{c} g^2x_1\frac{\partial f}{\partial x_1}-x_2\frac{\partial f}{\partial x_2}=\sum_{i,j\geq 1}x_1^ix_2^j\\
\frac{\partial g}{\partial x_1}=\frac{\partial g}{\partial x_2}=0\end{array}\right.$$
Example \ref{lastone} shows that there is no integer $\b$ such that every approximate solution $(f,g)$ of this system up to order $\b$ has a solution $(\wdt f,\wdt g)$ such that $\wdt g-g\in (x)$. Hence the analogue of the Strong Artin Approximation Theorem \ref{SAP_formal} for partial differential equations is not valid in general.

\end{remark}

 %%%%%%%%%%%%%%%%%%%%%%%%%%%%%%%%%%%%%%
 %%%%%%%%%%%%%%%%%%%%%%%%%%%%%%%%%%%%%%%%%%%%%%
 %%%%%%%%%%%%%%%%%%%%%%%%%%%%%%%%%%%%%%%%%%%%%%%%

 \section{Appendix A: Weierstrass Preparation Theorem}\label{weierstrass}
 In this part we set $x:=(x_1,\ldots,x_n)$ and $x':=(x_1,\ldots,x_{n-1})$. Moreover $R$ always denotes a local ring of maximal ideal $\m$ and residue field $\k$ (if $R$ is a field, $\m=(0)$). A local subring of $R\lb x\rb$ will be a subring $A$ of $R\lb x\rb$ which is a local ring and whose maximal ideal is $(\m+(x))\cap  A$.
 
 \begin{definition}\label{regular-W}
If $f\in R\lb x\rb$ we say that $f$ is $x_n$-regular of order $d$ if the image of $f$ in $\frac{R\lb x\rb}{\m+(x')}\simeq \k\lb x_n\rb$  has the form $u(x_n)x_n^d$ where   $u(x_n)$ is invertible in $\k\lb x_n\rb$.\\
When $R=\k$ is a field this just means that $f(0,\ldots,0,x_n)=u(x_n)x_n^d$ where $u(x_n)$ is invertible.
 \end{definition}
 
 \begin{definition}\label{WDT}\index{Weierstrass Division Property} 
 Let $A$ be a local subring of $R\lb x\rb$. We say that $A$ satifies the \emph{Weierstrass Division Property} if for any $f$, $g\in A$ such that $f$ is $x_n$-regular of order $d$, there exist $q\in A$ and $r\in (A\cap R\lb x'\rb)[x_n]$ such that $\deg_{x_n}(r)<d$ and $g=qf+r$. In this case $q$ and $r$ are unique.

 \end{definition}

 \begin{definition}\label{WPT}\index{Weierstrass Preparation Property}
 Let $A$ be a local subring of $R\lb x\rb$. We say that $A$ satisfies the \emph{Weierstrass Preparation Property} if for any $f\in A$ which is $x_n$-regular, there exist an integer $d$, a unit $u\in A$ and $a_1(x'),\ldots,a_{d}(x')\in A\cap  (\m+(x'))R\lb x'\rb$ such that 
 $$f=u\left(x_n^d+a_1(x')x_n^{d-1}+\cdots+a_d(x')\right).$$
 In this case $f$ is necessarily regular of order $d$ with respect to $x_n$ and $u$ and the $a_i$ are unique. The polynomial $x_n^d+a_1(x')x_n^{d-1}+\cdots+a_d(x')$ is called the \emph{Weierstrass polynomial} of $f$.
 \end{definition}
 
As mentioned in \ref{AAATWDT}, the Weierstrass Preparation Property implies the Implicit Function Theorem:

\begin{lemma}\label{WPPIFT}
Let $A$ be a local subring of $R\lb x\rb$, with $x$ a single variable. If $A$ has the Weierstrass Division Property then $A$ satisfies the Implicit Function Theorem as follows:\\
Let $f(x)\in A$ be such that
$$f(0)\in\m_A\cap R\ \text{ and}\ f'(0)\notin\m_A\cap R.$$
Then there is unique $a\in \m_A\cap R$ such that $f(a)=0$.
\end{lemma}

\begin{proof}
Let us write $f=\sum_{k\in\N} f_kx^k$ with $f_k\in R$ for every $k$. Then we have
$$f(0)=f_0\in \m_A\cap R,\ f'(0)=f_1\notin \m_A\cap R.$$
So $f$ is $x$-regular of order $1$. Thus by the Weierstrass Preparation Property there is a unit $u(x)\in A$, and $a\in \m_A\cap R$ such that
$$f(x)=u(x)\cdot(x-a).$$
Since $a\in\m_A\cap R$, $u(a)$ is well defined, therefore $f(a)=0$.\\
Moreover if $a'\in \m_A\cap R$ satisfies $f(a')=0$ we have 
\begin{equation}\label{ttt} f(a')=f(a)+\frac{\partial f}{\partial x}(a)(a'-a)+\sum_{l\geq 2}\frac{1}{l!}\frac{\partial^l f}{\partial x^l}(a)(a'-a)^l.\end{equation}
If $a\neq a'$, set $k=\ord(a'-a)\geq 1$. Since $\frac{\partial f}{\partial x}(0)\notin \m_ R$ and $a\in\m_R$, we have that $\frac{\partial f}{\partial x}(a)\notin \m_R$. Thus $\frac{\partial f}{\partial x}(a)$ is a unit and $\frac{\partial f}{\partial x}(a)(a'-a)\notin \m_R^{k+1}$. But $f(a)=f(a')=0\in\m_R^{k+1}$ and
$$\sum_{l\geq 2}\frac{1}{l!}\frac{\partial^l f}{\partial x^l}(a)(a'-a)^l\in\m_A^{2k}\subset\m_R^{k+1}.$$
This is a contradiction in view of \eqref{ttt}, therefore $a=a'$.

\end{proof}

 \begin{lemma}\label{D->P}
 A local subring $A$ of $R\lb x\rb$ satisfying the Weierstrass Division Property  satisfies the Weierstrass Preparation Property.
  \end{lemma}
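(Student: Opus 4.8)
The plan is to derive the Weierstrass Preparation Theorem from the Weierstrass Division Property by dividing the monomial $x_n^d$ by the given $x_n$-regular series $f$, where $d$ is the order of regularity. First I would set up the situation: let $f \in A$ be $x_n$-regular of order $d$, so that the image of $f$ in $\frac{R}{\m+(x')}\simeq \k\lb x_n\rb$ is $u_0(x_n)x_n^d$ with $u_0$ invertible in $\k\lb x_n\rb$. Apply the Weierstrass Division Property to the pair $(f,g)$ with $g := x_n^d$: there exist $q\in A$ and $r\in (A\bigcap R\lb x'\rb)[x_n]$ with $\deg_{x_n}(r)<d$ such that
$$x_n^d = qf + r.$$
The goal is to show that $q$ is a unit of $A$ and to read off the Weierstrass polynomial.

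Next I would check that $q$ is invertible in $A$. Reduce the identity $x_n^d = qf+r$ modulo $\m+(x')$: in $\k\lb x_n\rb$ we get $x_n^d = \bar q\, u_0(x_n) x_n^d + \bar r$, where $\bar r$ is a polynomial in $x_n$ of degree $<d$. Comparing the lowest-order terms forces $\bar r = 0$ (its order would be $<d$, contradicting the equality with a series of order $\geq d$) and then $\bar q\, u_0 = 1$ in $\k\lb x_n\rb$, so $\bar q(0)\ne 0$; hence $\bar q$ is a unit of $\k\lb x_n\rb$ and in particular the image of $q$ in the residue field $\k$ (evaluate further at $x_n=0$) is nonzero. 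Since $A$ is a local ring whose maximal ideal is $(\m+(x))\bigcap A$, an element of $A$ whose image in $\k$ is a unit is itself a unit of $A$. Therefore $q$ is invertible in $A$. Writing $u := q^{-1}\in A$, we obtain $f = u(x_n^d - u\,r')$... more cleanly: from $x_n^d = qf + r$ we get $f = u(x_n^d - r)$ where $u=q^{-1}$ and $-r \in (A\bigcap R\lb x'\rb)[x_n]$ has $x_n$-degree $<d$.

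Then I would massage this into the required normal form. Set $P(x',x_n) := x_n^d - r = x_n^d + a_1(x')x_n^{d-1}+\cdots+a_d(x')$, reading off the coefficients $a_i(x')$ from $-r$; they lie in $A\bigcap R\lb x'\rb$. To see $a_i(x')\in A\bigcap(\m+(x'))R\lb x'\rb$, reduce $f = uP$ modulo $\m+(x')$ again: in $\k\lb x_n\rb$ the left side is $u_0 x_n^d$, and the right side is $\bar u\,(x_n^d + \bar a_1 x_n^{d-1}+\cdots+\bar a_d)$ with $\bar u$ a unit; since $\bar u\,P$ has order exactly $d$ in $x_n$ and its coefficients of $x_n^{d-1},\dots,x_n^0$ must vanish, we get $\bar a_i = 0$ for all $i$, i.e.\ each $a_i(x')$ maps to $0$ under $R\lb x'\rb \to \k\lb x_n\rb$, which exactly says $a_i(x')\in(\m+(x'))R\lb x'\rb$. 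This gives the desired factorization $f = u\big(x_n^d + a_1(x')x_n^{d-1}+\cdots+a_d(x')\big)$. Uniqueness of $u$ and the $a_i$, together with the fact that $f$ is then automatically $x_n$-regular of order exactly $d$, follows from the uniqueness clause in the Weierstrass Division Property applied to $g=x_n^d$ (any two such factorizations would give two division expressions of $x_n^d$ by $f$). I expect the only mildly delicate point to be the bookkeeping with the local subring $A$ and its maximal ideal—namely justifying carefully that an element of $A$ that is a unit in $\k\lb x_n\rb$ (equivalently, has nonzero image in $\k$) is a unit in $A$—but this is immediate from the definition of "local subring of $R\lb x\rb$" given just before the statement, so there is no serious obstacle.
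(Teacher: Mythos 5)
Your proof is correct and takes essentially the same route as the paper's: divide $x_n^d$ by $f$ via the Weierstrass Division Property, observe by reduction modulo $\m+(x')$ that the quotient $q$ lies outside $\m+(x)$ (hence is a unit of the local ring $A$) while $r$ has coefficients in $\m+(x')$, and conclude $f=q^{-1}(x_n^d-r)$. Your extra remarks on the vanishing of $\bar r$, on the coefficients $a_i$, and on uniqueness are just more detailed bookkeeping of the same argument.
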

  \begin{proof}
If $A$ has the Weierstrass Division Property and if $f\in A$ is $x_n$-regular of order $d$, then we can write $x_n^d=qf+r$ where    $r\in (A\cap R\lb x'\rb)[x_n]$ such that $\deg_{x_n}(r)<d$. Thus $qf=x_n^d-r$. Because $f$ is $x_n$-regular of order $d$,  $q$ is invertible in $R\lb x\rb$ and $r\in (\m+(x'))$. Thus $q\notin (\m+(x))$ and $q$ is invertible in $A$. Hence $f=q^{-1}(x_n^d-r)$.
 \end{proof}

In fact the converse implication is true under some mild conditions:
\begin{lemma}\label{P->D}\cite{CL}
Let $A_n$ be a subring of $R \lb x_1,\ldots,x_n\rb$ for all $n\in\N$ such that 
\begin{itemize}
\item[i)] $A_{n+m}\cap R \lb x_1,\ldots,x_n\rb=A_n$ for all $n$ and $m$,
 \item[ii)] if $f\in A_{n}$ is written $f=\sum_{k\in\N}f_kx_{n}^k$ with $f_k\in R \lb x'\rb$ for all $k$, then $f_k\in A_{n-1}$ for all $k$.
\item[iii)] $A_n$ is stable by permutation of the $x_i$.
 \end{itemize}
 Then $A_n$ has the Weierstrass Division Property if $A_n$ and $A_{n+1}$ have the Weierstrass Preparation Property.
\end{lemma}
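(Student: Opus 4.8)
The plan is to reduce Weierstrass Division in $A_n$ to the Weierstrass Preparation Theorem in $A_n$ and $A_{n+1}$ by the standard trick of introducing an auxiliary variable and building the divisor into the graph of the division problem. So fix $f,g\in A_n$ with $f$ being $x_n$-regular of order $d$. First I would normalize: by Lemma \ref{D->P}'s argument applied in $A_n$, the Weierstrass Preparation Property of $A_n$ lets us write $f=uP$ with $u$ a unit of $A_n$ and $P=x_n^d+a_1(x')x_n^{d-1}+\cdots+a_d(x')$ a Weierstrass polynomial with $a_i\in A_{n-1}\cap(\m+(x'))R\lb x'\rb$ (here I use hypothesis (ii) to see that the coefficients $a_i$ of $f$ live in $A_{n-1}$, which is what makes the eventual remainder land in $A_{n-1}[x_n]$). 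Since dividing by $f$ and dividing by $P$ differ only by the unit $u\in A_n$, it suffices to produce the division $g=qP+r$ with $q\in A_n$, $r\in A_{n-1}[x_n]$, $\deg_{x_n}r<d$.

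\textbf{The key construction.} The plan is to introduce a new variable $t$ and consider the ring $A_{n+1}=A_n[\![t]\!]$-flavored object $R\lb x_1,\dots,x_{n-1},t,x_n\rb$ — more precisely, rename variables so that $t$ plays the role of a new "$x_n$" and $x_n$ becomes an interior variable; hypothesis (iii) (stability under permutation) is exactly what lets us apply the Preparation Property with respect to $t$ rather than the last variable. Consider the element
$$F(x',x_n,t):=P(x',x_n)-t\ \in\ A_{n+1}$$
(viewing $t$ as one of the $n+1$ variables). I claim $F$ is regular of some order with respect to $x_n$ — indeed modulo the maximal ideal and modulo $t$ it equals $x_n^d$ up to a unit, since $a_i\in\m+(x')$. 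Applying the Weierstrass Preparation Theorem for $A_{n+1}$ with respect to $x_n$, we can write $F$ as a unit times a Weierstrass polynomial $\widetilde P(x',t)(x_n)$ of degree $d$ in $x_n$ with coefficients in $A_n$ (coefficients in the subring of the remaining variables, using (ii) again). The upshot is a factorization expressing "$x_n$ as a root of $P-t$" algebraically; substituting back and chasing coefficients gives, for the given $g\in A_n\subset A_{n+1}$, after one more division of $g$ (now regarded in the larger ring) by the Weierstrass polynomial $\widetilde P$, an identity $g=q'\widetilde P+r'$ with $q'\in A_{n+1}$ and $r'$ polynomial of degree $<d$ in $x_n$ with coefficients in the ring of the other variables. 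Specializing $t\mapsto 0$ (legitimate since all the rings are closed under the substitution $t=0$ by (i) and (ii)) converts this into the desired division $g=qP+r$ in $A_n$, with $r\in A_{n-1}[x_n]$ of degree $<d$. Uniqueness is automatic: if $qP+r=q'P+r'$ then $(q-q')P=r'-r$; reducing modulo $\m+(x')$ and using that $P$ reduces to $x_n^d$ while $r'-r$ has $x_n$-degree $<d$ forces $q=q'$, $r=r'$ — exactly as in the classical argument.

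\textbf{Main obstacle.} The hard part will be keeping precise track of which subring each intermediate quantity lives in through the variable renaming and the substitution $t\mapsto 0$: the content of hypotheses (i)–(iii) is precisely to certify that (a) the Preparation Property may be invoked in $A_{n+1}$ with respect to an \emph{interior} variable $x_n$ rather than the nominal last variable, (b) when we expand a series in $A_{n+1}$ as a power series in $t$ the coefficients land in $A_n$, hence the remainder's coefficients land in $A_{n-1}$, and (c) setting $t=0$ stays inside the family. A secondary technical point is verifying the regularity orders: one must check that $F=P-t$ is genuinely $x_n$-regular of order $d$, which follows because $P$ is a Weierstrass polynomial (its $x_n$-degree-$d$ part is $x_n^d$, all lower coefficients in $\m+(x')$) and the extra $-t$ only adds to the maximal ideal. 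Once the bookkeeping is set up correctly the argument is essentially a one-line reduction, so I expect no genuine mathematical difficulty beyond this organizational care; I would present it by first stating the renaming convention explicitly, then the construction of $F$, then the substitution, then uniqueness.
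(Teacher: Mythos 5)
Your opening normalization is fine and matches the paper: by Preparation in $A_n$ you write $f=uP$ with $P=x_n^d+a_1(x')x_n^{d-1}+\cdots+a_d(x')$, and hypothesis (ii) puts the $a_i$ in $A_{n-1}$, so it suffices to divide by $P$. The gap comes immediately after you prepare $F=P-t$ in $A_{n+1}$: you then invoke ``one more division of $g$ (now regarded in the larger ring) by the Weierstrass polynomial $\widetilde P$''. That step is exactly the Weierstrass Division Property in $A_{n+1}$, which is not among the hypotheses --- only the Preparation Property of $A_n$ and $A_{n+1}$ is assumed, and deducing Division from Preparation is the whole content of the lemma. Dividing $g$ by a degree-$d$ Weierstrass polynomial over the subring in the variables $(x',t)$ is in no way easier than dividing $g$ by $P$ over the subring in $x'$, so the detour through $F=P-t$ buys nothing; and if you tried to justify that division by the same method you would need Preparation in $A_{n+2}$, then $A_{n+3}$, and the regress never closes. (Formal division in $R\lb x',x_n,t\rb$ does exist when $R$ is complete, but nothing guarantees the quotient and remainder land in $A_{n+1}$ and $A_n[x_n]$ --- that containment is precisely what is being proven.)

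The way to get division out of preparation alone is to build $g$ into the element being prepared, rather than preparing something made from $P$ only: set $h:=P+x_{n+1}g\in A_{n+1}$. Then $h$ is still $x_n$-regular of order $d$, and by (iii) you may prepare it with respect to the interior variable $x_n$ in $A_{n+1}$: $h=vQ$ with $v$ a unit and $Q$ a degree-$d$ Weierstrass polynomial in $x_n$. Expanding $v=\sum_k v_kx_{n+1}^k$ and $Q=\sum_k Q_kx_{n+1}^k$, hypothesis (ii) gives $v_k\in A_n$ and $Q_k\in A_{n-1}[x_n]$, with $\deg_{x_n}Q_k<d$ for $k\geq 1$ since the monic term $x_n^d$ only contributes to $Q_0$. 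Comparing the coefficients of $x_{n+1}^0$ and $x_{n+1}^1$ in $h=vQ$ gives $P=v_0Q_0$ and $g=v_1Q_0+v_0Q_1$; uniqueness of the preparation of $P$ forces $v_0=1$ and $Q_0=P$, whence $g=v_1u^{-1}f+Q_1$ is the required division. This is the paper's proof. Note that your two structural observations --- that (iii) lets one prepare with respect to a non-terminal variable, and that (ii) controls the coefficients of an $x_{n+1}$-expansion --- are exactly the right ingredients; they just have to be applied to $P+x_{n+1}g$ instead of $P-t$, so that Preparation plus its uniqueness, and no division, produces the identity.
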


\begin{proof}
Let $f(x)\in A_n$ be $x_n$-regular of order $d$. By the Weierstrass Preparation Property for $A_n$ we may write
$$f=u\left(x_n^d+a_1(x')x_n^{d-1}+\cdots+a_d(x')\right)=uP$$
where $u$ is a unit in $A_n$ and $P\in A_{n-1}[x_n]$.
Now let $g(x)\in A_n$ and   set $h:=P+x_{n+1}g$. Then $h$ is also $x_n$-regular of order $d$, thus by the Weierstrass Preparation Property for $A_{n+1}$ we may write $h=vQ$ where $v$ is a unit and $Q$ a monic polynomial of degre $d$ in $x_n$. Let us write
$$v=\sum_{k\in\N}v_kx_{n+1}^k \ \ \text{ and }\ \ Q=\sum_{k\in\N}Q_kx_{n+1}^k$$
where $v_k\in A_n$ and $Q_k\in A_{n-1}[x_n]$ for all $k$. Since $Q$ is monic in $x_n$ of degree $d$, the polynomial $Q_0$ is  monic  in $x_n$ of degree $d$ and $\deg_{x_n}(Q_k)< d$ for $k\geq 1$.\\
 We deduce from this that
$$v_0Q_0=P \ \ \text{ and } \ \ v_1Q_0+v_0Q_1=g.$$
Since $Q_0$ and $P$ are monic polynomials in $x_n$ of degree $d$ the first equality implies that $v_0=1$ and $Q_0=P$. Thus the second yields $g=v_1P+Q_1$, i.e.
$$g=v_1u^{-1}f+Q_1$$
and $Q_1\in A_{n-1}[x_n]$ is a polynomial in $x_n$ of degree $< d$. Thus the Weierstrass Division Property holds.
 \end{proof}
 
 \begin{theorem}\label{WDP}
 The following rings have the Weierstrass Division Property:
 \begin{enumerate}
 \item[i)] The ring $A=R \lb x\rb$ where $R $ is a complete local ring (\cite{Bo}).
 \item[ii)] The ring $A=R \langle x\rangle$ of algebraic power series where $R $ is a field or a Noetherian Henselian local ring of characteristic zero which is analytically normal \cite{Laf1,Laf,Ro15}.
 \item[iii)] The ring $A=\k\{x\}$ of convergent power series over a valued field $\k$ (see \cite{Na} or \cite{To} where is given a nice short proof using an invertibility criterion of a linear map between complete topological groups).
 \item[iv)] The ring $A$ of germs of $\mathcal C^\infty$-functions at the origin of $\R^n$ \cite{Mal1}. In this case $A$ is not a Noetherian ring.
 \end{enumerate}
 \end{theorem}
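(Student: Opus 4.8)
The plan is to treat part i) by a direct successive-approximation argument and to deduce parts ii) and iii) from it via Lemma \ref{P->D}, reducing in each case the Weierstrass Division Property to the Weierstrass Preparation Property. For part i), fix $f,g\in R\lb x\rb$ with $f$ being $x_n$-regular of order $d$. As $R\lb x'\rb$-modules one has a direct sum decomposition $R\lb x\rb = N\oplus x_n^{d}R\lb x\rb$ with $N:=\bigoplus_{k=0}^{d-1}R\lb x'\rb\,x_n^{k}$; write $h=p(h)+x_n^{d}\tau(h)$ for the associated projections $p\colon R\lb x\rb\to N$ and $\tau\colon R\lb x\rb\to R\lb x\rb$. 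Because $f$ is $x_n$-regular, $p(f)$ lies in $(\m+(x'))N$ while $\tau(f)$ is a unit; set $e:=\tau(f)^{-1}$. A short computation shows that the relation $g=qf+r$ with $r\in N$ is equivalent to the fixed-point equation $q=e\bigl(\tau(g)-\tau(q\,p(f))\bigr)$, whose linear part $q\mapsto -e\,\tau(q\,p(f))$ strictly raises the $(\m+(x'))$-adic order, since $p(f)\in(\m+(x'))R\lb x\rb$ and $\tau$ does not lower that order. As $R\lb x\rb$ is complete and separated for the $(\m+(x'))$-adic topology, the corresponding Neumann series converges and yields the (unique) $q$, and then $r:=g-qf\in N$. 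Uniqueness of $(q,r)$ follows by reducing a hypothetical relation $qf=r\in N$ modulo successive powers of $\m+(x')$, using that $\bar f=\bar u\,x_n^{d}$ with $\bar u$ a unit of $\k\lb x_n\rb$. This is \cite{Bo}.

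For parts ii) and iii), let $A_n$ be the ring of algebraic (resp. convergent) power series in $n$ variables over $R$ (resp. $\k$). The family $(A_n)_n$ satisfies the hypotheses i)--iii) of Lemma \ref{P->D}: stability under permuting the variables and the compatibility $A_{n+m}\cap R\lb x_1,\dots,x_n\rb=A_n$ are immediate from the definitions, and the fact that the $x_n$-coefficients $f_k$ of an $f=\sum_k f_k x_n^{k}\in A_n$ again lie in $A_{n-1}$ follows in the convergent case from the majorant estimates, and in the algebraic case from the two standard stability properties of algebraic power series under partial differentiation and under setting a variable to $0$ (apply them to $f_k=\tfrac1{k!}\,\partial_{x_n}^{k}f|_{x_n=0}$). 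Hence Lemma \ref{P->D} reduces both remaining statements to the Weierstrass Preparation Property of every $A_n$; equivalently, given $f\in A_n$ which is $x_n$-regular of order $d$, one must show that the formal factorisation $f=\wdh u\,(x_n^{d}+\wdh a_1 x_n^{d-1}+\cdots+\wdh a_d)$ provided (uniquely) by part i) already has $\wdh u\in A_n$ and $\wdh a_i\in A_{n-1}$.

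Part iii) is then obtained by controlling the growth of the coefficients of $\wdh u$ and of the $\wdh a_i$. Concretely one reruns the fixed-point scheme of part i) inside the Banach algebra $B_\rho$ of power series $\sum_\alpha a_\alpha x^{\alpha}$ with $\sum_\alpha|a_\alpha|\rho^{|\alpha|}<\infty$: after shrinking $\rho$, multiplication by $f$ differs from an evident isomorphism (coming from the unit leading term in $x_n$) by an operator of small norm, so the invertibility criterion for continuous linear maps between Banach spaces (a map $u$ admitting a continuous quasi-inverse $v$ with $\|uv-\mathrm{id}\|,\|vu-\mathrm{id}\|<1$ is an isomorphism) applies and produces a convergent $q$ and $r$; this is Tougeron's argument in \cite{To}, and the classical majorant-series estimates (or, over $\C$, the Cauchy integral formula) give the same conclusion. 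In particular $\wdh u$ and the $\wdh a_i$ share a common positive radius of convergence.

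Part ii) is the delicate one, and I expect it to be the main obstacle: there is neither completeness nor a radius of convergence available to run a fixed-point iteration, so one is forced to use the arithmetic structure of $R\langle x\rangle$ as (the Henselization of) $R[x]_{(\m+(x))}$, following Lafon \cite{Laf1}, \cite{Laf}. After the reduction above it suffices to prove the Weierstrass Preparation Property, which is obtained by a Hensel-type factorisation: reducing $f$ modulo $\m+(x')$ gives $\bar u\,x_n^{d}$ with $\bar u$ a unit, and this coprime factorisation is lifted through the finite ring extension governing an algebraic relation satisfied by $f$; the uniqueness of such a lift, together with part i), forces it to coincide with the formal Weierstrass data, which is therefore algebraic. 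The hypotheses that $R$ be a field or a Noetherian Henselian local ring of characteristic zero that is analytically normal are exactly what make this work -- analytic normality keeps $\wdh R\lb x\rb$ a normal domain, so the relevant finite extensions and their Henselizations are well behaved, while characteristic zero removes the inseparability phenomena that would otherwise obstruct the factorisation (compare the additional axiom v) of Definition \ref{W-sys}).
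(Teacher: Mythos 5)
The paper itself gives no proof of this theorem: it is stated as a pointer to \cite{Bo}, \cite{Laf1}, \cite{Laf}, \cite{Na} and \cite{To}, so the only question is whether your sketches stand on their own. For i) and iii) they essentially do. Your fixed-point formulation of division in $R\lb x\rb$ is correct: the equivalence of $g=qf+r$ with $r\in N$ and of $q=e(\tau(g)-\tau(q\,p(f)))$ checks out, and separatedness and completeness of $R\lb x\rb$ for the $(\m+(x'))$-adic topology give existence and uniqueness (note that the claim that $\tau$ does not lower the order uses that $(\m+(x'))^cR\lb x\rb$ is exactly the set of series whose $x_n$-coefficients lie in $(\m+(x'))^c$, which is where finite generation of the ideal enters). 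For iii), the Banach-algebra perturbation argument is precisely the Tougeron proof alluded to in the statement. One repair is needed in your verification of hypothesis ii) of Lemma \ref{P->D} in the algebraic case: extracting coefficients as $\frac{1}{k!}\partial_{x_n}^kf|_{x_n=0}$ requires characteristic zero, whereas item ii) of the theorem allows a field of arbitrary characteristic; use instead that $f_0:=f|_{x_n=0}$ is algebraic (choose an algebraic relation not divisible by $x_n$ and set $x_n=0$) and that $(f-f_0)/x_n$ is a root of $Q(x,x_ny)$ whenever $f-f_0$ is a root of $Q(x,y)$, and iterate.

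The genuine gap is part ii) itself, which you flag as the main obstacle but do not close. Hensel's Lemma (Proposition \ref{Henselian}) lifts coprime factorisations of \emph{monic polynomials} over a Henselian local ring; $f$ is not a polynomial in $x_n$, and an algebraic relation $F(x,f)=0$ does not by itself provide ``the finite ring extension'' you invoke: it ties $f$ to the variables but produces no monic degree-$d$ polynomial in $x_n$ over $R\langle x'\rangle$ attached to $f$. Producing such a polynomial --- equivalently, proving that $R\langle x\rangle/(f)$ is a finite, indeed free of rank $d$, module over $R\langle x'\rangle$ --- is the whole content of ii); once it exists, the uniqueness in part i) identifies it with the formal Weierstrass data, but nothing in your sketch manufactures it. The known arguments obtain this finiteness either by Lafon's original method or, say for $R=\k$ a field where $\k\langle x\rangle$ is the Henselization of $\k[x]_{(x)}$ (cf.\ Lemma \ref{henselization_A[x]}), by the structure theory of quasi-finite algebras over Henselian local rings: $f$ lies in some \'etale neighbourhood $B$ of $\k[x]_{(x)}$, the ring $B/(f)$ is quasi-finite over $\k[x']_{(x')}$ at the origin because $f$ is $x_n$-regular of order $d$, and after base change to $\k\langle x'\rangle$ a finite local factor of rank $d$ splits off, on which the characteristic polynomial of multiplication by $x_n$ is the desired Weierstrass polynomial. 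It is in carrying this out over a general base (flatness and freeness of the finite factor, its identification with $R\langle x\rangle/(f)$) that the hypotheses ``characteristic zero'' and ``analytically normal'' on $R$ are actually used; your one-line explanation that normality keeps $\wdh R\lb x\rb$ normal does not engage with this. As written, part ii) is asserted rather than proved.
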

 
 \begin{remark}\label{change_x_n-regular}
 Let $f\in\k\lb x\rb$ where $\k$ is an infinite field and let $d:=\ord(f)$. Let $f_d$ be the initial term of $f$. Since $\k$ is infinite there exists $(\la_1,\ldots,\la_{n-1})\in\k^{n-1}$ such that $c:=f_d(\la_1,\ldots,\la_{n-1},1)\neq 0$. So let us consider the linear change of variables defined by
 $$x_i\lgm x_i+\la_ix_n \text{ for }i<n$$
 $$x_n\lgm x_n.$$
 Then under this linear change of variables $f$ is transformed into a new power series $g$ whose initial term has degree $d$ and is of the form
 $$cx_n^d+\e$$
 where $\e\in(x_1,\ldots,x_{n-1})$. Thus $g$ is $x_n$-regular. Hence any formal power series may be transformed into a $x_n$-regular power series of degree $d=\ord(f)$ by a linear change of variables.\\
 In the case when $\k$ is finite we can also transform any formal power series $f$ into a $x_n$-regular one but for this we have to use non-linear changes of variables (see  \cite[Lemma 6.11]{Ar69}). Moreover after this change of coordinates $f$ is $x_n$-regular of degree bigger than $\ord(f)$ (with no equality in general).
 \end{remark}
 
 %%%%%%%%%%%%%%%%%%%%%%%%%%%%%%%%%%%%%%%%%%%%%%%%%%%%%%%
 %%%%%%%%%%%%%%%%%%%%%%%%%%%%%%
 \section{Appendix B: Regular morphisms and excellent rings}\label{regular_app}
We give here the definitions and the main properties of regular morphisms and excellent rings. For more details the reader may consult \cite[15.32 and 15.42]{ST}  or \cite{Ma'}.
 
 \begin{definition}\label{regular}
  Let $\phi: A\lgw B$ be a morphism of Noetherian rings. We say that $\phi$ is a \index{regular morphism} \emph{regular morphism} if it is flat and if for any prime ideal $\p$ of $A$, the $\kappa(\p)$-algebra $B\otimes_A \kappa(\p)$ is geometrically regular (where $\kappa(\p):=\frac{A_{\p}}{\p A_{\p}}$ is the residue field of  $A_{\p}$). This means that $B\otimes_{A}\K$ is a regular Noetherian ring for any finite field extension $\K$ of $\kappa(\p)$.
   \end{definition}
   
   \begin{example}\label{ex_reg} 
  \begin{enumerate}
\item[i)]   If $A$ and $B$ are fields, $A\lgw B$ is regular if and only if $B$ is a separable field extension of $A$.
\item[ii)] If $A$ is excellent (the definition of an excellent ring is given below), for any ideal $I$ of $A$, the morphism $A\lgw \wdh{A}$ is regular where $\wdh{A}:=\underset{{\longleftarrow}}{\lim} \frac{A}{I^n}$ denotes the $I$-adic completion of $A$ \cite[7-8-3]{EGAIV-2}.
\item[iii)] If $V$ is a discrete valuation ring,  the completion morphism $V\lgw \wdh{V}$ is regular if and only if $\Frac(V)\lgw \Frac(\wdh{V})$ is separable. Indeed, $V\lgw \wdh{V}$ is always flat and this morphism induces an isomorphism on the residue fields.

\item[iv)] Let $X$ be a compact Nash manifold, let $\mathcal{N}(X)$ be the ring of Nash functions on $X$ and let $\O(X)$ be the ring of real analytic functions on $X$. Then the natural inclusion $\mathcal{N}(X)\lgw \O(X)$ is regular (cf. \cite{C-R-S} or \cite{CRS} for a survey on the applications of General NŽron Desingularization to the theory of sheaves of Nash functions on Nash manifolds).
\item[v)] Let $L\subset \C^n$ be a compact polynomial polyhedron and $B$ the ring of holomorphic function germs at $L$. Then the morphism of constants $\C\lgw B$ is regular (cf. \cite{Lem}). This example and the previous one enable the use of Theorem \ref{Popescu} to show global approximation results in complex geometry or real geometry. The paper \cite{Bi} also provides a proof of a global Artin approximation theorem whose proof is based on basic methods of analytic geometry and not on the General N\'eron desingularization Theorem.  \index{global Artin approximation} The papers \cite{Bi2,BP} give stronger forms of this global Artin approximation theorem.
\end{enumerate}
 \end{example}
 
 In the case of the Artin Approximation problem, we will be mostly interested in the morphism $A\lgw \wdh{A}$. Thus we need to know what is an excellent ring by Example \ref{ex_reg} ii).
 
 \begin{definition}
 A Noetherian ring $A$ is \index{excellent ring} \emph{excellent} if the following conditions hold:
 \begin{enumerate}
 \item[i)] $A$ is universally catenary.
 \item[ii)] For any $\p\in\Spec(A)$, the formal fibre of $A_{p}$ is geometrically regular.
 \item[iii)] For any $\p\in\Spec(A)$ and for any finite separable extension $\Frac\left(\frac{A}{\p}\right)\lgw \K$, there exists a finitely generated sub-$\frac{A}{\p}$-algebra $B$ of $\K$, containing $\frac{A}{\p}$,  such that $\Frac(B)=\K$ and the set of regular points of $\Spec(B)$ contains a non-empty open set.
 \end{enumerate}
 
 \end{definition} This definition may be a bit obscure at first sight and difficult to catch.  Thus we give here the main examples of excellent rings:
 \begin{example}\label{ex_excellent}
   \begin{enumerate}
 \item[i)] Local complete rings (in particular any field) are excellent. Dedekind rings of characteristic zero (for instance $\Z$)  are excellent. Any ring  which is essentially of finite type over an excellent ring is excellent  \cite[7-8-3]{EGAIV-2}.
\item[ii)] If $\k$ is a complete valued field, then the ring of convergent power series $\k\{x_1,\ldots,x_n\}$ is excellent \cite{K}.
\item[iii)] We have the following result: let $A$ be a regular  local ring containing a field of characteristic zero denoted by $\k$. Suppose  that there exists an integer $n$ such that for any maximal ideal $\m$, the field extension $\k\lgw \frac{A}{\m}$ is algebraic and ht$(\m)=n$. Suppose moreover that there exist $D_1,\ldots,D_n\in \text{Der}_k(A)$ and $x_1,\ldots,x_n\in A$ such that $D_i(x_j)=\d_{i,j}$. Then $A$ is excellent  \cite[Theorem 102]{Ma'}. 
\item[iv)] A Noetherian local ring $A$ is excellent if and only if it is universally catenary and $A\lgw \wdh{A}$ is regular \cite[7-8-3 i)]{EGAIV-2}. In particular, if $A$ is a quotient of a local regular ring, then $A$ is excellent if and only if $A\lgw \wdh{A}$ is regular \cite[5-6-4]{EGAIV-2}.
\end{enumerate}
 \end{example}

\begin{example}\cite{Na,Ma'}
Let $\k$ be a field of characteristic $p>0$ such that $[\k:\k^p]=\infty$ (for instance let us take $\k=\mathbb{F}_p(t_1,\ldots,t_n,\ldots)$ where $(t_n)_n$ is a sequence of indeterminates). Let $V:=\k^p\lb x\rb[\k]$ where $x$ is a single variable, i.e. $V$ is the ring of power series $\displaystyle\sum_{i=0}^{\infty}a_ix^i$ such that $[\k^p(a_0,a_1,\ldots):\k^p]<\infty$.  Then $V$ is a  discrete valuation ring whose completion is $\k\lb x\rb$. We have $\wdh{V}^p\subset V$, thus $[\Frac(\wdh{V}):\Frac(V)]$ is purely inseparable. Hence  $V$ is a Henselian ring by Remark \ref{separable}  since  $\wdh V$ is Henselian by Example \ref{ex_henselian}.\\
Since $[\Frac(\wdh{V}):\Frac(V)]$ is purely inseparable, $V\lgw \wdh{V}$ is not regular by Example \ref{ex_reg} and $V$ is not excellent by Example \ref{ex_excellent} iv).\\
 On the other hand, let $f$ be a power series of the form $\displaystyle\sum_{i=0}^{\infty}a_ix^i$, $a_i\in\k$ such that 
 $$[\k^p(a_0,a_1,...):\k^p]=\infty.$$ Then $f\in \wdh{V}$ but $f\notin V$, and $f^p\in V$. Thus $f$ is the only root of the polynomial $y^p-f^p$. This shows that the polynomial $y^p-f^p\in V[y]$ does not satisfy Theorem \ref{Popescu}.
\end{example}

%%%%%%%%%%%%%%%%%%%%%%%%%%%%%%%%%%%%%%%%%%%%%%%%%%%%%%%%%%%%%%%%%%%%%%

\section{Appendix C: \'Etale morphisms and Henselian rings}\label{etale_app}
The material presented here is very classical and has first been studied by G. Azumaya \cite{Az} and M. Nagata \cite{Na1,Na2}. We will give a quick review of the definitions and properties that we need for the understanding of the rest of the paper. Nevertheless, the reader may consult \cite{Na,EGAIV-4,Ra70,Iv,Mil} for more details, in particular for the proofs we do not give here. 

\begin{example}\label{ex_elliptic} In classical algebraic geometry, the Zariski topology has too few open sets. For instance, there is no Implicit Function Theorem. Let us explain this problem through the following example:\\
Let $X$ be the zero set of the polynomial $y^2-x^2(x+1)$ in $\C^2$. On an affine open neighborhood of 0, denoted by $U$,  $X\cap  U$ is equal to $X$ minus a finite number of points, thus $X\cap  U$ is irreducible since $X$ is irreducible. In the euclidean topology, we can find an open neighborhood of $0$, denoted by $U$,  such that $X\cap  U$ is reducible, for instance take $U=\{(x,y)\in\C^2\ /\ |x|^2+|y|^2< 1/2\}$. This comes from the fact that $x^2(1+x)$ is  the square of an analytic function defined on $U\cap  (\C\times\{0\})$. Let $z(x)$ be such an analytic function, $z(x)^2=x^2(1+x)$.\\
In fact we can construct $z(x)$ by using the Implicit Function Theorem as follows. We see that $z(x)$ is a root of the polynomial $Q(x,z):=z^2-x^2(1+x)$. We have $Q(0,0)=\frac{\partial Q}{\partial z}(0,0)=0$, thus we can not use directly the Implicit Function Theorem to obtain $z(x)$ from its minimal polynomial.\\ 
Nevertheless let us  define $P(x,t):=(t+1)^2-(1+x)=t^2+2t-x$. Then $P(0,0)=0$ and $\frac{\partial P}{\partial t}(0,0)=2\neq 0$. Thus, from the Implicit function Theorem, there exists $t(x)$ analytic on a neighborhood of 0 such that $t(0)=0$  and $P(x,t(x))=0$. If we set $z(x):=x(1+t(x))$, we have $z^2(x)=x^2(1+x)$. In fact $z(x)\in B:=\frac{\C[x,t]_{(x,t)}}{(P(x,t))}$. The morphism $\C[x]\lgw B$ is a typical example of an \'etale morphism.
\end{example}

\begin{definition}\index{smooth morphism}\index{Žtale morphism}
Let $\phi : A\lgw B $ be a ring morphism essentially of finite type. We say that $\phi$ is a \emph{smooth morphism} (resp. \emph{\'etale morphism}) if for every $A$-algebra $C$ along with an ideal $I$ such that $I^2=(0)$ and any morphism of $A$-algebras $\psi : B\lgw \frac{C}{I}$ there exists a morphism $\s : B\lgw C$ (resp. a unique morphism) such that the following diagram commutes:
$$\xymatrix{A \ar[r]^{\phi} \ar[d] & B \ar[d]^{\psi} \ar@{.>}[ld]^{\s} \\
C \ar[r] & \frac{C}{I}}$$
\end{definition}

Let us mention that the definition of an \'etale morphism is not the same depending on the sources and some authors require that an \'etale morphism be of finite type (as in \cite{Ra70} or \cite{ST} for instance). But we are mainly interested in local morphisms $A\lgw B$ which are hardly of finite type, so we prefer to choose this definition. Moreover this allows the \'etale neighborhoods (see the definition below) to be \'etale morphisms, which is not the case if we impose the finite type condition. 

\begin{example}
Let $\k:=\R$ or $\C$ and let us assume that $A=\frac{\k[x_1,\ldots,x_n]}{J}$ and $B=\frac{A[y_1,\ldots,y_m]}{K}$ for some ideals $J$ and $K$. Let $X$ be the zero locus of $J$ in $\k^n$  and $Y$ be the zero locus of $K$ in $\k^{n+m}$. The morphism $\phi : A\lgw B$ defines a regular map $\Phi : Y\lgw X$. Let $C:=\frac{\k[t]}{(t^2)}$ and $I:=(t)$. Let $f_1(x),\ldots,f_r(x)$ be generators of $J$.\\
\\
 A morphism $A\lgw C$ is given by elements $a_i$, $b_i\in \k$ such that $f_j(a_1+b_1t,\ldots,a_n+b_nt)\in (t)^2$ for $1\leq j\leq r$. We have
$$f_j(a_1+b_1t,\ldots,a_n+b_nt)=f_j(a_1,\ldots,a_n)+\left(\sum_{i=1}^n\frac{\partial f_j}{\partial x_i}(a_1,\ldots,a_n)b_i\right)t \text{ mod. } (t)^2.$$
Thus a morphism $A\lgw C$ is given by a point $x:=(a_1,\ldots,a_n)\in X$ (i.e. such that $f_j(a_1,\ldots,a_n)=0$ for all $j$) and a tangent vector $u:=(b_1,\ldots,b_n)$ to $X$ at $x$ (i.e. such that $\displaystyle\sum_{i=1}^n\frac{\partial f_j}{\partial x_i}(a_1,\ldots,a_n)b_i=0$ for all $j$). In the same way a $A$-morphism $B\lgw \frac{C}{I}=\k$ is given by a point $y\in Y$. Moreover the first diagram is commutative if and only if $\Phi(y)=x$.\\
\\
 Then $\phi$ is smooth if for every $x\in X$, every $y\in Y$ and every tangent vector $u$ to $X$ at $x$ such that $\Phi(y)=x$,  there exists a tangent vector $v$ to $Y$ at $y$ such that $D_y(\Phi)(v)=u$, i.e. if $D_y(\phi)$ is surjective. And $\phi$ is \'etale if and only if $v$ is unique, i.e. if $D_y(\Phi)$ is bijective. This shows that smooth morphisms correspond to submersions in differential geometry and \'etale morphisms to local diffeomorphisms.

\end{example}

\begin{example}\label{standard}
Let $\phi : A\lgw B_{S}$ be the canonical morphism where $B:=\frac{A[x]}{(P(x))}$ and $S$ is a multiplicative system of $ B$ containing $\frac{\partial P}{\partial x}$. If we have  a commutative diagram
$$\xymatrix{A \ar[r]^{\phi} \ar[d] & B_{S} \ar[d]^{\psi} \\
C \ar[r] & \frac{C}{I}}$$
with $I^2=(0)$, 
the morphism $B_{S}\lgw \frac{C}{I}$ is given by an element $c\in C$ such that $P(c)\in I$. Looking for a lifting of $\psi$ is equivalent to find $\e\in I$ such that $P(c+\e)=0$. We have 
\begin{equation}\label{lifting}P(c+\e)=P(c)+\frac{\partial P}{\partial x}(c)\e\end{equation}
since $I^2=(0)$. Since $\frac{\partial P}{\partial x}$ is invertible in $B_{S}$, $\frac{\partial P}{\partial x}(c)$ is invertible in $\frac{C}{I}$, i.e. there exists $a\in C$ such that $a\frac{\partial P}{\partial x}(c)=1$ mod. $I$. Let $i:=a\frac{\partial P}{\partial x}(c)-1$. Then
$$a(1-i)\frac{\partial P}{\partial x}(c)=1$$
since $i^2=0$.
Thus there exists a unique $\e$ satisfying \eqref{lifting} and $\e$ is given by:
$$\e=-P(c)a(1-i).$$
 This proves that $\phi$ is \'etale. Compare this example with Example \ref{ex_elliptic}.\end{example}

\begin{definition}\label{standard_etale}\index{standard \'etale morphism}
Morphisms of the form $\phi : A\lgw \frac{A[x]}{(P(x))}_{\p}$  where $P(x)$ is monic, $\p\in \Spec(A[x])$  contains $P(x)$ but not $\frac{\partial P}{\partial x}(x)$ and $\p\cap A$ is maximal are \'etale morphisms  and these are called  \emph{standard \'etale morphisms}. 

\end{definition}

\begin{theorem}\label{iversen}\cite[II.2]{Iv} 
If $A$ and $B$  are local rings,  any \'etale morphism from $A$ to $B$ is standard Žtale.
\end{theorem}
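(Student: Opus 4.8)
The plan is to argue in two stages: first realise $B$ as a quotient of a standard \'etale $A$-algebra, and then use flatness to see that this quotient is in fact an isomorphism. Throughout, $\phi$ is understood to be a local homomorphism (which is necessary for the conclusion, since a standard \'etale morphism in the sense of Definition \ref{standard_etale} is local by construction); write $k:=A/\m_A$ and $\ell:=B/\m_B$. Being \'etale, $\phi$ is flat, unramified and (by definition) essentially of finite presentation; in particular $\ell$ is a finite separable extension of $k$ and $\m_AB=\m_B$.

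For the first stage I would carry out the construction underlying the local structure theory of unramified morphisms. Choose by the primitive element theorem $\bar\alpha$ with $\ell=k(\bar\alpha)$, let $\bar P\in k[x]$ be its minimal polynomial (irreducible and separable), and lift $\bar\alpha$ to $\alpha\in B$ and $\bar P$ to a monic $P\in A[x]$. Set $\mathfrak p:=\{f\in A[x]\ :\ f(\alpha)\in\m_B\}$; then $\mathfrak p$ is a prime with $\mathfrak p\supseteq(P)$ and $\mathfrak p\cap A=\m_A$, and $P'(x)\notin\mathfrak p$ because separability of $\bar P$ forces $\bar P'(\bar\alpha)\neq0$. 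Hence $C:=\bigl(A[x]/(P)\bigr)_{\mathfrak p}$ is standard \'etale over $A$, and $x\mapsto\alpha$ defines a local $A$-algebra homomorphism $\psi\colon C\lgw B$ which is the identity on residue fields. A Nakayama-type argument — using that $B$ is essentially of finite type over $C$ with $\m_CB=\m_B$, and that $B$ is unramified over $C$ since $\Omega_{B/C}$ is a quotient of $\Omega_{B/A}=0$ — shows $\psi$ is surjective, so $B=C/J$.

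For the second stage I would show $J=0$. Since $A[x]/(P)$ is $A$-free of rank $\deg P$, its localization $C$ is $A$-flat, and $B$ is $A$-flat; applying $-\otimes_Ak$ to $0\to J\to C\to B\to 0$ and using $\operatorname{Tor}_1^A(B,k)=0$ shows that $J/\m_AJ$ injects into $C/\m_AC$ with image the kernel of $C/\m_AC\to B/\m_AB$. But $C/\m_AC=k[x]/(\bar P)=\ell$ (as $\bar P$ is irreducible, the localization is trivial) and $C/\m_AC\to B/\m_AB$ is the identity map $\ell\lgw\ell$; hence $J/\m_AJ=0$, and since $J$ is a finitely generated ideal of the local ring $C$, Nakayama gives $J=0$. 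Therefore $B\cong C=\bigl(A[x]/(P)\bigr)_{\mathfrak p}$ is standard \'etale, as claimed.

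The step I expect to be the main obstacle is the surjectivity of $\psi$ in the first stage — equivalently, the structure statement that an unramified local homomorphism of local rings is a quotient of a standard \'etale algebra — because making this rigorous in the (possibly non-Noetherian) generality permitted by the definition of \'etale requires care with ``essentially of finite presentation'' and with the Artin--Rees/Nakayama bookkeeping; granting it, the flatness computation closing the argument is entirely routine. A complete proof is given in \cite{Iv} III.2.
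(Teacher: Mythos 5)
The paper does not actually prove this theorem --- it only cites Iversen III.2 --- so the only issue is whether your argument stands on its own, and it does not: the step you yourself flag as the ``main obstacle'' is false for the $C$ you construct, and it is precisely the content of the theorem. There is also a smaller problem just before it: the map $\psi$ is not well defined as described, since factoring $x\mapsto\alpha$ through $A[x]/(P)$ requires $P(\alpha)=0$ in $B$, not merely $P(\alpha)\in\m_B$, and $B$ is not Henselian, so an arbitrary lift $\alpha$ of $\bar\alpha$ need not be a root of an arbitrary monic lift $P$ of $\bar P$ (take $A=\Q[t]_{(t)}$ and $B=\bigl(A[x]/(x^2+1+t)\bigr)_{\m_A+(x^2+1)}$: the residue field is $\Q(i)$ but $B$ contains no root of $T^2+1$). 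The fatal gap is surjectivity. Concretely, let $A=\k[x]_{(x)}$ and $B=\bigl(A[y]/(y^2+y-x)\bigr)_{(x,y)}\cong \k[y]_{(y)}$, with $x\mapsto y^2+y$; this is \'etale (indeed standard \'etale of degree $2$) and the residue extension is trivial, $\ell=\k$. Your construction then takes $\bar P$ of degree one, so $C\cong A$ and $\psi$ is the structure map $A\lgw B$, which is \emph{not} surjective: $y\,g(y^2+y)=f(y^2+y)$ in $\k[y]$ forces $g=0$ by comparing degrees, so $y$ is not in the image. Here $B$ is essentially of finite type over $C=A$, unramified over $C$, with $\m_C B=\m_B$ and trivial residue extension --- exactly the hypotheses of your ``Nakayama-type argument'' --- yet $\psi$ is not surjective; Nakayama needs a module-finite situation, which essential finite type does not provide. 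Had the argument been valid, your stage two (which is fine once surjectivity is granted, since $J$ is finitely generated because $B$ is essentially of finite presentation) would conclude $B\cong A$, which is false. Note also that in this example the standard \'etale presentation uses a polynomial of degree $2$ while $[\ell:\k]=1$, so the primitive element of the residue field extension is simply the wrong generator in general.

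What is missing is the genuine structural input: one must first realize $B$ as the localization at a maximal ideal of a \emph{module-finite} $A$-algebra --- this is where Zariski's Main Theorem (the local structure of quasi-finite morphisms, quasi-finiteness of the closed fibre coming from unramifiedness) enters --- and then choose $\alpha$ generating that finite algebra after a further localization, with $P$ obtained from its characteristic or minimal equation over $A$; only in that finite setting does the Nakayama argument for surjectivity go through, and the degree of $P$ is governed by the rank of the finite algebra, not by $[\ell:\k]$. As written, your proposal carries out the routine flatness endgame but replaces the theorem's actual content by an assertion that the counterexample above shows cannot be repaired with the $C$ you built.
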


\begin{example}[Jacobian Criterion]
If $\k$ is a field and $\phi : \k\lgw B:=\frac{\k[x_1,\ldots,x_n]_{\m}}{(g_1,\ldots,g_r)}$,  where $\m:=(x_1-c_1,\ldots,x_n-c_n)$ for some $c_i\in \k$, the morphism $\phi$ is smooth if and only if the jacobian matrix $\left(\frac{\partial g_i}{\partial x_j}(c)\right)$ has rank equal to the height of $(g_1,\ldots,g_r)$. This is equivalent to saying that $V(I)$ has a non-singular point at the origin. Let us recall that the fibers of submersions are always smooth.
\end{example}

\begin{definition}\index{\'etale neighborhood}\index{Henselization}
Let $(A,\m_A)$ be a local ring. An \emph{\'etale neighborhood} of $A$ is an
Žtale local morphism $A\lgw B$
inducing an isomorphism between the residue fields. \\
If $A$ is a local ring, the \'etale neighborhoods of $A$ form a filtered inductive system and the limit of this system is called the \emph{Henselization} of $A$ (cf. \cite[III. 6]{Iv}  or \cite[VIII]{Ra}) and is denoted by $A^h$.\\
We say that $A$ is \emph{Henselian} if $A=A^h$. The morphism $\imath_A : A\lgw A^h$ is universal among all the morphisms $A\lgw B$ inducing an isomorphism on the residue fields and where $B$ is a Henselian local ring. The morphism $\imath_A$ is called the \emph{Henselization morphism} of $A$.
\end{definition}

\begin{remark}\label{separable}
If $A$ is a local domain, then $\Frac(A)\lgw \Frac(A^h)$ is an algebraic separable extension. Indeed $A^h$ is the limit of Žtale neighborhoods of $A$ which are localizations of Žtale morphisms by Theorem \ref{iversen}, thus $A^h$ is a limit of separable algebraic extensions.
\end{remark}

\begin{proposition}\label{section_hens}
If $A$ is a Noetherian local ring,  its Henselization $A^h$ is a Noetherian local ring and $\imath_A:A\lgw A^h$ is faithfully flat (in particular it is injective). If $\phi : A^h\lgw B$ is an \'etale morphism  there is a section $\s: B\lgw A^h$, i.e. $\s\circ\phi=id_{A^h}$.  
\end{proposition}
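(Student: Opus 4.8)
The plan is to handle the three assertions using that, by definition, $A^h=\varinjlim_{A\to A'}A'$ is the filtered colimit of the \'etale neighborhoods $A\to A'$ of $A$, each of which is local, essentially of finite presentation, and induces an isomorphism on residue fields.

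\emph{Faithful flatness of $\imath_A$, and a first look at completions.} First I would recall that every \'etale morphism is flat (this is contained in Definition~\ref{standard_etale} and Theorem~\ref{iversen}) and that a filtered colimit of flat $A$-modules is flat; hence $A^h$ is flat over $A$. Each $A\to A'$ being local, $A\to A^h$ is a local homomorphism of local rings, so flatness forces faithful flatness, and injectivity of $\imath_A$ follows. Along the way, unramifiedness of \'etale morphisms together with flatness gives $\m_AA'=\m_{A'}$ and, by induction on $n$, $A'/\m_A^nA'\cong A/\m_A^n$ for every $A'$; passing to the colimit yields $\m_AA^h=\m_{A^h}$ and $A^h/\m_A^nA^h\cong A/\m_A^n$, whence $\wdh{A^h}=\wdh{A}$, the $\m_A$-adic completion.

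\emph{Noetherianity of $A^h$.} The key point is that $A^h\to\wdh{A}$ is faithfully flat. Flatness I would prove with the local Tor criterion: a finitely generated ideal $I\subset A^h$ is extended from a finitely generated ideal $I'$ of some stage $A'$, and since $A'\to A^h$ is flat one obtains $\operatorname{Tor}_1^{A^h}(\wdh{A},A^h/I)\cong\operatorname{Tor}_1^{A'}(\wdh{A},A'/I')$, which vanishes because $\wdh{A}=\wdh{A'}$ is the completion of the Noetherian local ring $A'$, hence flat over $A'$. Faithfulness is immediate since $\m_{A^h}\wdh{A}=\m_A\wdh{A}=\m_{\wdh{A}}$. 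Now $\wdh{A}$ is Noetherian because $A$ is, and a ring admitting a faithfully flat map to a Noetherian ring is itself Noetherian: an ascending chain $I_1\subset I_2\subset\cdots$ in $A^h$ produces a chain $I_n\wdh{A}$ which stabilizes, and $I_n\wdh{A}\cap A^h=I_n$ by faithful flatness, so the original chain stabilizes. Hence $A^h$ is a Noetherian local ring.

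\emph{Existence of the section.} Let $\phi:A^h\to B$ be an \'etale neighborhood of $A^h$. By the standard limit formalism for morphisms locally of finite presentation, $B$ descends to a finite stage: there is an \'etale neighborhood $A'\to B'$ with $B\cong A^h\otimes_{A'}B'$ and $\phi$ the base-change map. Then $A\to A'\to B'$ exhibits $B'$ as an \'etale neighborhood of $A$, so $B'$ occurs in the directed system defining $A^h$ and carries a canonical $A'$-algebra map $\lambda:B'\to A^h$. The pair $(\mathrm{id}_{A^h},\lambda)$ induces $\s:=\mathrm{id}_{A^h}\otimes\lambda:B=A^h\otimes_{A'}B'\to A^h$, and by construction $\s\circ\phi=\mathrm{id}_{A^h}$; this is the asserted section (when $A$ is itself Henselian, so that $A^h=A$, it reads $\s:B\to A$ with $\s\circ\phi=\mathrm{id}_A$). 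The main obstacle is precisely the flatness of $A^h\to\wdh{A}$: one cannot invoke ``the completion of a Noetherian ring is flat'', since the Noetherianity of $A^h$ is exactly what is being established, so the reduction of a finitely generated ideal to a finite stage is the step doing the real work; the descent of $B$ to a finite stage in the last paragraph is routine but does require the finite-presentation limit machinery.
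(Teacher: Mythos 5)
The paper never proves this proposition: it sits in Appendix C, whose preamble explicitly defers all proofs to the standard references (Nagata, Raynaud, Iversen, Milne), so your argument has to stand on its own; it is essentially the classical proof and its first two parts are correct. You have also, rightly, read the statement the only way it can hold: the section is $\s:B\lgw A^h$ (the ``$B\lgw A$'' in the statement is a typo), and $\phi$ has to be an \'etale \emph{neighborhood} (local, trivial residue extension) -- for a general \'etale morphism, e.g.\ a separable residue field extension, no section exists. Your treatment of faithful flatness and of Noetherianity identifies the real difficulty correctly: one cannot invoke ``the completion of a Noetherian ring is flat'' before knowing $A^h$ is Noetherian, and your fix -- push a finitely generated ideal of $A^h$ down to a stage $A'$, use flat base change for $\operatorname{Tor}$ along the flat map $A'\lgw A^h$, and use flatness of $\wdh{A'}=\wdh{A}$ over the Noetherian ring $A'$, then descend Noetherianity along the faithfully flat map $A^h\lgw\wdh{A}$ -- is exactly the standard way around it. (You use without comment that $A'\lgw A^h$ is flat; this is true because $A^h$ is also a filtered colimit of \'etale $A'$-algebras, namely the cofinal subsystem of \'etale neighborhoods of $A$ receiving a map from $A'$, and it deserves a sentence.)

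The one step that is not correct as written is the descent claim in your last paragraph: for an \'etale neighborhood $B'$ of $A'$ the ring $C:=A^h\otimes_{A'}B'$ need not be local, so the asserted isomorphism $B\cong A^h\otimes_{A'}B'$ can fail. Concretely, take $A=\C[t]_{(t)}$, $A'=A$ and $B'=(A[x]/(x^2-1-t))_{(\m_A,x-1)}$: since $1+t$ has a square root $u\equiv 1$ in $A^h$, the ring $A^h\otimes_A B'$ splits into two nonzero factors, while $B$ is only one of them. What is true, and suffices, is the following: writing $B=(A^h[x]/(P))_{\p}$ in standard \'etale form (Theorem \ref{iversen}) and descending $P$ and $\p$ to a stage $A'$, the algebra $B'$ so obtained is an \'etale neighborhood of $A'$ and $B$ is the localization of $C$ at its \emph{unique} prime lying over $\m_{A^h}$ (unique because $C/\m_{A^h}C\simeq B'/\m_{B'}\simeq A/\m_A$); your retraction $C\lgw A^h$, built from the structure map $B'\lgw A^h$ of the colimit, pulls $\m_{A^h}$ back to that prime, hence sends its complement to units and factors through $B$, which gives $\s$. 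Alternatively, and closer to the remarks preceding the proposition in the paper, one can first check by a direct colimit argument that $A^h$ is Henselian and then use that an \'etale neighborhood of a Henselian local ring is an isomorphism, so that $\s=\phi^{-1}$. With either repair your proof is complete.
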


\begin{remark}

\begin{itemize}
\item[i)] Let $\phi :A\lgw B$ be a morphism of local rings. We denote by $\imath_A :A\lgw A^h$ and $\imath_B :B\lgw B^h$ the Henselization morphisms. By the universal property of the Henselization the morphism $\imath_B\circ A:A\lgw B^h$ factors through $A^h$ in a unique way, i.e. there exists a unique morphism $\phi^h:A^h\lgw B^h$ such that $\phi^h\circ\imath_A=\imath_B\circ\phi$.
\item[ii)] If $\phi :A\lgw B$ is an Žtale morphism between two local rings, $\phi^h$ is an isomorphism. Indeed $\phi$ being Žtale  $\imath_A$ factors through $\phi$, i.e. there exists a unique morphism $s:B\lgw A^h$ such that $s\circ\phi=\imath_A$. The morphism $s$ induces a morphism $s^h:B^h\lgw A^h$ as above. Thus we have the following commutative diagram:
$$\xymatrix{ A \ar[r]^{\phi} \ar[d]_{\imath_A} & B \ar[ld]_{s} \ar[d]^{\imath_B}  \\ A^h \ar[r]^{\phi^h} & B^h \ar@/^/@{->}[l]^{s^h} }$$
Since $s\circ\phi=\imath_A$, $(s\circ\phi)^h=s^h\circ\phi^h=\text{id}_A$. On the other hand $(\phi^h\circ s)^h=\phi^h\circ s^h=\imath_B^h=\text{id}_B$. This shows that $\phi^h$ is an isomorphism and $s^h$ is its inverse.
\item[iii)] If $\phi :A\lgw B$ is an Žtale morphism between two local rings where $A$ is Henselian, the previous remark implies that $\phi$ is an isomorphism since $\imath_B :B\lgw B^h$ is injective.
\end{itemize}
\end{remark}

\begin{proposition}\label{lift}
Let $A$ be a Henselian local ring and let $\phi:A\lgw B$ be an \'etale morphism that admits a section in $\frac{A}{\m_A^c}$ for some $c\geq 1$, i.e. a morphism of $A$-algebras  $s:B\lgw \frac{A}{\m_A^c}$. Then there exists a section $\wdt{s}: B\lgw A$ such that $\wdt{s}=s$ modulo $\m^c$.
\end{proposition}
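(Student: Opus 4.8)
The plan is to reduce the statement to Hensel's lemma for the Henselian local ring $A$ via the structure theory of étale morphisms. Since $\phi:A\lgw B$ is a local étale morphism of local rings, Theorem \ref{iversen} lets us present it as a standard étale morphism:
$$B\cong\left(\frac{A[x]}{(P(x))}\right)_{\mathfrak{q}}$$
with $P(x)\in A[x]$ monic, $\mathfrak{q}$ a prime of $A[x]$ containing $P(x)$, $\frac{\partial P}{\partial x}(x)\notin\mathfrak{q}$ and $\mathfrak{q}\cap A=\m_A$. Under this presentation, giving an $A$-algebra morphism out of $B$ into an $A$-algebra $C$ amounts to choosing a root in $C$ of $P$, with a mild extra condition ensuring one lands in the correct localization.

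First I would let $\ovl{b}\in A/\m_A^c$ be the image of $x$ under $s$ and choose a lift $b\in A$. Since $s$ is a ring homomorphism and the image of $x$ in $B$ is a root of $P$, we get $P(b)\in\m_A^c$; since $\frac{\partial P}{\partial x}(x)$ is invertible in the localization $B$, its image $\frac{\partial P}{\partial x}(\ovl{b})$ is a unit of $A/\m_A^c$, and because $\m_A$ is the unique maximal ideal of $A$ this forces $\frac{\partial P}{\partial x}(b)\notin\m_A$, i.e. $\frac{\partial P}{\partial x}(b)$ is a unit of $A$.

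Next I would apply Hensel's lemma. The pair $(A,\m_A^c)$ is again a Henselian pair, because $A$ is a Henselian local ring and $\m_A^c\subseteq\m_A$ (see \cite{Ra} Part XI), so the simple root $\ovl{b}$ of $P$ modulo $\m_A^c$ lifts to an actual root $\wdt{b}\in A$ of $P$ with $\wdt{b}\equiv b$ modulo $\m_A^c$. The assignment $x\mapsto\wdt{b}$ then defines an $A$-algebra morphism $A[x]/(P(x))\lgw A$; since $\wdt{b}\equiv b$ modulo $\m_A$ and $\frac{\partial P}{\partial x}(\wdt{b})$ is a unit, the elements of $A[x]\setminus\mathfrak{q}$ are sent to elements of $A$ outside $\m_A$, i.e. to units, so this morphism factors through $B$ and yields $\wdt{s}:B\lgw A$. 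Being an $A$-algebra morphism it is automatically a section of $\phi$, and by construction its composite with $A\lgw A/\m_A^c$ sends $x$ to $\ovl{b}=s(x)$, hence equals $s$; that is, $\wdt{s}\equiv s$ modulo $\m_A^c$.

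The step I expect to be delicate is the improvement from "$\wdt{b}$ close to $b$ modulo $\m_A$" — which is all the naive form of Hensel's lemma gives — to "$\wdt{b}$ close to $b$ modulo $\m_A^c$": this genuinely uses the refined version of Hensel's lemma, phrased above through the fact that $(A,\m_A^c)$ is itself a Henselian pair. A minor additional point is the verification that $x\mapsto\wdt{b}$ extends from $A[x]/(P(x))$ to the localization $B$, i.e. that $\wdt{b}$ lies in the component of $\Spec B$ singled out by $s$; this is forced by the congruence $\wdt{b}\equiv b$ modulo $\m_A$ together with $\frac{\partial P}{\partial x}$ being a unit there.
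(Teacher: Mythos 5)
Your proof is correct, but it takes a genuinely different route from the paper's. The paper argues without choosing any presentation of $B$: it sets $\m:=s^{-1}(\m_A)$, observes that $A\lgw B_{\m}$ is an \'etale neighborhood (\'etale with trivial residue field extension), hence an isomorphism $\psi$ because $A$ is Henselian, and defines $\wdt{s}$ as $\psi^{-1}$ composed with $B\lgw B_{\m}$; the congruence $\wdt{s}\equiv s$ modulo $\m_A^c$ is then automatic, since $s$ also factors through $B_{\m}$ and the only $A$-algebra map $A\lgw \frac{A}{\m_A^c}$ is the projection. You instead invoke Theorem \ref{iversen} to present $B$ as a standard \'etale algebra and reduce to lifting a simple root of the monic polynomial $P$ with control modulo $\m_A^c$, via the fact that $(A,\m_A^c)$ is a Henselian pair. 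Both arguments are valid: the paper's is shorter and presentation-free, yours is more concrete and makes the Newton-type approximation visible. Three small points on your version. First, Theorem \ref{iversen} requires $B$ to be local, which the proposition does not assume (it holds in the paper's application); in general you should first localize $B$ at $s^{-1}(\m_A)$, exactly as the paper does, before taking the standard \'etale presentation. Second, the refined congruence does not really need the theory of Henselian pairs: apply Proposition \ref{propC12} to get a root $\wdt{b}$ of $P$ with $\wdt{b}-b\in\m_A$, and then the identity $0=P(\wdt{b})=P(b)+(\wdt{b}-b)\bigl(\frac{\partial P}{\partial x}(b)+(\wdt{b}-b)(\cdots)\bigr)$, whose second factor is a unit, forces $\wdt{b}-b\in\m_A^c$ because $P(b)\in\m_A^c$. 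Third, the factorization of $x\mapsto \wdt{b}$ through the localization is forced not by $\frac{\partial P}{\partial x}$ being a unit but by the fact that $s$ is defined on all of $B$: any $g\in A[x]\setminus\mathfrak{q}$ is invertible in $B$, so $g(b)$ is a unit modulo $\m_A^c$, hence $g(\wdt{b})\equiv g(b)$ modulo $\m_A$ is a unit of $A$.
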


\begin{proof}
Let $\m:=s^{-1}(\m_A)$. Since $s$ is a $A$-morphism, $\m\cap A=\m_A$,  $\m$ is a maximal ideal of $B$ and $\frac{B}{\m}$ is isomorphic to $\frac{A}{\m_A}$. Because $A$ is Henselian and the morphism $\psi:A\lgw B_{\m}$ induced by $\phi$ is an Žtale neighborhood,  $\psi$ is an isomorphism. Then $\psi^{-1}$ composed with the localization morphism $B\lgw B_{\m}$ gives the desired section.
 \end{proof}

% \begin{definition}
% Let $A$ be a Henselian local ring and $x:=(x_1,\ldots,x_n)$. Then the Henselization of $A[x]_{\m_A+(x)}$ is denoted by $A\langle x\rangle$ (see Lemma \ref{henselization_A[x]}).
% \end{definition}
 
\begin{remark}
Let $(A,\m_A)$ be a local ring. Let $P(y)\in A[y]$  and $a\in A$ satisfy $P(a)\in\m_A $ and $\frac{\partial P}{\partial y}(a)\notin \m_A$. If $A$ is Henselian,  $A\lgw \frac{A[y]}{(P(y))}\, _{\m_A+(y-a)}$ is an \'etale neighborhood of $A$, thus it admits a section. This means that  there exists $\wdt{y}\in \m_A$ such that $P(a+\wdt{y})=0$.\\
In fact this characterizes Henselian local rings:
\end{remark}

\begin{proposition}\label{propC12}
Let $A$ be a local ring. Then $A$ is Henselian if and only if for any $P(y)\in A[y]$  and $a\in A$ such that $P(a)\in\m_A $ and $\frac{\partial P}{\partial y}(a)\notin \m_A$ there exists $\wdt{y}\in \m_A$ such that $P(a+\wdt{y})=0$.\end{proposition}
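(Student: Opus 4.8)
The plan is to prove both implications, the easy one first. For the ``only if'' direction, suppose $A$ is Henselian, so by definition $A=A^h$, the colimit of its \'etale neighborhoods. Given $P(y)\in A[y]$ and $a\in A$ with $P(a)\in\m_A$ and $\frac{\partial P}{\partial y}(a)\notin\m_A$, I would consider $B:=\frac{A[y]}{(P(y))}$ localized at the prime $\p:=\m_A+(y-a)$ of $A[y]$. One checks that $\p$ contains $P(y)$, that $\frac{\partial P}{\partial y}(y)\notin\p$ (since its value at $y=a$ is a unit mod $\m_A$), and that $\p\cap A=\m_A$ is maximal; hence by Definition \ref{standard_etale} the morphism $A\lgw B_\p$ is standard \'etale, and it induces an isomorphism on residue fields, so it is an \'etale neighborhood of $A$. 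Since $A$ is Henselian, this \'etale neighborhood is (up to the colimit description, or directly via Proposition \ref{section_hens} applied with $A^h=A$) split: there is a section $B_\p\lgw A$ of $A$-algebras. The image of $y$ under this section is an element $a+\wdt y$ with $\wdt y\in\m_A$ (because the section is compatible with the residue map, which sends $y\mapsto a$) and $P(a+\wdt y)=0$ because the relation $P(y)=0$ holds in $B_\p$.

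For the ``if'' direction, assume the root-lifting property for all such $P$ and $a$, and I want to deduce $A=A^h$. Since $\imath_A\colon A\lgw A^h$ is always faithfully flat, hence injective (Proposition \ref{section_hens}), it suffices to show $\imath_A$ is surjective, i.e. that every \'etale neighborhood $\phi\colon A\lgw B$ of $A$ admits a section $B\lgw A$; this forces the colimit $A^h$ to coincide with $A$. Because $A$ and $B$ are local, Theorem \ref{iversen} says $\phi$ is standard \'etale, so $B\cong\frac{A[x]}{(P(x))}_\p$ with $P$ monic, $\frac{\partial P}{\partial x}\notin\p$, $\p\cap A=\m_A$. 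The residue field of $B$ equals that of $A$, so the image of $x$ in $B/\m_B\cong A/\m_A$ lifts to some $a\in A$; then $P(a)\equiv 0$ and $\frac{\partial P}{\partial x}(a)\not\equiv 0$ modulo $\m_A$. Applying the hypothesis yields $\wdt y\in\m_A$ with $P(a+\wdt y)=0$. Sending $x\mapsto a+\wdt y$ defines an $A$-algebra map $\frac{A[x]}{(P(x))}\lgw A$; I must check it carries $\p$ into $\m_A$ so that it factors through the localization at $\p$: indeed $\p$ is the preimage of $\m_A$ under the residue map, and $x\mapsto a+\wdt y\equiv a$, while $\frac{\partial P}{\partial x}(a+\wdt y)\notin\m_A$ is a unit, so the localization is respected. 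This gives the desired section $B\lgw A$.

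The main obstacle I anticipate is being careful about the passage between the ``one \'etale neighborhood splits'' statement and the characterization $A=A^h$ in the ``if'' direction: one needs that the Henselization is the filtered colimit of \'etale neighborhoods and that if every object of this filtered system admits a retraction onto $A$ compatibly, then the colimit is $A$ itself. Concretely, if $\phi\colon A\lgw B$ is any \'etale neighborhood and admits a section, then $B$ is a localization-type quotient that adds nothing new over $A$; running this over the whole filtered system shows the transition maps are (on the relevant local pieces) isomorphisms, so $A^h=\varinjlim B=A$. One should also double-check the degenerate case $P(a)\notin\m_A$ or boundary behavior, but these do not arise since the hypotheses on $a$ are exactly tailored to the standard \'etale setup; alternatively one can invoke directly that the existence of such a root is equivalent to the existence of a section of the standard \'etale morphism, as worked out in Example \ref{standard}, which already did the infinitesimal lifting computation and whose iteration (over the $\m_A$-adic filtration, using that $A$ is complete along $\m_A$ after Henselization, or more elementarily using the explicit formula $\e=-P(c)a(1-i)$) reduces the surjectivity of $\imath_A$ to precisely the stated root-lifting property. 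I would keep the write-up short by citing Theorem \ref{iversen}, Definition \ref{standard_etale}, and Proposition \ref{section_hens} rather than re-proving them.
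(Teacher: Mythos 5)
Your ``only if'' half is exactly the paper's argument (it is the remark immediately preceding the proposition): the localization $\frac{A[y]}{(P(y))}_{\m_A+(y-a)}$ is an \'etale neighborhood of $A$ (\'etaleness is the computation of Example \ref{standard} -- note Definition \ref{standard_etale} asks for $P$ monic, which you do not have, but the lifting computation never uses monicity, and the paper's remark ignores this point too), and since $A$ is Henselian it admits a section, whose value on $y$ is the desired root $a+\wdt{y}$. For the converse the paper gives no proof (it refers to the literature), and your skeleton is the right one: by Theorem \ref{iversen} an \'etale neighborhood is $(A[x]/(P(x)))_{\p}$ with $P$ monic, triviality of the residue extension forces $\p=\m_A+(x-a)$ for a lift $a\in A$ of the image of $x$, whence $P(a)\in\m_A$ and $\frac{\partial P}{\partial x}(a)\notin\m_A$, and the hypothesis produces a root $r:=a+\wdt y$ and hence an $A$-section $B\lgw A$.

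The genuine gap is the step you yourself flag and then wave through: nothing in your write-up proves that ``every \'etale neighborhood of $A$ admits a section'' implies $A=A^h$; the phrase ``it suffices to show $\imath_A$ is surjective, i.e.\ that every \'etale neighborhood admits a section'' asserts precisely the unproved equivalence, ``$B$ adds nothing new over $A$'' is not an argument, and the fallback ``using that $A$ is complete along $\m_A$ after Henselization'' is simply false (neither $A$ nor $A^h$ is $\m_A$-adically complete in general, and Henselization is not completion). What is missing is the standard lemma that an \'etale neighborhood admitting a local $A$-section has its structure map an isomorphism. In your standard \'etale presentation this is quick: monic division gives $P(x)=(x-r)Q(x)$ in $A[x]$ with $Q(r)=\frac{\partial P}{\partial x}(r)$ a unit modulo $\m_A$, so $Q(x)\notin\p$ and $Q$ is invertible in $B=(A[x]/(P(x)))_{\p}$, forcing $x=r$ in $B$; thus the image of $A[x]/(P)$ in $B$ equals the image of $A$, every element of $A[x]\setminus\p$ evaluates at $x=r$ to a unit of the local ring $A$, and therefore $A\lgw B$ is surjective; it is injective because it has a section, hence an isomorphism. (Equivalently: $I=\ker(s)$ is finitely generated and satisfies $I=I^2$ since $\Omega_{B/A}=0$, so it is generated by an idempotent, which must vanish in the local ring $B$.) Once every structure map in the filtered system is an isomorphism, $\imath_A:A\lgw A^h$ is surjective as well as injective, and $A=A^h$; this also makes the sections automatically compatible, which your ``compatibly'' presupposed. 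With this lemma inserted, your proof is complete.
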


We can generalize this proposition as follows:

 \begin{theorem}[Implicit Function Theorem]\label{IFT}
 Set $y=(y_1,\ldots,y_m)$ and let $f(y)\in A[y]^r$ with $r\leq m$. Let $J$ be the ideal of $A[y]$ generated by the $r\times r$ minors of the Jacobian matrix of $f(y)$. Assume that $A$ is Henselian, $f(0)=0$ and $J\not\subset \m_A.\frac{A[y]}{(y)}$. Then there exists $\wdt{y}\in \m_A^m$ such that $f(\wdt{y})=0$.\\
 \end{theorem}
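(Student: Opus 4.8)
The plan is to deduce the Implicit Function Theorem (Theorem \ref{IFT}) from Proposition \ref{propC12}, exactly mimicking the classical reduction of a system to a single equation by a triangularization argument. First I would reorder the variables so that the first $r\times r$ minor of the Jacobian matrix $\frac{\partial(f_1,\dots,f_r)}{\partial(y_1,\dots,y_r)}$ is the one that does not lie in $\m_A\cdot\frac{A[y]}{(y)}$; since $f(0)=0$, this says precisely that the matrix $M:=\frac{\partial(f_1,\dots,f_r)}{\partial(y_1,\dots,y_r)}(0)$ has unit determinant in $A$, hence is invertible over $A$. I would then freeze the last $m-r$ variables by setting $\wdt y_{r+1}=\dots=\wdt y_m=0$ (any values in $\m_A$ would do, but $0$ is simplest), so that it suffices to solve the square system $g(y'):=f(y',0)=0$ in $y'=(y_1,\dots,y_r)$, where $g\in A[y']^r$, $g(0)=0$, and $\frac{\partial g}{\partial y'}(0)=M$ is invertible over $A$.

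Next I would run the standard elimination. Working in $A[y']$, I solve the system one variable at a time by Newton-style elimination using Proposition \ref{propC12}: after a linear change of coordinates among $y_1,\dots,y_r$ (using that $M$ is invertible over $A$, so we may assume $\frac{\partial g_i}{\partial y_j}(0)=\delta_{ij}$), the equation $g_1(y_1,y_2,\dots,y_r)=0$ has, for each fixed tuple $(y_2,\dots,y_r)$ in the maximal ideal of $A\langle y_2,\dots,y_r\rangle$ — or more elementarily, over the Henselian local ring $A[y_2,\dots,y_r]_{\m_A+(y_2,\dots,y_r)}$, which is Henselian because a localization of a polynomial ring over a Henselian ring at a maximal ideal lying over $\m_A$ is Henselian — a unique solution $y_1=\varphi_1(y_2,\dots,y_r)$ in the maximal ideal with $P(0)\in\m$, $\partial P/\partial y_1(0)\notin\m$. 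Substituting this back reduces the number of equations and unknowns by one, the Jacobian of the reduced system at the origin remaining invertible over $A$ by the usual chain-rule computation. Iterating $r$ times produces $\wdt y'\in\m_A^r$, hence $\wdt y=(\wdt y',0)\in\m_A^m$ with $f(\wdt y)=0$.

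Alternatively, and perhaps more cleanly, I would package the whole elimination into a single application of the fact that $A$ is Henselian by observing that the $A$-algebra $B:=\frac{A[y_1,\dots,y_r]}{(g_1,\dots,g_r)}$, localized at $\m_A+(y_1,\dots,y_r)$, is \'etale over $A$ — this is the Jacobian Criterion (see the Jacobian Criterion example in Appendix \ref{etale_app}), since the Jacobian determinant is a unit at the origin and the ideal $(g_1,\dots,g_r)$ has height $r$ there. An \'etale neighborhood of a Henselian local ring admits a section by Proposition \ref{section_hens} (or Proposition \ref{lift} with $c=1$), and that section is exactly a solution $\wdt y'\in\m_A^r$ of $g=0$; setting $\wdt y=(\wdt y',0)$ finishes the proof.

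The main obstacle is the bookkeeping in the reduction step: one must check that freezing the extra variables does not destroy the rank hypothesis (it does not, because the nonvanishing minor can be chosen among the first $r$ columns) and that the \'etale/Henselian machinery applies to $A[y]_{\m_A+(y)}$ rather than to $A$ itself — i.e. that this localization is again Henselian — which is where the argument genuinely uses more than the bare definition of Henselian. Beyond that, the computation that the successively reduced systems keep an invertible Jacobian at the origin is a routine chain-rule check that I would not spell out in detail.
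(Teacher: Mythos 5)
The paper itself states Theorem \ref{IFT} without proof (it is one of the standard facts recalled in Appendix \ref{etale_app}, with references), so there is no in-text argument to compare against; judged on its own merits, your second, \'etale-theoretic route is correct and is exactly what the appendix machinery is set up for. After permuting the variables so that the unit $r\times r$ minor uses the first $r$ columns and freezing $y_{r+1}=\cdots=y_m=0$, the localization $B:=\bigl(A[y_1,\dots,y_r]/(g_1,\dots,g_r)\bigr)_{\m_A+(y_1,\dots,y_r)}$ is an \'etale neighborhood of $A$ (square system, Jacobian determinant a unit at that prime, residue field $A/\m_A$, and $g(0)=0$ so the prime does contain $(g)$); Proposition \ref{lift} with $c=1$ (the residue map $B\to A/\m_A$ is the required section modulo $\m_A$) gives a section $B\to A$, and the images of the $y_i$ give $\wdt y'\in\m_A^r$, hence $\wdt y=(\wdt y',0)$. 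One correction of attribution: the ``Jacobian Criterion'' example of the appendix is stated for algebras over a field and compares rank with height; what you actually need is the standard fact that a presentation by $r$ equations in $r$ unknowns with invertible Jacobian determinant is \'etale over an arbitrary base ring --- no height hypothesis enters (and height is not the right invariant over a general $A$); this can be checked directly by running the infinitesimal-lifting computation of Example \ref{standard} with the Jacobian matrix in place of a single derivative.

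Your first route, as written, contains a genuine error: $A[y_2,\dots,y_r]_{\m_A+(y_2,\dots,y_r)}$ is \emph{not} Henselian when there is at least one variable, even for $A=\k$ a field --- its Henselization is the strictly larger ring of algebraic power series $\k\langle y_2,\dots,y_r\rangle$ (Lemma \ref{henselization_A[x]}); if localizations of polynomial rings over Henselian rings were Henselian, much of the paper would be vacuous. So Proposition \ref{propC12} cannot be applied over that ring to produce $\varphi_1$. The repair you hint at (working over $A\langle y_2,\dots,y_r\rangle$) is possible, but then the reduced system $g_i(\varphi_1(y_2,\dots,y_r),y_2,\dots,y_r)$ no longer has coefficients in $A[y]$, so the induction must be carried out over a tower of Henselizations, and one must still explain why at the end the solution descends to $\m_A^m$; that descent is precisely the section/universal-property argument of the \'etale route, not a ``routine chain-rule check.'' Since you explicitly offer the \'etale argument as the clean packaging, the proposal as a whole does prove the theorem, but the elimination route should either be dropped or rewritten with the Henselization bookkeeping made explicit.
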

 
  \begin{example}\label{ex_henselian}
  The following rings are Henselian local rings:
  \begin{itemize}
  \item Any complete local ring is Henselian.
\item The ring of germs of $\mathcal{C}^{\infty}$ functions at the origin of $\R^n$ is a Henselian local ring but it is not Noetherian. 
\item The ring of germs of analytic functions at the origin of $\C^n$ is a Noetherian Henselian local ring; it is isomorphic to the ring of convergent power series. 
 \item By Proposition \ref{propC12} any quotient of a local Henselian ring is again a local Henselian ring.
 \item The next example shows that the rings of algebraic power series over a field are Henselian.
 \end{itemize}
 \end{example}

 \begin{example}
 If $A=\k\llceil x_1,\ldots,x_n\rrceil$ for some Weierstrass system over $\k$, then $A$ is a Henselian local ring by Proposition \ref{propC12}. Indeed, let $P(y)\in A[y]$ satisfy $P(0)=0$ and $\frac{\partial P}{\partial y}(0)\notin (\p,x)$. Then $P(y)$ has a non-zero term of the form $cy$, $c\in\k^*$. So we have, by the Weierstrass Division Property, 
 $$y=P(y)Q(y)+r$$ where $r\in \m_A$. By considering the derivatives with respect to $y$ of both terms of this equality and evaluating at 0 we see that $Q(0)\neq 0$, i.e. $Q(y)$ is a unit. Thus $Q(r)\neq 0$ and  $P(r)=0$.
 \end{example}
 
   We have the following generalization of Proposition \ref{propC12}:

 \begin{proposition}[Hensel's Lemma]\label{Henselian}
 Let $(A,\m_A)$ be a local ring. Then $A$ is Henselian if and only if for any monic polynomial $P(y)\in A[y]$ such that $P(y)=f(y)g(y)$ mod $\m_A$ for some monic polynomials $f(y)$, $g(y)\in A[y]$ which are coprime modulo $\m_A$, there exist monic polynomials $\wdt{f}(y)$, $\wdt{g}(y)\in A[y]$ such that $P(y)=\wdt{f}(y)\wdt{g}(y)$ and $\wdt{f}(y)-f(y)$, $\wdt{g}(y)-g(y)\in \m_A[y]$.
 
  \end{proposition}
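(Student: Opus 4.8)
The plan is to prove the two implications separately: for the forward direction (Henselian $\Rightarrow$ lifting of factorizations) I would reduce the problem to the Tougeron Implicit Function Theorem, and for the converse I would show directly that every \'etale neighborhood of $A$ is trivial, so that $A=A^h$.

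For the forward direction, assume $A$ is Henselian and let $P(y)\in A[y]$ be monic of degree $n$ with $P\equiv fg \pmod{\m_A}$ for monic $f,g\in A[y]$ of degrees $d$ and $n-d$ that are coprime modulo $\m_A$. Write $f(y)=y^d+\sum_{i<d}f_i^0y^i$ and $g(y)=y^{n-d}+\sum_{j<n-d}g_j^0y^j$, introduce $n$ indeterminates $u=(u_i)_{i<d}$ and $v=(v_j)_{j<n-d}$, and let $F(u,v)\in A[u,v]^n$ be the vector whose components are the coefficients of $y^0,\dots,y^{n-1}$ in
\[
\Bigl(y^d+\sum_{i<d}(f_i^0+u_i)y^i\Bigr)\Bigl(y^{n-d}+\sum_{j<n-d}(g_j^0+v_j)y^j\Bigr)-P(y).
\]
Then each component of $F(0,0)$ lies in $\m_A$ (the reduction of $F(0,0)$ modulo $\m_A$ is the coefficient vector of $fg-P=0$ in $(A/\m_A)[y]$), and the Jacobian $\partial F/\partial(u,v)$ at $(0,0)$ reduces modulo $\m_A$ to the matrix of the linear map $(a,b)\mapsto a\bar g+b\bar f$ on $(A/\m_A)[y]_{<d}\oplus(A/\m_A)[y]_{<n-d}$, obtained by differentiating a product; this is the Sylvester matrix of the pair and its determinant is $\pm\operatorname{Res}(\bar f,\bar g)$, a unit of $A/\m_A$ since $\bar f,\bar g$ are coprime. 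Taking $\delta$ to be this full Jacobian determinant and $c=1$, we have $F_i(0,0)\in\m_A=(\delta(0,0))^2\m_A$, so Proposition \ref{TougeronIFT2} supplies $(u,v)\in\m_A^n$ with $F(u,v)=0$; setting $\wdt f(y)=y^d+\sum_{i<d}(f_i^0+u_i)y^i$ and $\wdt g(y)=y^{n-d}+\sum_{j<n-d}(g_j^0+v_j)y^j$ gives monic polynomials with $\wdt f\wdt g=P$ and $\wdt f-f,\wdt g-g\in\m_A[y]$.

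For the converse, assume the factorization property holds; I would show $A=A^h$. Let $A\lgw B$ be an \'etale neighborhood of $A$. By Theorem \ref{iversen} it is standard \'etale, so $B\cong(A[x]/(P(x)))_{\mathfrak p}$ with $P$ monic and $\mathfrak p$ a prime lying over $\m_A$ with $P'(x)\notin\mathfrak p$; since the residue field of $B$ equals $A/\m_A$, the image of $\mathfrak p$ in $(A/\m_A)[x]/(\bar P)$ is $(x-\bar a)$ for some $\bar a\in A/\m_A$, and $P'(x)\notin\mathfrak p$ means $\bar a$ is a simple root of $\bar P$. Thus $\bar P=(x-\bar a)\bar v$ with $\bar v$ monic and $\bar v(\bar a)\neq0$, so $x-\bar a$ and $\bar v$ are monic and coprime; the factorization hypothesis then yields $P=(x-\beta)V$ in $A[x]$ with $\beta\in A$ lifting $\bar a$, $V$ monic, $V\equiv\bar v\pmod{\m_A[x]}$, hence $V(\beta)$ a unit. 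In $(A[x]/(P))_{\mathfrak p}$ the element $V$ is invertible (it is not in $\mathfrak p$) while $x-\beta\in\mathfrak p$, so $(A[x]/(P))_{\mathfrak p}\cong(A[x]/(x-\beta))_{\m_A}\cong A$, the composite $A\lgw B\lgw A$ being the identity. Therefore every \'etale neighborhood of $A$ is an isomorphism, the filtered system defining $A^h$ is constant, and $A=A^h$.

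The routine-but-delicate point is the computation in the forward direction identifying the Jacobian of $F$ at the origin with the Sylvester matrix of $\bar f,\bar g$, hence with a unit of $A/\m_A$; this is the classical step where coprimality enters. The rest is formal, and I note in particular that no separate reduction to monic polynomials is needed in the converse, since the notion of standard \'etale morphism (Definition \ref{standard_etale}) already builds in a monic polynomial, which is precisely what the factorization hypothesis addresses.
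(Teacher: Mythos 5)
Your argument is correct. In the direction Henselian $\Rightarrow$ lifting of factorizations you follow essentially the same route as the paper: you parametrize the two monic factors by their coefficients, observe that the Jacobian of the product map at the approximate factorization reduces modulo $\m_A$ to the Sylvester matrix of $\bar f,\bar g$, whose determinant is the resultant and hence a unit by coprimality, and then invoke an implicit-function statement valid over Henselian local rings; the only cosmetic difference is that you call on Proposition \ref{TougeronIFT2} with $c=1$ where the paper uses Theorem \ref{IFT}, and your variant is if anything cleaner, since Tougeron's statement only needs $F(0,0)\in(\delta(0,0))^2\m_A$ rather than exact vanishing at the origin. The converse is where you genuinely diverge: the paper deduces it in three lines from the simple-root criterion of Proposition \ref{propC12} (factor $\bar P=(X-\bar a)\bar Q$ with coprime factors, lift the factorization, get a root, conclude Henselianity), whereas you go back to the definition $A=A^h$ and show, using the standard \'etale presentation of Theorem \ref{iversen} and Definition \ref{standard_etale}, that the factorization hypothesis forces every \'etale neighborhood $(A[x]/(P))_{\p}$ to collapse to $A$ (since $P=(x-\beta)V$ with $V\notin\p$ and $x-\beta\in\p$, so $x=\beta$ in $B$ and $B\cong A$). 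The paper's route is shorter but leans on Proposition \ref{propC12}, which is stated there without proof, and its quick application has a wrinkle about non-monic $P$ that the text glosses over; your route is self-contained relative to what the paper actually proves and, as you observe, sidesteps the monicness issue entirely, at the price of invoking the structure theorem for \'etale morphisms of local rings. Both are valid; your version essentially inlines the unproved half of Proposition \ref{propC12}, adapted directly to the factorization hypothesis.
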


  \begin{proof}
 Let us prove the sufficiency of the condition.   Let $P(y)\in A[y]$  and $a\in A$ satisfy $P(a)\in\m_A $ and $\frac{\partial P}{\partial y}(a)\notin \m_A$. This means that $P(X)=(X-a)Q(X)$ where $X-a$ and $Q(X)$ are coprime modulo $\m$. Then this factorization lifts to $A[X]$, i.e. there exists
  $\wdt{y}\in \m_A$ such that $P(a+\wdt{y})=0$. This proves that $A$ is Henselian.\\
To prove that the condition is necessary, let $P(y)\in A[y]$ be a monic polynomial, $P(y)=y^d+a_1y^{d-1}+\cdots+a_d$. Let $\k:=\frac{A}{\m_A}$ be the residue field of $A$. For any $a\in A$ let us denote by $\ovl{a}$  the image of $a$ in $\k$. Let us assume that $\ovl{P}(y)=f(y)g(y)$ mod $\m_A$ for some $f(y)$, $g(y)\in \k[y]$ which are coprime in $\k[y]$. Let us write
\begin{align*}
&  f(y)=y^{d_1}+b_1 y^{d_1-1}+\cdots+b_{d_1},  \quad g(y)=y^{d_2}+c_1 y^{d_2-1}+\cdots+c_{d_2}
\end{align*}
where  $b=(b_1,\ldots, b_{d_1})\in \k^{d_1}$,  
$c=(c_1,\ldots, c_{d_2})\in \k^{d_2}$.
The product of polynomials $\ovl{P}=fg$ defines  a map  $ \Phi :\k^{d_1}\times\k^{d_2} \to \k^d$,  that is polynomial 
in $b$ and $c$  with integer coefficients, and $\Phi (b,c) = \ovl{a}:=(\ovl{a}_1,\ldots,\ovl{a}_d)$.  
  The determinant of the Jacobian matrix $\frac {\partial \Phi }{\partial (b,c)}$ 
 is the resultant of $f(y)$ and $g(y)$, and hence is non-zero  at $(b,c)$.   By the Implicit Function Theorem (Theorem \ref{IFT}), there exist  $\wdt{b}\in A^{d_1}$, $\wdt{c}\in A^{d_2}$ such that $P(y)= P_1(y) P_2(y)$ where $ P_1(y)=y^{d_1}+\wdt{b}_1 y^{d_1-1}+\cdots+\wdt{b}_{d_1}$ and $ P_2(y)=y^{d_2}+\wdt{c}_1 y^{d_2-1}+\cdots+\wdt{c}_{d_2}$.
   \end{proof}

  \begin{proposition}\label{henz_exc}\cite[18-7-6]{EGAIV-4} 
 Given  an excellent local ring $A$, its Henselization $A^h$ is also an excellent local ring.

 \end{proposition}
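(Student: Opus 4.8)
The statement is classical; it is recorded as \cite{EGAIV-4}, 18.7.6, and I will only indicate the architecture of the argument. By Example \ref{ex_excellent} iv), and since $A^h$ is again a Noetherian local ring (Proposition \ref{section_hens}), it suffices to prove that $A^h$ is universally catenary and that the completion homomorphism $A^h\lgw \wdh{A^h}$ is regular. The basic device is to write $A^h$ as a filtered colimit $A^h=\varinjlim_i B_i$ in which each $A\lgw B_i$ is a local \'etale morphism inducing an isomorphism on residue fields (this is the definition of the Henselization, together with Theorem \ref{iversen}, which presents these $B_i$ as standard \'etale algebras, hence essentially of finite type over $A$). Since $A$ is excellent, Example \ref{ex_excellent} i) then shows that every $B_i$ is excellent; in particular each $B_i$ is universally catenary and each $B_i\lgw \wdh{B_i}$ is regular.

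Next I would compare the completions. For a local \'etale $A\lgw B_i$ with trivial residue extension one has $\m_AB_i=\m_{B_i}$, and flatness together with the local criterion forces $B_i/\m_A^nB_i\cong A/\m_A^n$ for every $n$ (the closed fibre being the residue field, $B_i/\m_A^nB_i$ is free of rank one over the Artinian ring $A/\m_A^n$). Hence $\wdh{B_i}=\wdh{A}$, and passing to the colimit $A^h/\m_A^nA^h=A/\m_A^n$, so that $\wdh{A^h}=\wdh{A}$. Thus $A^h$ sits in a tower $A\lgw A^h\lgw \wdh{A^h}=\wdh{A}$ in which $A\lgw \wdh{A}$ is regular (Example \ref{ex_reg} ii), as $A$ is excellent) and $A^h\lgw \wdh{A^h}$ is faithfully flat.

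The regularity of $A^h\lgw \wdh{A^h}$ I would deduce as follows: $A\lgw A^h$ is a filtered colimit of \'etale, hence regular, homomorphisms between Noetherian rings, so it is itself regular; and geometric regularity of formal fibres is inherited by the target of a regular homomorphism, whence $A^h\lgw \wdh{A^h}$ is regular. (Alternatively one passes directly to the limit in the regular maps $B_i\lgw \wdh{B_i}=\wdh{A}$, checking flatness and geometric regularity of the fibres of $A^h\lgw\wdh{A}$ by hand.) For the universal catenarity of $A^h$ one transports the universal catenarity of the $B_i$ through the colimit; this is the delicate point, since catenarity is not a finitely presentable property and its behaviour under Henselization must be controlled through Ratliff-type criteria (equidimensionality and absence of embedded primes in the completions $\wdh{A/\p}$), making essential use of $\wdh{A^h}=\wdh{A}$. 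This last step is really what the cited results of \cite{EGAIV-4} are needed for, and it is the main obstacle; the regularity of the formal fibres, by contrast, transfers almost formally once $\wdh{A^h}=\wdh{A}$ is in hand.
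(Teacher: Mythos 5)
The paper itself gives no argument for this proposition: it is stated with the bare reference to \cite{EGAIV-4}, 18.7.6, so there is no internal proof to compare yours with. Your sketch follows the standard route (write $A^h=\varinjlim_i B_i$ as a filtered colimit of \'etale neighborhoods, note each $B_i$ is excellent because it is essentially of finite type over $A$, identify $\wdh{B_i}=\wdh{A^h}=\wdh{A}$, then treat the formal fibres and universal catenarity separately), and, like the paper, you ultimately defer the substantive steps to \cite{EGAIV-4}. Up to that deferral the architecture is fine, and the identification $\wdh{A^h}=\wdh{A}$ is correctly justified.

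There is, however, one step stated as a general principle which is false: ``geometric regularity of formal fibres is inherited by the target of a regular homomorphism.'' If that were true, every regular local ring $B$ containing $\Q$ would have geometrically regular formal fibres, since $\Q\lgw B$ is a regular morphism (it is flat, and $B\otimes_{\Q}\K$ is regular for every finite extension $\K$ of $\Q$, being finite \'etale over $B$); but there are classical examples (Nagata, Rotthaus) of regular local $\Q$-algebras that are not excellent. So the $G$-ring property does not ascend along arbitrary regular morphisms, and you cannot conclude regularity of $A^h\lgw\wdh{A^h}$ from regularity of $A\lgw A^h$ and of $A\lgw\wdh{A}$ by this route. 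What saves the argument is precisely your parenthetical alternative, which exploits the ind-\'etale structure: for a prime $\q$ of $A^h$ lying over $\q_i=\q\cap B_i$, the fibre $\wdh{A}\otimes_{A^h}\kappa(\q)$ is the filtered colimit of the formal fibres $\wdh{B_i}\otimes_{B_i}\kappa(\q_i)=\wdh{A}\otimes_{B_i}\kappa(\q_i)$ of the excellent rings $B_i$, the transition maps being flat (localizations of base changes along the finite separable extensions $\kappa(\q_i)\lgw\kappa(\q_j)$), and this Noetherian colimit is then geometrically regular over $\kappa(\q)=\varinjlim_i\kappa(\q_i)$. That computation, together with the catenarity statement you rightly flag as delicate, is exactly the content of the cited result of \cite{EGAIV-4}; the ``inheritance'' principle should be deleted, and similarly the assertion that a filtered colimit of regular morphisms is regular should be replaced by the explicit description of the fibres of $A\lgw A^h$ as ind-\'etale $\kappa(\p)$-algebras.
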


%\backmatter
%\newpage 
 \printindex
%
%\newpage 

\end{document}